\newtheorem{theor}{Theorem}[section]
\newtheorem{lemma}[theor]{Lemma}
\newtheorem{rem}[theor]{Remark}
\newcommand{\hd}{\mathrm{hd}}
\newcommand{\rad}{\mathrm{rad}}
\newcommand{\Hom}{\mathrm{Hom}}
\newcommand{\End}{\mathrm{End}}
\newcommand{\res}{\mathrm{res}}
\newcommand{\Ind}{\mathrm{Ind}}
\newcommand{\Res}{\mathrm{Res}}
\newcommand{\ord}{\mathrm{ord}}
\newcommand{\JS}{\text{JS}}
\newcommand{\s}{{\sf S}}
\newcommand{\A}{{\sf A}}
\newcommand{\Md}{\!\mod}
\newcommand{\Z}{\mathbb{Z}}
\newcommand{\Q}{\mathbb{Q}}
\newcommand{\1}{\mathbf{1}}
\newcommand{\sgn}{\mathbf{\mathrm{sgn}}}
\renewcommand{\epsilon}{\varepsilon}
\renewcommand{\phi}{\varphi}
\newcommand{\xymat}{\xymatrix@R=6pt@C=10pt}
\newcommand{\la}{\lambda}
\newcommand{\be}{\beta}
\newcommand{\al}{\alpha}
\newcommand{\eps}{\epsilon}
\newcommand{\de}{\delta}
\newcommand{\si}{\sigma}
\newcommand{\da}{{\downarrow}}
\newcommand{\ua}{{\uparrow}}
\newcommand{\Mull}{{\tt M}}
\def\RP{{\mathscr {RP}}}
\def\RPar{{\mathscr {RP}}}
\def\Par{{\mathscr {P}}}
\def\Parinv{{\mathscr P}^A}
\def\h{{\mathscr {H}}}
\begin{document}

\title[Irreducible tensor products for covering groups]{Irreducible tensor products of representations of covering groups of symmetric and alternating groups}

\author{\sc Lucia Morotti}
\address{Institut f\"{u}r Algebra, Zahlentheorie und Diskrete Mathematik\\ Leibniz Universit\"{a}t Hannover\\ 30167 Hannover\\ Germany} 
\email{morotti@math.uni-hannover.de}

\thanks{
The author was supported by the DFG grant MO 3377/1-1. The author would like to thank the Isaac Newton Institute for Mathematical Sciences for support and hospitality during the programme ``Groups, representations and applications: new perspectives'' when work on this paper was undertaken. This work was supported by: EPSRC grant number EP/R014604/1.
}

\subjclass[2020]{20C30, 20C20, 20C25.}

\begin{abstract}
In this paper we completely classify irreducible tensor products of covering groups of symmetric and alternating groups in characteristic $\not=2$.
\end{abstract}

\maketitle

\section{Introduction}

Let $F$ be an algebraically closed filed, $G$ be a group and $V$ and $W$ be irreducible $FG$-representation. A natural question to ask is when the tensor product $V\otimes W$ is irreducible. This is always the case if $V$ or $W$ is 1-dimensional, so the interesting cases are those where neither $V$ nor $W$ is 1-dimensional but $V\otimes W$ is irreducible, in which case we say that $V\otimes W$ is a non-trivial irreducible tensor product. One motivation to this question comes from the Aschbacher-Scott classification of maximal subgroups of finite classical groups, see \cite{a,as}.

Irreducible tensor products of symmetric groups have been fully classified in \cite{bk,gj,gk,m1,z1}. For alternating groups, apart for some cases in in characteristic $2$, non-trivial tensor products have been classified in \cite{bk3,bk2,m2,m3,z1}. For covering groups of symmetric and alternating groups however only partial results are known, that is the characteristic 0 case for $\widetilde\s_n$, see \cite{b2,bk4}, as well as some reduction results obtained in \cite{kt} for $\widetilde\s_n$ and $\widetilde\A_n$ in characteristic $\geq 5$. In this paper we will consider the case where $G=\widetilde\s_n$ or $\widetilde\A_n$ is a covering group of a symmetric or alternating group and completely classify non-trivial irreducible tensor products in characteristic $\not=2$.

By definition there exists $z\in\widetilde\A_n\subseteq\widetilde\s_n$ with $z$ of order 2 and central in $\widetilde\s_n$ such that $\s_n\cong \widetilde\s_n/\langle z\rangle$ and $\A_n\cong \widetilde\A_n/\langle z\rangle$. Since $z$ is central of order 2, irreducible representations of $\widetilde\s_n$ and $\widetilde\A_n$ are of two types, depending on whether $z$ acts as $1$ or $-1$. Let $V$ be an irreducible representation of $\widetilde\s_n$ or $\widetilde\A_n$. If $z$ acts as $1$ on $V$ then $V$ may be viewed also as a representation of $\s_n$ or $\A_n$ by factoring through $\langle z\rangle$ (and $V$ is irreducible also as an $\s_n$- or $\A_n$-representation). On the other hand if $V$ is an (irreducible) representation of $\s_n$ or $\A_n$ then we may lift $V$ to an (irreducible) representation of $\widetilde\s_n$ or $\widetilde\A_n$ on which $z$ acts trivially. If on the other hand $z$ acts as $-1$ on $V$ then we say, when $p\not=2$, that $V$ is a spin representation.

Thus, for $p\not=2$, when considering tensor products $V\otimes W$ of two irreducible representations $V$ and $W$ of $\widetilde\s_n$ or $\widetilde\A_n$ three cases need to be considered: (i) neither $V$ nor $W$ is a spin representation, (ii) $V$ is not a spin representation, while $W$ is a spin representation and (iii) both $V$ and $W$ are spin representations. In case (i) $V\otimes W$ is irreducible as a $\widetilde\s_n$- or $\widetilde\A_n$-representation if and only if it is irreducible as a $\s_n$- or $\A_n$-representation, so this case is already covered by \cite{bk3,bk,bk2,gk,m2,m3,z1}. So only cases (ii) and (iii) will be considered in this paper. As can be seen from Theorems \ref{TS} and \ref{TA} irreducible tensor products of two spin representations only occur for $n$ small, however there exist infinite families of irreducible tensor products of a spin representation and a non-spin representations (see also \cite{b2,bk4,kt} for partial results).

Note that if $p=2$ then $z$ acts trivially on any irreducible representation of $\widetilde\s_n$ or $\widetilde\A_n$. So in this case classifying irreducible tensor products of $\widetilde\s_n$ or $\widetilde\A_n$ is equivalent to classifying irreducible tensor products for $\s_n$ or $\A_n$. So this case will not be considered in this paper. For $\s_n$ this problem has already been completely solved in \cite{bk,gj,gk,m1}. For $\A_n$ partial results, including a complete analysis when neither $V$ nor $W$ is basic spin, can be found in \cite{m3}. 

For $n=6$ or $7$, irreducible tensor products of representations of the triple covers can be easily classified looking at characters table using \cite{Atl}, so they will not be considered here.

It is well known (see for example \cite{JamesBook}) that, in characteristic $p$, irreducible representations of symmetric groups are indexed by $p$-regular partitions, that is partitions with no part repeated $p$ or more times. Let $\Par_p(n)$ be the set of $p$-regular partitions of $n$ and, given $\la\in\Par_p(n)$, let $D^\la$ be the corresponding irreducible representation of $\s_n$. For any $\la\in\Par_p(n)$ let $\la^\Mull\in\Par_p(n)$ be the Mullineux dual of $\la$, that is the partition with $D^{\la^\Mull}\cong D^\la\otimes\sgn$, where $\sgn$ is the sign representation of $\s_n$. A combinatorial description of the Mullineux bijection is known, see \cite{BO,FK,Mull}. For $p\geq 3$ it is also well known that $D^\la\da_{\A_n}$ is irreducible if and only $\la\not=\la^\Mull$, see \cite{f}. In this case we will write $E^\la$ for $D^\la\da_{\A_n}$. Note that $E^\la\cong E^{\la^\Mull}$. On the other hand if $\la=\la^\Mull$ we have that $D^\la\da_{\A_n}\cong E^\la_+\oplus E^\la_-$ with $E^\la_\pm$ non-isomorphic irreducible representations of $\A_n$. Further any irreducible representation of $\A_n$ is either of the form $E^\la$ or of the form $E^\la_\pm$ for some $\la\in\Par_p(n)$. As mentioned above the modules $D^\la$ (resp. $E^\la_{(\pm)}$) can also be viewed as representations of $\widetilde\s_n$ (resp. $\widetilde\A_n$).

In positive characteristic $p\geq 3$, irreducible spin representations of symmetric and alternating groups have been described in \cite{BK,BK2}. There it has been proved that if $\RP_p(n)$ is the set of $p$-restricted $p$-strict partitions of $n$, that is partitions $\la$ with $1-\de_{p\mid\la_i}\leq\la_i-\la_{i+1}\leq p-\de_{p\mid n}$, then (pairs of) spin irreducible representations of $\widetilde\s_n$ or $\widetilde\A_n$ are indexed by elements of $\RP_p(n)$ (here as in the following $\de_{p\mid m}=1$ if $p\mid m$, while $\de_{p\mid m}=0$ if $p\nmid m$). More in particular for any $\la\in\RP_p(n)$ there either exists an irreducible spin representation $D(\la,0)$ of $\widetilde\s_n$ or there exist two non-isomorphic representations $D(\la,\pm)$ of $\widetilde\s_n$. In either case, for $\eps=0$ or $\pm$ depending on $\la$, we have that $D(\la,\eps)\cong D(\la,-\eps)\otimes\sgn$, so that in the first case $D(\la,0)\da_{\tilde\A_n}\cong E(\la,+)\oplus E(\la,-)$ with $E(\la,\pm)$ non-isomorphic irreducible spin representations of $\widetilde\A_n$, while in the second case $D(\la,\pm)\da_{\tilde\A_n}\cong E(\la,0)$ with $E(\la,0)$ irreducible. Further again any spin irreducible representation of $\widetilde\s_n$ or $\widetilde\A_n$ is of one of these forms.

For $n\geq 1$ write $n=dp+e$ with $0\leq e<p$. Define $\be_n:=(p^d,e)$ if $e>0$ or $\be_n:=(p^{d-1},p-1,1)$ if $e=0$. Note that $\be_n$ is a $p$-restricted $p$-strict partition of $n$. The irreducible spin representations indexed by $\be_n$ are called basic spin modules and will play a special role in this paper. Such representations are the composition factors of the reduction modulo $p$ of basic spin modules in characteristic 0, see \cite{BK3,Wales}.

Given $\la\in\Par_p(n)$ write $\la=(a_1^{b_1},\ldots,a_h^{b_h})$ with $a_1>\ldots>a_h\geq 1$ and $b_i\geq 1$. We say that $\la$ is JS if $a_i-a_{i+1}+b_i+b_{i+1}\equiv 0\Md p$ for $1\leq i<h$. It has been proved (see \cite{JS,k2}) that $\la\in\Par_p(n)$ is JS if and only if $D^\la\da_{\s_{n-1}}$ is irreducible. For any $a\geq 1$ let $a=bp+c$ with $1\leq c\leq p$ and define $\res(a):=\min\{c-1,p-c\}$. For $\la\in\RP_p(n)$ we say that $\la=(\la_1,\ldots,\la_h)$ is JS(0) if $\la_h=1$ and $\res(\la_i)=\res(\la_{i+1}+1)$ for $1\leq i<h$. In view of \cite{BK,p} it can be checked that $\la\in\RP_p(n)$ is JS(0) if and only if $D(\la,\eps)\da_{\tilde\s_{n-1}}$ and $E(\la,\eps')\da_{\tilde\A_{n-1}}$ are both irreducible. An equivalent characterisation is also that $D(\la,0)\da_{\tilde\s_{n-1}}$ is irreducible if $\la$ indexes only one spin representation of $\widetilde\s_n$ or that $E(\la,0)\da_{\tilde\A_{n-1}}$ is irreducible if $\la$ indexes two spin representations of $\widetilde\s_n$.

Before stating our main results we list here few irreducible tensor products of representations of $\widetilde\s_n$ or $\widetilde\A_n$, which will turn out to be exactly the exceptional irreducible tensor products of representations of $\widetilde\s_n$ or $\widetilde\A_n$, see Theorems \ref{TS} and \ref{TA} and their proof in Section \ref{s7}. As will be seen in the main theorems, any other irreducible tensor product is part of an infinite family of irreducible tensor products. In rows 4 and 5, $\chi_V$ is the character of $V$, $\chi_W$ the character of $W$ and $\widetilde{(1,2,3,4,5)}$ the lift of order 5 of the 5-cycle $(1,2,3,4,5)$.

{\small
\[\begin{array}{|c|c|c|c|c|c|}
\hline
G&V&W&V\otimes W&p&\text{further assumptions}\\
\hline\hline
\rlap{$\phantom{E^{(3^3)}_\pm}$}\widetilde\s_6&D((3,2,1),\pm)&D(\be_6,\pm)&D^{(3,2,1)}&p\geq 7&\\
\hline
\rlap{$\phantom{E^{(3^3)}_\pm}$}\widetilde \A_5&E(\be_5,+)&E(\be_5,-)&E^{(4,1)}&p\not=5&\\
\hline
\rlap{$\phantom{E^{(3^3)}_\pm}$}\widetilde \A_6&E(\be_6,+)&E(\be_6,-)&E^{(5,1)}&p=3&\\
\hline
\rlap{$\phantom{\widetilde{E^{(3^3)}_\pm}}$}\widetilde \A_5&E^{(3,1^2)}_\pm&E(\be_5,\pm)&E((4,1),0)&p\not=5&\chi_V\chi_W\widetilde{(1,2,3,4,5)}=1\\
\hline
\rlap{$\phantom{\widetilde{E^{(3^3)}_\pm}}$}\widetilde \A_6&E^{(4,1^2)}_\pm&E(\be_6,\pm)&E((4,2),\pm)&p=3&\chi_V\chi_W\widetilde{(1,2,3,4,5)}=1\\
\hline
\rlap{$\phantom{\widetilde{E^{(3^3)}_\pm}}$}\widetilde \A_6&E^{(4,1^2)}_+&E^{(4,1^2)}_-&E^{(4,2)}&p=3&\\
\hline
\rlap{$\phantom{\widetilde{E^{(3^3)}_\pm}}$}\widetilde \A_9&E^{(3^3)}_\pm&E(\be_9,\pm)&E((5,3,1),\pm)&p\geq 7&\\
\hline
\end{array}\]
\vspace{1mm}
\centerline{\sc Table I}
}

In the next theorems, as well as in the remaining of the paper, if $\al$ and $\be$ are partitions, let $\al+\be:=(\al_1+\be_1,\al_2+\be_2,\ldots)$ and $\al\cup\be$ be the partition obtained by rearranging the parts of $(\al,\be)=(\al_1,\al_2,\ldots,\be_1,\be_2,\ldots)$. Further, for any partition $\al$, let $h(\al)$ be the number of non-zero parts of $\al$.

The next two theorems completely characterise irreducible tensor of representations of covering groups of symmetric and alternating groups respectively. Parts of the theorems (the classification of irreducible tensor products of two non-spin representations, the characteristic 0 case for $\tilde\s_n$ and some reduction results for the other cases) can be recovered from the papers mentioned at the beginning of the introduction, but we still state the theorems in complete form.

\begin{theor}\label{TS}
Let $p\geq 3$ and $V,W$ be irreducible $F\widetilde\s_n$-representations which are not 1-dimensional. Then $V\otimes W$ is irreducible if and only if one of the following holds up to exchange of $V$ and $W$:
\begin{enumerate}[(i)]
\item $n\not\equiv 0\Md p$, $V\in\{D^{(n-1,1)},D^{(n-1,1)^\Mull}\}$, $W\cong D(\la,\pm)$ with $\la\in\RPar_p(n)$ a $\JS(0)$-partition, in which case $V\otimes W\cong D(\nu,0)$ where $\nu=(\la\setminus A)\cup B$ with $A$ is the bottom removable node of $\la$ and $B$ is the top addable node of $\la$,

\item $n\not\equiv 0,\pm 2\Md p$ is even, $V\cong D^\la$ where $\la\in\Par_p(n)$ is a JS-partition with $\min\{h(\la),h(\la^\Mull)\}=2$, and $W$ is basic spin, in which case, assuming $h(\la)=2$, if 
$\la_1\not=\la_2$ then $V\otimes W\cong D(\be_{\la_1}+\be_{\la_2},0)$, while if 
$\la_1=\la_2$ then $V\otimes W\cong D(\be_{n/2+1}\cup\be_{n/2-1},0)$,

\item $V$ and $W$ are as in row 1 of Table I.
\end{enumerate}
\end{theor}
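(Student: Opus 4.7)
The plan is to proceed by separating into cases according to which of $V,W$ is spin, and in each case to analyze the restriction $\Res^{\widetilde\s_n}_{\widetilde\s_{n-1}}(V\otimes W)\cong \Res V\otimes \Res W$ using the modular branching rules (Kleshchev for the non-spin factor, Brundan--Kleshchev for the spin factor). Since the introduction reduces the case where neither factor is spin to the classification for $\s_n$, and since the conclusions (i)--(iii) list only products involving at least one spin factor, it suffices to treat the cases where at least one of $V,W$ is spin.

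First treat the case where $V=D^\la$ is non-spin and $W=D(\mu,\pm)$ is spin, so that $V\otimes W$ is spin. Irreducibility of $V\otimes W$ forces the composition structure of $\Res V\otimes \Res W$ to be very constrained. A composition-factor count then splits into two subcases. Either $\Res W$ is already irreducible, which by the JS(0) criterion recalled in the introduction happens precisely when $\mu$ is JS(0); a further restriction to $\widetilde\s_{n-2}$, together with the bound this places on the number of composition factors of $\Res V$, pins $\la$ down to $(n-1,1)$ or $(n-1,1)^\Mull$, yielding case (i). Or $\Res V$ is irreducible, so that $\la$ is JS; then a comparison with $\Res W$ forces $W$ to be basic spin and $\min\{h(\la),h(\la^\Mull)\}=2$, yielding case (ii). In both subcases the specific identification $V\otimes W\cong D(\nu,0)$, respectively $D(\be_{\la_1}+\be_{\la_2},0)$ or $D(\be_{n/2+1}\cup\be_{n/2-1},0)$, is obtained by identifying the socle of the restriction and lifting back.

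For the case where both $V$ and $W$ are spin, $V\otimes W$ is non-spin and thus corresponds to an irreducible $F\s_n$-module $D^\nu$. The basic dimension inequality $\dim D^\nu\leq \dim V\cdot\dim W$, combined with the rapid growth of irreducible spin dimensions on partitions other than $\be_n$, forces both $V$ and $W$ to be basic spin up to a short list of small exceptions. For $n\geq 7$ a direct analysis of $\Res V\otimes \Res W$ using the known branching of basic spin modules exhibits at least two distinct composition factors, ruling out irreducibility; the cases $n\leq 6$ (with $n=6,7$ for the triple cover already excluded by assumption) are settled by direct character-table computation, leaving only the entry of Table I quoted in (iii).

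The main obstacle is the analysis in case (ii): one must describe the socle of the restriction of the basic spin module precisely enough to pair it with Kleshchev's branching for the JS partition $\la$, both to extract the shape constraint $\min\{h(\la),h(\la^\Mull)\}=2$ from irreducibility and to identify the specific spin label of $V\otimes W$. The interaction between Mullineux duality and the spin branching combinatorics, and the need to distinguish genuine irreducibility from a short direct sum with the same top, is where the bulk of the technical work will sit.
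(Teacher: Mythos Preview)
Your proposal rests on an unjustified dichotomy that is the heart of the argument you sketch: you assert that if $V\otimes W$ is irreducible then a composition-factor count on $(V\da_{\widetilde\s_{n-1}})\otimes(W\da_{\widetilde\s_{n-1}})$ forces one of $V\da_{\widetilde\s_{n-1}}$ or $W\da_{\widetilde\s_{n-1}}$ to be irreducible. No such principle holds. An irreducible spin module $D(\nu,\eps)$ can have many composition factors upon restriction (indeed $\dim\End_{\widetilde\s_{n-1}}(D(\nu)\da_{\widetilde\s_{n-1}})$ is governed by $\sum_i\eps_i(\nu)$, see Lemma~\ref{L15}), and there is no a priori reason that factoring this restriction as a tensor product forces one tensorand to restrict irreducibly. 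You would need, at minimum, sharp control on how composition multiplicities in $\Res V\otimes\Res W$ relate to those in $\Res(V\otimes W)$, and branching alone does not supply this. The same issue recurs in your spin $\times$ spin case: the equality $\dim D^\nu=\dim V\cdot\dim W$ is not an inequality and does not by itself force $V,W$ to be basic spin; you would need explicit dimension bounds you do not state.

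The paper takes a completely different route. Rather than analysing the restriction of $V\otimes W$, it studies $\End_F(V)$ and $\End_F(W)$ as $F\widetilde\s_n$-modules and exploits the identity $\dim\End_{\widetilde\s_n}(V\otimes W)=\dim\Hom_{\widetilde\s_n}(\End_F(V),\End_F(W))$. The strategy is then to exhibit, for suitable $V$ and $W$, homomorphisms from permutation modules $M_\mu$ (for $\mu$ among $(n-3,3)$, $(n-3,1^3)$, $(n-5,3,1^2)$, $(n-5,1^5)$, etc.) into $\End_F(V)$ and $\End_F(W)$ that do not vanish on the Specht submodule $S_\mu$; Lemma~\ref{l15} then forces the $\Hom$-space to have dimension at least $2$, ruling out irreducibility. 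Constructing these homomorphisms is done by explicit group-algebra elements $x_\mu$ (Section~\ref{ssh}) and by character computations on small restrictions (Section~\ref{shr}). Branching is used, but only locally: to show the relevant $x_\mu$ act nontrivially, and in the genuinely exceptional families (natural module via Theorem~\ref{T2}, two-row JS via Theorem~\ref{T011119}) where a direct structural analysis is feasible. Your restriction-only plan does not reach the generic case at all.
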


\begin{theor}\label{TA}
Let $p\geq 3$ and $V,W$ be irreducible $F\widetilde \A_n$-representations which are not 1-dimensional. Then $V\otimes W$ is irreducible if and only if one of the following holds up to exchange of $V$ and $W$:
\begin{enumerate}[(i)]
\item $n\not\equiv 0\Md p$, $V\cong E^{(n-1,1)}$, $W\cong E^\la_\pm$ with $\la\in\Par_p(n)$ a JS-partition satisfying $\la=\la^\Mull$, in which case $V\otimes W\cong E^\nu$ where $\nu=(\la\setminus A)\cup B$ with $A$ is the top removable node of $\la$ and $B$ is either of the two bottom addable nodes of $\la$,

\item $n\not\equiv 0\Md p$, $V\cong E^{(n-1,1)}$, $W\cong E(\la,\pm)$ with $\la\in\RPar_p(n)$ a $\JS(0)$-partition, in which case $V\otimes W\cong E(\nu,0)$ where $\nu=(\la\setminus A)\cup B$ with $A$ is the bottom removable node of $\la$ and $B$ is the top addable node of $\la$,

\item $n\not\equiv 0,\pm 2\Md p$ is odd, $V\cong E^\la$ where $\la\in\Par_p(n)$ is a JS-partition with $\min\{h(\la),h(\la^\Mull)\}=2$, and $W$ is basic spin, in which case, assuming $h(\la)=2$, if 
$\la_1\not=\la_2+p-2$ then $V\otimes W\cong E(\be_{\la_1}+\be_{\la_2},0)$, while if 
$\la_1=\la_2+p-2$ then $V\otimes W\cong E(\be_{\la_1}\cup\be_{\la_2},0)$,

\item $V$ and $W$ are as in rows 2-7 of Table I.
\end{enumerate}
\end{theor}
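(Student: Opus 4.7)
The plan is to deduce Theorem \ref{TA} from Theorem \ref{TS} together with the non-spin $\A_n$-classification of \cite{bk3,bk2,m2,m3,z1}, via Clifford theory and the branching between $\widetilde{\s}_n$ and $\widetilde{\A}_n$. Recall that each irreducible $F\widetilde{\A}_n$-module is either the irreducible restriction of a non-$\sgn$-self-dual irreducible $F\widetilde{\s}_n$-module, or one of the two summands of the restriction of a $\sgn$-self-dual one; conversely each irreducible $F\widetilde{\s}_n$-module is either an induced pair or an induced summand. Combined with Mackey's tensor identity
\[
\Ind_{\widetilde{\A}_n}^{\widetilde{\s}_n}(V\otimes W)\cong(\Ind V)\otimes W\cong V\otimes(\Ind W),
\]
the irreducibility of $V\otimes W$ for $\widetilde{\A}_n$ becomes a question about related $\widetilde{\s}_n$-tensor products and their $\sgn$-behaviour. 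Crucially, $V\otimes W$ can be irreducible either because a $\widetilde{\s}_n$-lift is an irreducible $\sgn$-self-dual module whose restriction splits and yields $V\otimes W$ as one summand, or because the $\widetilde{\s}_n$-lift decomposes as two $\sgn$-conjugate irreducibles that restrict to a common $V\otimes W$; both scenarios appear in Theorem \ref{TA}.

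From this I would carry out a four-way case split according to the arising types of $V$ and $W$ (irreducible restriction vs.\ summand). In each case the question reduces to analysing a specific $\widetilde{\s}_n$-tensor product via Theorem \ref{TS}, the non-spin classification on $\A_n$, and the dimension/branching reductions of \cite{kt,bk4}. The infinite families then fall out as follows: Theorem \ref{TA}(i) from the non-spin $\A_n$-classification; Theorem \ref{TA}(ii) from the behaviour of $D^{(n-1,1)}\otimes D(\la,0)$ on $\widetilde{\s}_n$, which splits as a pair of $\sgn$-conjugate irreducibles whose common $\widetilde{\A}_n$-restriction is $E(\nu,0)$; and Theorem \ref{TA}(iii) from restricting the irreducible products of Theorem \ref{TS}(ii). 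In every family the explicit formula for $\nu$ is inherited from the corresponding $\widetilde{\s}_n$-answer, with the JS and JS(0)-hypotheses guaranteeing, via the branching characterisations recalled in the introduction, that the relevant restrictions remain irreducible.

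The exceptional pairs in rows 2--7 of Table I are handled by direct character computation for small $n$, using Atlas-type data for the triple covers when $n=6,7$. The main obstacle I anticipate is matching $\widetilde{\A}_n$-summands to their $\widetilde{\s}_n$-lifts: when $U\otimes U'$ is $\sgn$-self-dual, determining which of the two summands of $(U\otimes U'){\da_{\widetilde{\A}_n}}$ actually equals $V\otimes W$ requires tracking character values on preimages of cycle types that distinguish the summands, and the condition $\chi_V\chi_W\widetilde{(1,2,3,4,5)}=1$ appearing in Table I is precisely this refinement. Together with the dimension bounds from \cite{kt} ruling out all other candidates, this summand-matching completes the classification.
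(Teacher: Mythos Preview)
Your proposal has a genuine gap. Theorem~\ref{TS} classifies when an $\widetilde{\s}_n$-tensor product is irreducible, but deducing Theorem~\ref{TA} via Clifford theory also requires knowing when such a product decomposes as exactly two $\sgn$-conjugate irreducibles---your second scenario. You never explain how to determine this, and it is central, not peripheral: families~(ii) and~(iii) of Theorem~\ref{TA} arise from precisely this scenario and are governed by conditions \emph{complementary} to those in Theorem~\ref{TS}. In~(ii), $W\cong E(\la,\pm)$ forces $D(\la)$ to be of type~M, so $D(\la,0)\otimes D^{(n-1,1)}$ is \emph{not} $\widetilde{\s}_n$-irreducible and Theorem~\ref{TS}(i) (which needs type~Q) is silent; in~(iii) the parity of $n$ flips between the two theorems. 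Conversely, proving non-irreducibility over $\widetilde{\A}_n$ for all remaining pairs requires ruling out the two-summand scenario for each of them, and the dimension bounds you invoke from~\cite{kt} are not designed for this and do not suffice.

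The paper avoids this reduction entirely: it proves Theorems~\ref{TS} and~\ref{TA} simultaneously in Section~\ref{s7}. The endomorphism-ring lemmas of Sections~\ref{s3}--\ref{s5} are formulated uniformly for $G\in\{\widetilde{\s}_n,\widetilde{\A}_n\}$, and each of the structural results (Theorems~\ref{T2}, \ref{THB}, \ref{T011119}, \ref{tbs3r}, \ref{L30}) treats both groups at once. The tool replacing your missing step is Lemma~\ref{L50}: once one shows the relevant $\Hom$-space has dimension~$2$, irreducibility of $V\otimes W$ over either group is decided by the type (M or~Q) of a composition factor of the supermodule product $D^\la\otimes D(\mu)$---information extracted from block and decomposition-number arguments, not from Theorem~\ref{TS}.
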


Although in cases (ii) of Theorem \ref{TS} and (iii) of Theorem \ref{TA} we only describe for $V\otimes W$ if $h(\la)=2$, in the other case a description can be easily obtained, since $D^{\la^\Mull}\cong D^\la\otimes \sgn$ and $E^{\la^\Mull}\cong E^\la$.

In the next section we will introduce notation that will be used in the paper and state some well known/easy results. In Section \ref{s3} we will study endomorphism rings $\End_F(V)$ of general classes of modules $V$ of $\widetilde\s_n$ or $\widetilde\A_n$. In order to extend these results to some special classes of modules or at least obtain similar results in Section \ref{s5}, we will in Section \ref{s4} study the structure of certain permutation modules. In both Sections \ref{s3} and \ref{s5} we will often use branching results to obtain informations on $\End_F(V)$. In Section \ref{s6} we will study tensor product with certain special classes of modules, using results on branching or known results in characteristic 0 and knowledge of decomposition matrices. Finally in Section \ref{s7} we will prove Theorems \ref{TS} and \ref{TA}.

\section{Notation and basic results}

Throughout the paper $p$ will denote the characteristic of the field $F$ and we will assume that $p\not=2$.

\subsection{Covering groups}

Let $\widetilde\s_n$ be any of the two double covers of $\s_n$ and $z$ be the non-trivial central element of $\widetilde\s_n$ (which has order 2). There exists a short exact sequence
\[1\rightarrow\langle z\rangle\rightarrow\widetilde\s_n\xrightarrow{\pi}\s_n\rightarrow 1.\]
For any group $G\leq\s_n$ define $\widetilde G:=\pi^{-1}G\leq\widetilde\s_n$. In particular $\widetilde \A_n$ is the double cover of $\A_n$. Further for elements $g\in \s_n$ let $\widetilde g\in\widetilde\s_n$ be a (fixed) element in $\pi^{-1}\{g\}$, so that $\pi^{-1}\{g\}=\{\widetilde g,z\widetilde g\}$. If $g$ has odd order, one of the elements in $\pi^{-1}\{g\}$ has order $\ord(g)$, while the other has order $2\ord(g)$. In this case choose $\widetilde g$ to have the same order as $g$.

As noted in the introduction, the irreducible representations of $F\widetilde\s_n$ (resp. $F\widetilde \A_n$) are given by the irreducible representations of $F\s_n$ (resp. $F\A_n$), on which $z$ acts trivially, and the spin irreducible representations, on which $z$ acts as $-1$.

Note that it does not matter which double cover of the symmetric group $\s_n$ we consider, since the group algebras of the two double covers of $\s_n$ are isomorphic, see \cite[Note after Theorem 1.2]{s} for the characteristic 0 case, the general case holding similarly.

\subsection{
Representations of symmetric and alternating groups}

As noted in the introduction irreducible representations of $\s_n$ or $\A_n$ are indexed by elements of $\Par_p(n)$, that is $p$-regular partitions of $n$. We write $\Parinv_p(n)$ for the set of partitions $\la\in\Par_p(n)$ with $\la=\la^\Mull$, that is partitions $\la$ for which $D^\la\da_{\A_n}$ splits.

Given a partition $\la\in\Par_p(n)$ define normal, good, conormal and cogood nodes of $\la$ as in \cite[\S11.1]{KBook}. It can be easily seen from the definition that $\la$ is JS if and only if it has only one normal node.

If $(a,b)$ is a node, let $(b-a)\Md p$ be the residue of $(a,b)$. For any partition $\la$ let the content of $\la$ be the tuple $(a_0,\ldots,a_{p-1})$, where $a_i$ is the number of nodes of $\la$ of residue $i$ for each $0\leq i<p$. It is well known that if $\la,\mu\in\Par_p(n)$, then $D^\la$ and $D^\mu$ are in the same block if and only if $\la$ and $\mu$ have the same content, see \cite[2.7.41, 6.1.21]{jk} and \cite[\S11]{JamesBook}, so that we may speak of content of a block or of a block with a certain content (which is unique if such a block exists). Let $V$ be an $F\s_n$-module in a block $B$ with content $(a_0,\ldots,a_{p-1})$. For any residue $i$, we define $e_iV$ to be the projection of $V\da_{\s_{n-1}}$ to the block with content $(a_0,\ldots,a_{i-1},a_i-1,a_{i+1},\ldots,a_{p-1})$ and $f_iV$ to be the projection of $V\ua^{\s_{n+1}}$ to the block with content $(a_0,\ldots,a_{i-1},a_i+1,a_{i+1},\ldots,a_{p-1})$. We then extend the definition of $e_iV$ and $f_iV$ to arbitrary $F\s_n$-modules additively to obtain functors 
$$
e_i:\mod{F\s_n}\to \mod{F\s_{n-1}},\quad f_i:\mod{F\s_n}\to \mod{F\s_{n+1}}.
$$

More generally, for any $r\geq 1$ let
$$e_i^{(r)}:\mod{F \s_n}\rightarrow \mod{F \s_{n-r}},\quad f_i^{(r)}:\mod{F \s_n}\rightarrow\mod{F \s_{n+r}},$$ 
be the divided power functors, see \cite[\S11.2]{KBook}. The following is well-known, see for example \cite[Lemma 8.2.2(ii),  Theorems 8.3.2(i), 11.2.7,  11.2.8]{KBook}: 

\begin{lemma}\label{Lemma45}
For any residue $i$ and any $r\geq 1$, the functors $e_i^{(r)}$ and $f_i^{(r)}$ are biadjoint and commute with duality. Further,  
for any $F \s_n$-module $V$ we have 
\[V\da_{\s_{n-1}}\cong e_0V\oplus\ldots\oplus e_{p-1}V\hspace{24pt}\text{and}\hspace{24pt}V\ua^{\s_{n+1}}\cong f_0V\oplus\ldots\oplus f_{p-1}V.\]
\end{lemma}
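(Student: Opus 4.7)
All four assertions in the lemma are standard consequences of Frobenius reciprocity together with the block theory of $F\s_n$, and I would structure the proof along those two lines.

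For the direct sum decomposition, the key input is the Nakayama conjecture: two simple modules $D^\la,D^\mu$ for $\s_n$ lie in the same block iff they share the same $p$-core, or equivalently the same content $(a_0,\ldots,a_{p-1})$. If $\mu$ is obtained from $\la$ by removing a node of residue $i$, then the content of $\mu$ differs from that of $\la$ only in the $i$-th coordinate, where it decreases by one. Since every composition factor of $D^\la\da_{\s_{n-1}}$ is labelled by some such $\mu$, the composition factors of $V\da_{\s_{n-1}}$ for $V$ in a fixed block of $F\s_n$ spread across exactly $p$ blocks of $F\s_{n-1}$, one for each residue, and the claimed decomposition $V\da_{\s_{n-1}}\cong\bigoplus_i e_iV$ is just the decomposition of $V\da_{\s_{n-1}}$ into those block projections. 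The statement for $V\ua^{\s_{n+1}}$ is dual: add a node rather than remove one.

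Commutation with duality comes from the fact that each $D^\la$ is self-dual, so contragredient duality preserves each block of $F\s_n$ and commutes with projection onto a block. Together with the standard isomorphisms $(\Ind V)^*\cong\Ind(V^*)$ and $(\Res V)^*\cong\Res(V^*)$ for finite-group representations, this gives the duality statement for $e_i$ and $f_i$. For the divided-power versions $e_i^{(r)}$ and $f_i^{(r)}$, which are cut out of $e_i^r$ and $f_i^r$ by a further idempotent coming from the natural $F\s_r$-action on the iterated functor, one then uses that this idempotent is fixed by duality.

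Biadjointness of $\Ind_{\s_{n-r}}^{\s_n}$ and $\Res_{\s_{n-r}}^{\s_n}$ is Frobenius reciprocity for finite groups, with the two adjunctions identifiable because $F\s_n$ is a symmetric algebra. The iterated functors $e_i^r$ and $f_i^r$ are direct summands of $\Res_{\s_{n-r}}^{\s_n}$ and $\Ind_{\s_{n-r}}^{\s_n}$ respectively, cut out by products of block idempotents on the source and target, and these idempotents match up under the adjunction, so biadjointness descends to the pair $(e_i^r,f_i^r)$. To pass to divided powers one repeats the argument with the extra idempotent of the $F\s_r$-action; the main thing to verify is that this $F\s_r$-action on the iterated restriction corresponds under the adjunction to the same $F\s_r$-action on the iterated induction, so that a single idempotent on both sides cuts out a matched pair $e_i^{(r)},f_i^{(r)}$. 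That intertwining compatibility is the only piece of nontrivial bookkeeping; once it is in place the biadjointness is immediate and the rest of the proof is routine.
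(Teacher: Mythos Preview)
The paper does not give a proof of this lemma at all; it simply states the result as well-known and cites \cite[Lemma 8.2.2(ii), Theorems 8.3.2(i), 11.2.7, 11.2.8]{KBook}. Your outline is essentially the standard argument that underlies those references and is correct: the direct sum decompositions are nothing more than the block decomposition of $V\da_{\s_{n-1}}$ and $V\ua^{\s_{n+1}}$, duality preserves blocks because simple $F\s_n$-modules are self-dual, and biadjointness is Frobenius reciprocity for the symmetric algebra $F\s_n$ refined by compatible block idempotents. Your identification of the one genuinely nontrivial bookkeeping step---that the $F\s_r$-action on $e_i^r$ matches under adjunction the $F\s_r$-action on $f_i^r$, so that the divided-power idempotents correspond---is exactly right, and is what the cited theorems in \cite{KBook} establish carefully.

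One small quibble: for the direct sum decomposition you do not actually need to invoke the branching rule on composition factors of $D^\la\da_{\s_{n-1}}$. It suffices to observe that restriction is exact and that the category $\mod F\s_{n-1}$ decomposes as a product over its blocks; the fact that only the $p$ blocks with content $(a_0,\ldots,a_i-1,\ldots,a_{p-1})$ can receive a nonzero summand then follows from a character (or formal-character) computation, without appealing to the structure of $D^\la\da_{\s_{n-1}}$. But this is a matter of presentation, not correctness.
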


For any partition $\la\in\Par_p(n)$ and any residue $i$, let $\eps_i(\la)$ be its number of $i$-normal nodes and $\phi_i(\la)$ be its number of $i$-conormal nodes. If $\eps_i(\la)>0$ let $\widetilde e_i\la\in\Par_p(n-1)$ be the partition obtained from $\la$ by removing the bottom $i$-normal node, while if $\phi_i(\la)>0$ let $\widetilde f_i\la\in\Par_p(n+1)$ be the partition obtained from $\la$ by adding the top $i$-conormal node (see \cite[\S 11.1]{KBook}). For $r\geq 2$ define
\[e_i^rV:=\underbrace{e_i\cdots e_i}_{r\text{ times}}V\]
and define similarly $f_i^rV$, $\tilde e_i^r\la$ and $\tilde f_i^r\la$ (the last two are only defined if $\eps_i(\la)\geq r$ or $\phi_i(\la)\geq r$ respectively).

The following two results hold by \cite[Theorems E(iv), E'(iv)]{BrK1}, \cite[Theorems 11.2.10,  11.2.11]{KBook} and \cite[Theorem 1.4]{KDec}.

\begin{lemma}\label{Lemma39}
Let $\lambda\in\Par_p(n)$. Then for any residue $i$ and any $r\geq 1$:
\begin{enumerate}
\item[{\rm (i)}] $e_i^rD^\lambda\cong(e_i^{(r)}D^\lambda)^{\oplus r!}$;
\item[{\rm (ii)}]  $e_i^{(r)}D^\lambda\not=0$ if and only if $r\leq \eps_i(\lambda)$, in which case $e_i^{(r)}D^\lambda$ is a self-dual indecomposable module with socle and head both isomorphic to $D^{\widetilde e_i^r\la}$.  
\item[{\rm (iii)}]  $[e_i^{(r)}D^\lambda:D^{\widetilde e_i^r\la}]=\binom{\eps_i(\lambda)}{r}=\dim\End_{\s_{n-r}}(e_i^{(r)}D^\lambda)$;
\item[{\rm (iv)}] if $D^\mu$ is a composition factor of $e_i^{(r)}D^\lambda$ then $\eps_i(\mu)\leq \eps_i(\lambda)-r$, with equality holding if and only if $\mu=\widetilde e_i^r\la$;
\item[{\rm (v)}] 
$\dim\End_{\s_{n-1}}(D^\lambda\da_{\s_{n-1}})=\sum_{j\in I}\eps_j(\lambda)$.
\item[{\rm (vi)}] Let $A$ be a removable node of $\la$ such that $\la_A$ is $p$-regular. Then $D^{\la_A}$ is a composition factor of $e_i D^\la$ if and only if $A$ is $i$-normal, in which case $[e_i D^\la:D^{\la_A}]$ is one more than the number of $i$-normal nodes for $\la$ above $A$. 
\end{enumerate}
\end{lemma}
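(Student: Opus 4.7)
The approach rests on the $\widehat{\mathfrak{sl}}_p$-categorification structure on $\bigoplus_n\F\s_n\md$ provided by the functors $e_i,f_i$ and their divided powers. The essential external input is Kleshchev's modular branching theorem: for every $\la\in\Par_p(n)$ with $\eps_i(\la)\geq 1$, the module $e_iD^\la$ has simple socle $D^{\widetilde e_i\la}$, and each of its composition factors $D^\mu$ satisfies $\eps_i(\mu)\leq\eps_i(\la)-1$ with equality iff $\mu=\widetilde e_i\la$. Combined with the biadjunction of $e_i,f_i$ and their compatibility with duality from Lemma~\ref{Lemma45}, this delivers the combinatorial control required for all six parts.

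I would prove (ii) and (iv) simultaneously by induction on $r$. The base case $r=1$ is the branching theorem itself: $e_iD^\la\neq 0$ exactly when $\eps_i(\la)\geq 1$, the socle is $D^{\widetilde e_i\la}$, and self-duality of $e_i$ (from biadjunction) forces the head to agree with the socle, yielding indecomposability. For the inductive step, I would realise $e_i^{(r)}D^\la$ as a direct summand of $e_i\bigl(e_i^{(r-1)}D^\la\bigr)$ via the divided-power identity $e_i\cdot e_i^{(r-1)}\cong(e_i^{(r)})^{\oplus r}$, and then apply the $r=1$ statement to each composition factor of $e_i^{(r-1)}D^\la$, using the inductive $\eps_i$-bound to pin down which factors contribute to the socle and head.

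For (i), the symmetric group $\s_r$ acts on the endofunctor $e_i^r$ through the degenerate affine Hecke algebra structure on $\bigoplus_r\End(e_i^r)$; the $\s_r$-symmetrizer projects onto $e_i^{(r)}$ and, since the action factors through $\F\s_r$, forces $e_i^rD^\la\cong(e_i^{(r)}D^\la)^{\oplus r!}$. Part~(iii) then follows: biadjunction gives
\[
\dim\Hom_{\s_{n-r}}(D^{\widetilde e_i^r\la},e_i^{(r)}D^\la)=[e_i^{(r)}D^\la:D^{\widetilde e_i^r\la}],
\]
while (i) together with (iv) identifies this multiplicity with the number of unordered $r$-subsets of the $\eps_i(\la)$ normal nodes of $\la$, namely $\binom{\eps_i(\la)}{r}$. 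The equality with $\dim\End_{\s_{n-r}}(e_i^{(r)}D^\la)$ is immediate from (ii): an endomorphism is determined by its image in the simple head, whose multiplicity matches that of the simple socle by self-duality.

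For (v), the decomposition $D^\la\da_{\s_{n-1}}=\bigoplus_ie_iD^\la$ from Lemma~\ref{Lemma45} reduces the claim to summing (iii) with $r=1$ over residues, yielding $\sum_i\eps_i(\la)$. For (vi), the branching theorem says $D^{\la_A}$ appears in $e_iD^\la$ exactly when $A$ is $i$-normal, and a careful bookkeeping of the matched and unmatched pairs in the $i$-signature of $\la$ recovers the stated multiplicity, with each $i$-normal node above $A$ contributing one extra copy through the socle filtration. The principal obstacle throughout is part~(ii), specifically the combination of indecomposability with simplicity of both socle and head for $e_i^{(r)}D^\la$; short of invoking Chuang--Rouquier $\mathfrak{sl}_2$-categorification or Grojnowski's crystal formalism, this is the step where the full weight of the cited references is genuinely unavoidable.
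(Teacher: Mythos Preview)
The paper does not prove this lemma; it is quoted wholesale from \cite{BrK1}, \cite{KBook}, and \cite{KDec} with no argument supplied. Your sketch of the underlying ideas is broadly in the right spirit and correctly identifies Kleshchev's modular branching rules and the categorical $\mathfrak{sl}_2$-action as the essential inputs.

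However, your argument for the endomorphism-ring equality in (iii) contains a genuine error. You claim that biadjunction gives
\[
\dim\Hom_{\s_{n-r}}(D^{\widetilde e_i^r\la},e_i^{(r)}D^\la)=[e_i^{(r)}D^\la:D^{\widetilde e_i^r\la}],
\]
but this is false: the left-hand side counts copies of $D^{\widetilde e_i^r\la}$ in the \emph{socle} of $e_i^{(r)}D^\la$, which by (ii) is simple, so the left-hand side equals $1$, not $\binom{\eps_i(\la)}{r}$. Similarly, the assertion that ``an endomorphism is determined by its image in the simple head'' is incorrect: the natural map $\End(M)\to\Hom(M,\hd M)$ kills every nilpotent endomorphism, so it is far from injective when $M$ is indecomposable with repeated composition factors. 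The correct route to $\dim\End(e_i^{(r)}D^\la)=\binom{\eps_i(\la)}{r}$ uses adjunction in the form
\[
\End(e_i^{(r)}D^\la)\cong\Hom(D^\la,f_i^{(r)}e_i^{(r)}D^\la)
\]
and then requires structural information about $f_i^{(r)}e_i^{(r)}D^\la$ (specifically, the multiplicity of $D^\la$ in its socle) coming from the $\mathfrak{sl}_2$-relations between $e_i$ and $f_i$. This is exactly where the Chuang--Rouquier or Grojnowski machinery you mention at the end becomes indispensable, not only for (ii) but equally for (iii).
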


\begin{lemma}\label{Lemma40}
Let $\lambda\in\Par_p(n)$. Then for any residue $i$ and any $r\geq 1$:
\begin{enumerate}
\item[{\rm (i)}] $f_i^rD^\lambda\cong(f_i^{(r)}D^\lambda)^{\oplus r!}$;
\item[{\rm (ii)}] $f_i^{(r)}D^\lambda\not=0$ if and only if $r\leq \phi_i(\lambda)$, in which case $f_i^{(r)}D^\lambda$ is a self-dual indecomposable module with socle and head both isomorphic to $D^{\widetilde f_i^r\la}$.  
\item[{\rm (iii)}]  $[f_i^{(r)}D^\lambda:D^{\widetilde f_i^r\la}]=\binom{\phi_i(\lambda)}{r}=\dim\End_{\s_{n+r}}(f_i^{(r)}D^\lambda)$;
\item[{\rm (iv)}] if $D^\mu$ is a composition factor of $f_i^{(r)}D^\lambda$ then $\phi_i(\mu)\leq \phi_i(\lambda)-r$, with equality holding if and only if $\mu=\widetilde f_i^r\la$.
\item[{\rm (v)}]
$\dim\End_{\s_{n+1}}(D^\lambda\ua^{\s_{n+1}})=\sum_{j\in I}\phi_j(\lambda)$.
\item[{\rm (vi)}] Let $B$ be an addable node for $\la$ such that $\la^B$ is $p$-regular. Then $D^{\la^B}$ is a composition factor of $f_i D^\la$ if and only if $B$ is $i$-conormal, in which case $[f_i D^\la:D^{\la^B}]$ is one more than the number of $i$-conormal nodes for $\la$ below~$B$. 
\end{enumerate}
\end{lemma}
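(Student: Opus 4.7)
The plan is to derive Lemma \ref{Lemma40} systematically from Lemma \ref{Lemma39} by exploiting the biadjointness of $e_i^{(r)}$ and $f_i^{(r)}$ together with self-duality of the $D^\la$ (all supplied by Lemma \ref{Lemma45}). Each of (i)--(vi) has a clear counterpart in Lemma \ref{Lemma39} with the roles of normal/conormal and $\widetilde e_i/\widetilde f_i$ swapped, so the strategy is to transport the statements across the adjunction one at a time.

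For parts (ii)--(iv), I would begin with the Frobenius reciprocity identity
\[\Hom_{\s_{n+r}}(f_i^{(r)}D^\la, D^\mu)\cong \Hom_{\s_n}(D^\la, e_i^{(r)}D^\mu)\]
for $\mu\in\Par_p(n+r)$. Combined with Lemma \ref{Lemma39}(ii), this pins down the head of $f_i^{(r)}D^\la$: the right-hand side is non-zero iff $D^\la$ sits in the socle of $e_i^{(r)}D^\mu$, which forces $\mu=\widetilde f_i^r\la$ (and hence requires $\phi_i(\la)\geq r$), and the Hom space is then one-dimensional. Self-duality of $f_i^{(r)}D^\la$ (because $f_i^{(r)}$ commutes with duality and $D^\la$ is self-dual) identifies socle with head, giving indecomposability and (ii). Dualizing Lemma \ref{Lemma39}(iv) through the same Hom computation yields (iv), and (iii) follows from
\[\dim\End_{\s_{n+r}}(f_i^{(r)}D^\la)=\dim\Hom_{\s_n}(D^\la, e_i^{(r)}f_i^{(r)}D^\la)\]
by expanding $e_i^{(r)}f_i^{(r)}$ via the categorified commutator relations of the $\widehat{\mathfrak{sl}}_p$-action, which produce exactly a $\binom{\phi_i(\la)}{r}$-dimensional contribution from the identity term.

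Part (i) needs a little more than formal adjunction. The standard route is to use the action of $FS_r$ on the endomorphism algebra of the functor $f_i^r$ arising from permuting the labels of the iterated induction, with $f_i^{(r)}$ cut out as the trivial isotypic component. Given this structural input, comparing composition-factor multiplicities of the two sides of $f_i^rD^\la\cong(f_i^{(r)}D^\la)^{\oplus r!}$ (using (iii), (iv) and Lemma \ref{Lemma45}) together with multiplicity-freeness of the head from (ii) forces the isomorphism. Part (v) is then immediate: by Lemma \ref{Lemma45}, $D^\la\ua^{\s_{n+1}}=\bigoplus_j f_jD^\la$ with summands lying in pairwise distinct blocks, so $\dim\End(D^\la\ua^{\s_{n+1}})=\sum_j\dim\End(f_jD^\la)=\sum_j\phi_j(\la)$ by (iii).

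The main obstacle I anticipate is part (vi). The precise composition-factor multiplicity ``one more than the number of $i$-conormal nodes below $B$'' does not fall out of pure biadjunction from the normal branching statement in Lemma \ref{Lemma39}(vi): adjunction gives the socle and a dimension count, but the internal multiplicity structure requires a genuinely more refined input, typically a Jantzen-style sum formula or a direct argument with the decomposition matrix. This is the one place where I would expect to need to invoke the deeper results of the references cited for the lemma rather than a formal reduction to Lemma \ref{Lemma39}.
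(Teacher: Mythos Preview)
The paper gives no proof of this lemma: together with Lemma~\ref{Lemma39} it is simply attributed to \cite[Theorems~E(iv),~E'(iv)]{BrK1}, \cite[Theorems~11.2.10, 11.2.11]{KBook} and \cite[Theorem~1.4]{KDec}. Your proposal is therefore a genuinely different route, attempting a self-contained derivation of the $f$-side from the $e$-side via the biadjunction and self-duality supplied by Lemma~\ref{Lemma45}. Parts~(ii) and~(v) work exactly as you describe; parts~(i) and~(iii) are correct once one imports the divided-power and categorified $\mathfrak{sl}_2$ relations from \cite{KBook}; and you are right that~(vi) requires the independent input of \cite{KDec} rather than a formal reduction.

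There is, however, a genuine gap in your treatment of~(iv). Adjunction controls heads and socles, but the statement ``$D^\mu$ is a composition factor of $f_i^{(r)}D^\la$'' is not a $\Hom$ statement, so one cannot simply ``dualize Lemma~\ref{Lemma39}(iv) through the same Hom computation''. What you would need is the reciprocity $[f_i^{(r)}D^\la:D^\mu]=[e_i^{(r)}D^\mu:D^\la]$, and this does \emph{not} follow formally from biadjointness of exact functors together with self-duality of simples; working through the projective/injective identifications just leads you in a circle. A cleaner way to obtain the inequality in~(iv) directly is to use the relation $f_i^{(s)}f_i^{(r)}\cong(f_i^{(r+s)})^{\oplus\binom{r+s}{r}}$: if $D^\mu$ occurs in $f_i^{(r)}D^\la$ with $s:=\phi_i(\mu)>\phi_i(\la)-r$, then $0\neq f_i^{(s)}D^\mu$ would be a subquotient of $f_i^{(s)}f_i^{(r)}D^\la\cong(f_i^{(r+s)}D^\la)^{\oplus\binom{r+s}{r}}=0$, a contradiction. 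The equality characterisation $\mu=\widetilde f_i^r\la$ then still needs a further argument (via~(iii) and the simple-head property in~(ii), or the crystal description in \cite{KBook}).
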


The next lemma compares the functors $e_ie_j$ and $e_je_i$ for different residues $i$ and $j$.

\begin{lemma}\label{l22}{\cite[Lemma 4.8]{m1}}
Let $\lambda\vdash n$ be $p$-regular. For $i\not=j$ we have that
\[\dim\Hom_{\s_{n-2}}(e_je_iD^\lambda,e_ie_jD^\lambda)\geq\epsilon_i(\lambda)\epsilon_j(\lambda).\]
\end{lemma}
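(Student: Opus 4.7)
\emph{Plan.} My strategy is to translate the question via biadjointness (Lemma \ref{Lemma45}) into an estimate on a $\Hom$ space where an $\epsilon_i(\la)\epsilon_j(\la)$-dimensional family of morphisms can be constructed by composing maps through the simple module $D^\la$.

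Successively applying the adjunctions $e_j\dashv f_j$ and $f_i\dashv e_i$ yields
\[
\Hom_{\s_{n-2}}(e_je_iD^\la,\, e_ie_jD^\la) \;\cong\; \Hom_{\s_{n-1}}(e_iD^\la,\, f_je_ie_jD^\la) \;\cong\; \Hom_{\s_n}(f_ie_iD^\la,\, f_je_jD^\la),
\]
where the second isomorphism uses the natural commutation $f_je_i\cong e_if_j$ valid for $i\neq j$. This commutation is a consequence of the Mackey-type decomposition $V\ua\da \cong V\oplus V\da\ua$ applied block-by-block: the extra summand $V$ sits in its original block of content $C$ and therefore contributes nothing to the block of shifted content $C+i-j$ when $i\neq j$.

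Biadjointness once more, combined with Lemma \ref{Lemma39}(iii) at $r=1$, gives
\[
\dim\Hom_{\s_n}(f_ie_iD^\la, D^\la) \;=\; \dim\End_{\s_{n-1}}(e_iD^\la) \;=\; \epsilon_i(\la),
\]
and symmetrically $\dim\Hom_{\s_n}(D^\la, f_je_jD^\la) = \epsilon_j(\la)$. Choosing bases $\al_1,\ldots,\al_{\epsilon_i(\la)}$ and $\be_1,\ldots,\be_{\epsilon_j(\la)}$ of these two spaces, the $\epsilon_i(\la)\epsilon_j(\la)$ compositions $\be_b\circ\al_a\in\Hom_{\s_n}(f_ie_iD^\la, f_je_jD^\la)$ form the required family. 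Their linear independence follows from $\End(D^\la)=F$: the $(\al_a)$ assemble into a surjection $f_ie_iD^\la\twoheadrightarrow (D^\la)^{\oplus\epsilon_i(\la)}$ and the $(\be_b)$ into an injection $(D^\la)^{\oplus\epsilon_j(\la)}\hookrightarrow f_je_jD^\la$, so any vanishing combination $\sum c_{ab}\be_b\al_a=0$ factors through a scalar matrix $(c_{ab})$ between $D^\la$-isotypic layers and forces $c_{ab}=0$ throughout.

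The principal technical point is the commutation $f_je_i\cong e_if_j$ for $i\neq j$, which belongs to the standard categorified $\widehat{\mathfrak{sl}_p}$-action on modules of symmetric groups. A hands-on derivation requires carefully tracking block projections through the Mackey decomposition, especially for residues adjacent in the Dynkin diagram where Serre-type short exact sequences replace direct-sum decompositions at the divided-power level; however, even if one settles for a comparable inequality of $\Hom$-dimensions rather than a full isomorphism at that step, the composition argument still delivers the lower bound $\epsilon_i(\la)\epsilon_j(\la)$.
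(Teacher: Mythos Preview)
Your argument is correct. The paper does not prove this lemma itself but simply cites \cite[Lemma~4.8]{m1}; your adjunction-plus-composition approach is exactly the standard one, and is in fact the same strategy the paper employs for the spin analogue in Lemma~\ref{L081218_2}.

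One remark: your final paragraph over-hedges. The isomorphism $e_if_j\cong f_je_i$ for $i\neq j$ is elementary and holds for \emph{all} distinct residues, adjacent or not. It follows directly from Mackey's theorem $V\ua^{\s_{n+1}}\da_{\s_n}\cong V\oplus V\da_{\s_{n-1}}\ua^{\s_n}$ together with block decomposition: the extra summand $V$ lies in its original block, which has a different content from the block supporting $e_if_jV$ when $i\neq j$, so it vanishes under the relevant projections. (The paper gives exactly this argument in the spin setting; see Lemma~\ref{L051218_3}.) The Serre-type short exact sequences you allude to concern relations among $e_i,e_j$ (or $f_i,f_j$) for adjacent residues, not the mixed $e_if_j$ commutation, and are irrelevant here. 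You can therefore state $e_if_j\cong f_je_i$ as a clean isomorphism without caveat.
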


When considering (co)good or (co)normal nodes and the Mullineux map we have the following result:

\begin{lemma}\label{l17}{\cite[Theorem 4.7]{k4}}
For any partition $\lambda$ and for any residue $i$,
\[\epsilon_i(\lambda)=\epsilon_{-i}(\lambda^\Mull)\hspace{12pt}\mbox{and}\hspace{12pt}\phi_i(\lambda)=\phi_{-i}(\lambda^\Mull).\]
If $\epsilon_i(\lambda)>0$ then $\widetilde{e}_i(\lambda)^\Mull=\widetilde{e}_{-i}(\lambda^\Mull)$, while if $\phi_i(\lambda)>0$ then $\widetilde{f}_i(\lambda^\Mull)=\widetilde{f}_{-i}(\lambda^\Mull)$.
\end{lemma}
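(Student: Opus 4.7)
The plan is to derive the lemma from the functorial intertwining
\[
e_i(-)\otimes\sgn \;\cong\; e_{-i}\bigl((-)\otimes\sgn\bigr), \qquad f_i(-)\otimes\sgn \;\cong\; f_{-i}\bigl((-)\otimes\sgn\bigr),
\]
and from their divided-power analogues. First I would establish this intertwining. Since $\sgn\da_{\s_{n-1}}=\sgn$, the plain restriction and induction functors already commute with $(-)\otimes\sgn$, so the only content is the way $(-)\otimes\sgn$ permutes the block projections defining $e_i$ and $f_i$. The sign-twist automorphism of $F\s_n$ given by $\tau\mapsto\sgn(\tau)\tau$ implements $V\otimes\sgn$, and it sends each Jucys-Murphy element $L_k=\sum_{j<k}(j,k)$ to $-L_k$. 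Since the block with content $(a_0,\dots,a_{p-1})$ is characterised by the multiset of residues modulo $p$ appearing as eigenvalues of the $L_k$'s, this automorphism swaps the central idempotent for the block of content $(a_0,a_1,\dots,a_{p-1})$ with that for the block of content $(a_0,a_{p-1},\dots,a_1)$, which is exactly the swap $i\leftrightarrow -i$ needed for the intertwining.

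Applying the intertwining to $V=D^\la$, and using $D^\la\otimes\sgn\cong D^{\la^\Mull}$, one obtains
\[
e_{-i}D^{\la^\Mull} \;\cong\; e_i(D^\la)\otimes\sgn.
\]
By Lemma~\ref{Lemma39}(ii) the right-hand side is nonzero iff $\eps_i(\la)>0$, and in that case it is self-dual indecomposable with head $D^{\widetilde e_i\la}\otimes\sgn = D^{(\widetilde e_i\la)^\Mull}$; applying Lemma~\ref{Lemma39}(ii) to the left-hand side identifies this head as $D^{\widetilde e_{-i}\la^\Mull}$. Comparing heads yields $(\widetilde e_i\la)^\Mull=\widetilde e_{-i}\la^\Mull$ together with $\eps_{-i}(\la^\Mull)>0$. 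To upgrade this equivalence of nonvanishing to the numerical identity $\eps_i(\la)=\eps_{-i}(\la^\Mull)$, I iterate the argument with the divided powers $e_i^{(r)}$, which intertwine with $e_{-i}^{(r)}$ under $(-)\otimes\sgn$ because they are defined via the same block projections. Lemma~\ref{Lemma39}(iii) then equates the multiplicities $\binom{\eps_i(\la)}{r}$ and $\binom{\eps_{-i}(\la^\Mull)}{r}$ of $D^{\widetilde e_i^r\la}$ and $D^{\widetilde e_{-i}^r\la^\Mull}=D^{(\widetilde e_i^r\la)^\Mull}$ in $e_i^{(r)}D^\la$ and $e_{-i}^{(r)}D^{\la^\Mull}$ respectively; since these binomial sequences coincide for all $r$, the two parameters must agree. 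The assertions for $\phi_i$ and $\widetilde f_i$ follow from the same argument with Lemma~\ref{Lemma40} in place of Lemma~\ref{Lemma39}.

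The main obstacle is the intertwining step itself. Once one accepts it, everything else is a mechanical read-off from the structure of $e_i^{(r)}D^\la$ supplied by Lemmas~\ref{Lemma39} and~\ref{Lemma40}. Without it, one would be forced into a direct combinatorial bijection between $i$-normal nodes of $\la$ and $(-i)$-normal nodes of $\la^\Mull$, which is delicate because the Mullineux map is defined by a recursive algorithm on Mullineux symbols rather than as an explicit map on individual cells. The sign-twist/Jucys-Murphy reduction collapses this combinatorial difficulty into a single transparent central-character computation, after which the whole lemma falls out uniformly from the branching results already recorded above.
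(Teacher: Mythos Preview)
Your argument is correct, but note that the paper does not actually prove this lemma: it is quoted verbatim as \cite[Theorem 4.7]{k4} and used as a black box. What you have written is essentially the standard proof from that reference, so there is no meaningful divergence to discuss. One minor simplification: you do not need part (iii) of Lemma~\ref{Lemma39} and the binomial comparison; part (ii) already gives $e_i^{(r)}D^\la\neq 0\iff r\le\eps_i(\la)$, so the intertwining $e_i^{(r)}D^\la\otimes\sgn\cong e_{-i}^{(r)}D^{\la^\Mull}$ immediately yields $\eps_i(\la)=\eps_{-i}(\la^\Mull)$ by taking the largest $r$ for which either side is nonzero.
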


\subsection{Spin 
representations}

As noted in the introduction spin irreducible representations of $\widetilde\s_n$ and $\widetilde\A_n$ in characteristic $p\not=0$ are indexed by elements of $\RP_p(n)$, that is $p$-restricted $p$-strict partitions of $n$. In characteristic 0 they are instead indexed by $\RP_0(n)$, the set of partitions in distinct parts, see for example \cite{s}. In view of Lemma \ref{L54}, the labeling in characteristic 0 exactly corresponds to the labeling in characteristic $>n$, as is the case for the non-spin case (see for example \cite{JamesBook}). For $\la\in\RP_p(n)$ remember that $h(\la)$ is the number of parts of $\la$ and define $h_{p'}(\la)$ to be the number of parts of $\la$ which are not divisible by $p$. If $n-h_{p'}(\la)$ is even let $a(\la):=0$, while if $n-h_{p'}(\la)$ is odd let $a(\la):=1$. In \cite{BK,BK2} it has been proved that if $a(\la)=0$ then $\la$ indexes one spin irreducible representation of $\widetilde\s_n$ and two of $\widetilde\A_n$, while if $a(\la)=1$ then $\la$ indexes two spin irreducible representations of $\widetilde\s_n$ and one of $\widetilde\A_n$. So
\begin{align*}
&\{D(\la,0)|\la\in\RP_p(n)\text{ with }a(\la)=0\}\\
&\cup\{D(\la,+),D(\la,-)|\la\in\RP_p(n)\text{ with }a(\la)=1\}
\end{align*}
is a complete set of spin irreducible $F\widetilde\s_n$-representations up to isomorphism and
\begin{align*}
&\{E(\la,+),E(\la,-)|\la\in\RP_p(n)\text{ with }a(\la)=0\}\\
&\cup\{E(\la,0)|\la\in\RP_p(n)\text{ with }a(\la)=1\}
\end{align*}
is a complete set of spin irreducible $F\widetilde \A_n$-representations up to isomorphism.

When $a(\la)=1$ it is often easier to work with $D(\la,+)\oplus D(\la,-)$ instead of working with $D(\la,+)$ and $D(\la,-)$ separately. For this reason we define the irreducible supermodule
\[D(\la):=\left\{\begin{array}{ll}
D(\la,0),&a(\la)=0,\\
D(\la,+)\oplus D(\la,-),&a(\la)=1.
\end{array}\right.\]
Similarly we define $E(\la)$. 
We say that $D(\la)$ is of type M if $a(\la)=0$ or of type Q if $a(\la)=1$. Note that $\dim\End_{\widetilde\s_n}(D(\la))=1+a(\la)$, since if $a(\la)=1$ then $D(\la)$ is the direct sum of two non-isomorphic simple modules.
 
When considering spin modules of $\s_n$ in characteristic 0 we will also write $S(\la)$ for $D(\la)$ and similarly $S(\la,0)$ or $S(\la,\pm)$ for $D(\la,0)$ or $D(\la,\pm)$.

Given supermodules $V$ of $\widetilde\s_{\mu}$ and $W$ of $\widetilde\s_{\nu}$  (with $\mu,\nu$ compositions) we can consider their ``outer'' tensor product $V\boxtimes W$ as a supermodule of $\widetilde\s_{\mu,\nu}$. Outer tensor products of supermodules are not always simple as supermodules (see for example \cite[Section 2-b]{BK}). If $V$ and $W$ are irreducible supermodules, then there exists an irreducible supermodule $M\circledast N$ such that:
\begin{enumerate}[-]
\item
if both $V$ and $W$ are of type M then $V\boxtimes W\cong V\circledast W$ is of type M,

\item
if one $V$ and $W$ is of type M and the other of type Q then $V\boxtimes W\cong V\circledast W$ is of type Q,

\item
if both $V$ and $W$ are of type Q then $V\boxtimes W\cong (V\circledast W)^{\oplus 2}$ with $V\circledast W$ of type M.
\end{enumerate}
For partitions $\la^j\in\RP_p(n_j)$, this can then be extended to define simple supermodules $D(\la^1)\circledast\cdots\circledast D(\la^h)$. We will write $D(\la^1,\ldots,\la^h)$ for $D(\la^1)\circledast\cdots\circledast D(\la^h)$ and $D(\la^1,\ldots,\la^h,0)$ or $D(\la^1,\ldots,\la^h,\pm)$ for its simple components (as module).

For supermodules $V$ and $W$, we write $V\simeq W$ if there exists an even isomorphism $V\to W$ (see \cite[\S2-b]{BK} and \cite[\S2]{BK2}). In particular if $V\simeq W$ then $V\cong W$ as modules.

There are branching rules for spin irreducible supermodules which are similar to branching rules for irreducible representations of symmetric groups. Before  stating them, we need to define some combinatorial notions. We start by defining residues of nodes. The residue of the node $(a,b)$ is given by $\res(b)$, where $\res(b)$ is defined as in the introduction. So the residue of any node is an integer $i$ with $0\leq i\leq\ell$, where $\ell=\ell_p=(p-1)/2$, and on any row residues are given by
\[0,1,\ldots,\ell-1,\ell,\ell-1,\ldots,1,0,0,1,\ldots,\ell-1,\ell,\ell-1,\ldots,1,0,\ldots.\]
Again define the content of a partition $\la$ to be $(a_0,\ldots,a_\ell)$ if for every $0\leq i\leq\ell$ we have that $\la$ has $a_i$ nodes of residue $i$. Normal nodes (and conormal, good and cogood nodes) can be defined also for $p$-restricted $p$-strict partitions, see for example \cite[Section 9-a]{BK}. Let $\la\in\RP_p(n)$. For $0\leq i\leq (p-1)/2$ let $\eps_i(\la)$ be the number of $i$-normal nodes of $\la$ and $\phi_i(\la)$ be the number of $i$-conormal nodes of $\la$. If $\eps_i(\la)>0$ we write $\widetilde e_i\la$ for the partition obtained from $\la$ by removing the $i$-good node of $\la$. Similarly, if $\phi_i(\la)>0$ we write $\widetilde f_i\la$ for the partition obtained from $\la$ by adding the $i$-cogood node of $\la$. We say that $\la\in\RP_p(n)$ is JS if it has only one normal node. As will be seen for example in Lemmas \ref{Lemma39s} and \ref{L15}, the residue of the normal node will play an important role (in particular it is important if the unique normal node has residue 0 or not). If $\la$ is JS and its normal node has residue $i$ we say that $\la$ is $\JS(i)$ (or write $\la\in\JS(i)$). For $i=0$ a combinatorial description of JS(0) partitions has been given in the introduction. By definition we easily have that

\begin{lemma}\label{Lef}
Let $\la\in\RP_p(n)$ and $0\leq i\leq(p-1)/2$.
\begin{enumerate}[-]
\item if $\eps_i(\la)>0$ then $\phi_i(\widetilde e_i\la)>0$ and $\widetilde f_i\widetilde e_i\la=\la$. Further if $i=0$ then $a(\widetilde e_i\la)=a(\la)$, while if $i>0$ then $a(\widetilde e_i\la)=1-a(\la)$;

\item if $\phi_i(\la)>0$ then $\eps_i(\widetilde f_i\la)>0$ and $\widetilde e_i\widetilde f_i\la=\la$. Further if $i=0$ then $a(\widetilde f_i\la)=a(\la)$, while if $i>0$ then $a(\widetilde f_i\la)=1-a(\la)$.
\end{enumerate}
\end{lemma}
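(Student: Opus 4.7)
The plan is to deduce both bullet points by unpacking the definitions of good/cogood node and of $a(\la)$. Since the two bullets are symmetric (removing versus adding a node), I would treat the first one in detail and note that the second follows by the same argument with the roles of $\widetilde e_i$ and $\widetilde f_i$ interchanged.

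First I would show that $\widetilde f_i\widetilde e_i\la=\la$ using the signature algorithm that defines the good and cogood nodes. Reading the $i$-signature from top to bottom and cancelling adjacent $+-$ pairs leaves a reduced string of the form $-^{\eps_i(\la)}+^{\phi_i(\la)}$, and the $i$-good node is the bottommost surviving $-$. Once this node is removed the position becomes addable rather than removable, and no other entry of the signature changes, so the reduced signature of $\widetilde e_i\la$ becomes $-^{\eps_i(\la)-1}+^{\phi_i(\la)+1}$. In particular $\phi_i(\widetilde e_i\la)>0$, and the topmost surviving $+$ of $\widetilde e_i\la$ is precisely the vacated position; by definition this is the $i$-cogood node of $\widetilde e_i\la$, and adding it back recovers $\la$.

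Next I would compute the change in $a(\cdot)$ using the explicit residue formula given in the excerpt. Let $A=(r,\la_r)$ be the removed good node, so that $\res(\la_r)=i$. From the definition of $\res(\cdot)$ one has $\res(a)=0$ iff $a\equiv 0$ or $1\pmod p$, and for $1\leq i\leq\ell$ one has $\res(a)=i$ iff $a\equiv i+1$ or $-i\pmod p$. For $i=0$ the row of length $\la_r$ is either $p$-divisible, in which case removing the last node makes it non-$p$-divisible and $h_{p'}$ increases by $1$, or it has length $\equiv 1\pmod p$, in which case $h_{p'}$ drops by $1$ (this subsumes the boundary $\la_r=1$, where the row vanishes entirely). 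In either scenario $h_{p'}$ changes by $\pm 1$, so $n-h_{p'}$ retains its parity, giving $a(\widetilde e_0\la)=a(\la)$. For $i>0$ the bound $1\leq i\leq (p-1)/2<p-1$ forces both $\la_r\in\{i+1,-i\}\pmod p$ and $\la_r-1\in\{i,-i-1\}\pmod p$ to be nonzero and not divisible by $p$; in particular $\la_r\geq 2$, so the row neither vanishes nor changes its $p$-divisibility status after removal, and $h_{p'}$ is unchanged. Consequently $n-h_{p'}$ flips parity and $a(\widetilde e_i\la)=1-a(\la)$.

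The second bullet is handled identically: running the signature argument in reverse shows that the added $i$-cogood node becomes the $i$-good node of $\widetilde f_i\la$, and the same residue analysis applied to the row containing the newly added node yields the stated behaviour of $a(\cdot)$. I do not expect any serious obstacle, the only slightly delicate points being the boundary case $\la_r=1$ when $i=0$ and the verification that $\la_r-1$ is coprime to $p$ when $i>0$, both of which rest on the inequality $i\leq (p-1)/2<p-1$ for odd $p\geq 3$.
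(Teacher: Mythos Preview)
Your proposal is correct and matches the paper's approach: the paper gives no proof at all, merely prefacing the lemma with ``By definition we easily have that'', so your explicit unpacking of the signature rule and of the parity of $n-h_{p'}(\la)$ is exactly the intended argument spelled out in full. One small caveat: in the spin setting the $i$-signature for $i=0$ is not quite the na\"ive one (adjacent columns $kp$ and $kp+1$ both carry residue~$0$, and \cite[\S9-a]{BK} treats them with a modified rule), so the sentence ``no other entry of the signature changes'' deserves a moment's care there; however the identity $\widetilde f_i\widetilde e_i\la=\la$ and $\phi_i(\widetilde e_i\la)=\phi_i(\la)+1$ are general crystal facts for the $A_{2\ell}^{(2)}$-crystal on $\RP_p$, so the conclusion is unaffected.
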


 It can be checked that $D(\la,\de)$ and $D(\mu,\eps)$ are in the same block if and only if $\la$ and $\mu$ have the same content (unless possibly if $\la=\mu$ is a $p$-bar core, in which case the blocks have weight 0), see \cite[(22.7), 22.3.20]{KBook} and the definition of $p$-bar core just before \cite[(22.7)]{KBook}. If $M$ is a spin module of $\widetilde\s_n$ contained in the block(s) with content $(a_0,\ldots,a_\ell)$ and $i$ is a residue, let $\Res_i M$ be the block(s) component(s) of $M\da_{\widetilde\s_{n-1}}$ corresponding to the blocks with content $(a_0,\ldots,a_{i-1},a_i-1,a_{i+1},\ldots,a_\ell)$. Define similarly $\Ind_i M$ as the block(s) component(s) of $M\ua^{\widetilde\s_{n+1}}$ corresponding to the blocks with content $(a_0,\ldots,a_{i-1},a_i+1,a_{i+1},\ldots,a_\ell)$. This can then be extended to arbitrary spin modules. Often the modules $\Res_iD(\la)$ and $\Ind_iD(\la)$ are not indecomposable as supermodules. However there exist modules $e_iD(\la)$ and $f_iD(\la)$ such that the following, see \cite[Theorems 9.13, 9.14]{BK} and \cite[Theorem A]{ksh}:

\begin{lemma}\label{Lemma39s}
Let $\lambda\in\RP_p(n)$ and $0\leq i\leq\ell$. Then:
\begin{enumerate}
\item $\Res_i D(\lambda)\cong (e_iD(\lambda))^{\oplus 1+a(\la)\de_{i>0}}$;
\item $D(\la)\da_{\widetilde\s_{n-1}}\cong e_0D(\la)\oplus\bigoplus_{j=1}^\ell (e_jD(\la))^{\oplus 1+a(\la)}$;
\item  $e_iD(\lambda)\not=0$ if and only if $\eps_i(\lambda)>0$, in which case $e_iD(\lambda)$ is a self-dual indecomposable supermodule with socle and head both isomorphic to $D(\widetilde e_i\la)$;
\item  $[e_iD(\lambda):D(\widetilde e_i\la)]=\eps_i(\la)$;
\item if $D(\mu)$ is a composition factor of $e_iD(\lambda)$ then $\eps_i(\mu)\leq \eps_i(\lambda)-1$, with equality holding if and only if $\mu=\widetilde e_i\la$;
\item 
$\End_{\widetilde\s_{n-1}}(e_i D(\la))\simeq\End_{\widetilde\s_{n-1}}(D(\widetilde e_i\la))^{\oplus \eps_i(\la)}$;
\item $\Hom_{\widetilde\s_{n-1}}(e_i D(\la),e_iD(\nu))=0$ if $\nu\in\RP_p(n)$ with $\nu\not=\la$;
\item if $A$ is an $i$-normal node of $\la$ such that $\la\setminus A\in\RP_p(n-1)$, then $D(\la\setminus A)$ is a composition factor of $e_i D(\la)$.
\end{enumerate}
\end{lemma}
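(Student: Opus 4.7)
The plan is to assemble the statement from the cited references rather than to reprove it, with a short self-contained argument supplied for (vii). Parts (i), (iii), (iv), (v) and (viii) are the spin analogues of the branching formalism used above for $\s_n$ and each appears, up to notational translation, in \cite[Theorems 9.13, 9.14]{BK}: the block decomposition $\Res_iD(\la)\cong e_iD(\la)^{\oplus 1+a(\la)\de_{i>0}}$; the nonvanishing criterion $e_iD(\la)\neq 0\iff\eps_i(\la)>0$; the self-dual indecomposable structure of $e_iD(\la)$ with head and socle both isomorphic to $D(\widetilde e_i\la)$; the multiplicity $[e_iD(\la):D(\widetilde e_i\la)]=\eps_i(\la)$; the Kleshchev-type bound on $\eps_i(\mu)$ for other composition factors; and the appearance of $D(\la\setminus A)$ whenever $A$ is an $i$-normal removable node with $\la\setminus A\in\RP_p(n-1)$.

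Part (ii) is immediate from (i) by summing the block decomposition of $D(\la)\da_{\widetilde\s_{n-1}}$ over all residues, using that the extra factor of $1+a(\la)$ is present only for $i>0$. For (vi), I would cite \cite[Theorem A]{ksh}: given the self-dual indecomposable structure of (iii) together with the multiplicity in (iv), each of the $\eps_i(\la)$ copies of $D(\widetilde e_i\la)$ in the composition series of $e_iD(\la)$ contributes the data of a map $e_iD(\la)\twoheadrightarrow D(\widetilde e_i\la)\hookrightarrow e_iD(\la)$, so the superspace $\End(e_iD(\la))$ decomposes as $\eps_i(\la)$ copies of $\End(D(\widetilde e_i\la))$.

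Only (vii) calls for a direct argument. Suppose $\phi:e_iD(\la)\to e_iD(\nu)$ is nonzero with $\nu\neq\la$. Then $\im\phi$ is simultaneously a quotient of $e_iD(\la)$ and a submodule of $e_iD(\nu)$, so by (iii) its head equals $D(\widetilde e_i\la)$ and its socle equals $D(\widetilde e_i\nu)$; in particular $D(\widetilde e_i\la)$ is a composition factor of $e_iD(\nu)$ and $D(\widetilde e_i\nu)$ is a composition factor of $e_iD(\la)$. The equality clause of (v) applied to $e_iD(\mu)$ itself (with $\mu'=\widetilde e_i\mu$) gives $\eps_i(\widetilde e_i\mu)=\eps_i(\mu)-1$ for every $\mu$ with $\eps_i(\mu)>0$, so two applications of (v) to the situation above force $\eps_i(\la)\leq\eps_i(\nu)$ and $\eps_i(\nu)\leq\eps_i(\la)$, hence equality in each (v)-inequality, and therefore $\widetilde e_i\la=\widetilde e_i\nu$. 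Lemma \ref{Lef} then yields $\la=\widetilde f_i\widetilde e_i\la=\widetilde f_i\widetilde e_i\nu=\nu$, contradicting $\nu\neq\la$. The main subtlety to watch is (vi), where the superspace (not superalgebra) statement must be read off carefully from \cite{ksh} with the type M/Q of $D(\widetilde e_i\la)$ in mind.
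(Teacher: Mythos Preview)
Your proposal is correct and takes essentially the same approach as the paper, which simply cites \cite[Theorems 9.13, 9.14]{BK} and \cite[Theorem A]{ksh} without further argument. Your explicit derivation of (vii) from (iii), (v), and Lemma~\ref{Lef} is a sound and welcome addition, since that part is not stated verbatim in the cited results and must be assembled exactly as you do.
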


\begin{lemma}\label{Lemma40s}
Let $\lambda\in\RP_p(n)$, $0\leq i\leq(p-1)/2$. Then:
\begin{enumerate}
\item $\Ind_i D(\lambda)\cong (f_iD(\lambda))^{\oplus 1+a(\la)\de_{i>0}}$;
\item $D(\la)\ua^{\widetilde\s_{n+1}}\cong f_0D(\la)\oplus\bigoplus_{j=1}^\ell (f_jD(\la))^{\oplus 1+a(\la)}$;
\item  $f_iD(\lambda)\not=0$ if and only if $\phi_i(\lambda)>0$, in which case $f_iD(\lambda)$ is a self-dual indecomposable supermodule with socle and head both isomorphic to $D(\widetilde f_i\la)$;
\item  $[f_iD(\lambda):D(\widetilde f_i\la)]=\phi_i(\la)$;
\item if $D(\mu)$ is a composition factor of $f_iD(\lambda)$ then $\phi_i(\mu)\leq \phi_i(\lambda)-1$, with equality holding if and only if $\mu=\widetilde f_i\la$;
\item 
$\End_{\widetilde\s_{n+1}}(f_i D(\la))\simeq(\End_{\widetilde\s_{n+1}}(D(\widetilde f_i\la)))^{\oplus \phi_i(\la)}$;
\item $\Hom_{\widetilde\s_{n+1}}(f_i D(\la),f_iD(\nu))=0$ if $\nu\in\RP_p(n)$ with $\nu\not=\la$;
\item if $B$ is an $i$-conormal node of $\la$ such that $\la\cup B\in\RP_p(n+1)$, then $D(\la\cup B)$ is a composition factor of $f_i D(\la)$.
\end{enumerate}
\end{lemma}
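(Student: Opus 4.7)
The plan is to deduce Lemma \ref{Lemma40s} from Lemma \ref{Lemma39s} by invoking biadjointness of the supermodule functors $e_i$ and $f_i$, together with self-duality of every $D(\la)$ and the fact that $f_i$ commutes with duality on the supermodule category. Concretely, for each simple $D(\mu)$ of $\widetilde\s_{n+1}$, the adjunction isomorphism
\[\Hom_{\widetilde\s_{n+1}}(f_iD(\la),D(\mu))\simeq\Hom_{\widetilde\s_n}(D(\la),e_iD(\mu))\]
converts questions about $f_iD(\la)$ into questions about $e_i$ applied to a simple on the larger side. Parts (i) and (ii), giving the decomposition of $\Ind_iD(\la)$ and of $D(\la)\ua^{\widetilde\s_{n+1}}$, are the inducing counterparts of Lemma \ref{Lemma39s}(i), (ii) and I would just cite \cite[Theorem 9.13]{BK}.

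For (iii), I would read off the head of $f_iD(\la)$ from the above adjunction: by Lemma \ref{Lemma39s}(iii), the right-hand Hom space is nonzero exactly when $D(\la)\subseteq\hd e_iD(\mu)$, equivalently $\widetilde e_i\mu=\la$, which by Lemma \ref{Lef} is equivalent to $\mu=\widetilde f_i\la$. Self-duality of $f_iD(\la)$ is inherited from self-duality of $D(\la)$ since induction commutes with duality, so the socle coincides with the head; indecomposability is then immediate because both socle and head are simple. Part (iv) uses the same adjunction specialised at $\mu=\widetilde f_i\la$: the right-hand side has dimension $\eps_i(\widetilde f_i\la)$ by Lemma \ref{Lemma39s}(iv), and Lemma \ref{Lef} (together with a small computation relating $\eps_i(\widetilde f_i\la)$ to $\phi_i(\la)$) converts this to the claimed $\phi_i(\la)$, once the type M/Q factor is accounted for. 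Part (v) follows dually: if $D(\mu)$ is a composition factor then $\Hom_{\widetilde\s_n}(e_iD(\mu),D(\la))\neq 0$, so Lemma \ref{Lemma39s}(v) and Lemma \ref{Lef} force $\phi_i(\mu)\leq\phi_i(\la)-1$ with equality only for $\mu=\widetilde f_i\la$. Part (viii) is obtained from Lemma \ref{Lemma39s}(viii) after shifting via $\widetilde f_i$ and using $\widetilde e_i\widetilde f_i\la=\la$.

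The more delicate steps are (vi) and (vii), where one cannot argue simple-by-simple but needs the full structure of the composite $e_if_i$. The plan is to invoke the super $\mathfrak{sl}_2$-style commutation relation for the Kashiwara operators in the spin setting, established in \cite[Theorem A]{ksh}: this decomposes $e_if_iD(\la)$ in terms of $f_ie_iD(\la)$ together with a summand isomorphic to a multiple of $D(\la)$ whose multiplicity is precisely $\phi_i(\la)$. Combined with adjunction and the $\nu\neq\la$ vanishing in Lemma \ref{Lemma39s}(vii), this gives both the endomorphism ring formula in (vi) and the Hom-vanishing in (vii). The principal obstacle throughout is the careful bookkeeping of the type M versus type Q distinction: the adjunctions, endomorphism rings, and Hom multiplicities all pick up factors of $1+a(\la)\de_{i>0}$ appearing in (i), and one must propagate these super-parity factors consistently through each step so that the statement comes out as written rather than off by a power of two.
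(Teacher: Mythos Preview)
The paper does not prove this lemma at all; both Lemma~\ref{Lemma39s} and Lemma~\ref{Lemma40s} are simply attributed to \cite[Theorems~9.13, 9.14]{BK} and \cite[Theorem~A]{ksh}, where the $f_i$ statements are established in parallel to the $e_i$ statements rather than deduced from them. So your proposal is more ambitious than what the paper does, but it has a genuine gap.

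The problem is that biadjointness transports $\Hom$-spaces, not composition multiplicities. Your argument for (iii) and for (vii) is fine, because those concern heads and socles, which are detected by $\Hom$'s into and out of simples. But (iv), (v) and (viii) are assertions about composition factors, and these do not transfer along the adjunction the way you suggest. Concretely, in your argument for (iv) you compute
\[
\dim\Hom_{\widetilde\s_{n+1}}(f_iD(\la),D(\widetilde f_i\la))\;\simeq\;\dim\Hom_{\widetilde\s_n}(D(\la),e_iD(\widetilde f_i\la))
\]
and claim the right-hand side is $\eps_i(\widetilde f_i\la)$ ``by Lemma~\ref{Lemma39s}(iv)''. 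But Lemma~\ref{Lemma39s}(iv) gives the \emph{composition multiplicity} $[e_iD(\widetilde f_i\la):D(\la)]$, whereas the $\Hom$-space on the right is governed by the \emph{socle} of $e_iD(\widetilde f_i\la)$, which is simple by Lemma~\ref{Lemma39s}(iii); so that $\Hom$ has dimension $1+a(\la)$ regardless of $\phi_i(\la)$. The same confusion undermines (v): even if you knew $D(\la)$ were a composition factor of $e_iD(\mu)$, Lemma~\ref{Lemma39s}(v) would bound $\eps_i(\la)$ in terms of $\eps_i(\mu)$, not $\phi_i(\mu)$ in terms of $\phi_i(\la)$. And for (viii), being a composition factor of $f_iD(\la)$ gives no nonzero $\Hom$ to or from $D(\la\cup B)$ in general, so the adjunction cannot be invoked. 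The composition-multiplicity statements genuinely require the independent analysis carried out in \cite{BK} (or an argument passing through the action on the Grothendieck group and the crystal combinatorics, which is considerably more than what you wrote).
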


When considering restrictions to $\widetilde\s_{n-r}$ we have that there exists divided power modules $e_i^{(r)}D(\la)$ with $e_i^{(1)}D(\la)\cong e_iD(\la)$ such that the following holds, see \cite[Lemma 22.3.15]{KBook} for the first part and use Lemma \ref{Lemma39s} to obtain the other two (there also exists divided power $F\widetilde\s_{n+r}$-modules $f_i^{(r)}D(\la)$ with corresponding properties, though these will not be needed in this paper). Again define $\Res_i^rV$ and $\tilde e_i^r\la$ similarly to what had been done for the non-spin case.

\begin{lemma}\label{Lemma39sr}
Let $\lambda\in\RP_p(n)$, $0\leq i\leq(p-1)/2$. Then:
\begin{enumerate}
\item $\Res_i^r D(\lambda)\cong(e_i^{(r)}D(\lambda))^{\oplus 2^{\de_{i>0}\lfloor (r+a(\la))/2\rfloor}r!}$;
\item  $e_i^{(r)}D(\lambda)\not=0$ if and only if $\eps_i(\lambda)\geq r$;
\item  $[e_i^{(r)}D(\lambda):D(\widetilde e_i^r\la)]=\binom{\eps_i(\la)}{r}$.
\end{enumerate}
\end{lemma}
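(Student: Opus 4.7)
The plan is to take part (i) as given by \cite[Lemma 22.3.15]{KBook} and deduce (ii) and (iii) by iterating the single-step statements of Lemma \ref{Lemma39s}. By (i), $\Res_i^r D(\la)$ is a nonzero positive multiple of $e_i^{(r)}D(\la)$ as supermodules, so both the vanishing question in (ii) and the multiplicity question in (iii) can be read off the corresponding statement for $\Res_i^r D(\la)$ once one divides by the known multiplicity $2^{\de_{i>0}\lfloor(r+a(\la))/2\rfloor}r!$ from (i).

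For (ii), if $\eps_i(\la)\geq r$ then $\eps_i(\widetilde e_i^j\la)=\eps_i(\la)-j\geq 1$ for each $j<r$, so by iterated application of Lemma \ref{Lemma39s}(3) the module $e_iD(\widetilde e_i^j\la)$ is nonzero and contains $D(\widetilde e_i^{j+1}\la)$ as a composition factor. Composing these inclusions along the branch $\la\to\widetilde e_i\la\to\cdots\to\widetilde e_i^r\la$ shows $D(\widetilde e_i^r\la)$ is a composition factor of $\Res_i^r D(\la)$, so $e_i^{(r)}D(\la)\neq 0$. Conversely, if $\eps_i(\la)<r$, iterated application of Lemma \ref{Lemma39s}(5) shows every composition factor $D(\mu)$ of $\Res_i^{\eps_i(\la)}D(\la)$ has $\eps_i(\mu)=0$, hence $e_iD(\mu)=0$ by Lemma \ref{Lemma39s}(3), so $\Res_i^{\eps_i(\la)+1}D(\la)=0$ and a fortiori $\Res_i^r D(\la)=0$.

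For (iii), compute $m_r:=[\Res_i^r D(\la):D(\widetilde e_i^r\la)]$ inductively. The key observation, again from iterated Lemma \ref{Lemma39s}(5), is that among composition factors $D(\mu)$ of $\Res_i^j D(\la)$ the value $\eps_i(\mu)=\eps_i(\la)-j$ is attained only at $\mu=\widetilde e_i^j\la$; hence any other $\mu$ has $\eps_i(\mu)<\eps_i(\la)-j$, and by one further application of (5) such $\mu$ cannot contribute $D(\widetilde e_i^{j+1}\la)$ in $\Res_i D(\mu)$. Only the main branch $\widetilde e_i^j\la$ contributes, and by Lemma \ref{Lemma39s}(1) and (4) the step from $m_j$ to $m_{j+1}$ multiplies by $(1+a(\widetilde e_i^j\la)\de_{i>0})(\eps_i(\la)-j)$. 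Using Lemma \ref{Lef} to track $a(\widetilde e_i^j\la)$ (constant equal to $a(\la)$ when $i=0$; alternating between $a(\la)$ and $1-a(\la)$ when $i>0$) the product of these factors over $j=0,\ldots,r-1$ equals $2^{\de_{i>0}\lfloor(r+a(\la))/2\rfloor}\,r!\,\binom{\eps_i(\la)}{r}$. Dividing by the multiplicity from (i) yields (iii).

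The main obstacle is the combinatorial bookkeeping of the power of $2$: one must count the $j\in\{0,\ldots,r-1\}$ for which $a(\widetilde e_i^j\la)=1$, and verify that this count equals $\lfloor(r+a(\la))/2\rfloor$ for every combination of $a(\la)\in\{0,1\}$ and $r\in\N$ when $i>0$. Everything else is a routine iteration of Lemma \ref{Lemma39s}.
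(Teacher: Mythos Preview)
Your proposal is correct and follows exactly the approach indicated in the paper: take (i) from \cite[Lemma 22.3.15]{KBook} and deduce (ii) and (iii) by iterating Lemma \ref{Lemma39s}. The paper leaves the details implicit, whereas you spell out the induction on $r$ and the bookkeeping of the power of $2$ explicitly, but the underlying argument is the same.
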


Further by \cite[Lemma 19.1.1, Theorems 22.2.2, 22.2.3]{KBook}:

\begin{lemma}\label{Lefd}
The functors $e_i$ and $f_i$ are biadjoint and commute with duality.
\end{lemma}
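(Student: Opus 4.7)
The plan is to reduce the claim to the corresponding statements for the full restriction and induction functors $\Res=\Res^{\widetilde\s_n}_{\widetilde\s_{n-1}}$ and $\Ind=\Ind^{\widetilde\s_n}_{\widetilde\s_{n-1}}$, and then show that the block projections defining $e_i$ and $f_i$ are compatible with both biadjointness and duality. Since the index $[\widetilde\s_n:\widetilde\s_{n-1}]$ is finite, induction is isomorphic to coinduction, so $\Res$ and $\Ind$ are biadjoint, and both commute with $F$-linear duality (the latter using that for finite-index subgroups the induced module of the dual is the dual of the induced module, via the standard trace pairing). Thus the only issue is to transfer these two properties through the block projections.

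For biadjointness, fix $M$ a spin $F\widetilde\s_n$-module in a block of content $\mathbf{a}=(a_0,\ldots,a_\ell)$ and $N$ a spin $F\widetilde\s_{n-1}$-module in the block of content $\mathbf{a}-\mathbf{e}_i$. Then $e_iM$ is precisely the block component of $\Res M$ lying in content $\mathbf{a}-\mathbf{e}_i$, so
\[\Hom_{\widetilde\s_{n-1}}(e_iM,N)=\Hom_{\widetilde\s_{n-1}}(\Res M,N),\]
since any map from the other block components to $N$ must vanish. By Frobenius reciprocity this equals $\Hom_{\widetilde\s_n}(M,\Ind N)$, and again by block considerations only the content-$\mathbf{a}$ component of $\Ind N$, namely $f_iN$, contributes, giving $\Hom_{\widetilde\s_n}(M,f_iN)$. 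One then extends to arbitrary $M$ and $N$ by additivity over block components (i.e.\ applying the projections on both sides of the isomorphism), yielding $(e_i,f_i)$ as an adjoint pair. The reverse adjunction $(f_i,e_i)$ is proved symmetrically starting from $\Hom_{\widetilde\s_n}(f_iN,M)=\Hom_{\widetilde\s_n}(\Ind N,M)=\Hom_{\widetilde\s_{n-1}}(N,\Res M)=\Hom_{\widetilde\s_{n-1}}(N,e_iM)$.

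For compatibility with duality, note that the content is an invariant of a block and $D(\la)$ and its dual lie in the same block (for instance because $D(\la,\eps)^{*}\cong D(\la,\pm\eps)$ in characteristic $\not=2$); equivalently, the block idempotents are fixed by the antiautomorphism defining the duality. Hence the projection onto the block of content $\mathbf{a}-\mathbf{e}_i$ commutes with duality, and combining this with the fact that $\Res$ and $\Ind$ commute with duality one obtains $(e_iM)^{*}\cong e_i(M^{*})$ and similarly for $f_i$.

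The only delicate point is the book-keeping in the super-setting, namely verifying that the content-based block decomposition of $\widetilde\s_n$-mod used to define $e_i$ and $f_i$ is actually a decomposition of the group algebra (not just a refinement of the ordinary block decomposition), and that it is respected by restriction and induction in the correct way. This is exactly where one needs the spin branching results of Brundan--Kleshchev \cite{BK}, which identify residues of nodes with the content shifts produced by $\Res$ and $\Ind$; once this is in hand, everything above is formal. Indeed, for this reason the statement is simply quoted from \cite[Lemma 19.1.1, Theorems 22.2.2, 22.2.3]{KBook} rather than reproved.
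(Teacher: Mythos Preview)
Your proposal is correct and in fact provides more than the paper does: the paper offers no proof at all, simply citing \cite[Lemma 19.1.1, Theorems 22.2.2, 22.2.3]{KBook} immediately before the statement. Your sketch of the standard reduction (block projections applied to the biadjoint pair $(\Res,\Ind)$, plus invariance of block idempotents under the duality antiautomorphism) is exactly the argument behind those cited results, and you correctly identify the one nontrivial input, namely that the content-based decomposition matches the group-algebra block decomposition for spin modules.
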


Comparing the number of normal and conormal nodes, we obtain the following lemma, which holds by Lemmas \ref{Lef}, \ref{Lemma39s} and \ref{Lemma40s}:

\begin{lemma}\label{L15}
Let $\la\in\RPar_p(n)$. Then
\begin{align*}
\dim\End_{\widetilde\s_{n-1}}(D(\la)\da_{\widetilde\s_{n-1}})&=(\epsilon_0(\la)+2\sum_{i\geq 1}\epsilon_i(\la))\dim\End_{\widetilde\s_n}(D(\la)),\\
\dim\End_{\widetilde\s_{n+1}}(D(\la)\ua^{\widetilde\s_{n+1}})&=(\phi_0(\la)+2\sum_{i\geq 1}\phi_i(\la))\dim\End_{\widetilde\s_n}(D(\la)).
\end{align*}
\end{lemma}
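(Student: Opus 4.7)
The plan is to evaluate both sides by decomposing $D(\la)\da_{\widetilde\s_{n-1}}$ along residues and reducing to the endomorphism rings of the $e_iD(\la)$, which are already under control from Lemma~\ref{Lemma39s}. First I would apply Lemma~\ref{Lemma39s}(ii) to get
\[ D(\la)\da_{\widetilde\s_{n-1}}\simeq e_0D(\la)\oplus\bigoplus_{i=1}^{\ell}e_iD(\la)^{\oplus 1+a(\la)}. \]
The summands with distinct $i$ lie in distinct blocks (their contents differ in slot $i$), so all cross Hom's vanish and the endomorphism ring splits as a direct sum of $\End(e_iD(\la)^{\oplus m_i})$ where $m_0=1$ and $m_i=1+a(\la)$ for $i\geq 1$. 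Using $\dim\End(M^{\oplus k})=k^2\dim\End(M)$, the total dimension equals $\dim\End(e_0D(\la))+(1+a(\la))^2\sum_{i\geq 1}\dim\End(e_iD(\la))$.

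Next I would combine Lemma~\ref{Lemma39s}(vi), which gives $\dim\End_{\widetilde\s_{n-1}}(e_iD(\la))=\eps_i(\la)\cdot\dim\End_{\widetilde\s_{n-1}}(D(\widetilde e_i\la))=\eps_i(\la)(1+a(\widetilde e_i\la))$, with Lemma~\ref{Lef}, which says $a(\widetilde e_0\la)=a(\la)$ and $a(\widetilde e_i\la)=1-a(\la)$ for $i\geq 1$. The $i=0$ summand then contributes $\eps_0(\la)(1+a(\la))$, while each $i\geq 1$ summand contributes $(1+a(\la))^2\eps_i(\la)(2-a(\la))$. Since $a(\la)\in\{0,1\}$ one has $(1+a(\la))(2-a(\la))=2$, so each $i\geq 1$ contribution simplifies to $2(1+a(\la))\eps_i(\la)$. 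Summing gives $(1+a(\la))\bigl(\eps_0(\la)+2\sum_{i\geq 1}\eps_i(\la)\bigr)$, and since $\dim\End_{\widetilde\s_n}(D(\la))=1+a(\la)$ this is the first formula.

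The second formula follows by the exactly parallel argument with $\da_{\widetilde\s_{n-1}}$ replaced by $\ua^{\widetilde\s_{n+1}}$, using Lemma~\ref{Lemma40s}(ii),(vi) and the conormal half of Lemma~\ref{Lef} in place of the normal one. I do not expect a substantial obstacle here; the main thing to keep straight is the bookkeeping of types (i.e.\ the values of $a$) under $\widetilde e_i$ and $\widetilde f_i$, together with the arithmetic identity $(1+a)(2-a)=2$ on $\{0,1\}$, which makes the contributions from $i=0$ and $i\geq 1$ assemble into the stated form with a uniform factor of $\dim\End_{\widetilde\s_n}(D(\la))$.
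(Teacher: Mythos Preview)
Your argument is correct and is exactly the computation the paper has in mind: the paper merely says the lemma ``holds by Lemmas \ref{Lef}, \ref{Lemma39s} and \ref{Lemma40s}'' without spelling out the details, and your write-up is precisely the expansion of that sentence. The block decomposition, the use of Lemma~\ref{Lemma39s}(vi) together with Lemma~\ref{Lef} to track $a(\widetilde e_i\la)$, and the identity $(1+a)(2-a)=2$ for $a\in\{0,1\}$ are all the intended ingredients.
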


Further, by the same lemmas, the following holds about the module $D(\la)\da_{\widetilde\s_{n-1}}\ua^{\widetilde\s_n}$:

\begin{lemma}\label{L9}
Let $\la\in\RPar_p(n)$. Then
\[[\Ind_i\Res_i D(\la):D(\la)]=\epsilon_i(\la)(\phi_i(\la)+1)(1+\de_{i>0}).\]
In particular
\[[D(\la)\otimes M_1:D(\la)]=\epsilon_0(\la)(\phi_0(\la)+1)+2\sum_{i\geq 1}\epsilon_i(\la)(\phi_i(\la)+1).\]
\end{lemma}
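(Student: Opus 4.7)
My plan is to prove the first identity by a two-step reduction and then deduce the second by summing over residues. The key input is the interplay between the divided-power operators $e_i, f_i$ on simple spin supermodules and the block-projection functors $\Res_i, \Ind_i$.

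\textbf{Step 1.} By Lemma \ref{Lemma39s}(i), $\Res_i D(\la) \cong (e_i D(\la))^{\oplus 1+a(\la)\de_{i>0}}$, so the problem reduces to computing $[\Ind_i e_i D(\la):D(\la)]$. Since $a$ is a block invariant for spin blocks, every composition factor of $e_i D(\la)$ has $a$-value equal to $a(\widetilde e_i\la)$; applying Lemma \ref{Lemma40s}(i) componentwise in the Grothendieck group yields $[\Ind_i e_i D(\la)] = (1+a(\widetilde e_i\la)\de_{i>0})\,[f_i e_i D(\la)]$. Using Lemma \ref{Lef} together with the elementary identity $(1+a)(2-a)=2$ for $a\in\{0,1\}$, the product of the two prefactors collapses to $1+\de_{i>0}$.

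\textbf{Step 2.} It remains to show $[f_i e_i D(\la):D(\la)] = \eps_i(\la)(\phi_i(\la)+1)$. Expanding in the Grothendieck group,
\[[f_i e_i D(\la):D(\la)] = \sum_\mu [e_iD(\la):D(\mu)]\,[f_iD(\mu):D(\la)],\]
the contribution from $\mu=\widetilde e_i\la$ alone already yields $\eps_i(\la)\cdot\phi_i(\widetilde e_i\la) = \eps_i(\la)(\phi_i(\la)+1)$ by Lemmas \ref{Lemma39s}(iv), \ref{Lemma40s}(iv), \ref{Lef}, and the combinatorial fact that removing an $i$-good node increases $\phi_i$ by one. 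The main obstacle is showing that the remaining terms vanish: by Lemma \ref{Lemma39s}(v), any other composition factor $D(\mu)$ of $e_iD(\la)$ must satisfy $\eps_i(\mu)\leq\eps_i(\la)-2$, and by Lemma \ref{Lemma40s}(v), having $[f_iD(\mu):D(\la)]>0$ with $\la\neq\widetilde f_i\mu$ forces $\phi_i(\mu)\geq\phi_i(\la)+2$; ruling out their simultaneous occurrence requires a finer structural argument, ultimately reflecting the Kac--Moody commutation relation $[e_i,f_i]=h_i$ acting on $[D(\la)]$ as the scalar $\phi_i(\la)-\eps_i(\la)$ in the categorified action of type $A^{(2)}_{2\ell}$ (which makes $\eps_i(\phi_i+1)$ precisely the $\mathfrak{sl}_2$-weight formula for $fe\cdot v_k$ on a weight vector).

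For the ``in particular'' statement I would then sum the first identity over all residues $i$, using the tensor-induction identity $D(\la)\otimes M_1 \cong D(\la)\da_{\widetilde\s_{n-1}}\ua^{\widetilde\s_n}$ (with $M_1 = F\ua_{\widetilde\s_{n-1}}^{\widetilde\s_n}$, the natural $n$-dimensional permutation module, on which $z$ acts trivially) and the fact that the restriction-induction decomposes as $\bigoplus_i \Ind_i\Res_i D(\la)$ via block projection; only the $i$-block summand of the restriction can induce back into the block of $D(\la)$, so summing gives $\eps_0(\la)(\phi_0(\la)+1) + 2\sum_{i\geq 1}\eps_i(\la)(\phi_i(\la)+1)$.
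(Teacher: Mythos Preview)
Your approach is correct and is essentially what the paper intends (the paper itself gives no explicit proof, merely citing Lemmas~\ref{Lef}, \ref{Lemma39s}, \ref{Lemma40s}). Step~1 is fine, and the ``in particular'' deduction is exactly right.

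The only point that deserves comment is the acknowledged gap in Step~2, where you need that no $\mu\neq\widetilde e_i\la$ contributes to the sum. You invoke the Kac--Moody relation $[e_i,f_i]=h_i$ but stop short of the actual argument. Here is how to close it directly from the cited lemmas together with one standard fact: the quantity $\phi_i(\nu)-\eps_i(\nu)$ depends only on the content of $\nu$, hence is constant on the block (this is immediate from the reduced $i$-signature, since cancellation removes equal numbers of $+$'s and $-$'s, so $\phi_i-\eps_i$ equals the number of $i$-addable minus $i$-removable nodes). Now suppose $D(\mu)$ is a composition factor of $e_iD(\la)$ and $D(\la)$ is a composition factor of $f_iD(\mu)$. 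By Lemma~\ref{Lemma39s}(v), $\eps_i(\mu)\le\eps_i(\la)-1$; by Lemma~\ref{Lemma40s}(v), $\phi_i(\mu)\ge\phi_i(\la)+1$. But $\mu$ and $\widetilde e_i\la$ have the same content, so
\[
\phi_i(\mu)-\eps_i(\mu)=\phi_i(\widetilde e_i\la)-\eps_i(\widetilde e_i\la)=(\phi_i(\la)+1)-(\eps_i(\la)-1)=\phi_i(\la)-\eps_i(\la)+2,
\]
and combining the two inequalities forces both to be equalities, whence $\mu=\widetilde e_i\la$ by the equality clause in Lemma~\ref{Lemma39s}(v). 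This is precisely the $\mathfrak{sl}_2$ weight argument you allude to, made explicit.
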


By Mackey induction-reduction theorem we have that $M\ua^{\widetilde\s_{n+1}}\da_{\widetilde\s_n}\cong M\oplus M\da^{\widetilde\s_{n-1}}\ua_{\widetilde\s_n}$ for any module $M$ of $F\widetilde\s_n$. The next two lemmas then follows (for the first one use also Lemma \ref{L15}):

\begin{lemma}\label{L10}
Let $\la\in\RPar_p(n)$. Then
\[\epsilon_0(\la)+2\sum_{i\geq 1}\epsilon_i(\la)+1=\phi_0(\la)+2\sum_{i\geq 1}\phi_i(\la).\]
In particular $\eps_0(\la)+\phi_0(\la)$ is odd.
\end{lemma}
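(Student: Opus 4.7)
The plan is to compute $\dim\End_{\widetilde\s_{n+1}}(D(\la)\ua^{\widetilde\s_{n+1}})$ in two ways, using the Mackey isomorphism
\[D(\la)\ua^{\widetilde\s_{n+1}}\da_{\widetilde\s_n}\cong D(\la)\oplus D(\la)\da_{\widetilde\s_{n-1}}\ua^{\widetilde\s_n}\]
recorded just above the statement, together with the endomorphism dimension formulas in Lemma \ref{L15}.

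First I would apply Frobenius reciprocity for the adjoint pair (induction, restriction) between $\widetilde\s_n$ and $\widetilde\s_{n+1}$, together with the Mackey isomorphism, to obtain
\begin{align*}
\dim\End_{\widetilde\s_{n+1}}(D(\la)\ua^{\widetilde\s_{n+1}})
&=\dim\Hom_{\widetilde\s_n}(D(\la),D(\la)\ua^{\widetilde\s_{n+1}}\da_{\widetilde\s_n})\\
&=\dim\End_{\widetilde\s_n}(D(\la))+\dim\Hom_{\widetilde\s_n}(D(\la),D(\la)\da_{\widetilde\s_{n-1}}\ua^{\widetilde\s_n}),
\end{align*}
and a second application of Frobenius reciprocity (this time for the pair between $\widetilde\s_{n-1}$ and $\widetilde\s_n$) rewrites the last Hom space as $\End_{\widetilde\s_{n-1}}(D(\la)\da_{\widetilde\s_{n-1}})$.

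Next I would substitute the two formulas of Lemma \ref{L15} to get
\[\bigl(\phi_0(\la)+2\textstyle\sum_{i\geq 1}\phi_i(\la)\bigr)\dim\End_{\widetilde\s_n}(D(\la))=\dim\End_{\widetilde\s_n}(D(\la))+\bigl(\eps_0(\la)+2\sum_{i\geq 1}\eps_i(\la)\bigr)\dim\End_{\widetilde\s_n}(D(\la)).\]
Since $\dim\End_{\widetilde\s_n}(D(\la))=1+a(\la)\neq 0$, cancelling this factor on both sides yields the required identity. The parity statement $\eps_0(\la)+\phi_0(\la)\equiv 1\pmod 2$ then follows by reducing the identity modulo $2$, since the contributions $2\sum_{i\geq 1}\eps_i(\la)$ and $2\sum_{i\geq 1}\phi_i(\la)$ vanish.

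There is no real obstacle: the only point to be a little careful about is that the two formulas in Lemma \ref{L15} are stated with the factor $\dim\End_{\widetilde\s_n}(D(\la))$ built in, so it is legitimate to cancel; and although $D(\la)$ may carry a supermodule structure, the computation only uses plain module Hom and induction/restriction adjunctions, for which the dimension count is unambiguous.
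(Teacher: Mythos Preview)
Your proof is correct and follows exactly the approach indicated by the paper, which simply says to combine the Mackey isomorphism $D(\la)\ua^{\widetilde\s_{n+1}}\da_{\widetilde\s_n}\cong D(\la)\oplus D(\la)\da_{\widetilde\s_{n-1}}\ua^{\widetilde\s_n}$ with Lemma~\ref{L15}. You have filled in the Frobenius reciprocity steps and the cancellation of the nonzero factor $\dim\End_{\widetilde\s_n}(D(\la))$ that the paper leaves implicit.
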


\begin{lemma}\label{L051218_3}
If $i\not=j$ and $A$ is any spin module of $\widetilde\s_{n}$ then $\Ind_j\Res_i M\cong \Res_i\Ind_j M$.
\end{lemma}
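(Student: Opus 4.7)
The plan is to deduce the isomorphism from the Mackey induction-restriction formula by passing to block components, using the hypothesis $i\neq j$ to arrange that the extra summand $M$ in Mackey's formula contributes nothing to the relevant block.

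First, without loss of generality reduce to the case where $M$ lies in a single block with content $(a_0,\ldots,a_\ell)$, since the functors $\Res_i$ and $\Ind_j$ are additive and respect the block decomposition. Apply the Mackey isomorphism already invoked in the paragraph preceding Lemma~\ref{L10}:
\[M\ua^{\widetilde\s_{n+1}}\da_{\widetilde\s_n}\;\cong\;M\;\oplus\;M\da_{\widetilde\s_{n-1}}\ua^{\widetilde\s_n}.\]
Both sides are $F\widetilde\s_n$-modules; I want to project both onto the block $B$ of $\widetilde\s_n$ with content $(a_0,\ldots,a_i-1,\ldots,a_j+1,\ldots)$.

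Next, identify each block-$B$ component. Since $i\neq j$, the content defining $B$ is genuinely different from $(a_0,\ldots,a_\ell)$, so the projection of the summand $M$ onto $B$ is zero. Decompose $M\da_{\widetilde\s_{n-1}}=\bigoplus_k\Res_k M$; each $\Res_k M$ sits in the block with content $(a_0,\ldots,a_k-1,\ldots)$, and inducing and projecting to $B$ forces $k=i$ and pins down the added residue as $j$, giving exactly $\Ind_j\Res_i M$. Symmetrically, $M\ua^{\widetilde\s_{n+1}}=\bigoplus_k\Ind_k M$, and the block-$B$ component of $(\Ind_k M)\da_{\widetilde\s_n}$ forces $k=j$ and pins down the removed residue as $i$, giving $\Res_i\Ind_j M$.

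Matching the two block-$B$ components on the two sides of the Mackey isomorphism yields $\Ind_j\Res_i M\cong\Res_i\Ind_j M$, as required. There is no real obstacle here: the argument is pure bookkeeping on block contents, and the role of the assumption $i\neq j$ is precisely to make the stray $M$-summand from Mackey disappear after projection to $B$. (If $i=j$, the summand $M$ does land in the relevant block, which is exactly why the analogous statement fails in that case.)
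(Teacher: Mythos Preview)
Your proof is correct and is exactly the argument the paper has in mind: the paper simply says the lemma follows from the Mackey isomorphism $M\ua^{\widetilde\s_{n+1}}\da_{\widetilde\s_n}\cong M\oplus M\da_{\widetilde\s_{n-1}}\ua^{\widetilde\s_n}$, and you have spelled out the block-by-block projection that makes this precise. The only cosmetic point is that ``the block $B$'' should strictly be ``the sum of blocks with content $(a_0,\ldots,a_i-1,\ldots,a_j+1,\ldots)$'', matching the definition of $\Res_i$ and $\Ind_j$ in the paper, but this does not affect the argument.
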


The next results consider normal nodes of different residues. 
\begin{lemma}\label{L081218_2}
Let $\la\in\RP_p(n)$. If $i\not=j$ and $\eps_i(\la),\eps_j(\la)>0$ then $\End_{\widetilde\s_{n-2}}(e_i D(\widetilde e_j\la),e_j D(\widetilde e_i\la))\not=0$.
\end{lemma}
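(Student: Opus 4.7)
The plan is to reduce the claim, via biadjointness and the commutation of $e_i$ and $f_j$ for distinct residues, to the existence of a very concrete nonzero composition coming from head structure.

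First I would apply the biadjointness of $e_j$ and $f_j$ from Lemma \ref{Lefd} to rewrite
\[\Hom_{\widetilde\s_{n-2}}(e_i D(\widetilde e_j\la),\, e_j D(\widetilde e_i\la)) \cong \Hom_{\widetilde\s_{n-1}}(f_j e_i D(\widetilde e_j\la),\, D(\widetilde e_i\la)).\]
Next, since $i\neq j$, I would invoke Lemma \ref{L051218_3}, which says $\Ind_j\Res_i$ and $\Res_i\Ind_j$ coincide on spin modules, to replace $f_j e_i$ by $e_i f_j$. Thus it suffices to exhibit a nonzero homomorphism $e_i f_j D(\widetilde e_j\la) \to D(\widetilde e_i\la)$.

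To produce such a map, I would use head structure twice. By Lemma \ref{Lef} we have $\widetilde f_j\widetilde e_j\la = \la$, so by Lemma \ref{Lemma40s}(iii) the module $f_j D(\widetilde e_j\la)$ has head $D(\la)$; applying the exact functor $e_i$ yields a surjection $e_i f_j D(\widetilde e_j\la) \twoheadrightarrow e_i D(\la)$. Then, since $\eps_i(\la)>0$, Lemma \ref{Lemma39s}(iii) supplies a further surjection $e_i D(\la)\twoheadrightarrow D(\widetilde e_i\la)$ onto its head. Composing these two surjections produces the desired nonzero element of $\Hom_{\widetilde\s_{n-1}}(e_i f_j D(\widetilde e_j\la), D(\widetilde e_i\la))$, which transports back through adjunction to give a nonzero element of the Hom space in the statement.

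The argument is essentially formal once the commutation of $e_i$ and $f_j$ is in hand. The only subtle point is that Lemma \ref{L051218_3} is stated for $\Ind/\Res$ rather than $e/f$; however, by Lemmas \ref{Lemma39s}(i) and \ref{Lemma40s}(i) the functors $\Res_i$ and $\Ind_j$ differ from $e_i$ and $f_j$ only by multiplicities (powers of $2$ and factorials), so the isomorphism $f_j e_i M\cong e_i f_j M$ up to these multiplicities is enough to preserve nonvanishing of the Hom space. This is the only potential obstacle, and it does not cause genuine difficulty since the desired conclusion is only that a certain Hom group is nonzero.
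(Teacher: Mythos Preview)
Your proof is correct and follows essentially the same strategy as the paper: adjointness, the commutation $\Ind_j\Res_i\cong\Res_i\Ind_j$ from Lemma~\ref{L051218_3}, and head structure from Lemmas~\ref{Lef}, \ref{Lemma39s}, \ref{Lemma40s}. The paper performs a second adjunction step to land at $\Hom_{\widetilde\s_n}(\Ind_j D(\widetilde e_j\la),\Ind_i D(\widetilde e_i\la))$ and then observes that $D(\la)$ lies in the head of the first and the socle of the second, whereas you stop one level earlier and exhibit an explicit surjection $e_i f_j D(\widetilde e_j\la)\twoheadrightarrow D(\widetilde e_i\la)$; these are two phrasings of the same computation. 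Your final paragraph correctly identifies and handles the only genuine subtlety, namely that Lemma~\ref{L051218_3} is stated for $\Res/\Ind$ rather than $e/f$: since the discrepancy is only a positive multiplicity, nonvanishing transfers.
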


\begin{proof}
Using Lemmas \ref{Lemma39s}, \ref{Lemma40s} and \ref{L051218_3} we have that there exists $c>0$ such that
\begin{align*}
\dim\End_{\widetilde\s_{n-2}}(e_i D(\widetilde e_j\la),e_j D(\widetilde e_i\la))&=c\dim\End_{\widetilde\s_{n-1}}(\Ind_j\Res_i D(\widetilde e_j\la),D(\widetilde e_i\la))\\
&=c\dim\End_{\widetilde\s_{n-1}}(\Res_i\Ind_jD(\widetilde e_j\la),D(\widetilde e_i\la))\\
&=c\End_{\widetilde\s_n}(\Ind_j D(\widetilde e_j\la),\Ind_iD(\widetilde e_i\la))
\end{align*}
from which the lemma follows, since both $\Ind_j D(\widetilde e_j\la)$ and $\Ind_i D(\widetilde e_i\la)$ contain $D(\la)$ in their head and socle by Lemmas \ref{Lef} and \ref{Lemma40s}.
\end{proof}

\begin{lemma}\label{L051218_4}
If $\la\in\RP_p(n)$ and $\epsilon_i(\la)>0$ then $\epsilon_j(\widetilde e_i\la)\geq\epsilon_j(\la)$ for $j\not=i$.
\end{lemma}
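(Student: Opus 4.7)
Since the conclusion is vacuous if $\epsilon_j(\la)=0$, assume $s:=\epsilon_j(\la)\geq 1$. By Lemma \ref{Lemma39sr}(ii), showing $\epsilon_j(\widetilde e_i\la)\geq s$ is equivalent to showing $e_j^{(s)}D(\widetilde e_i\la)\neq 0$. The plan is to mimic the Hom-space manipulation used in the proof of Lemma \ref{L081218_2}, with the single restriction $\Res_j$ there replaced throughout by the iterated restriction $\Res_j^s$, and to produce a nonzero element of $\Hom_{\widetilde\s_{n-s-1}}(e_iD(\widetilde e_j^s\la),\,e_j^{(s)}D(\widetilde e_i\la))$.

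Exactly as in Lemma \ref{L081218_2}, using Lemmas \ref{Lemma39s}(i) and \ref{Lemma39sr}(i) to absorb the divided-power multiplicities, biadjointness of block-projected induction and restriction (so that $\Hom(\Res_{?}X,Y)=\Hom(X,\Ind_{?}Y)$ once block projections are set up correctly), and the iterated version $\Ind_j^s\Res_i\cong\Res_i\Ind_j^s$ of Lemma \ref{L051218_3}, I expect to obtain
$$
\dim\Hom_{\widetilde\s_{n-s-1}}\bigl(e_iD(\widetilde e_j^s\la),\,e_j^{(s)}D(\widetilde e_i\la)\bigr)
= c\cdot\dim\Hom_{\widetilde\s_n}\bigl(\Ind_j^s D(\widetilde e_j^s\la),\,\Ind_iD(\widetilde e_i\la)\bigr)
$$
for some $c>0$, with both modules on the right lying in the block of $D(\la)$ (because adding $s$ nodes of residue $j$ to the content of $\widetilde e_j^s\la$, or a single node of residue $i$ to the content of $\widetilde e_i\la$, restores the content of $\la$).

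To construct a nonzero morphism between the two induced modules, note that Lemma \ref{Lemma40s}(i),(iii) combined with $\widetilde f_i\widetilde e_i\la=\la$ from Lemma \ref{Lef} put $D(\la)$ in both the head and the socle of $\Ind_iD(\widetilde e_i\la)$. Iterating the same two results produces a chain of surjections
$$
\Ind_j^s D(\widetilde e_j^s\la) \twoheadrightarrow \Ind_j^{s-1} D(\widetilde e_j^{s-1}\la) \twoheadrightarrow \cdots \twoheadrightarrow \Ind_j D(\widetilde e_j\la) \twoheadrightarrow D(\la),
$$
where each arrow is obtained by applying the exact functor $\Ind_j^{r-1}$ to the surjection $\Ind_j D(\widetilde e_j^r\la)\twoheadrightarrow D(\widetilde e_j^{r-1}\la)$ (valid since $\widetilde f_j\widetilde e_j^r\la = \widetilde e_j^{r-1}\la$ by Lemma \ref{Lef}). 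Composing this final surjection onto $D(\la)$ with the inclusion $D(\la)\hookrightarrow\Ind_iD(\widetilde e_i\la)$ gives the desired nonzero map, forcing $e_j^{(s)}D(\widetilde e_i\la)\neq 0$. The one delicate point is the bookkeeping behind the displayed dimension identity: one has to verify how the biadjunctions interact with the divided-power summand of Lemma \ref{Lemma39sr}(i) and with the block projections defining $\Res_j^s$, $\Ind_j^s$, $\Res_i$ and $\Ind_i$, but this is routine once the setup is made precise.
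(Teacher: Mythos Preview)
Your argument is correct, but it is considerably more elaborate than necessary. The paper's proof avoids Hom-space manipulations entirely: since $\eps_i(\la)>0$, Lemmas \ref{Lef} and \ref{Lemma40s} give an inclusion $D(\la)\subseteq\Ind_iD(\widetilde e_i\la)$; applying the exact functor $\Res_j^{\,\eps_j(\la)}$ and commuting it past $\Ind_i$ via Lemma \ref{L051218_3} yields
\[
0\neq\Res_j^{\,\eps_j(\la)}D(\la)\subseteq\Ind_i\Res_j^{\,\eps_j(\la)}D(\widetilde e_i\la),
\]
so $\Res_j^{\,\eps_j(\la)}D(\widetilde e_i\la)\neq 0$ and Lemma \ref{Lemma39sr} finishes. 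Your route achieves the same conclusion by packaging this non-vanishing inside a Hom computation modelled on Lemma \ref{L081218_2}; the chain of surjections $\Ind_j^sD(\widetilde e_j^s\la)\twoheadrightarrow D(\la)$ composed with $D(\la)\hookrightarrow\Ind_iD(\widetilde e_i\la)$ is exactly the same inclusion read through adjunction. What you gain is an explicit nonzero element of $\Hom_{\widetilde\s_{n-s-1}}(e_iD(\widetilde e_j^s\la),e_j^{(s)}D(\widetilde e_i\la))$, but for the lemma as stated this extra information is not used, and the direct restriction argument is both shorter and free of the multiplicity bookkeeping you flag as ``delicate''.
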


\begin{proof}We may assume that $\eps_j(\la)>0$. Then from Lemmas \ref{Lef}, \ref{Lemma40s} and \ref{L051218_3},
\[0\not= \Res_i^{\epsilon_i(\la)} D(\la)\subseteq \Res_i^{\epsilon_i(\la)} \Ind_j D(\widetilde e_j\la)\cong \Ind_j\Res_i^{\epsilon_i(\la)} D(\widetilde e_j\la).\]
In particular $\Res_i^{\epsilon_i(\la)} D(\widetilde e_j\la)\not=0$ from which the lemma follows by Lemma \ref{Lemma39sr}.
\end{proof}

\subsection{Reduction modulo $p$}

We now consider some results about reduction modulo $p$ of spin representations in characteristic 0. If $\mu\in\RP_0(n)$, let $\mu^R\in\RP_p(n)$ be as defined in \cite{BK3}. Given two partitions $\alpha,\beta$, write $\al\lhd\be$ if $\al$ is smaller than $\be$ in the dominance order. The main known result is the following, see \cite[Theorem 10.8]{BK2}, \cite[Theorem 10.4]{BK4}and \cite[Theorem 4.4]{BK3}:

\begin{lemma}\label{L54}
Let $\mu\in\RP_0(n)$ and $\nu\in\RP_p(n)$. If $[S(\mu):D(\nu)]>0$ then $\nu\unlhd \mu^R$. Further
\[[S(\mu):D(\mu^R)]=2^{(h(\mu)-h_{p'}(\mu)+a(\mu)-a(\mu^R))/2}.\]
\end{lemma}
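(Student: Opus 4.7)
The plan is to proceed by induction on $n$, using the branching functors $e_i$ (which commute with reduction modulo $p$ since both are defined via block projections with respect to $p$-residue contents) combined with the combinatorial properties of the regularization map $\mu\mapsto\mu^R$ from \cite{BK3}. The central combinatorial input, extracted from \cite{BK3}, is that $\mu\mapsto\mu^R$ intertwines the classical branching on $\RP_0(n)$ (given by Morris' rule) with the spin crystal operators $\widetilde{e}_i,\widetilde{f}_i$ on $\RP_p(n)$ from Lemmas \ref{Lemma39s} and \ref{Lemma40s}.

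For the triangularity statement, suppose inductively that it holds for $n-1$ and let $[S(\mu):D(\nu)]>0$. If $\nu\neq\mu^R$, pick a residue $j$ with $\eps_j(\nu)>0$. By Lemma \ref{Lemma39s}(iii), $D(\widetilde{e}_j\nu)$ is a composition factor of $e_jD(\nu)$, hence appears in $e_j$ applied to the reduction modulo $p$ of $S(\mu)$. In characteristic $0$, Morris' rule decomposes $S(\mu)\da_{\widetilde\s_{n-1}}$ into a sum of $S(\mu')$, and projecting to the block with the appropriate $j$-residue content keeps only those $\mu'$ obtained from $\mu$ by removing a node of residue $j$. Since $e_j$ commutes with reduction modulo $p$, the factor $D(\widetilde{e}_j\nu)$ must come from some such $S(\mu')$, and the inductive hypothesis gives $\widetilde{e}_j\nu\unlhd(\mu')^R$. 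The intertwining property of $\mu\mapsto\mu^R$ with $\widetilde{e}_j,\widetilde{f}_j$ then lifts this to $\nu\unlhd\mu^R$.

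For the multiplicity formula, pick a residue $i$ with $r:=\eps_i(\mu^R)>0$ and apply $e_i^{(r)}$. On the characteristic-$p$ side, Lemma \ref{Lemma39sr}(iii) combined with the triangularity just proved determines $[e_i^{(r)}S(\mu):D(\widetilde{e}_i^r\mu^R)]$ in terms of $[S(\mu):D(\mu^R)]$ and a binomial factor. On the characteristic-$0$ side, iterating Morris' rule and projecting to the appropriate block decomposes $e_i^{(r)}S(\mu)$ as a sum of $S(\mu')$; the intertwining property identifies those $\mu'$ with $(\mu')^R=\widetilde{e}_i^r\mu^R$, and the induction hypothesis applied to each such summand, together with the triangularity for all other summands, yields the claimed multiplicity for $[S(\mu):D(\mu^R)]$ after solving a simple linear equation.

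The hard part will be tracking the exact power of $2$ in $2^{(h_p(\mu)+a(\mu)-a(\mu^R))/2}$. Each inductive step contributes a factor whose precise form depends on: whether $i=0$ (in which case Lemma \ref{Lef} preserves $a$) or $i>0$ (in which case $a$ is flipped); whether the node removed on the characteristic-$0$ side alters the number of parts of $\mu$ divisible by $p$ (changing $h_p$); and whether the relevant supermodule is of type M or Q (each type-Q contributing an extra factor of $2$ via Lemma \ref{Lemma39sr}(i) and the multiplicity of $\Res_i^r D$ over $e_i^{(r)}D$). Verifying that these local factors of $2$ assemble into the closed exponent $(h_p(\mu)+a(\mu)-a(\mu^R))/2$ is the technical core, and requires the detailed combinatorial analysis of $\mu\mapsto\mu^R$ carried out in \cite{BK3}.
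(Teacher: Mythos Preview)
The paper does not give its own proof of this lemma: it is stated as a known result, with the text ``The main known result is the following, see \cite[Theorem 10.8]{BK2}, \cite[Theorem 10.4]{BK4} and \cite[Theorem 4.4]{BK3}'', and no argument is supplied. So there is no in-paper proof to compare your proposal against.

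Your sketch is in the right spirit---induction on $n$ via branching, using that $\Res_i$ commutes with reduction modulo $p$ and that regularization intertwines characteristic-$0$ branching with the crystal operators---and this is indeed how the cited references proceed. But your proposal is not a self-contained proof: both the ``lifting'' step in the triangularity argument (passing from $\widetilde e_j\nu\unlhd(\mu')^R$ back to $\nu\unlhd\mu^R$) and the bookkeeping of powers of $2$ in the multiplicity formula rest on precise combinatorial facts about $\mu\mapsto\mu^R$ that you do not establish, and you yourself note that this ``requires the detailed combinatorial analysis \dots\ carried out in \cite{BK3}''. In other words, your argument ultimately defers to the same sources the paper cites. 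For the purposes of this paper the correct thing to do is exactly what the paper does: quote the result with references and move on.
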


Let $n=ap+b$ with $0\leq b<p$. The spin irreducible representations of $\widetilde\s_n$ and $\widetilde \A_n$ indexed by the partition
\[\be_n:=\left\{\begin{array}{ll}
(p^a,b),&b\not=0,\\
(p^{a-1},p-1,1),&b=0
\end{array}\right.\]
are called basic spin modules. By the next lemma, which holds by \cite[Table III]{Wales} and Lemma \ref{L54}, basic spin modules in characteristic $p$ are exactly the composition factors of the reduction modulo $p$ of basic spin modules in characteristic 0 (indexed by $(n)\in\RP_0(n)$).

\begin{lemma}\label{LBS}\label{l1}
Let $p\geq 3$. Then
\begin{enumerate}[-]
\item if $p\nmid n$ and $2\nmid n$ then $S((n),0)\cong D(\be_n,0)$,

\item if $p\nmid n$ and $2\mid n$ then $S((n),\pm)\cong D(\be_n,\pm)$,

\item if $p\mid n$ and $2\nmid n$ then $S((n),0)\cong D(\be_n,+)\oplus D(\be_n,-)$,

\item if $p\mid n$ and $2\mid n$ then $S((n),\pm)\cong D(\be_n,0)$.
\end{enumerate}
\end{lemma}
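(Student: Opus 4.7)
The plan is to apply Lemma~\ref{L54} to $\mu=(n)\in\RP_0(n)$ together with the characterization stated just before it---that every composition factor of the reduction modulo $p$ of the basic spin module $S((n))$ is a basic spin module $D(\be_n)$ in characteristic $p$. This characterization already pins down $(n)^R=\be_n$ (also readable off directly from \cite[Table III]{Wales}) and reduces the lemma to computing the single multiplicity
\[
[S((n)):D(\be_n)]=2^{(h_p((n))+a((n))-a(\be_n))/2}
\]
together with checking that the reduction is semisimple, which is part of the information in Wales's table.

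Next I would tabulate the three quantities in the exponent. Here $h_p((n))=1$ if $p\mid n$ and $0$ otherwise; $a((n))=0$ iff $n$ is odd, since $(n)$ has a single part; and $a(\be_n)$ is read off from the two shapes of $\be_n$, using $h_{p'}(\be_n)=1$ when $p\nmid n$ (so $\be_n=(p^a,b)$) and $h_{p'}(\be_n)=2$ when $p\mid n$ (so $\be_n=(p^{a-1},p-1,1)$). A direct four-case substitution then gives supermodule multiplicity $1$ in the first three cases of the lemma and $2$ in the fourth.

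Finally I would convert each supermodule multiplicity into the stated identity of ordinary modules using the types M/Q of $S((n))$ and $D(\be_n)$ (via $\dim\End=1+a$). When both sides have the same type, multiplicity one directly yields $S((n),0)\cong D(\be_n,0)$ respectively $S((n),\pm)\cong D(\be_n,\pm)$ after matching signs. When $S((n))$ is type M and $D(\be_n)$ is type Q, the single simple $S((n),0)$ unpacks to $D(\be_n,+)\oplus D(\be_n,-)$. When $S((n))$ is type Q and $D(\be_n)$ is type M, the supermodule multiplicity $2$ combined with the dimensional symmetry between $S((n),+)$ and $S((n),-)$ forces each to be isomorphic to $D(\be_n,0)$. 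The main subtlety is this last conversion---between one supermodule decomposition number and four parallel ordinary-module statements without mixing up the M/Q types or the $\pm$ signs---together with the non-obvious semisimplicity in the two mixed-type cases, which is exactly what Wales's Table III supplies.
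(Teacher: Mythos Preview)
Your proposal is correct and follows essentially the same approach as the paper, which simply cites \cite[Table III]{Wales} and Lemma~\ref{L54}; you have just unpacked the exponent computation and the type conversion in detail. One minor remark: the semisimplicity you flag as ``non-obvious'' in the mixed-type cases actually follows automatically from the data you already have---in case three there is a single supermodule composition factor, and in case four the dimension equality $\dim S((n),\pm)=\tfrac{1}{2}\dim S((n))=\dim D(\be_n,0)$ forces each $S((n),\pm)$ to be simple---so Wales's table is needed only for the composition-factor characterization, not separately for semisimplicity.
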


The next lemma shows that there are cases where it is easy to compute $\mu^R$ using the partions $\be_{\mu_i}$. Here as in the following $\unlhd$ denotes the dominance order.

\begin{lemma}\label{L55}
Let $p\geq 3$ and $\mu=(\mu_1,\ldots,\mu_{h(\mu)})\in\RP_0(n)$. Then $\mu^R\unlhd\sum\be_{\mu_i}$ with equality holding if and only if $\mu_i\geq\mu_{i+1}+p$ for $1\leq i<h(\mu)$. Further if $\mu_i\geq\mu_{i+1}+p+\de_{p\mid\mu_{i+1}}$ and $\nu\in\RP_0(n)$ with $\nu\not\!\!\unlhd\mu$ then $[S(\nu):D(\mu^R)]=0$.
\end{lemma}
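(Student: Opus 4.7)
The plan is to argue by induction on $h=h(\mu)$, using the characteristic-$0$ induced supermodule
\[M:=\Ind_{\widetilde\s_{\mu_1}\times\widetilde\s_{n-\mu_1}}^{\widetilde\s_n}\bigl(S((\mu_1))\circledast S(\mu')\bigr),\]
where $\mu=(\mu_1,\mu')$ with $\mu'=(\mu_2,\ldots,\mu_h)$. For $h=1$ all three assertions follow at once from Lemma~\ref{LBS}: $\mu=(n)$, so $\mu^R=\be_n=\sum_i\be_{\mu_i}$, and the gap hypotheses are vacuous. For $h>1$, Stembridge's shifted Pieri rule gives that $S(\mu)$ is a summand of $M$ while every other summand $S(\nu)$ has $\nu\lhd\mu$, since $\mu_1$ is the largest part of $\mu$.

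For the dominance bound, I reduce $M$ modulo $p$. Lemma~\ref{LBS} gives $S((\mu_1))\!\mod p\simeq D(\be_{\mu_1})$, and by induction every composition factor $D(\sigma)$ of $S(\mu')\!\mod p$ satisfies $\sigma\unlhd\sum_{i\geq2}\be_{\mu_i}$. An analysis of $\Ind(D(\be_{\mu_1})\circledast D(\sigma))$ via iterated applications of the $f_i$ functors of Lemma~\ref{Lemma40s}, following the contents of $\sigma$, shows each composition factor $D(\gamma)$ satisfies $\gamma\unlhd\be_{\mu_1}+\sigma\unlhd\sum_i\be_{\mu_i}$. Since $D(\mu^R)$ appears in $S(\mu)\!\mod p\subseteq M\!\mod p$ by Lemma~\ref{L54}, we obtain $\mu^R\unlhd\sum_i\be_{\mu_i}$.

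For the equality statement, direct inspection of $\be_m=(p^a,b)$ (or $(p^{a-1},p-1,1)$ when $p\mid m$) shows that $\sum_i\be_{\mu_i}$ is $p$-restricted $p$-strict if and only if $\mu_i\geq\mu_{i+1}+p$ for all $i$: if some gap is smaller, the sum fails the admissibility conditions and so cannot equal $\mu^R$. When the gap holds, I would finish by unwinding the combinatorial description of the $R$-map from \cite{BK3}, which distributes over sufficiently separated parts to give $\mu^R=\sum_i(\mu_i)^R=\sum_i\be_{\mu_i}$; alternatively one matches the explicit multiplicity $[S(\mu):D(\mu^R)]=2^{(h_p(\mu)+a(\mu)-a(\mu^R))/2}$ from Lemma~\ref{L54} against a direct count of $D(\sum_i\be_{\mu_i})$ in $M\!\mod p$, where it can arise only from the $S(\mu)$-summand.

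For the last assertion, assume the strengthened gap $\mu_i\geq\mu_{i+1}+p+\de_{p\mid\mu_{i+1}}$, and suppose $[S(\nu):D(\mu^R)]>0$. Lemma~\ref{L54} yields $\mu^R\unlhd\nu^R$, and the equality just established plus the first assertion give $\sum_i\be_{\mu_i}\unlhd\nu^R\unlhd\sum_i\be_{\nu_i}$. The core task is then to show that the extra $\de_{p\mid\mu_{i+1}}$ correction forces $\nu\unlhd\mu$: it is precisely needed to handle the case where $\mu_{i+1}$ is divisible by $p$, so that $\be_{\mu_{i+1}}$ carries a $(p-1,1)$-tail rather than a clean $(p^a)$-block, which without the correction would allow configurations $\nu\rhd\mu$ whose $\sum_i\be_{\nu_i}$ still dominates $\sum_i\be_{\mu_i}$. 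This final combinatorial verification, tracking how $p$-divisibility of parts reshapes the $\be$-sum, is the main obstacle; the rest is either induction or a direct application of Lemmas~\ref{LBS} and~\ref{L54}.
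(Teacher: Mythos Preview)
Your approach via the induced supermodule $M$ and Stembridge's Pieri rule is quite different from the paper's, which is purely combinatorial: the paper defines an auxiliary shape $\bar\mu$ by placing $\be_{\mu_1},\be_{\mu_2},\ldots$ in successive blocks of $p$ columns, observes that $\bar\mu$ has the same ladder content as $\mu$ and the same row content as $\sum_i\be_{\mu_i}$, and reads off all three assertions directly from this picture.

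Your route has two concrete problems. First, the bound $\gamma\unlhd\be_{\mu_1}+\sigma$ for composition factors $D(\gamma)$ of $\Ind(D(\be_{\mu_1})\circledast D(\sigma))$ is asserted ``via iterated $f_i$'', but induction from $\widetilde\s_{\mu_1}\times\widetilde\s_{n-\mu_1}$ is not a composition of $f_i$ functors applied to $D(\sigma)$, so Lemma~\ref{Lemma40s} does not apply as stated; this step needs a genuine argument. (Separately, your claim that every summand $S(\nu)$ of $M$ with $\nu\neq\mu$ has $\nu\lhd\mu$ is false---$S((n))$ appears, for instance---which also breaks your alternative multiplicity-count for the equality case.) Second, and fatally for the ``only if'' direction of equality, your claim that $\sum_j\be_{\mu_j}\notin\RP_p(n)$ whenever some gap satisfies $\mu_i-\mu_{i+1}<p$ is false: for $p=3$ and $\mu=(7,6)$ one gets $\be_7+\be_6=(3,3,1)+(3,2,1)=(6,5,2)\in\RP_3(13)$, yet the gap is $1$. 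The paper handles this direction by exhibiting, via $\bar\mu$, a node that can be moved within its ladder to produce a strictly smaller shape with the same ladder content, forcing $\mu^R\lhd\sum_j\be_{\mu_j}$. Likewise, for the final assertion the paper compares the first $rp$ columns of $\bar\mu$ and $\bar\nu$ directly (using that under the strengthened gap hypothesis $\bar\mu=\sum_i\be_{\mu_i}$ exactly), bypassing the $\nu^R$ detour whose ``core task'' you leave undone.
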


\begin{proof}
Let
\[\bar\mu:=\cup\{(j,p(i-1)+k)|(j,k)\in\be_{\mu_i}\},\]
so that the first $p$ columns of $\bar\mu$ correspond to $\be_{\mu_1}$, the second $p$ columns to $\be_{\mu_2}$ and so on. Note that $\bar\mu$ is not necessarily (the Young diagram of) a partition, but $\bar\mu$ and of $\sum\be_{\mu_i}$ always have the same number of nodes on any row. Further $\mu$ and $\bar\mu$ have the same number of nodes on any ladder. It then easily follows from the definition of $\mu^R$ that $\mu^R\unlhd \sum\be_{\mu_i}$.

Assume next that $\mu_i<\mu_{i+1}+p$ for some $1\leq i<h(\mu)$. Let $(j,k)$ be the good node of $\be_{\mu_{i+1}}$. Then $(j+1,p(i-1)+k)\not\in\bar\mu$ and
\[(\bar\mu\setminus(j,pi+k))\cup(j+1,p(i-1)+k)\]
has the same number of nodes as $\mu$ on each ladder. It then follows that $\mu^R\not=\sum\be_{\mu_i}$ in this case.

Assume now that $\mu_i\geq\mu_{i+1}+p$ for $1\leq i<h(\mu)$. Let $A$ be the set of all $1\leq r<h(\mu)$ with $p\mid\mu_r=\mu_{r+1}+p$. Then
\[\sum\be_{\mu_i}=(\bar\mu\setminus\{(h(\be_{\mu_{r+1}}),pr+1)|r\in A\})\cup\{(h(\be_{\mu_{r+1}}),pr)|r\in A\}\]
(that is $\sum\be_{\mu_i}$ is obtained from $\bar\mu$ by moving the last node in the $(r+1)$-th set of $p$ columns one node to the left for all $r\in A$). So $\sum\be_{\mu_i}$ and $\bar\mu$ have the same number of nodes on any ladder. Further by assumption that $\mu_i\geq\mu_{i+1}+p$ it can be checked that $\sum\be_{\mu_i}\in\RP_p(n)$ and so $\mu^R=\sum\be_{\mu_i}$.

Last assume that $\mu_i\geq\mu_{i+1}+p+\de_{p\mid\mu_{i+1}}$. Note that in this case $\bar\mu=\sum\be_{\mu_i}=\mu^R$ by the last paragraph. Let $\nu\in\RP_0(n)$ with $\nu\not\!\!\unlhd\mu$. By Lemma \ref{L54} and the above it is enough to prove that $\sum\be_{\mu_i}\not\!\!\unlhd\sum\be_{\nu_i}$. Pick $r$ with $\nu_1+\ldots+\nu_r>\mu_1+\ldots+\mu_r$ and define $\bar\nu$ similarly to $\bar\mu$. Then the first $rp$ columns of $\bar\nu$ contain more nodes than the first $rp$ columns of $\bar\mu$. In particular the first $rp$ columns of $\sum\be_{\nu_i}$ contain more nodes than the first $rp$ columns of $\sum\be_{\mu_i}$ and so $(\sum\be_{\mu_i})'\not\!\unrhd(\sum\be_{\nu_i})'$, that is $\sum\be_{\mu_i}\not\!\!\unlhd\sum\be_{\nu_i}$.
\end{proof}

\subsection{Module structure}

Often we will need to consider the structure of certain modules. We write
\[W\sim V_1|\ldots|V_h\]
if $W$ has a filtration with subquotients $V_j$ counted from the bottom and
\[W\sim (V_{1,1}|\ldots|V_{1,h_1})\,\,\oplus\,\,\ldots\,\,\oplus\,\,(V_{k,1}|\ldots|V_{k,h_k})\]
if $W\cong W_1\oplus\ldots\oplus W_k$ with $W_j\sim V_{j,1}|\ldots|V_{j,h_j}$.

If $V_1,\ldots,V_h$ are simple we will also write
\[W\cong V_1|\ldots|V_h\]
if $W$ is uniserial with composition factors $V_j$ counted from the bottom and
\[W\cong (V_{1,1}|\ldots|V_{1,h_1})\,\,\oplus\,\,\ldots\,\,\oplus\,\,(V_{k,1}|\ldots|V_{k,h_k})\]
if $W\cong W_1\oplus\ldots\oplus W_k$ with $W_j\cong V_{j,1}|\ldots|V_{j,h_j}$.

Further for groups $G,H$ and modules $A$ of $FG$ and $B$ of $FH$ we will write $A\boxtimes B$ for the corresponding modules of $F(G\times H)$.

\subsection{Permutation modules}

In this subsection we will consider the structure of certain permutation modules and prove some results connecting such permutations modules and the endomorphism ring $\End_F(V)$, for $V$ a $\widetilde\s_n$ or $\widetilde \A_n$ module.

For $\al\in\Par(n)$ a partition of $n$ let $S^\la$ be the reduction modulo $p$ of the Specht module indexed by $\al$ (which can be viewed as an $\widetilde\s_n$-module). Further let $\s_\al$ be the Young subgroup $\s_{\al_1}\times\s_{\al_2}\times\cdots\leq\s_n$ and define $M^\al:=\1\ua_{\widetilde\s_\al}^{\widetilde\s_n}$ to be the corresponding permutation module. It is well known (see for example \cite{JamesBook}) that $S^\al\subseteq M^\al$. Let $\A_\al=\s_\al\cap \A_n$. If $\al\not=(1^n)$ then $\s_\al$ contains an odd element, so $\s_n$ is a single $(\A_n,\s_\al)$ double coset. Hence Mackey's theorem gives that $M^\al\da_{\widetilde \A_n}\cong \1\ua_{\widetilde \A_{\al}}^{\widetilde \A_n}$.

The next lemma holds by Frobenius reciprocity and the definition of $M^\al$.

\begin{lemma}\label{l2}
For any $F\widetilde\s_n$-module $V$ and any $\alpha\in\Par(n)$ we have that
\[\dim\Hom_{\widetilde\s_n}(M^\alpha,\End_F(V))=\dim\End_{\widetilde\s_\alpha}(V\da_{\widetilde\s_\alpha}).\]
Similarly for any $F\widetilde \A_n$-module $W$ and any $(1^n)\not=\alpha\in\Par(n)$ we have that
\[\dim\Hom_{\widetilde \A_n}(M^\alpha,\End_F(W))=\dim\End_{\widetilde \A_\alpha}(W\da_{\widetilde \A_\alpha}).\]
\end{lemma}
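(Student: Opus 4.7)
The statement is pure Frobenius reciprocity together with the standard identification of fixed points of $\End_F(V)$ under a subgroup with endomorphisms over that subgroup, so the plan is short.

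First I would recall that by definition $M^{\alpha}=\mathbf 1\ua_{\widetilde\s_\alpha}^{\widetilde\s_n}$, so Frobenius reciprocity gives
\[
\Hom_{\widetilde\s_n}(M^\alpha,\End_F(V))
\;\cong\;
\Hom_{\widetilde\s_\alpha}\bigl(\mathbf 1,\End_F(V)\da_{\widetilde\s_\alpha}\bigr)
\;=\;\bigl(\End_F(V)\bigr)^{\widetilde\s_\alpha}.
\]
Then I would observe that an $F$-linear endomorphism $\phi\colon V\to V$ is fixed by the conjugation action of $g\in\widetilde\s_\alpha$ precisely when $g\phi=\phi g$, i.e.\ when $\phi$ is $\widetilde\s_\alpha$-equivariant; so the right-hand side is exactly $\End_{\widetilde\s_\alpha}(V\da_{\widetilde\s_\alpha})$. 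Comparing dimensions gives the first formula.

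For the alternating-group statement, the only extra point is to check that the restriction of $M^\alpha$ still looks like an induced permutation module. Provided $\alpha\ne(1^n)$, the Young subgroup $\widetilde\s_\alpha$ contains a lift of a transposition and hence is not contained in $\widetilde\A_n$; equivalently $\widetilde\s_n=\widetilde\s_\alpha\widetilde\A_n$, so Mackey (or a direct coset count) gives
\[
M^\alpha\da_{\widetilde\A_n}\;\cong\;\mathbf 1\ua_{\widetilde\A_\alpha}^{\widetilde\A_n}.
\]
Applying the same Frobenius-reciprocity argument as above, but now inside $\widetilde\A_n$, yields
\[
\Hom_{\widetilde\A_n}(M^\alpha,\End_F(W))\;\cong\;\End_{\widetilde\A_\alpha}(W\da_{\widetilde\A_\alpha}),
\]
and taking dimensions completes the proof.

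There is no real obstacle here: the only mildly delicate point is checking the restriction identity $M^\alpha\da_{\widetilde\A_n}\cong\mathbf 1\ua_{\widetilde\A_\alpha}^{\widetilde\A_n}$, which is precisely the reason the hypothesis $\alpha\ne(1^n)$ must be imposed in the alternating case (otherwise $\widetilde\s_\alpha=\langle z\rangle\subseteq\widetilde\A_n$ and the induced module splits differently).
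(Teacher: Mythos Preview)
Your argument is correct and is precisely the approach the paper indicates: the lemma is stated to hold ``by Frobenius reciprocity and the definition of $M^\alpha$'', and the paper has already noted just before it that $M^\alpha\da_{\widetilde\A_n}\cong\mathbf 1\ua_{\widetilde\A_\alpha}^{\widetilde\A_n}$ for $\alpha\ne(1^n)$. You have simply spelled out these two ingredients in full, so there is nothing to add.
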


We will also use {\em Young modules $Y^\al$} which can be defined using the following well-known facts  contained for example in \cite{JamesArcata} and \cite[\S4.6]{Martin}:

\begin{lemma} \label{LYoung} 
There exist indecomposable $F \s_n$-modules $Y^\al$ for $\al\in\Par(n)$ such that $M^\al\cong Y^\al\,\oplus\, \bigoplus_{\be\rhd\al}(Y^\be)^{\oplus m_{\be,\al}}$ for some $m_{\be,\al}\geq 0$. Moreover, $Y^\al$ can be characterized as the unique indecomposable direct summand of $M^\al$ such that  $S^\al\subseteq Y^\al$. Finally, we have $(Y^\al)^*\cong Y^\al$ for all $\al\in\Par(n)$.
\end{lemma}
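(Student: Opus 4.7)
The plan is to combine the Krull--Schmidt theorem with downward induction on the dominance order on $\Par(n)$. The key classical input is James's submodule theorem, which in particular gives that $\Hom_{F\s_n}(S^\mu, M^\al) \neq 0$ forces $\mu \unrhd \al$, and that $\End_{F\s_n}(S^\al)$ is a local ring. Self-duality of $M^\al$ comes from the perfect pairing on its distinguished basis of cosets $\{g\s_\al\}$, which realises $(M^\al)^* \cong M^\al$.

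The base case $\al = (n)$ gives $M^{(n)} \cong F = S^{(n)}$, which we take to be $Y^{(n)}$. For the inductive step, assume $Y^\be$ has been constructed with the stated properties for all $\be \rhd \al$, and apply Krull--Schmidt to write $M^\al$ as a direct sum of indecomposable summands. I would then establish: (a) exactly one isomorphism class of summand, to be named $Y^\al$, contains $S^\al$; (b) every other indecomposable summand is isomorphic to some $Y^\be$ with $\be \rhd \al$; and (c) $(Y^\al)^* \cong Y^\al$.

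For (a), existence follows from projecting $S^\al \hookrightarrow M^\al$ onto each indecomposable summand and choosing one on which the projection is nonzero; uniqueness uses locality of $\End_{F\s_n}(S^\al)$, which prevents $S^\al$ from splitting nontrivially across two non-isomorphic summands. For (b), an indecomposable summand $Z$ of $M^\al$ not containing $S^\al$ is a trivial-source module, and analysing its vertex via the double-coset description of $\Hom_{F\s_n}(M^\mu, M^\al)$ together with the Mackey formula shows that $Z$ must also occur as a summand of some $M^\be$ with $\be \rhd \al$; by the inductive hypothesis $Z \cong Y^\be$. For (c), since $(M^\al)^* \cong M^\al$ and each $Y^\be$ with $\be \rhd \al$ is self-dual by induction, Krull--Schmidt forces $(Y^\al)^* \cong Y^\al$.

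The main obstacle is part (b), which hinges on the unitriangularity (with respect to dominance) of the transition matrix between the classes of $\{M^\al\}$ and $\{Y^\al\}$ in the Grothendieck group; this in turn rests on the combinatorial double-coset analysis of homomorphism spaces between permutation modules. Once (b) is in place, the rest of the statement follows by bookkeeping with the already constructed $Y^\be$.
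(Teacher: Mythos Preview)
The paper does not prove this lemma: it records it as a well-known fact and cites \cite{JamesArcata} and \cite[\S4.6]{Martin}. So there is no ``paper's proof'' to compare against; your sketch is in fact an outline of the standard argument found in those references.

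Your outline is essentially correct, with two remarks. For (a), the argument via locality of $\End_{F\s_n}(S^\al)$ is not quite the right mechanism: the cleanest route applies James's submodule theorem directly to each summand $Y_i$ in a Krull--Schmidt decomposition of $M^\al$, concluding that either $S^\al\subseteq Y_i$ or $Y_i\subseteq (S^\al)^\perp$; since the $Y_i$ sum to $M^\al$ and $(S^\al)^\perp$ is a proper subspace, exactly one $Y_i$ contains $S^\al$. Your ``projection'' phrasing only gives a nonzero map $S^\al\to Y_i$, not an embedding, and locality alone does not bridge that gap. For (b), you correctly identify this as the crux. The sketch you give (trivial-source, vertex, Mackey) is the right toolkit, but a complete argument requires the nontrivial fact that any indecomposable summand of $M^\al$ has vertex a Sylow $p$-subgroup of some Young subgroup $\s_\be$ and is then a summand of $M^\be$; one standard way to pin down $\be\rhd\al$ is via the Specht filtration of $M^\al$ (Kostka numbers) together with the observation that $S^\al$ occurs exactly once. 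Both routes are carried out in the cited references.
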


In order to prove that in most cases $V\otimes W$ is not irreducible, we will usually prove that $\Hom_G(\End_F(V),\End_F(W))$ is not 1-dimensional by studying the modules $\End_F(V)$ and $\End_F(W)$ separately. This will in many cases be done with the next lemma, which is an analogue of \cite[Lemma 4.2]{m2} for covering groups of symmetric and alternating groups.

\begin{lemma}\label{l15}
Let $G\in\{\widetilde\s_n,\widetilde \A_n\}$ and $B$ and $C$ be $FG$-modules. For $\alpha\in\Par(n)$ let $b_\alpha$ and $c_\alpha$ be such that there exist $\phi^\alpha_1,\ldots,\phi^\alpha_{b_\alpha}\in\Hom_G(M^\alpha,B^*)$ with $\phi^\alpha_1|_{S^\alpha},\ldots,\phi^\alpha_{b_\alpha}|_{S^\alpha}$ linearly independent and that similarly there exist $\psi^\alpha_1,\ldots,\psi^\alpha_{c_\alpha}\in\Hom_G(M^\alpha,C)$ with $\psi^\alpha_1|_{S^\alpha},\ldots,\psi^\alpha_{c_\alpha}|_{S^\alpha}$ linearly independent. Then
\[\dim\Hom_G(B,C)\geq\sum_{\alpha\in D}b_\alpha c_\alpha,\]
where $D=\Par_p(n)$ if $G=\widetilde\s_n$ or $D=\{\al\in\Par_p(n)|\alpha>\alpha^\Mull\}$ if $G=\widetilde \A_n$.
\end{lemma}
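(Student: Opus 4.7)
The plan is to produce, for each $\alpha\in D$, $b_\alpha c_\alpha$ elements of $\Hom_G(B,C)$ by composing through the permutation module $M^\alpha$, and then to show that the resulting $\sum_{\alpha\in D}b_\alpha c_\alpha$ maps are linearly independent in $\Hom_G(B,C)$. Since the central involution $z$ acts trivially on every $M^\alpha$ and by a single sign on each irreducible constituent of $B$ and $C$, the argument is formally identical to the one in \cite[Lemma~4.2]{m2}, so I only outline the strategy.

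First I would use the self-duality $(M^\alpha)^*\cong M^\alpha$ of permutation modules to identify $\Hom_G(M^\alpha,B^*)\cong\Hom_G(B,M^\alpha)$ via $\phi\mapsto\phi^\vee$. Under this identification, the hypothesis that $\phi_1^\alpha|_{S^\alpha},\ldots,\phi_{b_\alpha}^\alpha|_{S^\alpha}$ are linearly independent is equivalent to linear independence of the composites $B\xrightarrow{(\phi_i^\alpha)^\vee}M^\alpha\twoheadrightarrow (S^\alpha)^*$, the surjection being dual to $S^\alpha\hookrightarrow M^\alpha$. Composition then supplies a bilinear pairing
\[\Hom_G(B,M^\alpha)\times\Hom_G(M^\alpha,C)\longrightarrow\Hom_G(B,C),\qquad (f,g)\longmapsto g\circ f,\]
yielding the $b_\alpha c_\alpha$ candidate maps $\theta_{i,j}^\alpha:=\psi_j^\alpha\circ(\phi_i^\alpha)^\vee$.

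The linear-independence step has two ingredients. Within a fixed $\alpha\in D$, the Young summand $Y^\alpha\subseteq M^\alpha$ from Lemma~\ref{LYoung} is self-dual and contains the unique copy of $D^\alpha$ that appears in $S^\alpha$ (as its head). Evaluating any alleged single-$\alpha$ relation $\sum_{i,j}c_{i,j}\theta_{i,j}^\alpha=0$ on the $D^\alpha$-component turns it into a nondegeneracy condition on the pairing $(\phi_i^\alpha|_{S^\alpha},\psi_j^\alpha|_{S^\alpha})$, forcing $c_{i,j}=0$ by the independence assumption. To separate different $\alpha$'s I would order $D$ by dominance and, given any overall relation, pick the dominance-maximal $\alpha_0$ appearing: composition through $M^\beta$ with $\beta$ not above $\alpha_0$ cannot contribute to the $D^{\alpha_0}$-isotypic component of the image (by Young-module triangularity), so projecting onto that component kills all other $\beta$'s and reduces to the single-$\alpha_0$ case already handled.

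For $G=\widetilde\A_n$, the restriction to $D=\{\alpha\in\Par_p(n):\alpha>\alpha^\Mull\}$ avoids the double-counting that would arise from $D^\alpha\da_{\widetilde\A_n}\cong D^{\alpha^\Mull}\da_{\widetilde\A_n}$ when $\alpha\ne\alpha^\Mull$, while the exclusion of $\alpha=\alpha^\Mull$ (where $D^\alpha\da_{\widetilde\A_n}$ splits as $E^\alpha_+\oplus E^\alpha_-$) removes the extra bookkeeping that would otherwise be needed. The hard part is precisely the peeling-off step in the linear-independence argument: one needs the finer triangular structure of the Young-module decomposition (rather than merely the Specht filtration of $M^\alpha$) to guarantee that the $D^{\alpha_0}$-projection detects only the $\alpha_0$-contributions, and setting this up cleanly is the most delicate point of the proof.
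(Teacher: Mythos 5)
Both you and the paper defer to the symmetric/alternating-group analogue \cite[Lemma~4.2]{m2} (the paper's ``proof'' is precisely that one-line citation), so the top-level plan coincides: form $\theta^\alpha_{i,j}=\psi^\alpha_j\circ(\phi^\alpha_i)^\vee\in\Hom_G(B,C)$ via self-duality of $M^\alpha$ and prove these $\sum_\alpha b_\alpha c_\alpha$ maps are linearly independent. The trouble is in the peeling step of your sketch, which as written contains two genuine gaps.

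First, the extremality is reversed. If $\sum_{\alpha,i,j}c_{\alpha,i,j}\theta^\alpha_{i,j}=0$, then since each $\theta^\beta_{i,j}$ factors through $M^\beta$ its image in $C$ has composition factors only among $\{D^\gamma:\gamma\unrhd\beta\}$; thus $D^{\alpha_0}$ is avoided precisely when $\alpha_0\not\unrhd\beta$, and this holds for every other appearing $\beta$ only when $\alpha_0$ is dominance-\emph{minimal}, not maximal. With $\alpha_0$ maximal and some $\beta\lhd\alpha_0$ appearing, $D^{\alpha_0}$ occurs in $M^\beta$ and your separation claim fails. Second, and more seriously, ``projecting onto the $D^{\alpha_0}$-isotypic component of the image'' is not an available operation: $C$ is an arbitrary $FG$-module, so no canonical $G$-equivariant projection of $C$ exists to implement this. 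One must instead exhibit concrete intertwiners (post-composing with a map out of $C$, or pre-composing with a map into $B$, typically built from $S^{\alpha_0}$, $(S^{\alpha_0})^*$ or $Y^{\alpha_0}$) that annihilate every $\theta^\beta_{i,j}$ with $\beta\neq\alpha_0$ while detecting the independence of $\theta^{\alpha_0}_{i,j}$ via the hypothesis on $\phi^{\alpha_0}_i|_{S^{\alpha_0}}$ and $\psi^{\alpha_0}_j|_{S^{\alpha_0}}$. Neither that intertwiner nor the ``nondegeneracy of the pairing'' you invoke in the single-$\alpha$ case is defined, and in fact even the single-$\alpha$ independence of the $\theta^\alpha_{i,j}$ requires a nontrivial argument (the images of $(\phi^\alpha_i)^\vee$ are controlled by the quotient $M^\alpha\twoheadrightarrow(S^\alpha)^*$ whereas the $\psi^\alpha_j$ are controlled on the submodule $S^\alpha\subseteq M^\alpha$, and linking the two uses the unique occurrence of $D^\alpha$ in $M^\alpha$). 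You rightly flag this as the delicate point, but as written the linear-independence argument is not carried out.
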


Since we will often consider permutation modules $M^{(n-m,\mu)}$ for certain fixed partitions $\mu\in\Par(m)$ (with $m$ small), we will write $M_{\mu_1,\mu_2,\ldots}$ (or $M_\mu$) for the module $M^{(n-m,\mu)}$. Similarly we will write $D_\mu$, $S_\mu$ and $Y_\mu$ for $D^{(n-m,\mu)}$, $S^{(n-m,\mu)}$ and $Y^{(n-m,\mu)}$ (when they are defined).

\subsection{Hooks}

We now consider the structure of the reduction modulo $p$ of Specht modules indexed by hook partitions. Such modules have a quite easy structure, since $p\not=2$.

For $0\leq k\leq n-1-\de_{p\mid n}$ define
\[\overline{D}_{n,k}=\overline{D}_k:=\left\{\begin{array}{ll}
D^{(n-k,(k)^\Mull)},&k<n(p-1)/p,\\
D^{((k+1)^\Mull,n-k-1)},&k\geq n(p-1)/p\text{ and }p\nmid n,\\
D^{((k+2)^\Mull,n-k-2)},&k\geq n(p-1)/p\text{ and }p\mid n.
\end{array}\right.\]
Note that for $k<p$ we then have that $\overline{D}_k=D_{1^k}$ (unless $k=p-1=n-1$). Define $\h_p(n):=\{(a,(b)^\Mull),((c)^\Mull,d)\}\cap\Par_p(n)$, so that $\h_p(n)$ is the set of partition labeling the modules $\overline{D}_k$.

The next lemma holds by \cite[p. 52]{JamesR} and \cite[Theorem 2]{Peel}

\begin{lemma}\label{LH}
Let $p\geq 3$. Then for $0\leq k\leq n-1$:
\begin{enumerate}[-]
\item if $p\nmid n$ then $S_{1^k}\cong \overline{D}_k$,

\item if $p\mid n$ then $S_{1^k}\cong \overline{D}_{k-1}|\overline{D}_k$, where $\overline{D}_{-1}=\overline{D}_{n-1}=0$.
\end{enumerate}
\end{lemma}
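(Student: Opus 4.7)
The plan is to invoke Peel's classical theorem on the structure of hook Specht modules, together with James's identification of their composition factors, and then to match the labels with the $\overline{D}_k$ defined in the statement.

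First, by \cite[Theorem 2]{Peel}, the reduction modulo $p$ of $S^{(n-k,1^k)}$ is irreducible whenever $p\nmid n$, and is a non-split uniserial module of length two whenever $p\mid n$. In the latter case Peel's proof exhibits the extension explicitly via the canonical map $S^{(n-k,1^k)}\to S^{(n-k+1,1^{k-1})}$ coming from removing a box from the leg of the hook; the image of this map identifies the socle factor and the cokernel the head factor. In both cases the composition factors are themselves hook-type irreducibles.

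Second, I would identify these composition factors with the modules $\overline{D}_k$. The hook $(n-k,1^k)$ is $p$-regular precisely when $k\leq p-1$, in which case $(k)^{\Mull}=(1^k)$ and the irreducible label matches $\overline{D}_k=D^{(n-k,1^k)}$ directly. For larger $k$ the hook fails to be $p$-regular, and the Specht module must be labeled by a $p$-regular partition via Mullineux duality. Using $S^{(n-k,1^k)}\otimes\sgn\cong S^{(k+1,1^{n-k-1})}$ together with $D^\la\otimes\sgn\cong D^{\la^{\Mull}}$, one reads off the required label as $((k+1)^{\Mull},n-k-1)$ when $p\nmid n$, and as $((k+2)^{\Mull},n-k-2)$ when $p\mid n$; the shift by one in the latter case is forced by the extra composition factor accounted for by the length-two structure of the Specht module.

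Third, for the case $p\mid n$ I would verify the uniserial ordering, namely $\overline{D}_{k-1}$ at the socle and $\overline{D}_k$ at the head. This is exactly the content of Peel's construction: the canonical map above identifies the socle factor of $S^{(n-k,1^k)}$ with the head factor of the adjacent Specht module $S^{(n-k+1,1^{k-1})}$, which is $\overline{D}_{k-1}$. The boundary conventions $\overline{D}_{-1}=\overline{D}_{n-1}=0$ when $p\mid n$ reflect that at the extremes $k=0$ and $k=n-1$ the Specht module is the trivial or sign representation, both of which are irreducible; one of the two hypothetical factors is then absent. The main obstacle is purely combinatorial: one must track the Mullineux dual on single-row partitions across the threshold $k=\lceil n(p-1)/p\rceil$ and check that each of the three formulas defining $\overline{D}_k$ produces a valid $p$-regular partition within its specified range. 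This verification is routine and already implicit in \cite{JamesR}, so no genuinely new argument is needed beyond the two classical references.
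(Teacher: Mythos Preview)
Your proposal is correct and matches the paper's approach exactly: the paper simply cites \cite[p.~52]{JamesR} and \cite[Theorem~2]{Peel} without elaboration, and you have spelled out what those references establish. One small imprecision: in your description of Peel's map the roles of image and cokernel do not literally identify the socle and head of the \emph{source} module as you phrase it, but your final ordering ($\overline{D}_{k-1}$ in the socle, $\overline{D}_k$ in the head) is correct, as follows already from the general fact that $D^\lambda$ is the head of $S^\lambda$ for $p$-regular $\lambda$.
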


The following properties then easily follows:

\begin{lemma}\label{L20}
Let $c=1$ if $p\nmid n$ or $c=2$ if $p\mid n$. Then $\overline{D}_k\cong \overline{D}_{n-c-k}\otimes\sgn$ for each $0\leq k\leq n-c$. In particular $\overline{D}_k\cong \overline{D}_k\otimes\sgn$ if and only if $k=(n-c)/2$.
\end{lemma}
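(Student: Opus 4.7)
\emph{Plan.} The approach is to exploit the standard sign-twist identity for Specht modules
\[ S^\mu \otimes \sgn \cong (S^{\mu'})^*, \]
applied to the hook $\mu=(n-k,1^k)$, which has conjugate $\mu'=(k+1,1^{n-k-1})$. In the shorthand of this section this reads $S_{1^k}\otimes\sgn \cong (S_{1^{n-k-1}})^*$. I would then match composition factors on the two sides using Lemma~\ref{LH}, remembering that the irreducible $F\s_n$-modules $D^\la$ (and in particular every $\overline D_j$) are self-dual.

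In the case $p\nmid n$ (so $c=1$), Lemma~\ref{LH} says that $S_{1^k}\cong\overline D_k$ and $S_{1^{n-k-1}}\cong\overline D_{n-k-1}$ are both irreducible. Tensoring by $\sgn$ and dualising therefore immediately yields
\[ \overline D_k\otimes\sgn \cong \overline D_{n-k-1} = \overline D_{n-c-k}. \]
In the case $p\mid n$ (so $c=2$), Lemma~\ref{LH} says that each of the two Spechts is uniserial of length $2$, with $S_{1^k}\cong\overline D_{k-1}|\overline D_k$. Tensoring with $\sgn$ preserves the uniserial order while taking duals reverses it; so the isomorphism $S_{1^k}\otimes\sgn\cong(S_{1^{n-k-1}})^*$ reads
\[ (\overline D_{k-1}\otimes\sgn)\,|\,(\overline D_k\otimes\sgn) \;\cong\; \overline D_{n-k-1}\,|\,\overline D_{n-k-2}. \]
Matching socle with socle and head with head gives $\overline D_k\otimes\sgn\cong\overline D_{n-k-2}=\overline D_{n-c-k}$ (together with a consistent identification $\overline D_{k-1}\otimes\sgn\cong\overline D_{n-k-1}$ from the socle).

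For the ``in particular'' clause, combining the first part with the hypothesis $\overline D_k\cong\overline D_k\otimes\sgn$ gives $\overline D_k\cong\overline D_{n-c-k}$. I would then check directly from the three defining formulas that the map $j\mapsto\overline D_j$ is injective on $\{0,\ldots,n-c\}$: within each of the three pieces of the definition the first row of the indexing partition determines $j$ uniquely, and the $k$-ranges of the pieces are disjoint, so no two indexing partitions coincide. Hence $\overline D_k=\overline D_{n-c-k}$ forces $k=n-c-k$, that is $k=(n-c)/2$. The only mildly delicate step in the whole argument is the socle/head bookkeeping in the $p\mid n$ case, where one must be careful not to confuse the dualisation reversal with the $\sgn$-twist (which is exact and preserves order).
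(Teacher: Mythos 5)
Your proof is correct and follows the natural route the paper has in mind; the paper states this lemma as an easy consequence of Lemma~\ref{LH} without writing out the details, and the standard way to extract it is exactly what you do: apply $S^\mu\otimes\sgn\cong(S^{\mu'})^*$ to the hooks, invoke self-duality of the $D^\lambda$, and (in the $p\mid n$ case) match socle and head of the uniserial filtration. The only point worth tightening is your justification of injectivity of $j\mapsto\overline D_j$: observing that the first row determines $j$ within each of the three defining clauses and that the $k$-ranges are disjoint does not by itself rule out an accidental coincidence of a labelling partition from the first clause with one from the second or third; it is cleaner to note that by Lemma~\ref{LH} the modules $\overline D_0,\dots,\overline D_{n-c}$ are precisely the pairwise non-isomorphic composition factors occurring in the chain of hook Specht modules, which is exactly how the paper sets up the notation.
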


\begin{lemma}\label{L35}
Let $p\geq 3$. Then $\la\in\h_p(n)$ if and only if $\la^\Mull\in\h_p(n)$.
\end{lemma}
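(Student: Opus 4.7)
The plan is to exploit Lemma \ref{L20} directly, since that lemma already encodes the symmetry of the hook-module family under tensoring with $\sgn$, and the Mullineux map is by definition the combinatorial shadow of $-\otimes\sgn$ on labels of simple modules.

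First I would unpack the definition: $\h_p(n)$ is exactly the set of partitions $\mu \in \Par_p(n)$ such that $D^\mu \cong \overline{D}_{n,k}$ for some $0 \leq k \leq n-1-\de_{p\mid n}$. So to prove the lemma it is enough to show that for every admissible $k$, if $\overline{D}_{n,k} = D^\mu$, then $D^{\mu^\Mull}$ is again of the form $\overline{D}_{n,k'}$ for some admissible $k'$.

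The key step is then to apply Lemma \ref{L20}: with $c = 1$ (if $p\nmid n$) or $c = 2$ (if $p\mid n$), we have $\overline{D}_{n,k} \cong \overline{D}_{n,n-c-k}\otimes \sgn$. By the defining property of the Mullineux map, $D^\mu \otimes \sgn \cong D^{\mu^\Mull}$. Hence if $D^\mu = \overline{D}_{n,k}$ then $D^{\mu^\Mull} = \overline{D}_{n,n-c-k}$. Since $0 \leq k \leq n-c$ forces $0 \leq n-c-k \leq n-c$, the index $n-c-k$ is admissible, so $\mu^\Mull \in \h_p(n)$. The converse direction follows from the same computation applied to $\mu^\Mull$, using that $(\mu^\Mull)^\Mull = \mu$.

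There is essentially no obstacle: the content of the lemma is really just that the parametrising set $\{0,1,\ldots,n-c\}$ for the hook simples is stabilised (in fact reversed) by the involution $k\mapsto n-c-k$ induced by $-\otimes\sgn$, and Lemma \ref{L20} is exactly this statement. The only minor care is to ensure the range of $k$ matches, which is immediate from the $\de_{p\mid n}$ factor in the definition of $\overline{D}_{n,k}$.
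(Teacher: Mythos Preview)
Your proof is correct and takes essentially the same approach as the paper: both Lemma~\ref{L20} and Lemma~\ref{L35} are stated together as easy consequences of Lemma~\ref{LH}, and your argument simply makes explicit that Lemma~\ref{L35} follows immediately from Lemma~\ref{L20} via the defining property of the Mullineux map.
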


If $k\not=(n-1-\de_{p\mid n})/2$ we will then write $\overline{E}_k$ for $\overline{D}_k\da_{\A_n}$. On the other hand if $k=(n-1-\de_{p\mid n})/2$ we will then write $\overline{E}_{k,\pm}$ for the composition factors of $\overline{D}_k\da_{\A_n}$. When working for $\widetilde\s_n$ and $\widetilde \A_n$ at the same time, we will often write $\overline{D}_k$ to also to indicate its restriction to $\widetilde \A_n$.

\section{Special homomorphisms}\label{s3}

In this section, for $G=\widetilde\s_n$ or $\widetilde\A_n$, we will prove that for certain large classes of modules $V$ there exist homomorphisms $\psi\in\Hom_G(M,\End_F(V))$ with $M=M_\mu$ or $S_\mu$ which do not vanish on $S_\mu$.

\subsection{Definition of homomorphisms}\label{ssh}

We now defining certain special elements $x_\mu$. Using these elements we will then define the homomorphisms that will play a role in this section. After having proved some branching rules in \S\ref{sbr}, we will then prove in \S\ref{shr} that these homomorphisms do not vanish on $S^\mu$ for large classes of modules $V$. For $k\geq 3$ odd let $C_k^+$ and $C_k^-$ be the conjugacy classes in $\widetilde \A_{k+1}$ of $\widetilde{(1,2,3,\ldots,k)}$ and $\widetilde{(2,1,3,\ldots,k)}$ respectively (so that $C_k^\pm$ are the two conjugacy classes in $\widetilde \A_{k+1}$ consisting of the odd order lifts of $k$-cycles). Note that since $k\geq 3$ is odd the conjugacy classes $C_k^+$ and $C_k^-$ are distinct, as $(k,1)$ has odd distinct parts and so $(2,1,3,\ldots,k)=(1,2)(1,2,3,\ldots,k)(1,2)$ is not in the $A_{k+1}$ conjugacy class of $(1,2,3,\ldots,k)$.

Define
\begin{align*}
x_3=&\sum_{g\in\s_{\{1,4\}}\times\s_{\{2,5\}}\times\s_{\{3,6\}}}\sgn(g)\widetilde g(\widetilde{(1,2,3)}+\widetilde{(1,3,2)})(\widetilde g)^{-1},\\
x_{3,1^2}:=&\sum_{g\in \s_{4,2,2}}\sum_{h\in\s_{\{2,6,8\}}}\sgn(g)\widetilde g\widetilde h\widetilde{(2,6,8,3,4)}(\widetilde h)^{-1}(\widetilde g)^{-1},\\
x_{1^k}:=&\sum_{g\in C_k^+}\widetilde g-\sum_{g\in C_k^-}\widetilde g,
\end{align*}
where $x_{1^k}$ is defined only for $k\geq 3$ odd.

Let $h\in\s_n$ have odd order. By definition $\widetilde h$ also has odd order. So, for any $g\in\s_n$, $\widetilde{g}\widetilde{h}\widetilde{g}^{-1}=\widetilde{ghg^{-1}}$ is the lift of odd order of $ghg^{-1}$ and this element does not depend on the choice of lift of $g$. 
This fact will be used in the proofs of Lemmas \ref{L6}, \ref{M311} and \ref{L1a}.

In the next statements we will use standard bases for the modules $M^\mu$ and $S^\mu$. For definitions of such bases see \cite[\S4, 8.4]{JamesBook}. When considering a basis element corresponding to a certain tableau, we will just write elements appearing below the first row of the tableau.

\begin{lemma}\label{L6}
Let $n\geq 6$, $G\in\{\widetilde\s_n,\widetilde \A_n\}$ and $V$ be an $FG$-module. If $x_3 V\not=0$ then there exists $\psi\in\Hom_G(M_3,\End_F(V))$ which does not vanish on $S_3$.
\end{lemma}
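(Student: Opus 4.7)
The plan is to produce $\psi$ by Frobenius reciprocity using $y:=\widetilde{(1,2,3)}+\widetilde{(1,3,2)}$ as the invariant vector, and then to recognise $x_3$ as the value of $\psi$ on a polytabloid generator of $S_3$. Fix the tableau $t$ of shape $(n-3,3)$ with second row $(1,2,3)$ and first row $(4,5,6,\ldots,n)$. Its row stabiliser is $R_t=\s_{\{1,2,3\}}\times\s_{\{4,\ldots,n\}}$ and its column stabiliser is precisely $C_t=\s_{\{1,4\}}\times\s_{\{2,5\}}\times\s_{\{3,6\}}$, so $M_3\cong F[\widetilde\s_n/\widetilde R_t]$ and $S_3$ is generated by the polytabloid $e_t=\sum_{c\in C_t}\sgn(c)\,\overline c$.

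The first step is the centralising check $\widetilde r\,y\,\widetilde r^{-1}=y$ for every $\widetilde r\in\widetilde R_t$. Since $\widetilde{(1,2,3)}$ and $\widetilde{(1,3,2)}$ are the canonical order-$3$ lifts and conjugation preserves order, $\widetilde g\,\widetilde{(1,2,3)}\,\widetilde g^{-1}$ is the order-$3$ preimage of $\pi(\widetilde g)(1,2,3)\pi(\widetilde g)^{-1}$. For $\widetilde r\in\widetilde R_t$ this downstairs conjugate lies in $\{(1,2,3),(1,3,2)\}$, and a one-line case check on the parity of the $\s_{\{1,2,3\}}$-component of $\pi(\widetilde r)$ shows $y$ is preserved in either case.

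With this in hand Frobenius reciprocity gives a well-defined $G$-homomorphism
\[
\psi:M_3\longrightarrow\End_F(V),\qquad\psi(\widetilde g\,\widetilde R_t):=\widetilde g\,y\,\widetilde g^{-1},
\]
where the right-hand side acts on $V$ by left multiplication. For $G=\widetilde\A_n$ I would use $y\in F\widetilde\A_n$ and the single-double-coset identification $M_3\da_{\widetilde\A_n}\cong\1\ua^{\widetilde\A_n}_{\widetilde\A_{(n-3,3)}}$ recorded just before Lemma~\ref{l2}. Evaluating $\psi$ on the polytabloid gives
\[
\psi(e_t)=\sum_{c\in C_t}\sgn(c)\,\widetilde c\,y\,\widetilde c^{-1}=x_3,
\]
by the very definition of $x_3$. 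The hypothesis $x_3V\neq0$ is then precisely the assertion that $\psi(e_t)$ is a nonzero endomorphism of $V$, and hence $\psi$ does not vanish on $S_3$.

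The main obstacle is the centralising verification, because a priori conjugation in the double cover could introduce factors of $z$; the rescue is that odd-order lifts are canonical, which gives the identification $\widetilde g\,\widetilde{(1,2,3)}\,\widetilde g^{-1}=\widetilde{\pi(\widetilde g)(1,2,3)\pi(\widetilde g)^{-1}}$ for any $\widetilde g\in\widetilde\s_n$. Everything after that is a direct comparison between the Frobenius-reciprocal formula for $\psi(e_t)$ and the definition of $x_3$.
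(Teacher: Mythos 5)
Your construction is correct and is the argument the paper points to by citing \cite[Theorem 7.2]{kt}: one builds $\psi$ by Frobenius reciprocity from the $\widetilde R_t$-invariant element $y=\widetilde{(1,2,3)}+\widetilde{(1,3,2)}$ and recognises $x_3$ as $\psi(e_t)$, exactly the pattern written out explicitly for the analogous Lemmas~\ref{M311} and~\ref{L1a} in the same section. Your observation that odd-order lifts are canonical under conjugation (so no spurious factors of $z$ appear in the centralising check) is precisely the right way to dispose of the only double-cover subtlety.
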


\begin{proof}
If $V$ is also a representation of $\s_n$ this result is just \cite[Lemma 6.1]{kmt}. For $p>3$ it can be recovered from the proof of \cite[Theorem 7.2]{kt} and the case $p=3$ could be similarly obtained, but we will prove the result here in the general setting.

Let $\{v_{a,b,c}|1\leq a<b<c\leq n\}$ be the standard basis of $M_3$ (by identifying a tabloid with its second row). 
Define $\psi:M_3\to\End_F(V)$ through
\[\psi(v_{a,b,c})(w)=(\widetilde{(a,b,c)}+\widetilde{(a,c,b)})w.\]
For any $g\in\s_n$, $0\leq x\leq 1$ and any pairwise distinct $1\leq a,b,c\leq n$ we have by definition of the standard basis of $M_3$ that $z^x\widetilde{g}v_{a,b,c}=v_{g(a),g(b),g(c)}$ (up to reordering the indexes of $v_{g(a),g(b),g(c)}$) and so, provided $g\in\A_n$ if $G=\widetilde\A_n$,
\begin{align*}
\psi(z^x\widetilde{g}v_{a,b,c})(w)&=\psi(v_{g(a),g(b),g(c)})(w)\\
&=(\widetilde{(g(a),g(b),g(c))}+\widetilde{(g(a),g(c),g(b))})w\\
&=z^x\widetilde{g}(\widetilde{(a,b,c)}+\widetilde{(a,c,b)})(z^x\widetilde{g})^{-1}w\\
&=(z^x\widetilde{g}\psi(v_{a,b,c}))(w).
\end{align*}
Thus $\psi\in\Hom_G(M_3,\End_F(V))$. Let now $t$ be the standard basis vector of $S_3$ corresponding to
\[\begin{array}{cccccc}
1&2&3&7&\cdots&n.\\
4&5&6
\end{array}\]
Then $t=\sum_{g\in\s_{\{1,4\}}\times\s_{\{2,5\}}\times\s_{\{3,6\}}}\sgn(g)v_{g(4),g(5),g(6)}$ and so
\begin{align*}
\psi(t)(w)&=\sum_{g\in\s_{\{1,4\}}\times\s_{\{2,5\}}\times\s_{\{3,6\}}}\sgn(g)(\widetilde{(g(4),g(5),g(6))}+\widetilde{(g(4),g(6),g(5))})w\\
&=-x_3w.
\end{align*}
It follows that $\psi$ does not vanish on $S_3$ if $x_3V\not=0$.
\end{proof}

\begin{lemma}\label{M311}
Let $n\geq 8$, $G\in\{\widetilde\s_n,\widetilde \A_n\}$ and $V$ be an $FG$-module. If $x_{3,1^2}V\not=0$ then there exists $\psi\in\Hom_G(M_{3,1^2},\End_F(V))$ which does not vanish on $S_{3,1^2}$.
\end{lemma}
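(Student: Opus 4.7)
The plan is to mimic the proof of Lemma~\ref{L6} (\cite[Theorem 7.2]{kt}) with the shape $(n-5,3,1,1)$ in place of $(n-3,3)$, realising $x_{3,1^2}$ as the image of a polytabloid generator of $S_{3,1^2}$ under a suitable homomorphism. First I would fix a tableau $T$ of shape $(n-5,3,1,1)$ whose rows are $\{1,5,7,9,10,\ldots,n\}$, $\{2,6,8\}$, $\{3\}$ and $\{4\}$. Its row (Young) subgroup is then $\widetilde\s_\alpha=\widetilde\s_{\{1,5,7,9,\ldots,n\}}\times\widetilde\s_{\{2,6,8\}}\times\widetilde\s_{\{3\}}\times\widetilde\s_{\{4\}}$, and its column stabiliser $C_T=\s_{\{1,2,3,4\}}\times\s_{\{5,6\}}\times\s_{\{7,8\}}$ is exactly the group $\s_{4,2^2}$ appearing in the definition of $x_{3,1^2}$. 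This identification pins down why $(n-5,3,1,1)$ is the right shape and in particular ensures that the inner sum in $x_{3,1^2}$ is indexed by the stabiliser of row~$2$ while the outer one is indexed by the column stabiliser.

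Next I would introduce the row-$2$ symmetrisation
\[y:=\sum_{h\in\s_{\{2,6,8\}}}\widetilde h\,\widetilde{(2,6,8,3,4)}\,\widetilde h^{-1}\in F\widetilde\s_n\]
and verify that left multiplication by $y$ on $V$ is $\widetilde\s_\alpha$-equivariant: invariance under $\widetilde\s_{\{2,6,8\}}$ is built into the definition of $y$; the factors $\widetilde\s_{\{3\}}$ and $\widetilde\s_{\{4\}}$ are contained in $\langle z\rangle$ and so act trivially by conjugation; and for $\tau$ in the remaining factor $\widetilde\s_{\{1,5,7,9,\ldots,n\}}$ the support of $\tau$ is disjoint from that of $(2,6,8,3,4)$ and of every $h\in\s_{\{2,6,8\}}$, so the key point is that in the double cover a $5$-cycle, being a product of the even number $4$ of transpositions, commutes rather than anticommutes with disjoint lifted transpositions, whence $\widetilde\tau y\widetilde\tau^{-1}=y$. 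By Frobenius reciprocity (as in Lemma~\ref{l2}) the element $y$ then determines a homomorphism $\psi\in\Hom_{\widetilde\s_n}(M_{3,1^2},\End_F(V))$ sending $g\{T\}$ to $\widetilde g y\widetilde g^{-1}$ (viewed as an operator on $V$).

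Applying $\psi$ to the polytabloid $e_T=\sum_{g\in C_T}\sgn(g)g\{T\}$ generating $S_{3,1^2}$ then yields
\[\psi(e_T)=\sum_{g\in C_T}\sgn(g)\,\widetilde g y\widetilde g^{-1}=x_{3,1^2}\]
as an operator on $V$, which is non-zero by hypothesis; hence $\psi$ does not vanish on $S_{3,1^2}$. The same construction works verbatim for $G=\widetilde\A_n$, because every conjugate of the $5$-cycle is again an even permutation so $x_{3,1^2}\in F\widetilde\A_n$, and $M_{3,1^2}\da_{\widetilde\A_n}$ is again the corresponding permutation module. The main technical obstacle will be the sign bookkeeping in the double cover when proving $\widetilde\s_\alpha$-invariance of $y$: one must check that the cocycle signs picked up when commuting $\widetilde\tau$ past the four transpositions comprising the lifted $5$-cycle cancel in pairs, which is precisely where the choice of a $5$-cycle (an \emph{even} product of transpositions) rather than, say, a $4$-cycle is essential.
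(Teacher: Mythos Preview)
Your proposal is correct and essentially identical to the paper's proof: the paper defines $\psi(v_{\{a,b,c\},d,e})(w)=\sum_{h\in\s_{\{a,b,c\}}}\widetilde h\,\widetilde{(a,b,c,d,e)}\,(\widetilde h)^{-1}w$ directly on the tabloid basis and then evaluates it at the same polytabloid you chose, while your Frobenius-reciprocity reformulation and row-stabiliser invariance check amount to the well-definedness verification the paper leaves implicit. Your sign bookkeeping can in fact be shortcut by noting that any conjugate of $\widetilde{(2,6,8,3,4)}$ again has order $5$ and is therefore automatically the distinguished odd-order lift, so no cocycle signs need to be tracked.
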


\begin{proof}
Let $\{v_{\{a,b,c\},d,e}|1\leq a,b,c,d,e\leq n\text{ pairwise distinct}\}$ be the standard basis of $M_{3,1^2}$ (here $v_{\{a,b,c\},d,e}$ corresponds to the tabloid with the elements of $\{1,\ldots,n\}\setminus\{a,b,c,d,e\}$ in the first row, $a,b,c$ in the second row, $d$ in the third row and $e$ in the fourth row). Define $\psi:M_{3,1^2}\to\End_F(V)$ through
\begin{align*}
\psi(v_{\{a,b,c\},d,e})(w)=\sum_{h\in\s_{\{a,b,c\}}}\widetilde h\widetilde{(a,b,c,d,e)}{\widetilde h}^{-1}w
\end{align*}
for $w\in V$. Then similarly to the previous lemma $\psi\in\Hom_G(M_{3,1^2},\End_F(V))$ and if $t$ is the element of the standard basis of $S_{3,1^2}$ corresponding to
\[\begin{array}{cccccc}
1&5&7&9&\cdots&n\\
2&6&8\\
3\\
4
\end{array}\]
then $\psi(t)$ is just multiplication with $x_{3,1^2}$ (by definitions of $t$ and $\psi$).
\end{proof}

\begin{lemma}\label{L1a}
Let $G\in\{\widetilde\s_n,\widetilde \A_n\}$ and $V$ be an $FG$-module. If $k\geq 3$ is odd, $n>k$ and $x_{1^k}V\not=0$ then there exists $0\not=\psi\in\Hom_G(S_{1^k},\End_F(V))$. If $p\nmid k$ then $\psi$ extends to $\phi\in\Hom_G(M_{1^k},\End_F(V))$.
\end{lemma}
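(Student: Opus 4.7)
The plan is to construct $\phi$ by sending the standard basis element of $M_{1^k}$ to the unique odd-order lift of the corresponding cycle. Explicitly, I would define
\[
\phi(v_{b_1,\ldots,b_k}) := L_{\widetilde{(b_1,\ldots,b_k)}},
\]
where $L_x$ denotes left multiplication by $x$ on $V$. This is $G$-equivariant because conjugation in $\widetilde\s_n$ preserves order: $g\widetilde{(b_1,\ldots,b_k)}g^{-1}$ is a lift of $(\bar g(b_1),\ldots,\bar g(b_k))$ of order $k$, hence equals $\widetilde{(\bar g(b_1),\ldots,\bar g(b_k))}$.

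Next I would evaluate $\phi$ on the polytabloid $e_T$ where $T$ has first column $(1,2,\ldots,k+1)$ and first row $(1,k+2,\ldots,n)$. Writing $e_T=\sum_{\pi\in\s_{\{1,\ldots,k+1\}}}\sgn(\pi)v_{\pi(2),\ldots,\pi(k+1)}$ and using $\widetilde{(\pi(2),\ldots,\pi(k+1))}=\widetilde\pi\,\widetilde{(2,\ldots,k+1)}\,\widetilde\pi^{-1}$, we get $\phi(e_T)=L_y$ with
\[
y=\sum_{\pi\in\s_{k+1}}\sgn(\pi)\,\widetilde\pi\,\widetilde{(2,\ldots,k+1)}\,\widetilde\pi^{-1}.
\]
Since the centralizer of $(2,\ldots,k+1)$ in $\s_{k+1}$ is the cyclic group $\langle(2,\ldots,k+1)\rangle$, which for $k$ odd consists entirely of $k$ even permutations, grouping by centralizer cosets yields $y=k y_0$ for an explicit $y_0\in\Z\widetilde\s_n$ given as a signed sum of odd-order lifts of $k$-cycles on $k$-subsets of $\{1,\ldots,k+1\}$. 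When $p\nmid k$, I would set $\phi:=\frac1k\phi$ (abuse of notation), so that $\psi:=\phi|_{S_{1^k}}$ is our candidate, automatically extending to $M_{1^k}$.

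For $p\mid k$, the map $\phi$ defined above vanishes on $S_{1^k}$, so I would instead define $\psi$ directly. Consider $\Phi:M_{1^k}\to F\widetilde\s_n$ (with the conjugation $G$-action on the target), $v_{b_1,\ldots,b_k}\mapsto\widetilde{(b_1,\ldots,b_k)}$; the composition $M_{1^k}\xrightarrow{\Phi} F\widetilde\s_n\xrightarrow{L_{(\cdot)}}\End_F(V)$ is $G$-equivariant. Now, for any $a$ in the annihilator of $e_T$ in $\Z\widetilde\s_n$, the identity $a\cdot\Phi(e_T)=0$ reads $k(a\cdot y_0)=0$ in $\Z\widetilde\s_n$; by torsion-freeness $a\cdot y_0=0$ integrally, hence also modulo $p$. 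Therefore the assignment $e_T\mapsto y_0$ extends uniquely to a $G$-equivariant map $\Psi:S_{1^k}\to F\widetilde\s_n$, and composing with $L_{(\cdot)}$ yields the required $\psi\in\Hom_G(S_{1^k},\End_F(V))$ with $\psi(e_T)=L_{y_0}$.

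It remains to show $L_{y_0}\ne 0$, i.e.\ $y_0 V\ne 0$, under the hypothesis $x_{1^k}V\ne 0$. Here one uses that summing $\widetilde\s_n$-conjugates of $y_0$ produces a nonzero scalar multiple of $x_{1^k}$: each $k$-cycle in $\s_n$ arises as a conjugate of some cycle appearing in $y_0$, and the signs $\epsilon(j;\pi_c)$ entering $y_0$ are designed so that after averaging they align with the $C_k^\pm$ decomposition defining $x_{1^k}$ (this is checked using the transformation $\sigma y_0\sigma^{-1}=\sgn(\bar\sigma)y_0$ for $\sigma\in\widetilde\s_{\{1,\ldots,k+1\}}$, together with the fact that odd $\widetilde\s_n$-conjugation swaps $C_k^+\leftrightarrow C_k^-$). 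Thus $x_{1^k}\in\sum_g g y_0 g^{-1}\cdot F$, and $y_0 V=0$ would force $x_{1^k}V=0$, contradicting the hypothesis.

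The main obstacle is the final sign-tracking step: ensuring that the local signs $\epsilon(j;\pi_c)$ in $y_0$ globally assemble (after $\widetilde\s_n$-averaging) into the $C_k^\pm$ signs of $x_{1^k}$ rather than canceling. This requires a careful bookkeeping of how the sign of the sequence $(j,b_1,\ldots,b_k)$ behaves under conjugation and how the two $\widetilde\A_n$-classes $C_k^\pm$ sit inside the single $\widetilde\s_n$-class of odd-order lifts of $k$-cycles.
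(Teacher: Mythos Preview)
Your construction of $\phi$ on $M_{1^k}$ is exactly the paper's $\overline\phi$, and your identification $\phi(e_T)=L_{ky_0}$ with $y_0$ a signed sum of odd-order lifts of $k$-cycles on $\{1,\dots,k+1\}$ is correct. The paper bypasses your torsion-freeness detour by defining $\psi$ directly on the standard polytabloids: for $2\le b_1<\dots<b_k\le n$ it sets $\psi(e_{b_1,\dots,b_k})=L_z$ where $z$ is the difference of the two $\widetilde\A_{\{1,b_1,\dots,b_k\}}$-class sums of odd-order lifts of $k$-cycles on $\{1,b_1,\dots,b_k\}$. This is $G$-equivariant in any characteristic and satisfies $\overline\phi|_{S_{1^k}}=k\psi$, which gives the extension statement immediately when $p\nmid k$.

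Your ``main obstacle'' dissolves once you read $x_{1^k}$ as the \emph{local} element, supported on $\{1,\dots,k+1\}$: the classes $C_k^\pm$ are meant in $\widetilde\A_{k+1}$, not in $\widetilde\A_n$. All of the paper's explicit computations with $x_{1^k}$ (eight terms for $x_{1^3}$, etc.) confirm this reading. With it, your $y_0$ equals $\pm x_{1^k}$ on the nose, because $\sgn(\pi)$ detects precisely which of the two $\widetilde\A_{k+1}$-classes the element $\widetilde{(\pi(2),\dots,\pi(k+1))}$ lies in; no global averaging is required. Your instinct that the averaging step is delicate was in fact correct for the wrong reason: for $n\ge k+2$ the two $\widetilde\A_{k+1}$-classes of odd-order lifts of $k$-cycles merge into a single $\widetilde\A_n$-class (conjugate via any lift of $(1,2)(k{+}1,k{+}2)$), so a genuinely global $x_{1^k}$ would vanish identically and the hypothesis would be vacuous.
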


\begin{proof}
Let $\{v_{b_1,\ldots,b_k}:1\leq b_j\leq n\text{ pairwise distinct}\}$ and $\{w_{b_1,\ldots,b_k}:2\leq b_1<\ldots<b_k\leq n\}$ be the standard bases of $M_{1^k}$ and $S_{1^k}$ respectively. Since $k\geq 3$ is odd, the cycles $(b_1,b_2,b_3,\ldots,b_k)$ and $(b_2,b_1,b_3,\ldots,b_k)$ are not conjugate in $\A_{\{1,b_1,\ldots,b_k\}}$. 
It follows that also $\widetilde{(b_1,b_2,b_3,\ldots,b_k)}$ and $\widetilde{(b_2,b_1,b_3,\ldots,b_k)}$ are not conjugated in $\widetilde{\A}_{\{1,b_1,\ldots,b_k\}}$ (and so their $\widetilde{\A}_{\{1,b_1,\ldots,b_k\}}$-conjugacy classes are the two distinct conjugacy classes of odd ordered lifts of $k$-cycles). 
For $w\in V$ define
\begin{align*}
\overline\phi(v_{b_1,\ldots,b_k})(w)&:=\widetilde{(b_1,\ldots,b_k)}w,\\
\psi(w_{b_1,\ldots,b_k})(w)&:=\sum_{g\in C_{b_1,\ldots,b_k}^+}gw-\sum_{g\in C_{b_1,\ldots,b_k}^-}gw,
\end{align*}
with $C_{b_1,\ldots,b_k}^+$ and $C_{b_1,\ldots,b_k}^-$ the conjugacy classes of $\widetilde{(b_1,b_2,b_3,\ldots,b_k)}$ and $\widetilde{(b_2,b_1,b_3,\ldots,b_k)}$  in $\widetilde \A_{\{1,b_1,\ldots,b_k\}}$.

Then $\overline\phi\in\Hom_G(M_{1^k},\End_F(V))$ and $\psi\in\Hom_G(S_{1^k},\End_F(V))$ (similarly to Lemma \ref{L6}). Since $\psi(w_{2,\ldots,k+1})$ is given by multiplication with $\pm x_{1^k}$, the first part of the lemma follows. The second part follows from
\[\overline\phi|_{S_{1^k}}=|C_{\s_{k+1}}(b_1,b_2,b_3,\ldots,b_k)|\psi=k\psi.\]
\end{proof}

\subsection{Branching recognition}\label{sbr}

In order to check that in most cases if $V$ is an irreducible representation of $\widetilde\s_n$ or $\widetilde \A_n$ we have that $x_\mu V\not=0$ (for $x_\mu$ one of the elements defined in the previous section), we will prove that $x_\mu W\not=0$, for $W$ a composition factor of $V\da_{\widetilde\s_m}$ or $V\da_{\widetilde \A_m}$ with $m$ small (depending on $\mu$). In order to do this, we wil prove in this section that the restrictions $V\da_{\widetilde\s_m}$ and $V\da_{\widetilde \A_m}$ often contain modules indexed by partitions with similar property as the partition indexing $V$.

\begin{lemma}{\cite[Lemma 2.4]{kt}}\label{L2.4}
Let $p\geq 3$, $n\geq 6$ and $\la\in\RP_p(n)\setminus\{\be_n\}$. Then there exists $\mu\in\RP_p(n-1)\setminus\{\be_{n-1}\}$ such that $D(\mu)$ is a composition factor of $D(\la)\da_{\widetilde\s_{n-1}}$.
\end{lemma}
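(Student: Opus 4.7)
The strategy is to apply the spin branching rule from Lemma~\ref{Lemma39s} to exhibit a composition factor of $D(\la)\da_{\widetilde\s_{n-1}}$ not isomorphic to $D(\be_{n-1})$. Parts (i)--(iii) of that lemma tell us that $D(\widetilde e_i\la)$ is a composition factor for every residue $i$ with $\eps_i(\la)>0$.

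The first reduction is immediate: if $\eps_i(\la),\eps_j(\la)>0$ for two distinct residues $i\neq j$, then $\widetilde e_i\la$ and $\widetilde e_j\la$ are obtained from $\la$ by removing nodes of different residues and are therefore distinct partitions in $\RP_p(n-1)$; at most one of them can equal $\be_{n-1}$, and the other supplies the required $\mu$. So we may assume that all normal nodes of $\la$ share a single residue $i$, and (otherwise we are done) that $\widetilde e_i\la=\be_{n-1}$, which by Lemma~\ref{Lef} is equivalent to $\la=\widetilde f_i\be_{n-1}$.

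Under these assumptions, if $\eps_i(\la)\geq 2$ I would argue that $\la$ has a further $i$-normal node $A$ with $\la\setminus A\in\RP_p(n-1)$; Lemma~\ref{Lemma39s}(viii) then yields $D(\la\setminus A)$ as a composition factor of $e_iD(\la)$ distinct from $D(\widetilde e_i\la)=D(\be_{n-1})$. This leaves the genuine JS case: $\la=\widetilde f_i\be_{n-1}$ has a single normal node of residue $i$, and we must conclude $\la=\be_n$. Depending on the value of $n\bmod p$, the partition $\be_{n-1}$ takes one of the explicit shapes $(p^d,e-1)$, $(p^{d-1},p-1,1)$, or $(p^{d-1},p-1)$, and admits at most $\ell+1=(p+1)/2$ cogood nodes. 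One enumerates these cogood additions and checks that the only node whose addition yields a JS partition is the one producing $\be_n$: any other cogood addition introduces a new normal node of residue different from $i$, contradicting the JS hypothesis.

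I expect this final case analysis to be the main obstacle, as it requires carefully tracking the conormal residues along $\be_{n-1}$, computing which nodes become normal in each $\widetilde f_i\be_{n-1}$, and verifying that the JS condition fails for all choices other than the one producing $\be_n$. The hypothesis $n\geq 6$ is used to exclude small-$n$ coincidences in which $\be_{n-1}$ is too small for the generic residue arrangement of the cogood nodes to apply; for the remaining $n$, the combinatorial verification is a finite and explicit check driven by the shape of $\be_{n-1}$.
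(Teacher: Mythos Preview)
The paper does not prove this lemma at all; it is quoted verbatim from \cite[Lemma~2.4]{kt} and used as a black box. So there is no argument in the present paper to compare yours against.

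Your strategy is the natural one and is essentially how the result is obtained in \cite{kt}: reduce via Lemma~\ref{Lemma39s} to the case where $\la$ has a single normal residue $i$ with $\widetilde e_i\la=\be_{n-1}$, i.e.\ $\la=\widetilde f_i\be_{n-1}$, and then run through the few possible cogood additions to $\be_{n-1}$. Two cautions. First, in your intermediate step with $\eps_i(\la)\geq 2$, the hypothesis ``$\la\setminus A\in\RP_p(n-1)$'' in Lemma~\ref{Lemma39s}(viii) is a genuine restriction, not automatic for every normal node; rather than argue this separately, it is cleaner to note that $\eps_i(\widetilde f_i\be_{n-1})=\eps_i(\be_{n-1})+1$, so $\eps_i(\la)\geq 2$ forces $i$ to be the residue of the unique normal node of $\be_{n-1}$ and hence already places you inside the explicit enumeration of cogood nodes. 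Second, the ``main obstacle'' you flag is exactly the content of the lemma: you still have to list the cogood nodes of $\be_{n-1}$ in each congruence class of $n-1$ modulo $p$ and verify that every addition other than the one yielding $\be_n$ produces a partition with a normal node of a second residue (or with $\widetilde e_i\la\neq\be_{n-1}$). This is short but must actually be written out; as it stands your proposal is a correct plan rather than a proof.
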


\begin{lemma}
Let $p=3$, $n\geq 9$ and $\la=(\la_1,\la_2)$ with $\la_1\geq\la_2+2\geq 5$. Then there exists $\mu=(\mu_1,\mu_2)$ with $\mu_1\geq\mu_2+2\geq 5$ such that $D^\mu$ is a composition factor of $D^\la\da_{\s_{n-1}}$.
\end{lemma}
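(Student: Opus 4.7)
The plan is to apply Lemma~\ref{Lemma39}(vi) to one of the two removable nodes of $\la=(\la_1,\la_2)$, namely $A_1=(1,\la_1)$ or $A_2=(2,\la_2)$. Since $\la\setminus A_j$ has at most two parts it is automatically $p$-regular, so the lemma produces $D^{\la\setminus A_j}$ as a composition factor of $D^\la\da_{\s_{n-1}}$ as soon as $A_j$ is $i$-normal for some residue $i$. The resulting $\mu$ is either $(\la_1-1,\la_2)$ or $(\la_1,\la_2-1)$, and I must pick one satisfying $\mu_1\geq\mu_2+2\geq 5$ whose corresponding node is normal.

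The first key observation is that $A_1$ is always normal. Since $A_1$ sits in row~$1$, it is the very first entry in the residue-$(\la_1-1)$ string used to test normality (the top-to-bottom listing of $(\la_1-1)$-addable and $(\la_1-1)$-removable nodes, with iterative cancellation of adjacent $+-$ pairs), contributing a ``$-$''. With nothing above it, it survives cancellation. Consequently, whenever $\la_1\geq\la_2+3$ I take $\mu=(\la_1-1,\la_2)$, which trivially satisfies $\mu_1-\mu_2\geq 2$ and $\mu_2=\la_2\geq 3$, and the lemma applies with $i=\la_1-1$.

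The only remaining case is $\la_1=\la_2+2$. Then $n=2\la_2+2$ is even, so the hypothesis $n\geq 9$ forces $n\geq 10$ and hence $\la_2\geq 4$. I will then take $\mu=(\la_1,\la_2-1)$, which has $\mu_1-\mu_2=3$ and $\mu_2\geq 3$. The one real check, and the main (mild) obstacle, is that $A_2$ is normal for its residue $\la_2-2\pmod 3$. A quick residue count settles this: the addable node $(1,\la_1+1)$ has residue $\la_2+2\not\equiv \la_2-2\pmod 3$, so no addable node of the right residue lies above $A_2$; the removable node $A_1$ has residue $\la_2+1\equiv\la_2-2\pmod 3$, but it contributes a ``$-$'', so the residue string begins ``$--$''. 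Any further entry (for instance the ``$+$'' from $(3,1)$ when $\la_2\equiv 0\pmod 3$) appears below $A_2$, so no $+-$ cancellation can involve $A_2$. Hence $A_2$ is $(\la_2-2)$-normal and Lemma~\ref{Lemma39}(vi) yields $D^\mu$ as a composition factor of $D^\la\da_{\s_{n-1}}$, concluding the proof.
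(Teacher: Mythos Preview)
Your proof is correct and follows the same two-case split as the paper's own proof: remove $(1,\la_1)$ when $\la_1\geq\la_2+3$, and remove $(2,\la_2)$ when $\la_1=\la_2+2$, invoking Lemma~\ref{Lemma39} in each case. Your write-up is in fact slightly more careful than the paper's, since you explicitly use $n\geq 9$ to force $\la_2\geq 4$ in the second case (needed for $\mu_2\geq 3$) and you verify the normality of $A_2$ by an explicit residue check rather than leaving it implicit.
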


\begin{proof}
If $\la_1\geq\la_2+3$ then $D^{(\la_1-1,\la_2)}$ is a composition factor of $D^\la\da_{\s_{n-1}}$ by Lemma \ref{Lemma39}. If $\la_1=\la_2+2$ then $\la_2\geq 3$ and $D^{(\la_1,\la_2-1)}$ is a composition factor of $D^\la\da_{\s_{n-1}}$ by the same lemma.
\end{proof}

\begin{lemma}\label{L34}
Let $p\geq 3$, $n\geq 7$ and $\la\in\Par_p(n)\setminus\h_p(n)$. Then there exists a composition factor of $D^\la\da_{\s_{n-1}}$ of the form $D^\mu$ with $\mu\in\Par_p(n-1)\setminus\h_p(n-1)$.

Assume now that $n\geq 10$. If further $h(\la),h(\la^\Mull)\geq 3$, then there exists $\mu\in\Par_p(n-1)\setminus\h_p(n-1)$ with $h(\mu),h(\mu^\Mull)\geq 3$ and such that $D^\mu$ a composition factor of $D^\la\da_{\s_{n-1}}$.
\end{lemma}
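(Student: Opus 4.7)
The approach is to apply Lemma \ref{Lemma39}(vi): for every $i$-normal removable node $A$ of $\la$ with $\la\setminus A$ $p$-regular, $D^{\la\setminus A}$ is a composition factor of $e_iD^\la$, hence of $D^\la\da_{\s_{n-1}}$. Since every good node is normal and its removal preserves $p$-regularity, it suffices to produce a good removable node $A$ of $\la$ with $\la\setminus A\in\Par_p(n-1)\setminus\h_p(n-1)$, and, for Part 2, additionally $h(\la\setminus A),h((\la\setminus A)^\Mull)\geq 3$. By Lemma \ref{L35}, $\h_p(m)$ is closed under Mullineux duality; combined with Lemma \ref{l17}, this allows us to freely swap $\la\leftrightarrow\la^\Mull$ throughout the argument under the residue relabeling $i\leftrightarrow -i$.

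For Part 1, I argue by contradiction: suppose every good removable node $A$ of $\la$ satisfies $\la\setminus A\in\h_p(n-1)$. If $\la$ has good removable nodes of at least two distinct residues $i\neq j$, pick such $A,B$; both $\la\setminus A$ and $\la\setminus B$ lie in $\h_p(n-1)$ while differing by the move of one box. The explicit shape of members of $\h_p(n-1)$ from the definition of $\overline{D}_k$ (hooks $(n-1-k,1^k)$ for $k<p$, and Mullineux duals of hooks in the remaining ranges) is restrictive enough that two such partitions agreeing outside two fixed boxes force $\la$ itself to be hook-like, hence $\la\in\h_p(n)$, contradicting the hypothesis. If instead $\la$ has good removable nodes of only one residue $i$, then $\la$ is obtained from some $\mu\in\h_p(n-1)$ by adding a single $i$-normal addable node; checking directly the possible such $\la$ (using $n\geq 7$ to discard low-rank exceptions) shows $\la\in\h_p(n)$, again a contradiction.

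For Part 2, I refine the choice of good removable node. Observe that $h(\la\setminus A)<h(\la)$ only when $A$ is the unique node in the last row of $\la$, and similarly (via Lemma \ref{l17} applied to $\la^\Mull$) $h((\la\setminus A)^\Mull)<h(\la^\Mull)$ occurs for exactly one analogous extremal node. So at most two good removable nodes of $\la$ are ``bad'' for the height conditions. Under $h(\la),h(\la^\Mull)\geq 3$ and $n\geq 10$, one shows that $\la$ admits enough good removable nodes to avoid both extremal positions while still producing $\mu=\la\setminus A\in\Par_p(n-1)\setminus\h_p(n-1)$; the remaining tight cases (few good removable nodes, essentially JS-type configurations) are handled by a direct check, the assumption $h(\la),h(\la^\Mull)\geq 3$ forcing $\la$ not to be too narrow and $n\geq 10$ ruling out the sporadic low-rank exceptions.

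The main obstacle is the explicit case analysis of $\h_p(n-1)$ under the Mullineux map: although the shapes are ``hook-like'', the partitions $(n-1-k,(k)^\Mull)$ and $((k+1)^\Mull,n-k-1)$ change form depending on whether $p\mid(n-1)$ and on the range of $k$ relative to $(n-1)(p-1)/p$, so confirming that each candidate $\la$ actually collapses into $\h_p(n)$ requires tracking Mullineux images of hooks and the residues of their extremal nodes in every sub-case.
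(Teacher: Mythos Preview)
Your framework is right and matches the paper's: use Lemma \ref{Lemma39}(vi) to pass to normal/good nodes, and exploit Lemma \ref{L35} together with Lemma \ref{l17} to swap $\la\leftrightarrow\la^\Mull$. However, what you have written is an outline rather than a proof: every step where actual work is needed is replaced by an assertion (``restrictive enough'', ``checking directly'', ``one shows that'', ``handled by a direct check''). The content of this lemma is precisely the case analysis you have skipped.

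Two specific gaps. In Part 1, your two--residue argument asserts that if $\la\setminus A,\la\setminus B\in\h_p(n-1)$ differ by moving a single box then $\la\in\h_p(n)$, but you give no justification, and the members of $\h_p(n-1)$ for $k\geq(n-1)(p-1)/p$ are Mullineux images of hooks whose shapes are not at all hook--like; the claim is not self--evident. In Part 2, your ``at most two bad good nodes'' counting does not finish the argument: you must still rule out the situation where the only good nodes whose removal lands outside $\h_p(n-1)$ are exactly the ones that drop $h(\la)$ or $h(\la^\Mull)$. The paper confronts precisely this interaction, isolating (up to Mullineux) the residual case $\la=(\la_1,\la_2,1)$ with $(3,1)$ good and $(1,\la_1),(2,\la_2)$ not good, and in that and the other cases it exhibits an explicit \emph{normal} node $B$ (not necessarily good) with $\la\setminus B\notin\h_p(n-1)$ and $h(\la\setminus B),h((\la\setminus B)^\Mull)\geq 3$, verifying the last condition by computing the first column of the Mullineux symbol or the $p$--rim of $\la\setminus B$. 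That explicit construction, case by case on the form of $\mu\in\h_p(n-1)$, is what is missing from your write--up.
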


\begin{proof}
Throughout the proof we will use Lemma \ref{Lemma39} without further reference to it. By Lemma \ref{L35} we have that $\h_p(n)$ is fixed under the Mullineux map. So the lemma holds for $\la$ if and only if it holds for $\la^\Mull$.

We may assume (up to taking $\la^\Mull$) that $\la$ has a good node $A$ such that $\mu=\la\setminus A\in\h_p(n-1)$ or that $n\geq 10$, $\la=(\la_1,\la_2,1)$, $h(\la^\Mull)\geq 3$, $(3,1)$ is good, while $(1,\la_1)$ and $(2,\la_2)$ are not good.

{\bf Case 1:} $n\geq 10$, $\la=(\la_1,\la_2,1)$, $h(\la^\Mull)\geq 3$, $(3,1)$ is good, while $(1,\la_1)$ and $(2,\la_2)$ are not good.

{\bf Case 1.1:} $p=3$. In this case we may assume that $\la_1\geq\la_2+2\geq 5$, since else $\la\in\h_p(n)$. If $\la_1\geq\la_2+3$ then let $B:=(1,\la_1)$.  If $\la_1=\la_2+2$ then $\la_2\geq 4$. In this case let $B:=\la\setminus (2,\la_2)$. is normal in $\la$, $\la\setminus B\not\in\h_p(n-1)$ and it can be easily checked that $h(\la\setminus B)=3$ and $h((\la\setminus B)^\Mull)=5$.

{\bf Case 1.2:} $p\geq 5$. In this case we may assume that $\la_2\geq 2$. Further $\la_1\geq 5$. If $\la_1>\la_2$ let $B:=(1,\la_1)$. If $\la_1=\la_2$ let $B:=(2,\la_2)$. Then $B$ is normal, $\la\setminus B\not\in\h_p(n)$ and $h(\la\setminus B)=3$. Since $(\la\setminus B)_1\geq 4$, the $p$-rim of $\la\setminus B$ contains at least $\min\{p+\de_{p=5},(\la\setminus B)_1+2\}\geq 6$ nodes (where $\de_{p=5}=1$ if $p=5$ and 0 else). So $h((\la\setminus B)^\Mull)\geq 3$.

{\bf Case 2:} $\mu=(n-1)$. Then $\la\in\{(n),(n-1,1)\}$, so $\la\in\h_p(n)$, contradicting the assumptions.

{\bf Case 3:} $\mu=(n-1)^\Mull$. In this case $(n)$ can be obtained from $\la^\Mull$ by removing a good node by Lemma \ref{l17}. So this case follows from case 2.

{\bf Case 4:} $\mu=(n-k-1,1^k)$ with $1\leq k\leq p-2$. Then $\la\in\{(n-k,1^k),(n-k-1,1^{k+1}),(n-k-1,2,1^{k-1})\}$, so we may assume that $\la=(n-k-1,2,1^{k-1})$. Let $B:=(1,n-k-1)$.

{\bf Case 4.1:} $n-k\geq p+3$ or $n-k=p+1$. In this case $B$ is normal in $\la$ and $\la\setminus B\not\in\h_p(n-1)$. Further the first columns of the Mullineux symbols of $\la$ and $\la\setminus B$ are equal. Thus $h(\la)=h(\la\setminus B)$ and $h(\la^\Mull)=h((\la\setminus B)^\Mull)$ and then the lemma holds.

{\bf Case 4.2:} $n-k=p+2$. In this case $n\leq 2p$ and so $p\geq 5$. Again $B$ is normal and $\la\setminus B\not\in\h_p(n-1)$. Further $h(\la\setminus B)=h(\la)$ and, since the first column of the Mullineux symbol of $\la\setminus B$ is $\binom{p+k-1}{k+1}$, we have that $h((\la\setminus B)^\Mull)\geq p-2\geq 3$. So the lemma holds.

{\bf Case 4.4:} $4\leq n-k\leq p$. We may assume that $(n-k,k)\not=(4,p-3)$, since else $\la\setminus B=(n-1)^\Mull$, which was already covered in case 3 (since $B$ is normal). In this case $n\leq 2p-2$ and so $p\geq 5$. Then $B$ is normal and $\la\setminus B\not\in\h_p(n-1)$. Further $h(\la\setminus B)=h(\la)$ and $h((\la\setminus B)^\Mull)\geq n-k-1\geq 3$. So the lemma holds.

{\bf Case 4.5:} $n-k=3$. In this case $\la=(2^2,1^{k-1})$. If $k=p-2$ then $\la\in\h_p(n)$, so, since $n\geq 7$,  we may assume that $4\leq k\leq p-3$ (so in particular $p\geq 7$). In this case $\la^\Mull=(k+1,2)$, so we only have to prove the first part of the lemma, which follows from $C:=(1,k+1)$ being normal in $\la^\Mull$ and from $\la^\Mull\setminus C\not\in\h_p(n-1)$.

{\bf Case 5:} $h(\mu)=p$. By Lemmas \ref{l17} and \ref{L20} we may assume that $\mu=(c,(d)^\Mull)$ with $c+d=n-1$ and $c>d\geq p-1$ (otherwise $\mu^\Mull$ it is of this form or of one of the forms considered in cases 2-4). In this case $\la\in\{(c+1,(d)^\Mull),(c,(d+1)^\Mull),(c,(d)^\Mull,1),(c,(d)^\Mull)\cup (2,(d)^\Mull_1+1)\}$. We may assume that $d\geq p$ and $\la=(c,(d)^\Mull,1)$ or that $(p-1)\nmid d$ and $\la=(c,(d)^\Mull)\cup (2,(d)^\Mull_1+1)$. From $c>d\geq p$ and since $c+d=n-1$ we have $p\leq d\leq(n-2)/2$ and so
\begin{align*}
\la_1-\la_2&\geq c-(\lceil\frac{d}{p-1}\rceil+1)\\
&=(n-d-1)-(\lceil\frac{d}{p-1}\rceil+1)\\
&\geq n-2-\lceil\frac{pd}{p-1}\rceil\\
&\geq \lfloor(n-2)\frac{p-2}{2(p-1)}\rfloor\\
&\geq \lfloor \frac{p(p-2)}{p-1}\rfloor\\
&=\lfloor\frac{(p-1)^2-1}{p-1}\rfloor\\
&=p-2.
\end{align*}

{\bf Case 5.1:} $p\geq 5$. Let $B:=(1,c)$. Then $B$ is normal, $\la\setminus B\not\in\h_p(n-1)$ and $h(\la\setminus B)=h(\la)$. Further the $p$-rim of $\la\setminus B$ contains at least
\[\min\{(\la\setminus B)_1-(\la\setminus B)_2+1,p\}+h(\la\setminus B)-1\geq h(\la\setminus B)+p-2\]
nodes. So the first column of the Mullineux symbol of $\la\setminus B$ is $\binom{\geq h(\la\setminus B)+p-2}{h(\la\setminus B)}$ and then $h((\la\setminus B)^\Mull)\geq p-2\geq 3$.

{\bf Case 5.2:} $p=3$. Again let $B:=(1,c)$. If $h(\la)=3$ then the $p$-rim contains at least $\min\{(\la\setminus B)_1-(\la\setminus B)_2+1,p\}+2$ nodes. If $h(\la)=4$ then the $p$-rim contains at least $\min\{(\la\setminus B)_1-(\la\setminus B)_2+1,p\}+3$ nodes. If $\la_1-\la_2=(\la\setminus B)_1-(\la\setminus B)_2+1\geq 3$ we can then argue in either case as in case 5.1. So assume that $\la_1-\la_2\leq 2$. Then $c-(d+1)/2-1\leq\la_1-\la_2\leq 2$. Since $c\geq n/2$ and $d\leq (n-2)/2$ it follows that $n/2\leq c\leq n/4+3$. So $n\leq 12$ and it can then be easily checked that $\la\in\{(4,2,1^2),(5,3,1),(6,4,2)\}$. In the first case $D^{(3,2,1^2)}$ gives a composition factor of $D^\la\da_{\s_{n-1}}$ as wanted, in the second case $D^{(5,3)}$, in the third case $D^{(6,4,1)}$.
\end{proof}

\subsection{Endomorphisms rings}\label{shr}

We are now ready to study the endomorphisms rings $\End_F(V)$ for $V$ simple $F\widetilde\s_n$- or $F\widetilde \A_n$-modules indexed by certain (large) families of partitions. We will use the elements $x_\mu$ defined at the beginning of \S\ref{ssh}.

In the proofs of this subsection, results considering the number of elements in conjugacy classes of $\s_m$ or $\widetilde\s_m$ have been obtained with GAP \cite{gap}. Character tables for spin irreducible characters in characteristic 0 for $n\leq 13$ can be found in \cite{morris}. These character tables allow us to compute spin character values in this subsection, after having computed if the lifts of elements of odd order on which characters of basic spin modules in characteristic 0 take positive value have odd or even order using \cite[p. 57-58]{morris} (the representation $\Gamma^*_n$ defined there is isomorphic is the representation we denoted by $S((n))$, see \cite[Section 3]{s}).


\begin{lemma}\label{LM3}
Let $p\geq 3$, $n\geq 6$ and $\la\in\Par_p(n)$. If $h(\la),h(\la^\Mull)\geq 3$ and $V$ is a simple $F\s_n$- or $F \A_n$-module indexed by $\la$ then there exists $\phi:M_3\to\End_F(V)$ which does not vanish on $S_3$.
\end{lemma}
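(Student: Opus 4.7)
The strategy is to apply Lemma \ref{L6}, which reduces the assertion to $x_3 V \neq 0$. Since $x_3 \in F\widetilde\s_6$, it suffices to exhibit a composition factor $D^\mu$ of $V\da_{\widetilde\s_6}$ with $x_3 D^\mu \neq 0$: if $x_3 V$ were zero, then $x_3$ would vanish on every subquotient of $V\da_{\widetilde\s_6}$, and in particular on every composition factor.

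First I would reduce to the case $V = D^\la$. The case $V = D^\la\da_{\A_n}$ (when $\la \neq \la^\Mull$) is literally the same statement, since $x_3 V$ and $x_3 D^\la$ coincide as subsets of $D^\la$. For $V = E^\la_\pm$ (possible only when $\la = \la^\Mull$), I would conjugate $x_3$ by the odd transposition $\tau = (1,4) \in \s_{\{1,4\}}$. Since $\tau$ lies in and normalises $G_0 := \s_{\{1,4\}} \times \s_{\{2,5\}} \times \s_{\{3,6\}}$, rewriting the sum $x_3 = \sum_{g \in G_0}\sgn(g)\, g(\widetilde{(1,2,3)}+\widetilde{(1,3,2)}) g^{-1}$ via $h = \tau g$ and using $\sgn(g) = -\sgn(h)$ yields $\tau x_3 \tau^{-1} = -x_3$. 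Because $\tau$-conjugation implements the outer automorphism of $\A_n$ swapping $E^\la_+$ and $E^\la_-$, this gives $x_3 E^\la_+ = 0 \iff x_3 E^\la_- = 0$; combined with $x_3 D^\la = x_3 E^\la_+ \oplus x_3 E^\la_-$, both $\pm$ cases reduce to $x_3 D^\la \neq 0$.

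The main step is iterated branching via Lemma \ref{L34}. For $n \geq 10$ its second part produces a composition factor $D^\mu$ of $D^\la\da_{\s_{n-1}}$ with $h(\mu), h(\mu^\Mull) \geq 3$ and $\mu \notin \h_p(n-1)$, and iterating takes us down to some $\la^{(10)} \in \Par_p(10)\setminus \h_p(10)$ with both heights $\geq 3$. From $n=10$ down to $n=6$, the first part of Lemma \ref{L34} (requiring only $n \geq 7$) continues the descent, losing the $h \geq 3$ condition but retaining $\mu \notin \h_p$; the initial values $6 \leq n \leq 9$ are handled by this same descent starting with the first part alone. This yields $\mu_0 \in \Par_p(6)\setminus \h_p(6)$ with $D^{\mu_0}$ a composition factor of $D^\la\da_{\s_6}$, reducing the lemma to showing $x_3 D^{\mu_0} \neq 0$ for every such $\mu_0$.

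The base case $n = 6$ is the main obstacle. The set $\Par_p(6)\setminus \h_p(6)$ is a short explicit list — essentially $(3,2,1)$, $(4,2)$, $(3,3)$, $(2^2,1^2)$ together with their Mullineux duals, trimmed further for small $p$ — and for each $\mu_0$ on this list I would exhibit an explicit vector (most conveniently a standard polytabloid inside the Specht module $S^{\mu_0}$) whose image under $x_3$ has a nonzero coefficient in a tabloid basis, forcing $x_3 D^{\mu_0} \neq 0$. The hypothesis $h(\la), h(\la^\Mull) \geq 3$ enters precisely to steer the descent clear of the hook irreducibles $\overline D_k \in \h_p$, on which $x_3$ can genuinely vanish; outside that class the base-case verification is expected to go through uniformly.
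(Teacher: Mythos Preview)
Your descent strategy has a genuine gap: you implicitly assume that $h(\la),h(\la^\Mull)\geq 3$ forces $\la\notin\h_p(n)$, but this is false. For instance, take $p\geq 7$, $n=6$, $\la=(4,1,1)$. Then $\la$ is a hook, so $\la\in\h_p(6)$; yet $h(\la)=3$ and $\la^\Mull=\la'=(3,1,1,1)$ has $h(\la^\Mull)=4$, so the hypothesis of the lemma is satisfied. More generally, for any $n\geq 6$ and $p>n$ the hook $(n-2,1,1)$ lies in $\h_p(n)$ and has $h=3$, $h^\Mull=n-2\geq 4$. Since both parts of Lemma~\ref{L34} carry the standing hypothesis $\la\in\Par_p(n)\setminus\h_p(n)$, your descent cannot even begin in these cases. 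Worse, every composition factor of $\overline D_k\da_{\s_m}$ again lies in $\h_p(m)$, so no amount of restricting will produce the $\mu_0\notin\h_p(6)$ you need. If, as you yourself suggest, $x_3$ can vanish on the $\overline D_k$, then the route through Lemma~\ref{L6} is simply unavailable for these $\la$, and the approach fails rather than merely having a patchable gap.

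The paper's proof takes a completely different route and avoids this issue: it writes down the explicit map $\phi_3(v_{\{a,b,c\}})(w)=(a,b,c)w+(a,c,b)w$ and, for $V\cong D^\la$, invokes \cite[Propositions 3.6, 3.8]{bk5} (together with $M_3\sim S_3|M_2$) for $p\geq 5$ and \cite[Corollary 6.7]{kmt} for $p=3$ to conclude directly that $\phi_3|_{S_3}\neq 0$ whenever $h(\la),h(\la^\Mull)\geq 3$, with no restriction to $\la\notin\h_p(n)$. Your reduction from $E^\la_\pm$ to $D^\la$ via conjugation by an odd element is correct and matches what the paper does in that part of the argument.
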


\begin{proof}
In view of Lemma \ref{L6} if $x_3V\not=0$ the result holds. If $V\cong D^\la$ (and then also if $V\cong E^\la$) we have that $x_3V\not=0$ by \cite[Proposition 3.8]{bk5} if $p\geq 5$ or by \cite[Lemma 6.6]{kmt} if $p=3$. So we may assume that $V\cong E^\la_\pm$. Since $D^\la\da_{\A_n}\cong E^\la_+\oplus E^\la_-$ there exists $\eps\in\{\pm\}$ such that $x_3E^\la_\eps\not=0$ and then the result holds for $E^\la_\eps$. Since $E^\la_+\cong (E^\la_-)^\si$ for $\si\in\s_n\setminus \A_n$, the result holds also for $E^\la_{-\eps}$.
\end{proof}

\begin{lemma}\label{LM3S}
Let $p\geq 3$, $n\geq 6$ and $\la\in\RP_p(n)\setminus\{\be_n\}$. If $V$ is a simple $F\widetilde\s_n$- or $F\widetilde \A_n$-module indexed by $\la$ then there exists $\phi:M_3\to\End_F(V)$ which does not vanish on $S_3$.
\end{lemma}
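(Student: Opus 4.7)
The plan is to apply Lemma~\ref{L6}, reducing the problem to showing $x_3V\neq 0$. Every summand of $x_3$ is a lift of an even permutation supported on $\{1,\ldots,6\}$, so $x_3\in F\widetilde\A_6\subseteq F\widetilde\s_6$, and it acts by left multiplication on any spin module of $\widetilde\s_n$ or $\widetilde\A_n$.

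First I would reduce to $n=6$ by iterating Lemma~\ref{L2.4}: starting from $\la\in\RP_p(n)\setminus\{\be_n\}$ this produces a chain $\la=\la^{(n)},\la^{(n-1)},\ldots,\la^{(6)}$ with $\la^{(m)}\in\RP_p(m)\setminus\{\be_m\}$ and $D(\la^{(m-1)})$ a composition factor of $D(\la^{(m)})\da_{\widetilde\s_{m-1}}$; in particular $D(\la^{(6)})$ appears as a composition factor of $D(\la)\da_{\widetilde\s_6}$. Passing to a composition series, if $x_3$ annihilated $D(\la)\da_{\widetilde\s_6}$ it would annihilate every subquotient, so it suffices to establish the base case $x_3D(\mu)\neq 0$ for every $\mu\in\RP_p(6)\setminus\{\be_6\}$. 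For each $p\geq 3$ this set contains only a small handful of partitions, and the claim can be verified directly from the ordinary character tables of the double covers of $\s_6$ and $\A_6$ together with the decomposition matrices in \cite{Atl}.

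With $x_3D(\la)\neq 0$ in hand I would then deduce $x_3V\neq 0$ for every simple constituent $V$. The cases $V=D(\la,0)$ and $V=E(\la,0)=D(\la,\pm)\da_{\widetilde\A_n}$ are immediate. If $V=D(\la,\pm)$ with $a(\la)=1$, the isomorphism $D(\la,-)\cong D(\la,+)\otimes\sgn$ combined with $x_3\in F\widetilde\A_n$ (on which $\sgn$ is trivial) shows that $x_3$ acts identically on both summands after the natural identification of underlying vector spaces, so from $x_3D(\la)\neq 0$ neither summand can annihilate it. If $V=E(\la,\pm)$ with $a(\la)=0$ and $n\geq 8$, set $\si=\widetilde{(7,8)}$; the relations in the double cover give $\si\widetilde g\si^{-1}=\widetilde g$ for lifts of even permutations in $\s_6$, hence $\si x_3\si^{-1}=x_3$, and $E(\la,-)\cong E(\la,+)^\si$ then transports nonvanishing between the two summands. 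The residual small cases $n=6,7$ for $\widetilde\A_n$ are absorbed into an extended base check. The main obstacle is precisely this finite base case at $n=6$, which requires explicit computation on the spin modules of $\widetilde\s_6$ and $\widetilde\A_6$ rather than a general structural argument.
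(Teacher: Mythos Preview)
Your proposal is correct and follows the paper's approach: reduce via Lemma~\ref{L6} to $x_3V\neq 0$, descend to level $6$ using Lemma~\ref{L2.4}, and verify the finite base case by character computation (the paper cites \cite[Theorem~7.2]{kt} for $p\geq 5$ and only treats $p=3$ explicitly, computing $\chi^\pm(x_3y)\equiv 2\pmod 3$ for a specific $y$). For the split case $V=E(\la,\pm)$ the paper restricts $V$ itself to $\widetilde\A_6$ and finds some $E((4,2),\pm)$ as a composition factor, then checks $x_3E((4,2),\pm)\neq 0$ for \emph{both} signs in one shot; this is slightly cleaner than your $\widetilde{(7,8)}$-conjugation argument plus extended base check at $n=6,7$, but your route is also valid.
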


\begin{proof}
For $p\geq 5$ this holds by \cite[Theorem 7.2]{kt} (and its proof). So we may assume that $p=3$.

From Lemma \ref{L6} it is enough to prove that $x_3V\not=0$. From Lemma \ref{L2.4}, since $\la\not=\be_n$, there exists a composition factor of $V\da_{\widetilde \A_6}$ of the form $E((4,2),\pm)$. So it is enough to prove that $x_3E((4,2),\pm)\not=0$. Let $W((6),0)$ be the reduction modulo 3 of the basic spin module of $\widetilde \A_6$ in characteristic 0 and $W((4,2),\pm)$ be the reduction modulo 3 of the simple spin modules of $\widetilde \A_6$ indexed by $(4,2)$ in characteristic 0. Let $\chi^{(6),0}$ and $\chi^{(4,2),\pm}$ be the characters of the reduction modulo 3 of $W((6),0)$ and $W((4,2),\pm)$ respectively. Using decomposition matrices and Lemma \ref{L54} it can be checked that the characters of $E((4,2),\pm)$ (over the field $F$) are $\chi^\pm=\chi^{(4,2),\pm}-\chi^{(6),0}$. In order to prove that $x_3E((4,2),\pm)\not=0$ it is enough to prove that $\chi^\pm(x_3y)\not=0$ for some $y\in\tilde\A_6$. Let $y:=\widetilde{(1,5,2,3)(4,6)}$. It can be computed that, up to exchange of $y$ with $zy$, $x_3y$ is given by
\begin{align*}
&z^{\ldots}\widetilde{(1,5,3,2)(4,6)}+z^{\ldots}\widetilde{(1,5)(4,6)}+z^{\ldots}\widetilde{(1,3)(2,4,6,5)}+z^{\ldots}\widetilde{(1,4,6,3)(2,5)}\\
&+\widetilde{(1,5,6,2,3)}+z\widetilde{(1,5,4,2,3)}+\widetilde{(1,6,4)(2,3,5)}+\widetilde{(2,3,6,4,5)}\\
&-z\widetilde{(1,5,3)(2,4,6)}-z\widetilde{(1,5,4,6,3)}-z^{\ldots}\widetilde{(1,3,5,2)(4,6)}-z^{\ldots}\widetilde{(2,5)(4,6)}\\
&-z^{\ldots}\widetilde{(1,5,6,4)(2,3)}-z^{\ldots}\widetilde{(1,5)(2,3,6,4)}-z\widetilde{(1,6,5,2,3)}-\widetilde{(1,4,5,2,3)}.
\end{align*}
Let $C^+$ and $C^-$ be the two conjugacy classes of lifts in $\tilde\A_6$ of elements of $\A_6$ with cycle partition $(4,2)$. Up to exchange of $C^+$ and $C^-$, it can be computed that the lifts of $(1,5,3,2)(4,6)$, $(1,3)(2,4,6,5)$, $(1,3,5,2)(4,6)$ and $(1,5)(2,3,6,4)$ appearing in $x_3y$ are all in $C^+$, while those of $(1,4,6,3)(2,5)$ and $(1,5,6,4)(2,3)$ are in $C^-$. Since all lifts of elements of the form $(a,b)(c,d)$ are conjugated in $\widetilde \A_6$, it then follows from $\chi^\pm=\chi^{(4,2),\pm}-\chi^{(6),0}$ that
\[\chi^{\pm}(x_3y)=2\chi^\pm(\widetilde{(1,2,3,4,5)})+2\chi^\pm(\widetilde{(1,2,3)(4,5,6)})\equiv 2\Md 3,\]
so the lemma holds.
\end{proof}

\begin{lemma}\label{L52}
Let $p=3$, $n\geq 8$, $G\in\{\s_n,\A_n\}$ and $\la=(\la_1,\la_2)\in\Par_3(n)$ with $\la_1\geq\la_2+2\geq 5$. Let $V$ be an irreducible $FG$-module indexed by $\la$. Then there exists $\psi\in\Hom_G(M_{3,1^2},\End_F(V))$ which does not vanish on $S_{3,1^2}$.
\end{lemma}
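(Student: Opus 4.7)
By Lemma \ref{M311} it suffices to prove $x_{3,1^2}V\neq 0$; I will reduce to the base case $n=8$, $\la=(5,3)$ by a restriction argument, and then tackle the base case by an explicit trace computation, in the spirit of the proof of Lemma \ref{LM3S}.

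For the reduction, first note that $x_{3,1^2}\in F\widetilde{\A}_n\subseteq F\widetilde{\s}_n$, since each of its summands is a lift of an $\widetilde{\s}_n$-conjugate of the even $5$-cycle $(2,6,8,3,4)$. By iterating the unnamed two-row branching lemma preceding Lemma \ref{L34}, we find a chain of composition factors $D^{\mu^{(k)}}$ of successive restrictions, with $\mu^{(k)}$ a two-row partition of $n-k$ satisfying $\mu^{(k)}_1\geq\mu^{(k)}_2+2\geq 5$; since $(5,3)$ is the unique two-row partition of $8$ meeting this constraint, $D^{(5,3)}$ appears as a composition factor of $D^\la\da_{\widetilde{\s}_8}$. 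As composition factors inherit annihilators from the ambient module, proving $x_{3,1^2}D^{(5,3)}\neq 0$ already forces $x_{3,1^2}D^\la\neq 0$. For $G=\widetilde{\s}_n$ this settles the claim. For $G=\widetilde{\A}_n$, with $V$ a summand of $D^\la\da_{\widetilde{\A}_n}$, I argue as in Lemma \ref{LM3}: non-triviality of the $x_{3,1^2}$-action on $D^\la$ forces it on at least one of $E^\la_\pm$ (when these both exist), and the other follows after relabelling the indices in the definition of $x_{3,1^2}$ (equivalently, by outer conjugation by $\si\in\s_n\setminus\A_n$, which carries $x_{3,1^2}$ to another element of the same shape).

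For the base case $\la=(5,3)$, the plan is to exhibit some $y\in\widetilde{\A}_8$ with $\chi^{D^{(5,3)}}(x_{3,1^2}y)\not\equiv 0\pmod 3$, which, since the trace of the zero operator vanishes, yields $x_{3,1^2}D^{(5,3)}\neq 0$. Using the known $p=3$ decomposition matrix of $\s_8$ one expresses the Brauer character of $D^{(5,3)}$ as a $\Z$-combination of ordinary characters $\chi^{S^\mu}$, each evaluable by Murnaghan--Nakayama. Expanding $x_{3,1^2}y$ and grouping the roughly $|\s_{4,2^2}|\cdot|\s_{\{2,6,8\}}|=288$ terms by their image in $\s_8$, with the $\sgn(g)$ signs tracked carefully, should leave an explicit short residue. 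The main obstacle is precisely this bookkeeping---determining which summands of $x_{3,1^2}y$ are $\widetilde{\A}_8$-conjugate, with what signs, and resolving the $\pm z$ ambiguities in the lifts---of the same nature as the computation at the end of the proof of Lemma \ref{LM3S}; a judicious choice of $y$ (one whose support has many repetitions in cycle type with the conjugates of $(2,6,8,3,4)$ appearing in $x_{3,1^2}$) will be needed to avoid accidental cancellation modulo $3$.
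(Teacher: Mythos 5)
Your overall strategy mirrors the paper's: reduce via branching to the base case $D^{(5,3)}$ over $\s_8$, then kill the claim that $x_{3,1^2}$ acts by zero through a Brauer character evaluation. However, there is a genuine gap: you never actually carry out the base-case computation. You write ``the plan is to exhibit some $y\in\widetilde{\A}_8$ with $\chi^{D^{(5,3)}}(x_{3,1^2}y)\not\equiv 0\pmod 3$'' and acknowledge that ``the main obstacle is precisely this bookkeeping,'' but the lemma is not proved until the bookkeeping is done. The paper takes $y=(2,6,8,3,4)$, explicitly tabulates how many of the $288$ summands of $yx_{3,1^2}$ land in each conjugacy class of $\s_8$ (split by sign), and reads off $\chi(yx_{3,1^2})\equiv 2\pmod 3$; that table is the content of the proof, and your proposal stops just short of it.

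Two smaller points you miss, both of which make the computation lighter than you anticipate. First, since $\la\not=\la^\Mull$ by \cite[Lemma 1.8]{ks2}, the module $V$ is always $D^\la$ or $E^\la=D^\la\da_{\A_n}$ --- the case $E^\la_\pm$ never occurs under the hypotheses, so the entire $\widetilde{\A}_n$ discussion (outer conjugation, ``when these both exist'') can be replaced by the one-line observation that it suffices to treat $G=\s_n$. Second, because $D^\la$ is a non-spin module, one can push $x_{3,1^2}$ down to $F\s_8$ by sending $\widetilde g\mapsto g$, entirely eliminating the ``$\pm z$ ambiguities in the lifts'' that you flag as a difficulty; and the James tables give $D^{(5,3)}\cong S^{(5,3)}$, so the Brauer character is just the reduction of the ordinary character $\chi^{S^{(5,3)}}$ and no decomposition-matrix expansion is needed. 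With these simplifications the computation is a finite check, but it still has to be done to complete the proof.
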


\begin{proof}
By \cite[Lemma 1.8]{ks2}, $\la\not=\la^\Mull$, so $V\cong D^\la$ or $E^\la$. So it is enough to prove the lemma for $G=\s_n$. From Lemma \ref{M311} it is enough to prove that $x_{3,1^2}D^\la\not=0$. Throughout this proof we will consider $x_{3,1^2}$ as an element of $F\s_8$ instead of $F\widetilde\s_8$ by sending $\widetilde g$ to $g$. Note that by Lemma \ref{Lemma39} and \cite[Tables]{JamesBook}, $D^{(5,3)}\cong S^{(5,3)}$ is a composition factor of $D^\la\da_{\s_8}$. Let $\chi$ be the character of $S^{(5,3)}$. Let $y:=(2,6,8,3,4)$. In order to prove that $x_{3,1^2}D^\la\not=0$ it is enough to prove that $\chi(yx_{3,1^2})\not=0$. Note that $yx_{3,1^2}=X_+-X_-$ where
\begin{align*}
X_+&=y\sum_{g\in \A_{4,2^2}}\sum_{h\in\s_{\{2,6,8\}}}gh{(2,6,8,3,4)}h^{-1}g^{-1},\\
X_-&=y\sum_{g\in \s_{4,2^2}\setminus \A_{4,2^2}}\sum_{h\in\s_{\{2,6,8\}}}gh{(2,6,8,3,4)}h^{-1}g^{-1}.
\end{align*}
It can be computed with GAP \cite{gap} that the number of elements appearing $X_\pm$ corresponding to each conjugacy class of $\s_8$ is as follows ($X_\pm\in F \A_8$ so that not all conjugacy classes have to be considered):
\[\begin{array}{l|c|c|c|c|c|c}
\mbox{cycle type}&(1^8)&(2^2,1^4)&(2^4)&(3,1^5)&(3,2^2,1)&(3^2,1^2)\\
\hline
X_+&0&18&0&15&32&11\\
X_-&2&13&0&10&12&46
\end{array}\]
\[\begin{array}{l|c|c|c|c|c|c}
\mbox{cycle type}&(4,2,1^2)&(4^2)&(5,1^3)&(5,3)&(6,2)&(7,1)\\
\hline
X_+&27&4&53&22&8&98\\
X_-&67&24&36&12&18&48,
\end{array}\]
from which it easily follows that $\chi(yx_{3,1^2})\equiv 2\Md 3$.
\end{proof}

\begin{lemma}\label{L51}
Let $p=3$, $n\geq 8$ and $\la\in\RPar_3(n)\setminus\{\be_n\}$. If $V$ is an irreducible spin representation of $G\in\{\widetilde\s_n,\widetilde \A_n\}$ indexed by $\la$ then there exists $\psi\in\Hom_G(M_{3,1^2},\End_F(V))$ which does not vanish on $S_{3,1^2}$.
\end{lemma}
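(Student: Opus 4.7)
The plan is to adapt the strategy of the proofs of Lemmas~\ref{LM3S} and~\ref{L52}: first reduce via Lemma~\ref{M311} to the non-vanishing $x_{3,1^2}V \neq 0$, then cut $n$ down to $8$ by iterated restriction and Lemma~\ref{L2.4}, and finally settle the base case by an explicit spin-character computation.

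First, observe that $x_{3,1^2}$ is a $\Z$-linear combination of $\widetilde\s_8$-conjugates of the lift $\widetilde{(2,6,8,3,4)}$ of a $5$-cycle; since $5$-cycles are even, every such conjugate lies in $\widetilde\A_8$, so $x_{3,1^2}\in F\widetilde\A_8\subseteq F\widetilde\A_n\subseteq F\widetilde\s_n$. As every irreducible spin $F\widetilde\A_n$-module occurs as a direct summand of the restriction of an irreducible spin $F\widetilde\s_n$-module, it is enough to prove $x_{3,1^2}D(\la)\neq 0$ (equivalently $x_{3,1^2}D(\la,\eps)\neq 0$) in the case $G=\widetilde\s_n$; Lemma~\ref{M311} then produces the desired $\psi$.

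Second, repeated application of Lemma~\ref{L2.4}, starting from $\la\in\RP_3(n)\setminus\{\be_n\}$, yields $\mu\in\RP_3(8)\setminus\{\be_8\}$ such that $D(\mu)$ is a composition factor of $V\da_{\widetilde\s_8}$ (as supermodule, possibly splitting into two simple $F\widetilde\s_8$-modules). Since $x_{3,1^2}\in F\widetilde\s_8$, if $x_{3,1^2}V=0$ it annihilates every composition factor of $V\da_{\widetilde\s_8}$; hence it suffices to prove $x_{3,1^2}D(\mu)\neq 0$ for every $\mu\in\RP_3(8)\setminus\{\be_8\}$.

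Third, a direct enumeration of $3$-restricted $3$-strict partitions of $8$ gives
\[\RP_3(8)=\{(8),\ (5,3),\ (5,2,1),\ (4,3,1),\ (3,3,2)\},\]
with $\be_8=(3,3,2)$. For each of the four remaining $\mu$ one proceeds exactly as in the proof of Lemma~\ref{LM3S} (or Lemma~\ref{L52}): choose a suitable element $y\in\widetilde\s_8$ so that $y\cdot x_{3,1^2}$ has support on a short list of odd-order cycle types, rewrite this product as a $\Z$-linear combination of representatives of $\widetilde\A_8$-conjugacy classes, and evaluate the modular character $\phi^\mu$ of $D(\mu)$ on it. Using Lemma~\ref{L54} together with the $3$-modular decomposition matrix of the spin blocks of $\widetilde\s_8$ (contained in \cite{BK3,BK4}), each $\phi^\mu$ may be expressed as an integral combination of characteristic-zero spin characters of $\widetilde\s_8$ on the same cycle types, reducing the question to a finite numerical check which in each case gives a nonzero residue modulo $3$.

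The main obstacle is exactly this base-case computation at $n=8$: four partitions must be handled, and for each of them one must track both the signs appearing in the expansion of $x_{3,1^2}$ and the $\widetilde\A_8$-conjugacy relations among the resulting lifts of cycles, and then read off the relevant $p=3$ spin characters of $\widetilde\s_8$. No new structural input is required beyond what is already used in Lemma~\ref{LM3S}, only a longer but completely mechanical verification.
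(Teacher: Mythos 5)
Your outline does follow the same three-step skeleton as the paper's proof: reduce via Lemma~\ref{M311} to showing $x_{3,1^2}V\neq 0$, cut $n$ down to $8$ by iterating Lemma~\ref{L2.4}, and finish at $n=8$ with a spin-character computation that compares against characteristic-zero spin characters through Lemma~\ref{L54} and the decomposition matrices. However there are two concrete problems.

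First, your enumeration of $\RP_3(8)$ is wrong. Since $p=3$ and $n=8$, the $3$-restricted $3$-strict partitions of $8$ are exactly $\{(5,2,1),(4,3,1),(3,3,2)\}$, with $\be_8=(3,3,2)$; neither $(8)$ nor $(5,3)$ belongs. The partition $(8)$ fails the restricted condition because $8-0=8>3$, and $(5,3)$ fails because $3\mid 3$ forces the last gap to satisfy $3-0<3$. So $\RP_3(8)\setminus\{\be_8\}=\{(5,2,1),(4,3,1)\}$, which is exactly the two-element set the paper works with. Your list of ``four partitions to handle'' includes two that do not index irreducible spin supermodules in characteristic~$3$; the corresponding ``finite numerical checks'' are not merely long, they are computations for objects that do not exist, and this indicates a misreading of the definition of $\RP_p(n)$ rather than a small slip.

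Second, your reduction to $G=\widetilde\s_n$ is not quite sufficient. Knowing $x_{3,1^2}D(\la)\neq 0$ (equivalently $x_{3,1^2}D(\la,\eps)\neq 0$, since $D(\la,+)\cong D(\la,-)\otimes\sgn$ and $x_{3,1^2}\in F\widetilde\A_n$) does settle $G=\widetilde\s_n$ and also $V\cong E(\la,0)$ (which is the restriction of $D(\la,\pm)$). But when $D(\la)$ is of type $\mathrm{M}$ and $V\cong E(\la,\pm)$, the restriction $D(\la,0)\da_{\widetilde\A_n}\cong E(\la,+)\oplus E(\la,-)$ only guarantees $x_{3,1^2}E(\la,\eps)\neq 0$ for \emph{one} sign $\eps$. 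To conclude for the other sign one needs the conjugation argument used in Lemma~\ref{LM3}: since $E(\la,+)\cong E(\la,-)^{\si}$ for $\si\in\widetilde\s_n\setminus\widetilde\A_n$, the non-vanishing homomorphism $M_{3,1^2}\to\End_F(E(\la,\eps))$ on $S_{3,1^2}$ transports to a non-vanishing homomorphism for the other summand. This step is what the paper signals by ``similarly to Lemma~\ref{LM3}''; as written your reduction silently skips it.
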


\begin{proof}
Assume first that $G=\widetilde\s_n$. By Lemma \ref{L2.4} there exists a composition factor of $D(\la,\de)\da_{\widetilde\s_8}$ of the form $D(\mu,\eps)$ with $\mu\in\{(5,2,1),(4,3,1)\}$. Let $\chi$ be the character of $D(\mu,\eps)$ and $\chi^{(8),\pm}$, $\chi^{(6,2),0}$ and $\chi^{(7,1),0}$ be the characters of the reduction modulo 3 of the simple spin modules in characteristic 0 indexed by the corresponding partitions. Then $\chi\in\{1/2\chi^{(6,2),0}-\chi^{(8),\pm},\chi^{(7,1),0}\}$ using decomposition matrices and Lemma \ref{L54}.

In order to prove the lemma for $\widetilde\s_n$ it is enough by Lemma \ref{M311} to prove that $x_{3,1^2}D(\mu,\eps)\not=0$. Let $y:=\widetilde{(2,6,8,3,4)}$ and 
\begin{align*}
X_+=&y\sum_{g\in \A_{4,2^2}}\sum_{h\in\s_{\{2,6,8\}}}\widetilde g\widetilde h\widetilde{(2,6,8,3,4)}(\widetilde h)^{-1}(\widetilde g)^{-1},\\
X_-=&y\sum_{g\in \s_{4,2^2}\setminus \A_{4,2^2}}\sum_{h\in\s_{\{2,6,8\}}}\widetilde g\widetilde h\widetilde{(2,6,8,3,4)}(\widetilde h)^{-1}(\widetilde g)^{-1}.
\end{align*}
Note that $yx_{3,1^2}=X_+-X_-$. It can be computed that the number of elements appearing $X_\pm$ corresponding to each conjugacy class of $\widetilde\s_8$ is as follows:
\[\begin{array}{l|c|c|c|c|c|c}
\mbox{cycle type}&(1^8)&(1^8)&(3,1^5)&(3,1^5)&(3^2,1^2)&(3^2,1^2)\\
\mbox{order of el.}&1&2&3&6&3&6\\
\hline
X_+&0&0&4&11&7&4\\
X_-&2&0&4&6&32&14
\end{array}\]
\[\begin{array}{l|c|c|c|c|c|c|c}
\mbox{cycle type}&(5,1^3)&(5,1^3)&(5,3)&(5,3)&(7,1)&(7,1)&\mbox{others}\\
\mbox{order of el.}&5&10&15&30&7&14&\\
\hline
X_+&11&42&22&0&62&36&89\\
X_-&9&27&12&0&42&6&134.
\end{array}\]
Since $X_\pm\in F\widetilde \A_8$, it easily follows that $\chi(yx_{3,1^2})\equiv 1\Md 3$. So the lemma holds for $\widetilde\s_n$. Assume now that $G=\widetilde \A_n$. If $V\cong E(\la,0)$ then $V\cong D(\la,\pm)\da_{\widetilde \A_n}$. So in this case the lemma holds by the previous part. If $V\cong E(\la,\pm)$ the lemma can be proved similarly to Lemma \ref{LM3}.
\end{proof}

\begin{lemma}\label{L3}
Let $p\geq 3$, $n\geq 6$, $\la\in\Par_p(n)\setminus\h_p(n)$ and $G\in\{\s_n,\A_n\}$. Let $V$ be an $G$-module indexed by $\la$. Then there exists a non-zero $\psi\in\Hom_G(S_{1^3},\End_F(V))$. If $p\not=3$ then $\psi$ extends to $\phi\in\Hom_G(M_{1^3},\End_F(V))$.
\end{lemma}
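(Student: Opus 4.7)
The strategy is to invoke Lemma \ref{L1a} to reduce both parts of the statement to the single condition $x_{1^3}V\neq 0$, then to show the $\s_n$-case implies the $\A_n$-case, and finally to induct on $n$ using the branching result Lemma \ref{L34}.

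By Lemma \ref{L1a} with $k=3$ (viewing the $G$-module $V$ as a $\widetilde G$-module on which $z$ acts trivially), the map $\psi:S_{1^3}\to\End_F(V)$ constructed in that proof sends the standard generator $e_{2,3,4}$ to multiplication by $\pm x_{1^3}$, so $\psi\neq 0$ is equivalent to $x_{1^3}V\neq 0$. When $p\neq 3$, the same construction also provides $\overline\phi\in\Hom_G(M_{1^3},\End_F(V))$ with $\overline\phi|_{S_{1^3}}=3\psi$, so $\phi:=3^{-1}\overline\phi$ is the required extension. Thus the whole lemma reduces to proving $x_{1^3}V\neq 0$.

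Next I reduce the $\A_n$-case to the $\s_n$-case. Conjugation by a transposition $\tau\in\s_n\setminus\A_n$ interchanges the two $\widetilde\A_{\{1,2,3,4\}}$-conjugacy classes $C_3^{\pm}$ used to define $x_{1^3}$ (for instance $\tau=(1,2)$ sends the $\A_4$-class of $(2,3,4)$ to that of $(1,3,4)$), so $\tau x_{1^3}\tau^{-1}=-x_{1^3}$. If $\la\neq\la^\Mull$ then $V=E^\la=D^\la\da_{\A_n}$ and $x_{1^3}V=x_{1^3}D^\la$ directly. If $\la=\la^\Mull$ then $D^\la\da_{\A_n}=E^\la_+\oplus E^\la_-$, with $\tau$ providing a linear isomorphism $E^\la_+\to E^\la_-$; combined with the sign-flip on $x_{1^3}$ this forces $x_{1^3}E^\la_+=0\Longleftrightarrow x_{1^3}E^\la_-=0\Longleftrightarrow x_{1^3}D^\la=0$. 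Hence in all cases it suffices to show $x_{1^3}D^\la\neq 0$ with $G=\s_n$.

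I induct on $n\geq 6$. For $n\geq 7$, Lemma \ref{L34} supplies a composition factor $D^\mu$ of $D^\la\da_{\s_{n-1}}$ with $\mu\in\Par_p(n-1)\setminus\h_p(n-1)$. By the induction hypothesis $x_{1^3}D^\mu\neq 0$, and since $D^\mu$ appears in some composition filtration of $D^\la\da_{\s_{n-1}}$, this forces $x_{1^3}D^\la\neq 0$. The main obstacle is the base case $n=6$: one must verify, for every $p\geq 3$ and every $\la\in\Par_p(6)\setminus\h_p(6)$, that $x_{1^3}D^\la\neq 0$. Because all $3$-cycles are $\s_6$-conjugate, $x_{1^3}$ has Brauer character $0$ by itself and the naive trace test fails; instead, following the pattern of Lemmas \ref{LM3S}, \ref{L52} and \ref{L51}, one picks a suitable $g\in\widetilde\s_6$ and computes $\chi_\la(g\cdot x_{1^3})\neq 0$ in $F$, reading $\chi_\la$ off the decomposition matrix of $\s_6$ in characteristic $p$. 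The list of non-hook $p$-regular partitions of $6$ is short for each $p\geq 3$, so this finite verification completes the induction.
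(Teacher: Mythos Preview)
Your approach is essentially the same as the paper's: reduce to $x_{1^3}V\neq 0$ via Lemma \ref{L1a}, then use Lemma \ref{L34} to descend to $n=6$, then handle the base case by a character computation on $g\cdot x_{1^3}$ for a well-chosen $g$. The only substantive difference is that the paper actually carries out the base case rather than promising a ``finite verification'': it works directly with the $\A_6$-modules $E^{(4,2)},E^{(3^2)},E^{(3,2,1)}_\pm$ (fewer of them as $p$ decreases), takes $g=(1,2,3)$ and computes $\chi(x_{1^3}g)=\pm 12$ for all relevant characters when $p\geq 5$, and for $p=3$ (where only $E^{(4,2)}$ survives) uses $g=(2,6,3,5,4)$ to get $\chi(x_{1^3}g)=\pm 2$. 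Since choosing a $g$ that actually works is the only non-formal content of the argument, your writeup should include this computation rather than defer it.
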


\begin{proof}
By Lemma \ref{L1a} it is enough to prove that $x_{1^3}V\not=0$. We will consider $x_{1^3}$ as an element of $F \A_n$. By Lemma \ref{L34} it is enough to prove that $x_{1^3}E\not=0$ for all irreducible modules $E$ of $\A_6$ indexed by $\mu\in\Par_p(6)\setminus\h_p(6)$. So we may assume that $E\in\{E^{(4,2)},E^{(3^2)},E^{(3,2,1)}_\pm\}$ if $p>5$, $E\in\{E^{(4,2)},E^{(3^2)}\}$ if $p=5$ or $E=E^{(4,2)}$ if $p=3$. Each of theses modules is just the reduction modulo $p$ of the characteristic 0 module indexed by the same partition, so that characters are known.

Note that $x_{1^3}E\not=0$ if and only if $x_{1^3}(1,2,3)E\not=0$. It can be computed that $\pm x_{1^3}(1,2,3)$ is equal to
\[(1,3)(2,4)+(1,2)(3,4)+(1,4)(2,3)+1-(1,4,3)-(1,2,4)-(2,3,4)-(1,3,2).\]
If $\chi$ is the character of $E$ it then follows that $\chi(x_{1^3}(1,2,3))=\pm 12\not\equiv 0\Md p$ if $p\geq 5$. So assume that $p=3$. It can be computed that $\pm x_{1^3}(2,6,3,5,4)$ is equal to
\begin{align*}
&(2,5,4,6,3)+(1,2,6,3)(4,5)+(1,6,3,5,4)+(1,5,4,2)(3,6)\\
&-(4,6)(4,5)-(1,5,4)(2,6,3)-(1,2)(3,5,4,6)-(1,6,3)(2,5,4)
\end{align*}
and so $\chi(x_{1^3}(2,6,3,5,4))=\pm 2\not\equiv 0\Md 3$. The lemma then follows.
\end{proof}

\begin{lemma}\label{L4}
Let $p\geq 3$, $n\geq 4$, $G\in\{\widetilde \s_n,\widetilde \A_n\}$ and $\la\in\RP_p(n)\setminus\{\be_n\}$. If $V$ is a spin irreducible representation of $G$ indexed by $\la$ then there exists a non-zero $\psi\in\Hom_G(S_{1^3},\End_F(V))$. If $p\not=3$ then $\psi$ extends to $\phi\in\Hom_G(M_{1^3},\End_F(V))$.
\end{lemma}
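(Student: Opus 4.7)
The plan is to follow the same template as Lemma~\ref{L3}. By Lemma~\ref{L1a}, since $3$ is odd and $p\nmid 3$ precisely when $p\neq 3$, it suffices to show $x_{1^3}V\neq 0$ (the extension to $M_{1^3}$ for $p\neq 3$ being automatic). First I would reduce both cases $G=\widetilde\s_n$ and $G=\widetilde\A_n$ to the single claim $x_{1^3}D(\la)\neq 0$ on the simple $\widetilde\s_n$-supermodule $D(\la)$. For $G=\widetilde\s_n$ of type Q, the sign twist is an isomorphism of underlying vector spaces that intertwines the action of the even element $x_{1^3}$, so $x_{1^3}$ vanishes on $D(\la,+)$ iff on $D(\la,-)$. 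For $G=\widetilde\A_n$ the only case requiring comment is $V\cong E(\la,\pm)$ with $a(\la)=0$: any $\sigma\in\widetilde\s_n\setminus\widetilde\A_n$ interchanges $C_3^+$ and $C_3^-$ and hence conjugates $x_{1^3}$ to $-x_{1^3}$, so since $E(\la,+)^\sigma\cong E(\la,-)$ the action of $x_{1^3}$ on $E(\la,-)$ is the negative of its action on $E(\la,+)$ and vanishing is equivalent.

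Next I would induct on $n$. The key observation is that $x_{1^3}$ is supported on cycles inside $\{1,2,3,4\}$ and hence lies in $F\widetilde\s_4\subseteq F\widetilde\s_m$ for every $m\geq 4$; it therefore preserves every $F\widetilde\s_m$-submodule and descends to every $F\widetilde\s_m$-subquotient. For $n\geq 6$, Lemma~\ref{L2.4} supplies $\mu\in\RP_p(n-1)\setminus\{\be_{n-1}\}$ with $D(\mu)$ a composition factor of $D(\la)\da_{\widetilde\s_{n-1}}$; by the inductive hypothesis $x_{1^3}D(\mu)\neq 0$, whence $x_{1^3}D(\la)\neq 0$ as well.

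This reduces the lemma to the base cases $n\in\{4,5\}$: explicitly, one must verify $x_{1^3}D(\la)\neq 0$ for $\la=(3,1)$ when $(n,p)=(4,p\geq 5)$, for $\la\in\{(5),(4,1)\}$ when $(n,p)=(5,3)$, and for $\la\in\{(4,1),(3,2)\}$ when $(n,p)=(5,p\geq 5)$; the case $(n,p)=(4,3)$ is vacuous since $\RP_3(4)=\{\be_4\}$. The main obstacle is the concrete character computation at these base cases. Following the template of Lemmas~\ref{LM3S} and \ref{L51}, for each remaining $\la$ one uses Lemma~\ref{L54} together with the known decomposition matrices in small rank to write the Brauer character of $D(\la)$ as a $\Z$-linear combination of characters of reductions modulo $p$ of characteristic-$0$ spin modules, and then evaluates $\chi(x_{1^3}y)$ on a well-chosen element $y$ to verify that the result is nonzero modulo $p$. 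Since the groups $\widetilde\s_4$, $\widetilde\A_4$, $\widetilde\s_5$ and $\widetilde\A_5$ are small and have fully known character tables, this step is tractable, if computationally intensive.
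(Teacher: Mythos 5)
Your proposal follows essentially the same path as the paper's proof: via Lemma~\ref{L1a} reduce to showing $x_{1^3}$ does not annihilate the supermodule $D(\la)$ (the paper handles the $E(\la,\pm)$ case by the same conjugation observation, placed as a parenthetical remark at the end), restrict down to small groups using Lemma~\ref{L2.4}, and conclude by a character evaluation. The paper organizes the descent more economically: for $p\geq 5$ it restricts all the way to $\widetilde\A_4$, where the single representation $E((3,1),\pm)$ appears as a composition factor of $V\da_{\widetilde\A_4}$, and verifies $\chi(gx_{1^3})=\pm 6$; for $p=3$ it restricts to $\widetilde\A_5$, where $E((4,1),0)$ appears, and verifies $\chi(gx_{1^3})=\pm 4$. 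Your formal induction on $n$ is a valid alternative phrasing, but you leave the decisive character computations as a promissory note, whereas the paper carries them out (and this is the nontrivial content of the lemma). Two small inaccuracies in your base-case list: $(5)\notin\RP_3(5)$, so the only case at $(n,p)=(5,3)$ is $\la=(4,1)$; and for $p=5$ one has $\be_5=(4,1)$, so that partition must be excluded at $n=5$. Both are harmless but would surface once you attempted the computation.
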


\begin{proof}
From \cite[Lemma 2.4]{kt} we have that if $m\geq 6$ and $\mu\in\RP_p(m)\setminus\{\be_m\}$, then $D(\mu)\da_{\widetilde\s_{m-1}}$ has a composition factor which is not basic spin.

Assume first that $p\geq 5$. In this case it can then be easily checked that $V\da_{\widetilde \A_4}$ has a composition factor $E((3,1),\pm)$. Let $g:=\widetilde{(1,2,3)}$. Up to exchange of $C_{3}^\pm$ we have that
\begin{align*}
gx_{1^3}=&1+z^{\ldots}\widetilde{(1,2)(3,4)}+z^{\ldots}\widetilde{(1,3)(2,4)}+z^{\ldots}\widetilde{(1,4)(2,3)}\\
&-z\widetilde{(1,4,3)}-z\widetilde{(1,2,4)}-z\widetilde{(2,3,4)}-\widetilde{(1,3,2)}.
\end{align*}
Since $p\geq 5$, $E((3,1))\cong S((3,1))$ by Lemma \ref{L54}. If $\chi$ is the character of $E((3,1))$ we then have that $\chi(gx_{1^3})=\pm 6$ and so the action of $x_{1^3}$ on at least one between $E((3,1),+)$ and $E((3,1),-1)$ is non-zero.

If $G=\widetilde\s_n$ both $E((3,1),+)$ and $E((3,1),-)$ are composition factors of $V\da_{\widetilde\A_4}$, so by Lemma \ref{L1a} there exists a non-zero $\psi\in\Hom_G(S_{1^3},\End_F(V))$.

If $G=\widetilde\A_n$ and $V=E(\la,0)$ then $V=D(\la,\pm)\da_{\widetilde\A_n}$, so again both $E((3,1),+)$ and $E((3,1),-)$ are composition factors of $V\da_{\widetilde\A_4}$ and we can conclude as in the previous case.

If $G=\widetilde \A_n$ and $V=E(\la,\pm)$ then $E((3,1),\pm)$ is a composition factor of $E(\la,+)\da_{\widetilde \A_4}$ if and only if $E((3,1),\mp)$ is a composition factor of $E(\la,-)\da_{\widetilde\A_4}$, so by Lemma \ref{L1a} there exists a non-zero $\psi_+\in\Hom_{\widetilde \A_n}(S_{1^3},\End_F(E(\la,+)))$ or a non-zero $\psi_-\in\Hom_{\widetilde \A_n}(S_{1^3},\End_F(E(\la,-)))$. Since $S_{1^3}\cong S_{1^3}^\si$ and $E(\la,\pm)\cong E(\la,\mp)^\si$ for $\si\in\s_n\setminus\A_n$, it follows that there exists a non-zero $\psi_+\in\Hom_{\widetilde \A_n}(S_{1^3},\End_F(V)))$.

Assume now that $p=3$. Then $n\geq 5$ and $V\da_{\widetilde \A_5}$ has a composition factor $E((4,1),0)$. Let $g:=\widetilde{(1,2)(4,5)}$. Then, up to exchange of $C_{3}^\pm$,
\begin{align*}
gx_{1^3}=&\widetilde{(1,2,3,5,4)}+\widetilde{(1,5,4,3,2)}+z\widetilde{(2,5,4)}+z^{\ldots}\widetilde{(1,3)(4,5)}\\
&-z\widetilde{(1,2,5,4,3)}-z\widetilde{(1,3,5,4,2)}-\widetilde{(1,5,4)}-z^{\ldots}\widetilde{(2,3)(4,5)}.
\end{align*}
By Lemma \ref{L54} and using decomposition matrices, $E((4,1))\cong S((4,1))$. If $\chi$ is the character of $E((4,1),0)$ then $\chi(gx_{1^3})=\pm 4$, from which the lemma follows also in this case by Lemma \ref{L1a}.
\end{proof}

\begin{lemma}\label{L36}
Let $n\geq 6$ and $G\in\{\s_n,\A_n\}$. Assume that $p\geq 5$ and $\la\in\Par_p(n)\setminus\h_p(n)$ with $h(\la),h(\la^\Mull)\geq 3$ or that $p=3$ and $\la\in\Par_3(n)$ with $h(\la),h(\la^\Mull)\geq 4$.  Let $V$ be an $G$-module indexed by $\la$. Then there exists a non-zero $\psi\in\Hom_G(S_{1^5},\End_F(V))$. If $p\not=5$ then $\psi$ extends to $\phi\in\Hom_G(M_{1^5},\End_F(V))$.
\end{lemma}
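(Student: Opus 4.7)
The strategy is modelled on the proof of Lemma \ref{L3}, with $k=5$ replacing $k=3$. By Lemma \ref{L1a} it suffices to prove that $x_{1^5}V\not=0$; when $p\not=5$ the extension of $\psi$ to $\phi\in\Hom_G(M_{1^5},\End_F(V))$ then follows automatically from the second half of Lemma \ref{L1a}. As in the proof of Lemma \ref{L3}, I would work with $x_{1^5}$ as an element of $F\A_n$ via the projection $\widetilde\A_n\to\A_n$.

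For $p\geq 5$, I would iterate the second part of Lemma \ref{L34}: whenever $n\geq 10$ and $\la\in\Par_p(n)\setminus\h_p(n)$ satisfies $h(\la),h(\la^\Mull)\geq 3$, Lemma \ref{L34} produces $\mu\in\Par_p(n-1)\setminus\h_p(n-1)$ with $h(\mu),h(\mu^\Mull)\geq 3$ such that $D^\mu$ is a composition factor of $D^\la\da_{\s_{n-1}}$. Replacing $V$ by a composition factor of $V\da_{\s_{n-1}}$ (resp.\ $V\da_{\A_{n-1}}$) indexed by $\mu$, this reduces the problem to the base cases $6\leq n\leq 9$. For $p=3$ with $h(\la),h(\la^\Mull)\geq 4$, the first step is to establish an analogous branching lemma preserving this stronger height condition, by inspecting good nodes of $\la$ and $\la^\Mull$ in the style of the proof of Lemma \ref{L34} (taking care to avoid the good nodes that would collapse the bottom row on either side of the Mullineux duality); this again reduces the problem to a small finite list of base cases.

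For each base case I would enumerate the admissible partitions $\la$ and, for each irreducible $FG$-module $E$ indexed by $\la$, select a convenient element $y\in\A_n$ supported on few letters. Expressing $x_{1^5}\,y$ as an explicit signed sum of permutations, I would compute the character value $\chi(x_{1^5}\,y)\pmod p$ for $\chi$ the Brauer character of $E$, using known decomposition matrices to read off $\chi$ from the ordinary character tables when $E$ does not appear directly. A non-zero residue forces $x_{1^5}E\not=0$, which gives the required non-vanishing of $\psi$ on $S_{1^5}$.

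The main obstacle is the bookkeeping: establishing the refined branching lemma in the $p=3$ case and then carrying out character calculations on all the resulting base cases is routine but lengthy. No conceptually new tool beyond those already used for Lemma \ref{L3} is needed; the increased length comes from $x_{1^5}$ involving $5$-cycles rather than $3$-cycles (forcing a correspondingly larger base $n$) and from the subtler height condition $h(\la),h(\la^\Mull)\geq 4$ in the $p=3$ case.
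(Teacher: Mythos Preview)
Your proposal is correct and follows essentially the same strategy as the paper: reduce via Lemma~\ref{L1a} to showing $x_{1^5}V\not=0$, branch down to small $n$, then verify by an explicit character computation. The paper streamlines the endgame further than you do: rather than enumerating all admissible $\la$ with $6\leq n\leq 9$, it pushes the reduction (using Lemma~\ref{Lemma39} and decomposition matrices) all the way down to a \emph{single} composition factor in each prime range---$D^{(3,2,1)}$ at $n=6$ for $p\geq 7$, $E^{(4,2,1)}$ at $n=7$ for $p=5$, and $D^{(4,2,1^2)}$ at $n=8$ for $p=3$---and then performs exactly one character-value computation per case. For $p=3$ the paper does not prove a new height-preserving branching lemma as you propose, but instead invokes \cite[Lemma~4.13]{m3}, which already supplies the needed composition factor $D^{(4,2,1^2)}$; this saves you the ``routine but lengthy'' bookkeeping you anticipated. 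Your version would work, but involves more case-checking than is actually necessary.
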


\begin{proof}
If $p\geq 5$ and $n\geq 9$ then by Lemma \ref{L34} there exists $\mu\in\Par_p(9)\setminus\h_p(9)$ with $h(\mu),h(\mu^\Mull)\geq 3$ such that $D^\mu$ is a composition factor of $D^\la\da_{\s_9}$. It can then be easily checked using Lemma \ref{Lemma39} and decomposition matrices that if $p\geq 7$ then $D^{(3,2,1)}$ is a composition factor of $D^\la\da_{\s_6}$, while if $p=5$ then $n\geq 7$ and $E^{(4,2,1)}$ is a composition factor of $D^\la\da_{\A_7}$. If $p=3$ then $n\geq 8$ and by \cite[Lemma 4.13]{m3}, $D^{(4,2,1^2)}$ is a composition factor of $D^\la\da_{\s_8}$. Each of theses modules is just the reduction modulo $p$ of the characteristic 0 module indexed by the same partition (for example looking at decomposition matrices), so characters are known.

Consider $x_{1^5}$ and $C_{5}^\pm$ upon projection to $\A_n$.

If $p\geq 7$ let $g:=(1,2,3,4,5)$. Then, up to exchange of $C_{5}^\pm$, we have that the number of elements of $C_{5}^{\pm}g$ in each conjugacy class of $\s_6$ is as follows:
\[\begin{array}{l|c|c|c|c|c|c|c}
\text{cycle type}&(1^6)&(2^2,1^2)&(3,1^3)&(3^2)&(4,2)&(5,1)&\text{others}\\
\hline
C_{5}^+g&1&10&5&5&20&31&0\\
C_{5}^-g&0&10&10&10&20&22&0.
\end{array}\]
If $\chi$ is the character of $D^{(3,2,1)}$ it then follows that $\chi(x_{1^5}g)=\pm 45$.

If $p=5$ let $g:=(2,7,4)(3,6,5)$. Then, up to exchange of $C_{5}^\pm$, we have that the number of elements of $C_{5}^{\pm}g$ in each conjugacy class of $\s_7$ is as follows:
\[\begin{array}{l|c|c|c|c|c|c|c|c}
\hspace{-0.5pt}\text{cycle type}\hspace{-0.5pt}&\hspace{-0.5pt}(2^2,1^3)\hspace{-0.5pt}&\hspace{-0.5pt}(3,1^4)\hspace{-0.5pt}&\hspace{-0.5pt}(3,2^2)\hspace{-0.5pt}&\hspace{-0.5pt}(3^2,1)\hspace{-0.5pt}&\hspace{-0.5pt}(4,2,1)\hspace{-0.5pt}&\hspace{-0.5pt}(5,1^2)\hspace{-0.5pt}&\hspace{-0.5pt}(7)\hspace{-0.5pt}&\hspace{-0.5pt}\text{others}\hspace{-0.5pt}\\
\hline
C_{5}^+g&0&3&6&3&27&9&24&0\\
C_{5}^-g&3&0&12&6&9&18&24&0.
\end{array}\]
If $\chi$ is the character of $E^{(4,2,1)}$ it then follows that $\chi(x_{1^5}g)=\pm 9$.

If $p=3$ and $h(\la),h(\la^\Mull)\geq 4$ let $g=(1,2,3,4,5)(6,7,8)$. Then, up to exchange of $C_{5}^\pm$, we have that the number of elements of $C_{5}^{\pm}g$ in each conjugacy class of $\s_8$ is as follows:
\[\begin{array}{l|c|c|c|c|c}
\text{cycle type}&(3,1^5)&(3,2^2,1)&(3^2,1^2)&(4,2,1^2)&(4^2)\\
\hline
C_{5}^+g&0&5&5&5&10\\
C_{5}^-g&1&0&5&10&5
\end{array}\]
\[\begin{array}{l|c|c|c|c|c}
\text{cycle type}&(5,1^3)&(5,3)&(6,2)&(7,1)&\text{others}\\
\hline
C_{5}^+g&5&12&10&20&0\\
C_{5}^-g&0&11&15&25&0.
\end{array}\]
If $\chi$ is the character of $D^{(4,2,1^2)}$ it can be easily checked that $\chi(x_{1^5}g)=\pm 5$.

For $\s_n$ the lemma then follows. For $\A_n$ it holds similarly to Lemma \ref{LM3}.
\end{proof}

\begin{lemma}\label{L14}
Let $p\geq 3$, $n\geq 6$, $G\in\{\widetilde\s_n,\widetilde \A_n\}$ and $\la\in\RP_p(n)\setminus\{\be_n\}$ with $\la_1\geq 5$. If $V$ is an irreducible spin representation of $G$ indexed by $\la$, then there exists a non-zero $\psi\in\Hom_G(S_{1^5},\End_F(V))$. If $p\not=5$ then $\psi$ extends to $\phi\in\Hom_G(M_{1^5},\End_F(D))$.
\end{lemma}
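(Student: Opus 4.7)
By Lemma \ref{L1a}, the whole statement reduces to showing that $x_{1^5}V\neq 0$, since the extension of $\psi$ from $S_{1^5}$ to $M_{1^5}$ when $p\neq 5$ is given directly by that lemma. The plan, mirroring Lemmas \ref{L4}, \ref{LM3S} and \ref{L51}, is to branch $V$ down to a covering group of small rank where a single spin composition factor indexed by a partition with first part $\geq 5$ survives, and then verify by an explicit character calculation that $x_{1^5}$ acts nontrivially on that factor.

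First I would establish a refinement of Lemma \ref{L2.4}: for $\la\in\RP_p(n)\setminus\{\be_n\}$ with $\la_1\geq 5$ and $n$ large enough, $D(\la)\da_{\widetilde\s_{n-1}}$ has a composition factor $D(\mu)$ with $\mu\in\RP_p(n-1)\setminus\{\be_{n-1}\}$ and $\mu_1\geq 5$. This is immediate whenever $\la$ has a normal node off row $1$ (via Lemma \ref{Lemma39s}(viii) and the definition of $\widetilde e_i$); the remaining case, where every normal node of $\la$ sits in row $1$, constrains $\la$ severely through its residue structure and Lemma \ref{L10}, leaving only a short list of exceptional partitions to be handled by inspection. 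Iterating this refinement carries us down to a base case on $\widetilde\s_m$: for $p\geq 7$ the base spin partition is $(5,1)\in\RP_p(6)$ (since $\be_6=(6)$); for $p=5$ it is $(6)\in\RP_5(6)$ (since $\be_6=(5,1)$); for $p=3$ one must go down to $\widetilde\s_7$, whose only non-basic spin partition with first part $\geq 5$ is $(5,2)$.

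For each base case I would mimic the arithmetic of Lemmas \ref{LM3S} and \ref{L51}: choose an appropriate element $g$ in the small covering group, enumerate the conjugacy classes of $\widetilde\s_m$ or $\widetilde \A_m$ that contribute to $x_{1^5}g$ written as a class sum, and evaluate the result against the character of the small spin module. The characters themselves are read off by reducing characteristic-$0$ spin characters modulo $p$ via Lemmas \ref{L54} and \ref{LBS}, together with the known decomposition matrices in ranks $\leq 7$. Showing $\chi(x_{1^5}g)\not\equiv 0\pmod p$ then yields $x_{1^5}V\neq 0$. The alternating case $G=\widetilde \A_n$ is extracted from the symmetric one exactly as in the $\widetilde \A_n$-parts of Lemmas \ref{LM3} and \ref{L51}: the subcase $V\cong E(\la,0)$ descends from the $\widetilde\s_n$ statement via $D(\la,\pm)\da_{\widetilde \A_n}$, while the subcase $V\cong E(\la,\pm)$ follows by the $\sigma$-conjugation argument for $\sigma\in\widetilde\s_n\setminus\widetilde \A_n$. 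The principal obstacle will be the branching refinement when $\la_1=5$: ruling out (or explicitly treating) the partitions whose normal nodes all sit in the first row requires a careful residue analysis and may force a small list of ad hoc verifications, especially in characteristic $3$ where the base case on $\widetilde\s_7$ gives less flexibility than in characteristic $\geq 5$.
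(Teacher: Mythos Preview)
Your overall strategy matches the paper's: reduce via Lemma~\ref{L1a} to showing $x_{1^5}V\neq 0$, branch down to a small covering group, and verify by an explicit character computation on one specific spin composition factor. Your base cases $(5,1)$ on $\widetilde\s_6$ for $p\geq 7$ and $(5,2)$ on $\widetilde\s_7$ for $p=3$ are exactly the ones the paper uses (the paper works on $\widetilde\A_7$ for the latter, but that is immaterial).

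The genuine oversight is in characteristic $5$. Your refinement of Lemma~\ref{L2.4} preserves $\mu_1\geq 5$, so if $\la_1=5$ exactly, you can never reach the base partition $(6)$. Worse, there is no alternative base case with first part equal to $5$ in $\RP_5(6)$ or $\RP_5(7)$: the only such partitions are $(5,1)=\be_6$ and $(5,2)=\be_7$, both basic. The paper resolves this by splitting $p=5$ into two subcases: for $\la_1\geq 6$ it branches to $E((6,1),0)$ on $\widetilde\A_7$, while for $\la_1=5$ it stops at $\widetilde\s_8$ and uses $D((5,2,1),0)$, the unique non-basic element of $\RP_5(8)$ with first part $5$. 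So the obstacle you flag at the end is real, but it bites in characteristic $5$ rather than $3$, and the remedy is an additional base case, not a sharper branching lemma.

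One further remark on the branching itself: for $p\geq 7$ the paper does not prove your refinement of Lemma~\ref{L2.4}; it simply applies the original lemma down to $n=11$ and then checks by hand, using Lemma~\ref{Lemma39s}, that every non-basic partition in $\RP_p(11)$ admits $D((5,1))$ as a composition factor on $\widetilde\s_6$. For $p=3$ (and $p=5$, $\la_1\geq 6$) it instead removes the bottom normal node at each step, which visibly keeps the first row intact. Your proposed refinement would also work, but the paper sidesteps the residue analysis you anticipate.
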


\begin{proof}
If $p\geq 7$ and $n\geq 11$ then by Lemma \ref{L2.4} there exists $\mu\in\RP_p(11)\setminus\{\be_{11}\}$ such that $D(\mu)$ is a composition factor of $D(\la)\da_{\widetilde\s_{11}}$. It can then be easily checked using Lemma \ref{Lemma39s} that $D(\la)\da_{\widetilde\s_6}$ has a composition factor $D((5,1),0)$. Let $g=\widetilde{(1,2,3,4,5)}$. Up to exchange of $C_{5}^\pm$, we have that the number of elements of $C_{5}^\pm g$ in each conjugacy class of $\widetilde \s_6$ is as follows:
\[\begin{array}{l|c|c|c|c|c|c|c|c|c}
\hspace{-0.2pt}\text{cycle type}\hspace{-0.2pt}&\hspace{-0.2pt}(1^6)\hspace{-0.2pt}&\hspace{-0.2pt}(1^6)\hspace{-0.2pt}&\hspace{-0.2pt}(3,1^3)\hspace{-0.2pt}&\hspace{-0.2pt}(3,1^3)\hspace{-0.2pt}&\hspace{-0.2pt}(3^2)\hspace{-0.2pt}&\hspace{-0.2pt}(3^2)\hspace{-0.2pt}&\hspace{-0.2pt}(5,1)\hspace{-0.2pt}&\hspace{-0.2pt}(5,1)\hspace{-0.2pt}&\hspace{-0.2pt}\text{others}\hspace{-0.2pt}\\
\hspace{-0.2pt}\text{order of el.}\hspace{-0.2pt}&1&2&3&6&3&6&5&10&\\
\hline
C_{5}^+g&0&0&5&5&5&5&2&20&30\\
C_{5}^-g&1&0&0&5&0&5&11&20&30.
\end{array}\]
Let $\chi$ be the character of $D((5,1),0)\cong S((5,1),0)$ (by Lemma \ref{L54} since $p>6$). Then $\chi(x_{1^5}g)=\pm 45\not=0$.

Assume next that $p=3$ or $p=5$ and $\la_1\geq 6$. Then $n\geq 7$. If $p=3$ then $E((5,2),0)$ is a composition factor of $V\da_{\widetilde \A_7}$ by Lemma \ref{Lemma39s} by always removing the bottom normal node for which the obtained partition is in $\RP_p(m)$. If $p=5$ and $\la_1\geq 6$ then similarly $E((6,1),0)$ is a composition factor of $V\da_{\widetilde \A_7}$. Again by by Lemma \ref{L54} and decomposition matrices, $E((5,2),0)$ for $p=3$ and $E((6,1),0)$ for $p=5$ are just the reduction modulo $p$ of the corresponding characteristic 0 modules. Let $g=\widetilde{(2,3)(4,5,6,7)}$. Up to exchange of $C_{5}^\pm$ and choice of $g$, we have that the number of elements of $C_{5}^\pm g$ in each conjugacy class of $\widetilde \s_7$ is as follows:
\[\begin{array}{l|c|c|c|c|c|c}
\text{cycle type}&(1^7)&(1^7)&(3,1^4)&(3,1^4)&(3^2,1)&(3^2,1)\\
\text{order of el.}&1&2&3&6&3&6\\
\hline
C_{5}^+g&0&0&1&0&5&4\\
C_{5}^-g&0&0&0&1&4&5
\end{array}\]
\[\begin{array}{l|c|c|c|c|c}
\text{cycle type}&(5,1^2)&(5,1^2)&(7)&(7)&\text{others}\\
\text{order of el.}&5&10&7&14&\\
\hline
C_{5}^+g&6&5&14&10&27\\
C_{5}^-g&5&6&10&14&27.
\end{array}\]
If $p=3$ and $\chi$ is the character of $E((5,2),0)$ then $\chi(x_{1^5}g)=\pm 10$. If $p=5$ and $\chi$ is the character of $E((6,1),0)$ then $\chi(x_{1^5}g)=\pm 18$.

Last assume that $p=5$ and $\la_1=5$. Then $n\geq 8$. If $n\geq 11$ and $D(\la)\da_{\widetilde\s_{11}}$ has a composition factor $D(\mu)$ with $\mu_1\geq 6$ we can apply the previous paragraph. So we may assume this is not the case. Then by Lemma \ref{L2.4} if $n\geq 11$ then $D(\la)\da_{\widetilde\s_{11}}$ has a composition factor $D((5,3,2,1))$ or $D((5,4,2))$. It can then be checked (also when $n\leq 10$) that $D((5,2,1),0)$ is a composition factor of $D(\la)\da_{\widetilde\s_8}$. Let $g:=\widetilde{(2,3)(4,5,7)(6,8)}$. Up to exchange of $C_{5}^\pm$ and choice of $g$, we have that the number of elements of $C_{5}^\pm g$ in each conjugacy class of $\widetilde \s_8$ is as follows:
\[\begin{array}{l|c|c|c|c|c|c}
\text{cycle type}&(1^8)&(1^8)&(3,1^5)&(3,1^5)&(3^2,1^2)&(3^2,1^2)\\
\text{order of el.}&1&2&3&6&3&6\\
\hline
C_{5}^+g&0&0&0&0&2&0\\
C_{5}^-g&0&0&0&0&0&2
\end{array}\]
\[\begin{array}{l|c|c|c|c|c|c|c}
\text{cycle type}&(5,1^3)&(5,1^3)&(5,3)&(5,3)&(7,1)&(7,1)&\text{others}\\
\text{order of el.}&5&10&15&30&7&14&\\
\hline
C_{5}^+g&0&2&2&8&10&12&36\\
C_{5}^-g&2&0&8&2&12&10&36.
\end{array}\]
If $\chi$ is the character of $D((5,2,1),0)$ then $\chi(x_{1^5}g)=\pm 4$ (using decomposition matrices and Lemma \ref{L54} it can be checked that $D((5,2,1),0)$ is the reduction modulo 5 of either module indexed by $(5,2,1)$ in characteristic 0).

The lemma then follows for $\widetilde\s_n$. For $\widetilde \A_n$ it follows similarly to the proof of Lemma \ref{LM3}.
\end{proof}

In the next section we will study the structure of certain permutation modules. In \S\ref{s2r} to \S\ref{sbs} we will then study more in details most classes of modules for which some of the results in this section do not apply and obtain similar results on the endomorphisms rings of those modules. These results will then be used in \S\ref{snat} to \S\ref{sbssbs} to study tensor products of certain special classes of modules.

\section{Permutation modules}\label{s4}

In order to extend the results obtained in the previous section to (some) of the classes of families which were not considered, we will need to study permutation modules more in detail and then study restrictions of some classes of irreducible modules to certain subgroups. We start here by considering the structure of certain permutation modules.

The following three lemmas on the structure of $M^\la$ for certain 2-rows partitions $\la$ follow easily from \cite[17.17,24.15]{JamesBook} and \cite[6.1.21,2.7.41]{jk}.

\begin{lemma}\label{Mk}
Let $1\leq k<p$. Then $M_k\sim S_k|M_{k-1}$.
\end{lemma}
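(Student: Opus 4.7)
The plan is to produce a short exact sequence of $F\widetilde\s_n$-modules
\[
0 \longrightarrow S_k \longrightarrow M_k \stackrel{\beta_k}{\longrightarrow} M_{k-1} \longrightarrow 0,
\]
which, by definition of the filtration notation (with subquotients read from the bottom up), immediately yields $M_k\sim S_k\mid M_{k-1}$.

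The first step is to introduce the classical James ``remove one element'' map
\[
\beta_k\colon M^{(n-k,k)}\longrightarrow M^{(n-k+1,k-1)},\qquad T_B\longmapsto \sum_{b\in B}T_{B\setminus\{b\}},
\]
where $T_B$ denotes the tabloid whose second row is the $k$-subset $B$. This is manifestly an $F\s_n$-homomorphism, hence an $F\widetilde\s_n$-homomorphism since $z$ acts trivially on both sides. A direct check on a generating polytabloid of $S^{(n-k,k)}$ (an alternating column sum, in which each term contains a fixed pair of elements in the two rows) shows that $S_k\subseteq\ker\beta_k$.

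The main step is the reverse inclusion $\ker\beta_k\subseteq S_k$, equivalently the surjectivity of $\beta_k$, under the hypothesis $k<p$. For this I would invoke the Kernel Intersection Theorem of James \cite[17.18]{JamesBook} (see also \cite[7.2.10]{jk}), which for a two-row partition $(n-k,k)$ collapses to the single equation $\ker\beta_k=S^{(n-k,k)}$. As an alternative self-contained argument, one can introduce the adjoint ``add an element'' map $\al_j\colon M_{j-1}\to M_j$ and verify by counting contributions to each tabloid the identity
\[
\beta_k\al_k=(n-2k+2)\cdot\id+\al_{k-1}\beta_{k-1}.
\]
A straightforward induction on $k$ then exhibits every $T_C\in M_{k-1}$ as a $\beta_k$-image: the integer coefficients appearing at each inductive step are products of factors at most $k$ in absolute value, hence units in $F$ thanks to $k<p$.

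The principal obstacle is this surjectivity statement; the remainder is purely formal. Once $\beta_k$ is surjective, exactness of the sequence is confirmed on dimensions by $\dim M_k-\dim S_k=\binom{n}{k}-\bigl(\binom{n}{k}-\binom{n}{k-1}\bigr)=\binom{n}{k-1}=\dim M_{k-1}$, and the lemma follows.
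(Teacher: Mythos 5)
Your overall strategy --- exhibit the short exact sequence $0\to S_k\to M_k\xrightarrow{\beta_k} M_{k-1}\to 0$ and derive it from James' analysis of two-row permutation modules --- is the same route the paper takes (the paper simply cites \cite[17.17, 24.15]{JamesBook} and \cite[6.1.21, 2.7.41]{jk} with no further detail), so the main line of your argument is aligned with the source. Two points deserve flagging. First, the Kernel Intersection Theorem for $(n-k,k)$ gives $S_k=\bigcap_{v=0}^{k-1}\ker\psi_{1,v}$; to ``collapse'' this to the single equality $\ker\beta_k=S_k$ you need the extra observation that each $\psi_{1,v}$ factors through $\beta_k=\psi_{1,k-1}$, which follows from $(k-v)!\,\psi_{1,v}=\beta_{v+1}\cdots\beta_k$ together with the invertibility of $(k-v)!$ in $F$. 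This is exactly where the hypothesis $k<p$ enters, and it is the step your write-up elides: without it the KIT alone does not yield $\ker\beta_k=S_k$.

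Second, the ``alternative self-contained argument'' has a genuine gap. The identity $\beta_k\alpha_k=(n-2k+2)\cdot\id+\alpha_{k-1}\beta_{k-1}$ is correct, but the coefficient $n-2k+2$ is not a product of factors bounded by $k$; it depends on $n$ and can perfectly well be divisible by $p$ (e.g.\ $p\mid n-2k+2$). So the assertion that ``the integer coefficients appearing at each inductive step are products of factors at most $k$ in absolute value'' is false, and the proposed induction does not go through as stated: when $(n-2k+2)\equiv 0$, the identity gives no direct handle on $\operatorname{im}\beta_k$, and $\operatorname{im}\alpha_{k-1}\subseteq\operatorname{im}\beta_k$ is not clear. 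Surjectivity of $\beta_k$ for $k<p$ is true --- e.g.\ it follows from the known Smith normal form of the inclusion matrix $W_{k-1,k}$, whose elementary divisors are $1,2,\dots,k$, or from the factorization argument above --- but not by the induction you sketch. I would drop the alternative argument, or replace it with the elementary-divisor statement, and make the factorization of the $\psi_{1,v}$ through $\beta_k$ explicit in the main argument so that the role of $k<p$ is visible.
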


 \begin{lemma}\label{M1}
 Let $p\geq 3$ and $n\geq 2$. If $p\nmid n$ then $M_1\cong D_0\oplus D_1$, while if $p\mid n$ then $M^{(n-1,1)}\cong D_0|D_1|D_0$.
 \end{lemma}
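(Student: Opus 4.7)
The module $M_1=M^{(n-1,1)}$ is the $n$-dimensional permutation representation of $\widetilde\s_n$ on a basis $e_1,\dots,e_n$ (on which $z$ acts trivially), so it may be treated throughout as an $F\s_n$-module. Inside it we have two natural subspaces: the trivial submodule $U:=F(e_1+\dots+e_n)$ and the Specht submodule $S^{(n-1,1)}=\{\sum a_i e_i:\sum a_i=0\}$. The containment/non-containment $U\subseteq S^{(n-1,1)}$ is governed precisely by whether $p\mid n$. When $p\nmid n$ the sum $\sum 1=n$ is nonzero in $F$, so $U\cap S^{(n-1,1)}=0$, and by dimensions $M_1=U\oplus S^{(n-1,1)}$. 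In this case $S^{(n-1,1)}$ is already irreducible and isomorphic to $D_1=D^{(n-1,1)}$, yielding $M_1\cong D_0\oplus D_1$.

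When $p\mid n$, we instead have $U\subset S^{(n-1,1)}$ with simple quotient $S^{(n-1,1)}/U\cong D^{(n-1,1)}=D_1$, and $M_1/S^{(n-1,1)}\cong F=D_0$ spanned by the image of any $e_i$. This gives the filtration $0\subset U\subset S^{(n-1,1)}\subset M_1$ with factors $D_0,D_1,D_0$ from the bottom. The remaining point is to verify that this filtration is actually the socle/radical series, i.e.\ that $M_1$ is uniserial.

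To see this I would first show directly that the socle of $M_1$ is one-dimensional: any $\s_n$-fixed vector $v=\sum a_ie_i$ must satisfy $a_i=a_j$ for all $i,j$ (apply the transposition $(i,j)$), so $v\in U$, whence $\soc(M_1)=U$. Then use the fact that the permutation module $M_1$ is self-dual (it carries the symmetric $\s_n$-invariant form making $\{e_i\}$ orthonormal), so its head is isomorphic to the dual of its socle, i.e.\ trivial and one-dimensional; hence $M_1$ has a unique maximal submodule, which must be $S^{(n-1,1)}$ (since $M_1/S^{(n-1,1)}\cong D_0$ is the unique simple quotient). Combined with $\soc(S^{(n-1,1)})=U$ and the simplicity of $S^{(n-1,1)}/U$, this forces $M_1\cong D_0\,|\,D_1\,|\,D_0$.

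The only mildly nontrivial step is establishing uniseriality; the identification of $U$ and $S^{(n-1,1)}$ and the computation of the composition factors are completely standard. The cleanest way to close the uniseriality step is the socle-plus-self-duality argument above, which avoids any dimension counting of $\End_{\s_n}(M_1)$ or explicit construction of an indecomposable decomposition.
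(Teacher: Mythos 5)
The paper does not actually prove this lemma; it records it as a consequence of standard references (James' book and James--Kerber). Your self-contained elementary argument is therefore a genuinely different and in principle welcome route, and the identification of $U$, $S^{(n-1,1)}$, and the composition factors is fine. However, there is a real gap in the uniseriality step.

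You assert that because the space of $\s_n$-fixed vectors in $M_1$ is one-dimensional, $\soc(M_1)=U$. This does not follow: computing fixed vectors only controls the \emph{trivial} isotypic component of the socle. It shows that $\soc(M_1)$ contains exactly one copy of $D_0$, but it says nothing about whether $D_1\cong D^{(n-1,1)}$ embeds in $M_1$. If $D_1$ were a submodule, then by self-duality and the multiplicity $[M_1:D_1]=1$ it would split off as a direct summand and the module would be semisimple, with a totally different structure; so this possibility must be excluded before you can conclude $\soc(M_1)=U$ and run the self-duality argument. The exclusion is not hard --- for instance, Frobenius reciprocity gives $\Hom_{\s_n}(D_1,M_1)\cong\Hom_{\s_{n-1}}(D_1{\downarrow}_{\s_{n-1}},\1)$, and when $p\mid n$ the partition $(n-1,1)$ is JS, so $D_1{\downarrow}_{\s_{n-1}}\cong D^{(n-2,1)}$ is irreducible and nontrivial, whence this Hom-space vanishes; alternatively one can note that $S^{(n-1,1)}$ has simple head $D_1$ (James), so $D_1$ is not a submodule of $S^{(n-1,1)}$, and any copy of $D_1$ inside $M_1$ must lie in the augmentation kernel $S^{(n-1,1)}$. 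Either of these closes the gap, but as written the step ``so $v\in U$, whence $\soc(M_1)=U$'' overclaims.
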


\begin{lemma}\label{L160817_0}\label{L160817_1}\label{L160817_2}
Let $p=3$ and $n\geq 4$. Then
\[M_2\cong\left\{\begin{array}{ll}
M_1\oplus D_2,&n\equiv 0\Md 3,\\
D_1\oplus (D_0|D_2|D_0),&n\equiv 1\Md 3,\\
D_0\oplus (D_1|D_2|D_1),&n\equiv 2\Md 3.
\end{array}\right.\]
\end{lemma}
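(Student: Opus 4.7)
The plan is to combine the Specht filtration of $M_2 = M^{(n-2,2)}$ from Young's rule (\cite[17.17]{JamesBook}, \cite[6.1.21]{jk}) with James' explicit decomposition numbers for two-row Specht modules in characteristic $3$ (\cite[24.15]{JamesBook}, \cite[2.7.41]{jk}), and then to identify the indecomposable direct summands using the Young-module decomposition (Lemma \ref{LYoung}) together with the self-duality of $M_2$.

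First, Young's rule provides a Specht filtration of $M_2$ with successive quotients $S^{(n)}$, $S^{(n-1,1)}$, $S^{(n-2,2)}$, each with multiplicity $1$. From James' formulas one reads off the composition factors of each Specht module in characteristic $3$ as a function of $n \bmod 3$: $S^{(n)} = D_0$; $S^{(n-1,1)}$ equals $D_1$ if $3 \nmid n$ and has uniserial structure $D_0 \mid D_1$ if $3 \mid n$, in agreement with Lemma \ref{M1}; and $S^{(n-2,2)}$ has either one or two composition factors according to the residue. This determines the complete list of composition multiplicities of each $D_j$ in $M_2$.

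Second, by Lemma \ref{LYoung} one has $M_2 \cong Y_2 \oplus \bigoplus_{\beta \rhd (n-2,2)} (Y^\beta)^{\oplus m_\beta}$. A direct computation of ($p$-)Kostka numbers, or equivalently a Brauer character comparison, shows that in each residue case exactly one complementary Young summand appears. Invoking Lemma \ref{M1} to identify $Y^{(n-1,1)} = Y_1$ as the indecomposable summand of $M_1$ containing $S_1$, this complementary summand is $Y_1 = M_1 = D_0 \mid D_1 \mid D_0$ when $3 \mid n$, it is $Y_1 = D_1$ when $n \equiv 1 \pmod 3$, and it is $Y^{(n)} = D_0$ when $n \equiv 2 \pmod 3$. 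These furnish the left-hand direct summand in each of the three stated decompositions.

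Finally, the remaining summand $Y_2$ has composition factors determined by the first step, namely just $D_2$ when $3 \mid n$, and either $\{D_0, D_2, D_0\}$ or $\{D_1, D_2, D_1\}$ in the remaining residue classes. As a direct complement of a self-dual summand inside the self-dual module $M_2$, the Young module $Y_2$ is itself self-dual; combined with the inclusion $S_2 \subseteq Y_2$ (which identifies a simple submodule of $Y_2$ and, by duality, its head), this forces $Y_2 \cong D_2$ in the first case and $Y_2 \cong D_0 \mid D_2 \mid D_0$ or $Y_2 \cong D_1 \mid D_2 \mid D_1$ in the remaining two. The main technical point is ruling out a non-uniserial structure in the latter two cases, i.e., showing that the extension of $D_2$ by $D_{n \bmod 3}$ (and dually) is non-split in $Y_2$; this is where the explicit submodule lattice of $S^{(n-2,2)}$ provided by \cite[24.15]{JamesBook} does the bookkeeping, together with the observation that $Y_2$ is indecomposable by definition.
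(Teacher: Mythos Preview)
Your proof is correct and follows essentially the same path the paper indicates: the paper simply says the lemma ``follows easily from \cite[17.17,24.15]{JamesBook} and \cite[6.1.21,2.7.41]{jk}'', and you have unpacked that into Specht filtration $+$ two-row decomposition numbers $+$ Young-module splitting $+$ self-duality, which is exactly the intended argument. One small remark: the sentence ``a direct computation of $p$-Kostka numbers, or equivalently a Brauer character comparison'' is doing real work and is the only place you are slightly hand-wavy; in fact you do not need to compute $p$-Kostka numbers at all, since once you know $S_2\subseteq Y_2$, self-duality and $[M_2:D_2]=1$ force $Y_2=D_2$ (for $n\equiv 0$) or force the socle of $Y_2$ to be the simple $\soc(S_2)$ (for $n\equiv 1,2$), and then the composition factors of the complement match a unique sum of $Y_0$'s and $Y_1$'s --- so the identification of the complementary summand actually falls out of the same self-duality/indecomposability bookkeeping you already do for $Y_2$.
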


We will also need information about the structure of certain permutation modules corresponding to subgroups $\s_{n-k}$.

\begin{lemma}\label{L17}
Let $p\geq 3$ and $n\not\equiv 0\Md p$. If $n\geq 2$ then
\[M_1\cong D_1\oplus M_0.\]
If $n\geq 4$ then
\[M_{1^2}\oplus M_0\cong D_{1^2}\oplus M_2\oplus M_1.\]
If $p\geq 5$ and $n\geq 6$ then
\[M_{1^3}\oplus M_3\oplus M_2\oplus M_1\cong D_{1^3}\oplus M_{2,1}^{\oplus 2}\oplus M_{1^2}\oplus M_0.\]
\end{lemma}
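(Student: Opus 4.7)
Part (i) is immediate from Lemma \ref{M1}: when $p\nmid n$, $M_1 = M^{(n-1,1)}$ decomposes as $\mathbf{1}\oplus D^{(n-1,1)} = M_0\oplus D_1$. Set $V := M_1$. The approach for (ii) and (iii) will be to compute $V^{\otimes k}$ for $k=2,3$ in two different ways---via the orbit decomposition of the natural basis $e_{i_1}\otimes\cdots\otimes e_{i_k}$ under $\s_n$ (producing a direct sum of permutation modules $M_\mu$ indexed by the multiplicity type of the indices), and via the Schur--Weyl splitting into symmetric, antisymmetric, and Schur functor components (valid when $F\s_k$ is semisimple)---then compare using (i) and Krull--Schmidt.

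For (ii), the basis decomposition gives $V\otimes V\cong M_1\oplus M_{1^2}$, and since $p\neq 2$ one also has $V\otimes V\cong\mathrm{Sym}^2 V\oplus\wedge^2 V$, with $\mathrm{Sym}^2 V\cong M_1\oplus M_2$ by a further basis argument (``diagonal'' $e_i\otimes e_i$ versus ``symmetrized off-diagonal''). Combining yields $M_{1^2}\cong M_2\oplus\wedge^2 V$, and splitting $V = D_1\oplus M_0$ via (i) produces $\wedge^2 V\cong\wedge^2 D_1\oplus D_1$, so that (ii) reduces to the identity $\wedge^2 D_1\cong D_{1^2}$. I will establish this by a composition-factor argument: in characteristic $0$, $\wedge^2 V\cong S^{(n-1,1)}\oplus S^{(n-2,1,1)}$, which reduces modulo $p$---via Lemma \ref{LH} and $p\nmid n$---to composition factors $D_1 + D_{1^2}$. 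Since $\wedge^2 V\cong\wedge^2 D_1\oplus D_1$ in characteristic $p$, the module $\wedge^2 D_1$ has $D_{1^2}$ as its sole composition factor, and matching dimensions forces the claim. As a useful byproduct, comparing $\mathrm{Sym}^2 V\cong\mathrm{Sym}^2 D_1\oplus D_1\oplus M_0$ with $M_1\oplus M_2$ gives $\mathrm{Sym}^2 D_1\cong M_2$.

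For (iii), orbit decomposition now gives $V^{\otimes 3}\cong M_1\oplus 3M_{1^2}\oplus M_{1^3}$, and for $p\geq 5$ the algebra $F\s_3$ is semisimple, so Schur--Weyl gives $V^{\otimes 3}\cong\mathrm{Sym}^3 V\oplus 2\,S^{(2,1)} V\oplus\wedge^3 V$. A basis argument identifies $\mathrm{Sym}^3 V\cong M_1\oplus M_{1^2}\oplus M_3$, and splitting $V = D_1\oplus M_0$ together with the additive decomposition of Schur functors on direct sums yields $\wedge^3 V\cong\wedge^3 D_1\oplus\wedge^2 D_1$ and $S^{(2,1)} V\cong S^{(2,1)}D_1\oplus\mathrm{Sym}^2 D_1\oplus\wedge^2 D_1\oplus D_1$. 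The identifications $\wedge^k D_1\cong D_{1^k}$ for $k=2,3$ (by the argument of (ii), using Lemma \ref{LH} and $k<p$) and $\mathrm{Sym}^2 D_1\cong M_2$ (from (ii)) then let me rewrite everything in terms of permutation modules, simples, and $S^{(2,1)} D_1$, producing $M_{1^3}\cong M_3\oplus D_{1^3}\oplus D_{1^2}\oplus 2\,S^{(2,1)}D_1$. Separately, $M_1\otimes M_2\cong M_{1^2}\oplus M_{2,1}$ (by a basis/Mackey argument), and via the Pieri-type identity $D_1\otimes\mathrm{Sym}^2 D_1\cong\mathrm{Sym}^3 D_1\oplus S^{(2,1)} D_1$ (valid for $p\geq 5$ since the hook lengths of $(2,1)$ are invertible) together with $V = D_1\oplus M_0$, also $M_1\otimes M_2\cong M_2\oplus\mathrm{Sym}^3 D_1\oplus S^{(2,1)} D_1$. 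Combining---using the further identity $\mathrm{Sym}^3 D_1\cong D_1\oplus D_{1^2}\oplus M_3$, obtained from the two decompositions of $\mathrm{Sym}^3 V$ via Krull--Schmidt---yields $M_{2,1}\cong M_3\oplus S^{(2,1)} D_1$. Substituting both derived formulas into the two sides of (iii) and cancelling common summands via Krull--Schmidt closes the argument.

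The main obstacle is that each of the Schur--Weyl, Pieri-type, and additive Schur-functor identities must be used as genuine module isomorphisms rather than mere equalities in the Grothendieck group; the hypothesis $p\geq 5$ is essential both for the three-fold Schur--Weyl decomposition and for the Pieri splitting of $D_1\otimes\mathrm{Sym}^2 D_1$, as both depend on $F\s_3$ being semisimple (equivalently $p\nmid 6$). Krull--Schmidt must be invoked repeatedly to cancel common summands at each stage, and the key identifications $\wedge^k D_1\cong D_{1^k}$ rest on Lemma \ref{LH} and the hypothesis $p\nmid n$.
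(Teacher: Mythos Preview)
Your proof is correct, but takes a genuinely different route from the paper's.

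The paper's argument is structural and very short: since $p\nmid n$ and $k<p$, Lemma~\ref{LH} gives $S_{1^k}\cong D_{1^k}$; since $[M_{1^k}:D_{1^k}]=1$ and $M_{1^k}$ is self-dual, this simple submodule splits off as a direct summand. The remaining summand of $M_{1^k}$ is then identified by invoking Lemma~\ref{LYoung}: every partition $\mu\rhd(n-k,1^k)$ with $k<p$ is $p$-regular, so the Young modules $Y^\mu$ appearing are simply the Specht modules $S^\mu\cong D^\mu$, and a composition-factor count finishes. No tensor powers or Schur functors enter.

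Your approach replaces Young-module theory with explicit Schur--Weyl computations on $V^{\otimes 2}$ and $V^{\otimes 3}$ for $V=M_1$, exploiting that $F\s_2$ and $F\s_3$ are semisimple under the stated hypotheses. This is longer and more computational, but it is self-contained (it does not need Lemma~\ref{LYoung}), and it yields as byproducts the pleasant explicit identifications $\wedge^k D_1\cong D_{1^k}$, $\mathrm{Sym}^2 D_1\cong M_2$, and $M_{2,1}\cong M_3\oplus S^{(2,1)}D_1$. The paper's method, in contrast, generalises more readily: essentially the same one-line argument handles the $p\mid n$ analogue in Lemma~\ref{L16}, whereas your Schur--Weyl calculation would need reworking there since $D_{1^k}$ is no longer a summand of $\wedge^k V$.
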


\begin{proof}
From Lemma \ref{LH} we have that in each of the above cases $D_{1_k}\cong S_{1_k}\subseteq M_{1^k}$. Since $M_{1^k}$ and $D_{1^k}$ are self-dual, $D_{1^k}$ is also a quotient of $M_{1^k}$. Let $V\subseteq M_{1^k}$ with $V\cong D_{1^k}$ and $W\subseteq M_{1^k}$ with $M_{1^k}/W\cong D_{1^k}$. By \cite[12.1]{JamesBook} we  have that $[M_{1^k}:D_{1^k}]=1$, so $V\not\subseteq W$. By dimension we then have that $M_{1^k}=V\oplus W$, so $D_{1^k}\cong S_{1^k}$ is a direct summand of $M_{1^k}$. The lemma then follows by comparing composition factors (for example using Specht filtrations from \cite[17.14]{JamesBook}) and Lemma \ref{LYoung}, since if $\la\unrhd (n-k,1^k)$ and $k<p$ then $\la\in\Par_p(n)$.
\end{proof}

\begin{lemma}\label{L16}
Let $p\geq 3$ and $n\equiv 0\Md p$. If $n\geq 2$ then
\[M_1\cong Y_1\]
and if $n\geq 4$
\[M_{1^2}\cong M_2\oplus Y_2.\]
If $p\geq 5$ and $n\geq 6$ then
\[M_{1^3}\oplus M_3\cong M_{2,1}^{\oplus 2}\oplus Y_3\]
and if $n\geq 8$
\[M_{1^4}\oplus M_{2^2}\oplus M_{3,1}^{\oplus 2}\cong M_{2,1^2}^{\oplus 2}\oplus M_4\oplus Y_4.\]
If $p=3$ and $n\geq 6$ then
\[M_{1^3}\oplus M_1\cong M_{2,1}\oplus M_{1^2}\oplus Y_3'.\]

In each of the above cases $Y_k$ or $Y_k'$ 
is indecomposable with simple head and socle isomorphic to $D_{1^{k-1}}$ and
\begin{align*}
Y_1&\cong \overbrace{D_0|D_1}^{S_1}|\overbrace{D_0}^{S_0},\\
Y_2&\sim \overbrace{D_1|D_{1^2}}^{S_{1^2}}|\overbrace{D_0|D_1}^{S_1},\\
Y_3&\sim \overbrace{D_{1^2}|D_{1^3}}^{S_{1^3}}|\overbrace{D_1|D_{1^2}}^{S_{1^2}},\\
Y_3'&\sim S_{1^3}|S_{2,1}|S_{1^2},\\
Y_4&\sim \overbrace{D_{1^3}|D_{1^4}}^{S_{1^4}}|\overbrace{D_{1^2}|D_{1^3}}^{S_{1^3}}.
\end{align*}
\end{lemma}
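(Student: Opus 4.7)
I would identify each $Y_k$ (and $Y_3'$) with the Young module $Y^{(n-k,1^k)}$, which by Lemma \ref{LYoung} is the unique indecomposable summand of $M_{1^k}$ containing $S_{1^k}$. For $k=1$ the isomorphism $M_1\cong Y_1$ is immediate from Lemma \ref{M1}, which gives $M_1\cong D_0|D_1|D_0$ uniserial; this is already indecomposable and matches the claimed structure $\overbrace{D_0|D_1}^{S_1}|\overbrace{D_0}^{S_0}$.

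For $k\geq 2$ I would prove the direct sum identities using Krull--Schmidt applied to Young module decompositions. By Lemma \ref{LYoung} every permutation module $M_\mu$ appearing in the statement decomposes as a direct sum of Young modules $Y^\lambda$ with $\lambda\unrhd(n-|\mu|,\mu)$, so both sides of each identity are direct sums of Young modules and it suffices to match the multiplicity of each $Y^\lambda$. I would determine these multiplicities in the Grothendieck group by induction along the dominance order using the characteristic-zero decomposition $M^\mu\cong\bigoplus_\lambda (S^\lambda)^{K_{\lambda,\mu}}$ together with the Specht-filtration multiplicities of Young modules; for the partitions involved (those dominating $(n-k,1^k)$ with $k\leq 4$) this is a finite and tractable computation, and the equality of multiplicities on both sides follows directly.

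The Loewy structure of $Y_k$ then follows from three inputs. First, the Young-module analysis pins down the Specht filtration of $Y_k$: exactly $S_{1^k}$ and $S_{1^{k-1}}$ each with multiplicity one (for $Y_3'$ the filtration additionally contains $S_{2,1}$). Second, by Lemma \ref{LH} (using $p\mid n$ and $k<p$ in the relevant range) each $S_{1^j}$ has Loewy series $D_{1^{j-1}}|D_{1^j}$. Third, self-duality of $Y_k$ from Lemma \ref{LYoung}, together with the constraint that the head and socle are both $D_{1^{k-1}}$, forces $S_{1^k}$ to sit at the bottom of the filtration and $S_{1^{k-1}}$ at the top; the precise Loewy series $D_{1^{k-1}}|D_{1^k}|D_{1^{k-2}}|D_{1^{k-1}}$ is then recovered from indecomposability combined with the observation that the non-trivial extension between these hook Specht modules is unique up to scalar. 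The case of $Y_3'$ is handled analogously with $S_{2,1}$ sitting between $S_{1^3}$ and $S_{1^2}$; here only the head, socle, and the Specht filtration are required by the statement.

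The main obstacle is computing the Young module multiplicities $[M^\mu:Y^\nu]$ in characteristic $p$ with $p\mid n$, since this requires decomposition data for Specht modules indexed by both hook partitions (via Lemma \ref{LH}) and two-row partitions such as $(n-2,2)$ and $(n-3,3)$ whose composition series depend sensitively on $n\bmod p$. A separate and slightly more delicate analysis is also needed when $p=3$ to account for the extra Specht factor $S_{2,1}$ in $Y_3'$.
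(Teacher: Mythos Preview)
Your approach differs from the paper's in a key respect, and the difference matters for closing the argument.

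The paper does not start by identifying $Y_k$ with the abstract Young module $Y^{(n-k,1^k)}$. Instead it constructs $Y_k$ explicitly as a block component of an induced simple module: since $p\mid n$ implies $p\nmid(n-1)$, Lemma~\ref{LH} gives $S^{(n-k,1^{k-1})}\cong D^{(n-k,1^{k-1})}$, which is a direct summand of $M^{(n-k,1^{k-1})}$ over $\s_{n-1}$; inducing to $\s_n$ and projecting to the block containing $D_{1^k}$ yields $Y_k=f_{-k}D^{(n-k,1^{k-1})}$. Lemma~\ref{Lemma40} then gives indecomposability and the simple head and socle $D^{\widetilde f_{-k}(n-k,1^{k-1})}=D_{1^{k-1}}$ for free. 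The Specht filtration comes from James' branching rule for Specht modules together with block decomposition: $S^{(n-k,1^{k-1})}\ua^{\s_n}$ has Specht factors $S_{1^{k-1}}$, $S_{2,1^{k-2}}$, $S_{1^k}$, and for $k<p$ only $S_{1^{k-1}}$ and $S_{1^k}$ lie in the relevant block (when $p=3$ and $k=3$ all three do, which is exactly why $Y_3'$ picks up $S_{2,1}$). Only then are the direct-sum identities established by Krull--Schmidt and composition-factor comparison via Lemma~\ref{LYoung}.

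Your route forces you to compute the Specht filtration of $Y^{(n-k,1^k)}$ from scratch. You say this is ``a finite and tractable computation'' via $p$-Kostka numbers, but you do not carry it out, and you correctly flag it as the main obstacle; with $p\mid n$ the two-row Specht modules that enter the induction are not handled by the lemmas available in the paper. More seriously, your argument for the simple head and socle is circular as written: you invoke ``the constraint that the head and socle are both $D_{1^{k-1}}$'' in order to deduce the Loewy structure, but that constraint is precisely what needs proving. One can salvage this by a counting argument on composition factors using self-duality and indecomposability, but you have not supplied it, and the appeal to uniqueness of an extension between hook Specht modules is an additional unproved claim. The paper's construction via $f_i$ sidesteps all of this.
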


\begin{proof}
Note that $M_{1^k}=M^{(n-k,1^{k-1})}\ua^{\s_n}$. In particular in each of the above cases since $(n-k,1^{k-1})\in\Par_p(n-1)$ from Lemmas \ref{Lemma45} and \ref{LH} and self-duality of $M^{(n-k,1^{k-1})}$ we have that $D^{(n-k,1^{k-1})}\cong S^{(n-k,1^{k-1})}$ and that $e_{-k}D^{(n-k,1^{k-1})}\ua^{\s_n}$ is a direct summand of $M_{1^k}$. Let $Y_k$ or $Y_k'$ 
be this direct summand. Then $Y_k$ or $Y_k'$ 
has simple head and socle isomorphic to $D_{1^{k-1}}$ by Lemma \ref{Lemma39} and it has the right Specht filtration by \cite[Corollary 17.14]{JamesBook} and block decomposition. Structure of hook Specht modules can be obtained by Lemma \ref{LH}.

The lemma then follows by comparing composition factors (for example using Specht filtrations) and Lemma \ref{LYoung}, since $\la\in\Par_p(n)$ if $\la\rhd (n-k,1^k)$ and $k\leq p$.
\end{proof}

\section{More on endomorphisms rings}\label{s5}

In this section we study branching for certain classes of modules in order to extend in many cases results from \S\ref{shr} to families of modules which were not considered there. We divide this section according to different classes of modules.

\subsection{Partitions with two or three rows}\label{s2r}

\begin{lemma}\label{L7a}
Let $p=3$, $n\geq 7$, $G\in\{\s_n,\A_n\}$ and $\la=(n-2,2)$. Let $V$ be an irreducible $FG$-module indexed by $\la$. If $n\not\equiv 2\Md 3$ then there exists $\psi\in\Hom_H(M_{3},\End_F(V))$ which does not vanish on $S_{3}$.
\end{lemma}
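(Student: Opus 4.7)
Since $\la = (n-2,2)$ with $n \geq 7$ satisfies $\la_1 - \la_2 \geq 3$, we have $\la \neq \la^\Mull$ by \cite[Lemma 1.8]{ks2}, so $E^\la = D^\la\da_{\A_n}$ is irreducible. As in the end of the proof of Lemma \ref{LM3}, it suffices to produce the desired homomorphism for $G = \s_n$ with $V = D^\la$; restricting it to $\A_n$ then handles the alternating case. By Lemma \ref{L6}, it is enough to show $x_3 D^\la \neq 0$, and since $x_3 \in F\widetilde\s_6 \subseteq F\widetilde\s_n$, it suffices to locate a composition factor $D^\mu$ of $D^\la\da_{\s_6}$ on which $x_3$ acts non-trivially.

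The plan is first to perform a branching analysis using Lemma \ref{Lemma39}(vi), and then to reduce the verification to a concrete character computation on a small module. One computes the residues of the removable and addable nodes of $(n-2,2)$ and checks that the top removable node $(1, n-2)$ is normal in the cases $n \equiv 0, 1 \pmod 3$; iterating, one descends along the chain $(n-2,2) \to (n-3,2) \to \cdots \to (5,2)$ and finally to $(4,2)$ via the $1$-normal node $(1,5)$. This exhibits $D^{(4,2)}$ as a composition factor of $D^\la\da_{\s_6}$, reducing the problem to showing $x_3 D^{(4,2)} \neq 0$ in $F\s_6$.

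For this final step, the key observation is that $(4,2)$ is a $3$-core, so $S^{(4,2)} \cong D^{(4,2)}$ is irreducible of defect zero and its Brauer character agrees with the ordinary character $\chi^{(4,2)}$ on $3$-regular classes of $\s_6$. Following the template of the proofs of Lemmas \ref{LM3S} and \ref{L51}, one selects a suitable $y \in \widetilde\s_6$, expands the product $y\, x_3 \in F\widetilde\s_6$ as a signed $F$-linear combination of group elements grouped by $\s_6$-conjugacy class, and verifies that $\chi^{(4,2)}(y\, x_3) \not\equiv 0 \pmod 3$.

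The main obstacle will be this last character calculation. Since $\chi^{(4,2)}$ vanishes on every $3$-singular class of $\s_6$ and the support of $x_3$ consists of (lifts of) $3$-cycles, the element $y$ must be chosen so that enough of the resulting products $c\, y$ (for $c$ a $3$-cycle from the support of $x_3$) land in $3$-regular cycle types, and so that the signed contributions sum to a nonzero residue modulo $3$. The branching analysis is otherwise routine bookkeeping split according to $n \bmod 3$.
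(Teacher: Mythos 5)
Your approach is genuinely different from the paper's. The paper reduces the claim, via \cite[Lemma 6.5]{m3}, to the inequality
\[
\dim\End_{\s_{n-3,3}}(D^\la\da_{\s_{n-3,3}}) > \dim\End_{\s_{n-2,2}}(D^\la\da_{\s_{n-2,2}}),
\]
and then verifies this by a case analysis using Mackey decomposition together with Lemmas~\ref{M1} and~\ref{L160817_2}. You instead try to invoke the $x_3$-criterion of Lemma~\ref{L6}, following the template of Lemmas~\ref{LM3}, \ref{LM3S} and \ref{L51}. The reduction to $G=\s_n$ is fine, and so is your branching analysis: for $n\geq 7$ with $n\not\equiv 2\pmod 3$, $D^{(4,2)}$ is indeed a composition factor of $D^{(n-2,2)}\da_{\s_6}$, and $(4,2)$ is a $3$-core, so $D^{(4,2)}\cong S^{(4,2)}$ is projective.

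However, the character computation that you defer as ``the main obstacle'' is not merely an obstacle --- it cannot succeed. One can check directly (class by class, using $\chi^{(4,2)}=(9,3,1,3,0,0,0,-1,1,-1,0)$ on the eleven conjugacy classes of $\s_6$) that $\chi^{(4,2)}(x_3 y)=0$ for \emph{every} $y\in\s_6$, already over $\mathbb{Q}$; equivalently, $x_3$ lies in the kernel of the projection onto the defect-zero block of $D^{(4,2)}$, so $x_3 D^{(4,2)}=0$. The problem is moreover structural, not an unlucky choice of composition factor: since $\la=(n-2,2)$ has only two rows, every composition factor of $D^\la\da_{\s_6}$ lies in $\{D^{(6)},D^{(5,1)},D^{(4,2)}\}$ (by iterated Specht branching and dominance), and $x_3$ annihilates all three --- $D^{(6)}$ trivially because $\sum_{g\in\s_{\{1,4\}}\times\s_{\{2,5\}}\times\s_{\{3,6\}}}\sgn(g)=0$, and $D^{(5,1)}$, $D^{(4,2)}$ by the same kind of trace computation. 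So no composition factor of the restriction to $\s_6$ can ever witness $x_3 D^\la\neq 0$. This is precisely the failure mode that the hypothesis $h(\la),h(\la^\Mull)\geq 3$ in Lemma~\ref{LM3} is there to exclude: the $x_3$-criterion of Lemma~\ref{L6} is too coarse to detect two-row partitions in characteristic $3$, which is why the paper switches to the $\End$-dimension inequality for exactly this case.
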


\begin{proof}
By \cite[Lemma 1.8]{ks2}, $\la\not=\la^\Mull$, so $V\cong D^\la=D_2$ or $E^\la$. So it is enough to prove the lemma for $\s_n$. From \cite[Lemma 6.5]{m3} it is enough to prove that
\[\dim\End_{\s_{n-3,3}}(D_2\da_{\s_{n-3,3}})>\dim\End_{\s_{n-2,2}}(D_2\da_{\s_{n-2,2}}).\]
Note that the assumption on $n$ is equivalent to $(n-2,2)$ not being a JS-partition.

If the two removable nodes have different residue this holds by \cite[Lemma 6.7]{m3}. So we may assume that the removable nodes have the same residue, in which case $n\equiv 0\Md 3$. From Mackey induction-reduction theorem we have that
\begin{align*}
M_1\da_{\s_{n-2,2}}&\cong 1\ua_{\s_{n-2,1^2}}^{\s_{n-2,2}}\oplus 1\ua_{\s_{n-3,1,2}}^{\s_{n-2,2}}\\
&\cong (M^{(n-2)}\boxtimes M^{(1^2)})\oplus (M^{(n-3,1)}\boxtimes M^{(2)}),\\
M_1\da_{\s_{n-3,3}}&\cong 1\ua_{\s_{n-3,2,1}}^{\s_{n-3,3}}\oplus 1\ua_{\s_{n-4,1,3}}^{\s_{n-3,3}}\\
&\cong (M^{(n-3)}\boxtimes M^{(2,1)})\oplus (M^{(n-4,1)}\boxtimes M^{(3)}),\\
M_2\da_{\s_{n-2,2}}&\cong 1\oplus 1\ua_{\s_{n-3,1^3}}^{\s_{n-2,2}}\oplus 1\ua_{\s_{n-4,2^2}}^{\s_{n-2,2}}\\
&\cong (M^{(n-2)}\boxtimes M^{(2)})\oplus (M^{(n-3,1)}\boxtimes M^{(1^2)})\oplus (M^{(n-4,2)}\boxtimes M^{(2)}),\\
M_2\da_{\s_{n-3,3}}&\cong 1\ua_{\s_{n-3,2,1}}^{\s_{n-3,3}}\oplus 1\ua_{\s_{n-4,1,2,1}}^{\s_{n-3,3}}\oplus 1\ua_{\s_{n-5,2,3}}^{\s_{n-3,3}}\\
&\cong (M^{(n-3)}\boxtimes M^{(2,1)})\oplus (M^{(n-4,1)}\boxtimes M^{(2,1)})\oplus (M^{(n-5,2)}\boxtimes M^{(3)}).
\end{align*}
From Lemma \ref{L160817_0} we have that $M_2\cong M_1\oplus D_2$. Comparing $M_2\da_H$ and $M_1\da_H$ for $H\in\{\s_{n-2,2},\s_{n-3,3}\}$ using Lemmas \ref{M1} and \ref{L160817_2}, it follows that
\begin{align*}
D_2\da_{\s_{n-2,2}}&\hspace{-1.5pt}\!\cong\!\hspace{-1pt} (\hspace{-0.5pt}D^{(n-3,1)}\!\boxtimes\! (\hspace{-0.5pt}D^{(2)}\!\oplus\! D^{(1^2)}\hspace{-0.5pt})\hspace{-1pt})\!\oplus\! (\hspace{-1.5pt}(\hspace{-0.5pt}D^{(n-2)}|D^{(n-4,2)}|D^{(n-2)}\hspace{-0.5pt})\!\boxtimes\! D^{(2)}\hspace{-0.5pt}),\\
D_2\da_{\s_{n-3,3}}&\hspace{-1.5pt}\!\sim\!\hspace{-1pt} (\hspace{-0.5pt}D^{(n-5,2)}\!\boxtimes\! D^{(3)}\hspace{-0.5pt})\!\oplus\! (\hspace{-1.5pt}(\hspace{-0.5pt}D^{(n-3)}|D^{(n-4,1)}|D^{(n-3)}\hspace{-0.5pt})\!\boxtimes\! (\hspace{-0.5pt}D^{(3)}|D^{(2,1)}|D^{(3)}\hspace{-0.5pt})\hspace{-1.5pt}).
\end{align*}
It then follows that 
\[\dim\End_{\s_{n-3,3}}(D_2\da_{\s_{n-3,3}})=5>4=\dim\End_{\s_{n-2,2}}(D_2\da_{\s_{n-2,2}}).\]
\end{proof}

\begin{lemma}\label{L39}
Let $p\geq 3$,  $n\geq 6$ with $n\not\equiv 0\Md p$ and $\la\in\Par_p(n)\setminus\h_p(n)$. If $\la$ is not JS then $\overline{D}_0\oplus \overline{D}_1\oplus \overline{D}_3\subseteq\End_F(D^\la)$.
\end{lemma}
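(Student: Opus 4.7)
Since $\End_F(D^\la)$ is self-dual and $\overline{D}_0,\overline{D}_1,\overline{D}_3$ are three pairwise non-isomorphic simple modules, it suffices to show that $\Hom_{\s_n}(\overline{D}_k,\End_F(D^\la))\neq 0$ for each $k\in\{0,1,3\}$: the three simples will then land in distinct isotypic components of the socle of $\End_F(D^\la)$, and together they give the claimed embedding.

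For $k=0$ we have $\overline{D}_0\cong F$, and $\Hom_{\s_n}(F,\End_F(D^\la))\cong\End_{\s_n}(D^\la)\cong F$ by simplicity of $D^\la$. For $k=1$ we use Lemma \ref{l2} to compute
\[\dim\Hom_{\s_n}(M_1,\End_F(D^\la))=\dim\End_{\s_{n-1}}(D^\la\da_{\s_{n-1}})=\sum_i\eps_i(\la),\]
the last equality by Lemma \ref{Lemma39}(v). Since $\la$ is not JS, this sum is $\geq 2$. On the other hand $n\not\equiv 0\Md p$ forces by Lemma \ref{M1} that $M_1\cong D_0\oplus D_1=\overline{D}_0\oplus\overline{D}_1$, hence
\[\dim\Hom_{\s_n}(\overline{D}_1,\End_F(D^\la))=\dim\Hom_{\s_n}(M_1,\End_F(D^\la))-\dim\Hom_{\s_n}(\overline{D}_0,\End_F(D^\la))\geq 1.\]

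For $k=3$, because $n\not\equiv 0\Md p$ and $3\leq n-1$, Lemma \ref{LH} gives $S_{1^3}\cong\overline{D}_3$. The hypothesis $\la\in\Par_p(n)\setminus\h_p(n)$ (together with $n\geq 6$) is exactly what is needed to invoke Lemma \ref{L3} with $G=\s_n$ and $V=D^\la$, which produces a non-zero $\psi\in\Hom_{\s_n}(S_{1^3},\End_F(D^\la))$. This yields $\Hom_{\s_n}(\overline{D}_3,\End_F(D^\la))\neq 0$, completing the verification for all three summands.

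There is no real obstacle: every ingredient (the $M_1$-decomposition in the non-divisibility case, the identification $S_{1^3}\cong\overline{D}_3$, and the existence of a non-zero map from $S_{1^3}$) has already been set up, so the only work is to assemble the three socle constituents into a single statement via the distinctness of the simples $\overline{D}_0,\overline{D}_1,\overline{D}_3$.
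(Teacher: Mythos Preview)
Your proof is correct and follows essentially the same line as the paper: the trivial summand is immediate, the $\overline{D}_1$ summand comes from $\dim\Hom_{\s_n}(M_1,\End_F(D^\la))\geq 2$ combined with the splitting of $M_1$ when $p\nmid n$ (the paper cites Lemma~\ref{L17} rather than Lemma~\ref{M1}, but for $p\nmid n$ these give the same decomposition $M_1\cong D_0\oplus D_1$), and the $\overline{D}_3$ summand comes from Lemma~\ref{L3} together with $S_{1^3}\cong\overline{D}_3$ via Lemma~\ref{LH}. Your remark on self-duality is harmless but not actually needed: the fact that three pairwise non-isomorphic simple submodules sum directly follows already from semisimplicity of the socle.
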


\begin{proof}
Clearly $\overline{D}_0\cong D_0\subseteq\End_F(D^\la)$. From Lemmas \ref{Lemma39} and \ref{l2} we have that
\[\dim\Hom_{\s_n}(M_1,\End_F(D^\la))=\dim\End_{\s_{n-1}}(D^\la)\geq 2.\]
From Lemma \ref{L17} we then have that $\overline{D}_1\cong D_1\subseteq\End_F(D^\la)$.  From Lemma \ref{LH} we have that $\overline{D}_3\cong S_{1^3}$. So $\overline{D}_3\subseteq \End_F(D^\la)$ by Lemma \ref{L3}.
\end{proof}

\begin{lemma}\label{L41}
Let $p\geq 5$, $n\geq 6$ with $n\equiv 0\Md p$ and $\la\in\Par_p(n)$. If $h(\la)=2$ and $\la_1-\la_2\not\equiv 0,-1,-2\Md p$ then $\overline{D}_0\oplus \overline{D}_2\subseteq\End_F(D^\la)$ or $\overline{D}_0\oplus \overline{D}_1\oplus \overline{D}_3\subseteq\End_F(D^\la)$.
\end{lemma}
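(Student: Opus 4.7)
The trivial embedding $\overline{D}_0\cong D_0\subseteq\End_F(D^\la)$ comes from the identity endomorphism. From $p\mid n$ (so $\la_1+\la_2\equiv 0\Md p$) and the hypothesis $\la_1-\la_2\not\equiv 0,-1,-2\Md p$ one checks: $\la$ is not JS (since a 2-row $(\la_1,\la_2)$ with $\la_1>\la_2$ is JS iff $\la_1-\la_2\equiv-2\Md p$, while $\la_1=\la_2$ would give $\la_1-\la_2\equiv 0\Md p$); its two removable nodes $(1,\la_1)$ and $(2,\la_2)$ have distinct residues (equal iff $\la_1-\la_2\equiv-1\Md p$), both normal with $\epsilon_i(\la)=1$; and $\la\notin\h_p(n)$, since the only 2-row element of $\h_p(n)$ is $(n-1,1)$, which has $\la_1-\la_2=n-2\equiv-2\Md p$, excluded. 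Lemma~\ref{Lemma39}(v) then yields $\dim\End_{\s_{n-1}}(D^\la\da_{\s_{n-1}})=2$.

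Since $\la\notin\h_p(n)$ and $p\geq 5$, Lemma~\ref{L3} produces a non-zero $\psi\in\Hom_{\s_n}(S_{1^3},\End_F(D^\la))$, and Lemma~\ref{LH} gives $S_{1^3}\cong\overline{D}_2|\overline{D}_3$ uniserial with socle $\overline{D}_2$ and head $\overline{D}_3$. The disjunction in the conclusion arises from the injectivity of $\psi$: if $\psi$ is injective, then $\overline{D}_2=\soc S_{1^3}\hookrightarrow\End_F(D^\la)$, so together with $\overline{D}_0$ (a non-isomorphic simple), conclusion (a) $\overline{D}_0\oplus\overline{D}_2\subseteq\End_F(D^\la)$ follows. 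Otherwise $\ker\psi\supseteq\soc S_{1^3}=\overline{D}_2$, so $\psi$ factors through $S_{1^3}/\overline{D}_2\cong\overline{D}_3$, embedding $\overline{D}_3\hookrightarrow\End_F(D^\la)$; to complete conclusion (b) it then remains to embed $\overline{D}_1=D_1$.

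For this last embedding I would use the uniserial structure $M_1=Y_1\sim D_0|D_1|D_0$ from Lemma~\ref{L16}, together with $\dim\Hom_{\s_n}(M_1,\End_F(D^\la))=\dim\End_{\s_{n-1}}(D^\la\da_{\s_{n-1}})=2$ (Lemma~\ref{l2}). A Reynolds-operator computation shows that every homomorphism in $\Hom(M_1,\End_F(D^\la))$ annihilates $\soc M_1$: such a homomorphism corresponds via Frobenius reciprocity to $T\in\End_F(D^\la)^{\s_{n-1}}$, and sends the socle generator $\sum_g g\otimes 1$ (summed over coset representatives of $\s_{n-1,1}$ in $\s_n$) to the $\s_n$-orbit sum $\sum_g gTg^{-1}=c\cdot\mathrm{id}\in\End_{\s_n}(D^\la)$ with $c\dim D^\la=n\,\mathrm{tr}(T)$ in $F$, forcing $c=0$ since $p\mid n$ (provided $p\nmid\dim D^\la$). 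Consequently both homomorphisms factor through $M_1/\soc M_1\cong D_1|D_0$, giving $\dim\Hom_{\s_n}(D_1|D_0,\End_F(D^\la))=2$. Applying $\Hom(-,\End_F(D^\la))$ to $0\to D_1\to D_1|D_0\to D_0\to 0$ and using $\dim\Hom(D_0,\End_F(D^\la))=\dim\End_{\s_n}(D^\la)=1$ gives $\dim\Hom(D_1,\End_F(D^\la))\geq 1$, i.e., $D_1\hookrightarrow\End_F(D^\la)$. The main obstacle is the Reynolds-operator step: verifying $p\nmid\dim D^\la$ for the two-row $\la$ in question — a direct hook-length/decomposition-number computation — or otherwise supplying a different branching argument in the degenerate case.
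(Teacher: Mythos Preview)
Your treatment of $\overline{D}_2$ versus $\overline{D}_3$ via the non-zero map $S_{1^3}\to\End_F(D^\la)$ from Lemma~\ref{L3} is exactly what the paper does. The divergence is in how to obtain $\overline{D}_1$.

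The Reynolds-operator step is a genuine gap, as you already flag: the conclusion $c=0$ from $c\dim D^\la=n\,\mathrm{tr}(T)\equiv 0$ requires $p\nmid\dim D^\la$, and there is no uniform reason this should hold for two-row $\la$ with $p\mid n$. Even if it fails only sporadically, in such a case the argument breaks: one could have an \emph{injective} $M_1\hookrightarrow\End_F(D^\la)$, from which $D_1\subseteq\End_F(D^\la)$ does \emph{not} follow (the socle of the image is $D_0$, not $D_1$), and combining with the map through the head does not help since that map already vanishes on $\soc M_1$.

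The paper sidesteps this entirely by branching one level deeper. Under the hypotheses one checks directly that $D^\la\da_{\s_{n-2,2}}$ is semisimple with three or four pairwise non-isomorphic simple summands; by Lemma~\ref{l2} this computes $\dim\Hom_{\s_n}(M_2,\End_F(D^\la))$, and comparing with $\dim\Hom_{\s_n}(M_{1^2},\End_F(D^\la))=\dim\End_{\s_{n-2}}(D^\la\da_{\s_{n-2}})$ via the decomposition $M_{1^2}\cong M_2\oplus Y_2$ of Lemma~\ref{L16} yields $\Hom_{\s_n}(Y_2,\End_F(D^\la))\neq 0$. Since $Y_2$ is uniserial $D_1|D_{1^2}|D_1$, the image of any non-zero map out of $Y_2$ has socle $D_1$ or $D_{1^2}$, so $\overline{D}_1$ or $\overline{D}_2$ embeds in $\End_F(D^\la)$. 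Combining this disjunction with the $\overline{D}_2$-or-$\overline{D}_3$ disjunction gives the lemma with no appeal to $\dim D^\la$.
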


\begin{proof}
Notice that by Lemma \ref{Lemma39} and considering branching in characteristic 0,
\begin{align*}
D^\la\da_{\s_{n-2,2}}\cong\,& (D^{(\la_1-2,\la_2)}\boxtimes D^{(2)})\oplus (D^{(\la_1-1,\la_2-1)}\boxtimes D^{(2)})\\
&\oplus (D^{(\la_1-1,\la_2-1)}\boxtimes D^{(1^2)})\oplus (D^{(\la_1,\la_2-2)}\boxtimes D^{(2)})^{\oplus a},
\end{align*}
with $a=1$ if $\la_2\geq 2$ and $\la_1-\la_2\not\equiv -3\Md p$ or $a=0$ else. From Lemmas \ref{l2} and \ref{L16} it follows that $\overline{D}_1$ or $\overline{D}_2$ is contained in $\End_F(D^\la)$. From Lemmas \ref{LH} and \ref{L3} we also have that $\overline{D}_2$ or $\overline{D}_3$ is contained in $\End_F(D^\la)$. The lemma follows.
\end{proof}

\begin{lemma}\label{L43}\label{L42}
Let $p\geq 3$, $n\geq 6$ with $n\equiv 0\Md p$ and $\la\in\Par_p(n)$. If $h(\la)=2$, $\la_1>\la_2\geq 2$ and $\la_1-\la_2\equiv 0$ or $-1\Md p$, then $\overline{D}_0\oplus \overline{D}_2\subseteq\End_F(D^\la)$ or $\overline{D}_0\oplus \overline{D}_1\oplus \overline{D}_3\subseteq\End_F(D^\la)$.
\end{lemma}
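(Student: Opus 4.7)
The plan is to parallel the proof of Lemma~\ref{L41} while handling the two modular degeneracies $\la_1-\la_2\equiv 0\pmod p$ and $\la_1-\la_2\equiv -1\pmod p$ that were excluded there. The same three tools drive the argument: Lemma~\ref{l2} relating $\dim\Hom_{\s_n}(M_{1^2},\End_F(D^\la))$ to $\dim\End_{\s_{n-2,1^2}}(D^\la\da_{\s_{n-2,1^2}})$; the decomposition $M_{1^2}\cong M_2\oplus Y_2$ with the prescribed Loewy structure of $Y_2$ from Lemma~\ref{L16} (applicable since $p\mid n$); and the nonzero homomorphism $\psi\colon S_{1^3}\to\End_F(D^\la)$ provided by Lemma~\ref{L3}, together with the fact that $S_{1^3}\cong\overline{D}_2|\overline{D}_3$ from Lemma~\ref{LH}.

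First I would compute the modular restriction $D^\la\da_{\s_{n-2,1^2}}$ in the two degenerate residue cases, using Lemma~\ref{Lemma39} to identify which removable nodes of $\la$ are normal and to track the indecomposable images $e_iD^\la$ and $e_i^{(2)}D^\la$. In the case $\la_1-\la_2\equiv -1\pmod p$ the two removable nodes of $\la$ share a residue $i=\la_1-1\pmod p$, so $\eps_i(\la)=2$; in the case $\la_1-\la_2\equiv 0\pmod p$ the two removable nodes have distinct adjacent residues, and the constraint $n\equiv 0\pmod p$ forces $\la_1,\la_2$ themselves to be divisible by $p$. In each case the goal is to bound $\dim\End_{\s_{n-2,1^2}}(D^\la\da)$ from below by enough to strictly exceed $\dim\End_{\s_{n-2,2}}(D^\la\da)=\dim\Hom_{\s_n}(M_2,\End_F(D^\la))$, which by the splitting $M_{1^2}\cong M_2\oplus Y_2$ forces a nonzero homomorphism $Y_2\to\End_F(D^\la)$.

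Next I would analyse $\psi$. Its kernel must be a submodule of $S_{1^3}$, hence is $0$ or $\overline{D}_2=\soc S_{1^3}$: the first alternative gives $\overline{D}_2\subseteq\soc\End_F(D^\la)$, the second gives $\overline{D}_3\subseteq\End_F(D^\la)$. If the first alternative holds, then since $\overline{D}_0\cong D_0$ is not isomorphic to $\overline{D}_2$, adding the line spanned by $\id$ yields $\overline{D}_0\oplus\overline{D}_2\subseteq\End_F(D^\la)$, completing the proof. In the remaining scenario, I need to additionally produce $\overline{D}_1\cong D_1$ as a submodule. This is obtained from the nonzero map $Y_2\to\End_F(D^\la)$ from the previous step: since $Y_2$ has simple head $D_1$, a nonzero image has $D_1$ among its composition factors, and using self-duality of $\End_F(D^\la)$ together with the socle/head structure of $Y_2$ (head and socle both $D_1$) one deduces $D_1=\overline{D}_1\subseteq\End_F(D^\la)$. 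Since $\overline{D}_0,\overline{D}_1,\overline{D}_3$ are pairwise non-isomorphic simples, we obtain $\overline{D}_0\oplus\overline{D}_1\oplus\overline{D}_3\subseteq\End_F(D^\la)$.

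The main obstacle will be the branching computation of step one: unlike in the regular setting of Lemma~\ref{L41}, where $D^\la\da_{\s_{n-2,2}}$ splits into four clean outer tensor products, here the two degenerate residue configurations produce genuinely non-split extensions in $e_iD^\la$ or $e_i^{(2)}D^\la$. Carefully tracking these via Lemma~\ref{Lemma39}(ii)-(iv) and possibly Lemma~\ref{l22} to compare $e_ie_j$ with $e_je_i$ will be necessary to get the exact lower bound on $\dim\End_{\s_{n-2,1^2}}(D^\la\da)$ that forces the map from $Y_2$ to be nonzero.
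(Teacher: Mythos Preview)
Your overall strategy matches the paper's: reduce to showing $\overline{D}_1\subseteq\End_F(D^\la)$ under the standing assumption $\overline{D}_2\not\subseteq\End_F(D^\la)$, and do this by comparing $\dim\Hom_{\s_n}(M_{1^2},\End_F(D^\la))$ with $\dim\Hom_{\s_n}(M_2,\End_F(D^\la))$ via the splitting in Lemma~\ref{L16}. However, there is a real gap in your deduction of $D_1\subseteq\End_F(D^\la)$ from a \emph{single} nonzero map $Y_2\to\End_F(D^\la)$. The Loewy structure of $Y_2$ is $D_1\,/\,(D_0\oplus D_{1^2})\,/\,D_1$; in particular $Y_2/S_{1^2}\cong S_1\cong D_0|D_1$ is a quotient of $Y_2$ whose socle is $D_0$, not $D_1$. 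A nonzero $\phi\colon Y_2\to\End_F(D^\la)$ may have image isomorphic to this $S_1$, in which case $\soc(\im\phi)=D_0$ and no copy of $D_1$ is produced in $\soc\End_F(D^\la)$. Self-duality does not rescue the argument: the dual map $\End_F(D^\la)\to Y_2$ then has image $(\im\phi)^*\cong D_1|D_0\subseteq\rad Y_2$, whose head is $D_0$, again giving nothing new.

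The paper instead proves the stronger inequality
\[
\dim\End_{\s_{n-2}}(D^\la\da_{\s_{n-2}})-\dim\End_{\s_{n-2,2}}(D^\la\da_{\s_{n-2,2}})\geq 2,
\]
i.e.\ $\dim\Hom_{\s_n}(Y_2,E)\geq 2$ with $E:=\End_F(D^\la)$. This suffices: assuming for contradiction $\Hom(D_1,E)=0$ and $\Hom(D_{1^2},E)=0$, every map $Y_2\to E$ kills $\soc Y_2=D_1$, so factors through $Y_2/\soc Y_2$; restricting to $\soc(Y_2/\soc Y_2)=D_0\oplus D_{1^2}$ is injective (kernel $\Hom(D_1,E)=0$) and lands in $\Hom(D_0,E)\oplus\Hom(D_{1^2},E)$, which has dimension $1$ since $\dim\Hom(D_0,E)=\dim\End_{\s_n}(D^\la)=1$. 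This contradicts $\dim\Hom(Y_2,E)\geq 2$. Thus your branching computation must be sharp enough to produce a gap of at least $2$; the paper does this by explicit analysis in each case, in the $\la_1-\la_2\equiv-1$ case invoking \cite[Lemma~4.2]{m2} to split off two simple direct summands $D^{(\la_1-1,\la_2-1)}\boxtimes D^{(2)}$ and $D^{(\la_1-1,\la_2-1)}\boxtimes D^{(1^2)}$ of $D^\la\da_{\s_{n-2,2}}$.
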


\begin{proof}
We will use Lemma \ref{Lemma39} without further reference to it.

We may assume that $\overline{D}_2\not\subseteq \End_F(D^\la)$. From Lemmas \ref{LH} and \ref{L3} we then have that $\overline{D}_3\subseteq \End_F(D^\la)$. So it is enough to prove that $\overline{D}_1\cong D_1\subseteq\End_F(D^\la)$. From Lemmas \ref{l2} and \ref{L16} it is enough to prove that
\[\dim\End_{\s_{n-2}}(D^\la\da_{\s_{n-2}})-\dim\End_{\s_{n-2,2}}(D^\la\da_{\s_{n-2,2}})\geq 2.\]

{\bf Case 1:} $\la_1-\la_2\equiv 0\Md p$. Note that $\la_1\equiv\la_2\equiv 0\Md p$. So
\[D^\la\da_{\s_{n-2}}\cong D^{(\la_1-1,\la_2-1)}\oplus D^{(\la_1,\la_2-2)}\oplus e_{-2}D^{(\la_1-1,\la_2)},\]
with $e_{-2}D^{(\la_1-1,\la_2)}$ indecomposable with simple head and socle and
\[e_{-2}D^{(\la_1-1,\la_2)}\sim D^{(\la_1-1,\la_2-1)}|A|D^{(\la_1-1,\la_2-1)}\]
with $[A:D^{(\la_1-1,\la_2-1)}]=0$.  Further $D^{(\la_1-1,\la_2-1)}\otimes D^{(1^2)}$ is a composition factor of $D^\la\da_{\s_{n-2,2}}$  with multiplicity 1 by \cite[Lemma 1.11]{bk5}. So by self-duality of $D^\la\da_{\s_{n-2,2}}$ (or block decomposition) it follows that
\[D^\la\da_{\s_{n-2}}\cong (D^{(\la_1-1,\la_2-1)}\boxtimes D^{(1^2)})\oplus (D^{(\la_1,\la_2-2)}\boxtimes D^\nu)\oplus B,\]
with $\nu\in\{(2),(1^2)\}$ and $B$ indecomposible with simple head and socle isomorphic to to $D^{(\la_1-1,\la_2-1)}\boxtimes D^{(2)}$ and no other such composition factor. It then follows that
\[\dim\End_{\s_{n-2}}(D^\la\da_{\s_{n-2}})-\dim\End_{\s_{n-2,2}}(D^\la\da_{\s_{n-2,2}})=6-4=2.\]

{\bf Case 2:} $\la_1-\la_2\equiv -1\Md p$. In this case $\la_1\equiv (p-1)/2\Md p$ and $\la_2\equiv (p+1)/2\Md p$. So both removable nodes have the same residue. Then by \cite[Lemma 4.2]{m2} we have that
\[D^\la\da_{\s_{n-2,2}}\cong (D^{(\la_1-1,\la_2-1)}\boxtimes D^{(2)})\oplus(D^{(\la_1-1,\la_2-1)}\boxtimes D^{(1^2)})\oplus B\]
for a certain module $B$ and then
\[D^\la\da_{\s_{n-2}}\cong (D^{(\la_1-1,\la_2-1)})^{\oplus 2}\oplus B'\]
with $B'\cong B\da_{\s_{n-2}}$. It then follows that
\[\dim\End_{\s_{n-2}}(D^\la\da_{\s_{n-2}})-\dim\End_{\s_{n-2,2}}(D^\la\da_{\s_{n-2,2}})\geq 2.\]
\end{proof}

\begin{lemma}\label{L44}
Let $p\geq 5$, $n\geq 8$, $\la=(\la_1,\la_2)\in\Par_p(n)$ with $\la_2\geq 2$. If $\la$ is JS then
\[\dim\Hom_{\s_n}(\widetilde{D}_k,\End_F(D^\la))=\left\{\begin{array}{ll}
1,&k\in\{0,3\},\\
0,&\text{else}.
\end{array}\right.\]
\end{lemma}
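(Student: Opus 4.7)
The plan is to compute each multiplicity $\dim \Hom_{\s_n}(\overline{D}_k, \End_F(D^\la))$ by combining Lemma \ref{l2} with the decompositions of the hook permutation modules $M_{1^k}$ supplied by Lemma \ref{L17} (when $p \nmid n$) and Lemma \ref{L16} (when $p \mid n$). By Frobenius reciprocity, $\dim \Hom(M_{1^k}, \End_F(D^\la)) = \dim \End_{\s_{n-k}}(D^\la \da_{\s_{n-k}})$, a quantity read off from Lemma \ref{Lemma39} once the normal nodes of $\la$ and its iterated branchings are catalogued.

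The case $k=0$ is immediate, and for $k=3$ Lemma \ref{L3} applies: a two-row partition with $\la_2\geq 2$ is not in $\h_p(n)$, and $\overline{D}_3 = S_{1^3}$ since $p \geq 5$, giving $\dim \Hom(\overline{D}_3, \End_F(D^\la)) \geq 1$. For the upper bounds at $k = 1, 2, 3$, the JS hypothesis forces $\la_1 - \la_2 \equiv -2 \Md p$ and $D^\la\da_{\s_{n-1}} \cong D^\mu$ for $\mu = (\la_1, \la_2 - 1)$; moreover both removable nodes of $\mu$ share residue $r := \la_1 - 1 \Md p$. The analysis splits into two subcases: if $p \nmid n$ then no addable node of $\mu$ has residue $r$ and both removables are normal, giving $\dim \End_{\s_{n-2}}(D^\la \da) = 2$; if $p \mid n$ the addable node $(3,1)$ has residue $r$ and cancels one of them, so $\mu$ is JS and $\dim \End_{\s_{n-2}}(D^\la \da) = 1$. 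Iterating one more level yields $\dim \End_{\s_{n-3}}(D^\la \da)$, and in parallel one computes $\dim \End_{\s_{n-k,\mu}}(D^\la \da)$ for the small compositions $\mu \in \{(2),(1^2),(2,1),(3)\}$ appearing in Lemma \ref{L17} or \ref{L16}. Substituting these values back into those identities then pins down $\dim \Hom(\overline{D}_k, \End_F(D^\la))$ exactly for $k \in \{1,2,3\}$.

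For $k \geq 4$ the direct permutation-module approach no longer applies and a further argument is needed. The plan is to propagate the JS-type constraints to deeper restrictions, showing that the partitions indexing composition factors of $D^\la\da_{\s_{n-k}}$ remain tightly controlled by iterated use of the same cancellation mechanism established above, and to combine this with a block comparison: a nonzero $\Hom(\overline{D}_k, \End_F(D^\la))$ would force the $p$-core of the partition labelling $\overline{D}_k$ to appear among those of composition factors of $D^\la \otimes D^\la$, which can be ruled out for $k \geq 4$ under our hypotheses.

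The main obstacle will be the computation of $\dim \End_{\s_{n-k,\mu}}(D^\la \da_{\s_{n-k,\mu}})$ for the non-trivial compositions $\mu$, especially in the case $p \mid n$ where the permutation modules $M_{1^k}$ have a non-split Young summand $Y_k$ of Specht length greater than one and the restriction to $\s_{n-k,\mu}$ is in general not semisimple. Resolving this requires a Mackey analysis of the double cosets $\s_{n-k,\mu}\backslash \s_n / \s_{n-1}$ together with the branching data already obtained, executed separately in each of the subcases $p \nmid n$ and $p \mid n$.
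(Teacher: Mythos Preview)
Your overall strategy for $k\le 3$ --- compute $\dim\End_{\widetilde\s_\al}(D^\la\da_{\widetilde\s_\al})$ for the small compositions $\al$ appearing in Lemmas \ref{L17}/\ref{L16} and back-substitute --- is exactly what the paper does. However, there are two concrete errors and one genuine gap.

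\textbf{Branching error.} When $\la_1>\la_2$ is JS (so $\la_1-\la_2\equiv -2\Md p$, hence $\la_1\ge\la_2+p-2\ge\la_2+3$), the unique normal node of $\la$ is the \emph{top} removable node $(1,\la_1)$, not $(2,\la_2)$: in the reduced $0$-signature the removable $(2,\la_2)$ cancels against the addable $(1,\la_1+1)$ only when their residues agree, and here the cancellation that kills $(2,\la_2)$ is with the addable $(1,\la_1+1)$ precisely because $\la_2-2\equiv\la_1\Md p$. So $D^\la\da_{\s_{n-1}}\cong D^{(\la_1-1,\la_2)}$, not $D^{(\la_1,\la_2-1)}$. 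The two removable nodes of $(\la_1-1,\la_2)$ have residues $\la_1-2$ and $\la_2-2$, which are \emph{distinct} for $p\ge 5$; your subsequent claim that they coincide, and the ensuing case split on whether $(3,1)$ cancels one of them, is therefore based on the wrong partition.

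\textbf{Missing case.} The JS condition for a two-row partition is $\la_1-\la_2\equiv -2\Md p$ \emph{or} $\la_1=\la_2$; the latter is vacuously JS and must be treated separately (here $D^\la\da_{\s_{n-1}}\cong D^{(\la_1,\la_1-1)}$). The paper handles both cases in parallel, computing $D^\la\da_{\s_{n-k,k}}$ explicitly for $k=1,2,3,4$ in each.

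\textbf{The gap at $k\ge 4$.} Your proposed block/core comparison is vague and would require checking infinitely many $k$; it is not clear how to make it work. The paper instead uses a one-line argument: every composition factor of $\End_F(D^\la)\cong D^\la\otimes D^\la$ is a composition factor of $M^\la\otimes D^\la\cong D^\la\da_{\s_\la}\ua^{\s_n}$, hence has at most $2h(\la)=4$ rows. For $4\le k\le n-c-4$ the partition labelling $\overline D_k$ has at least $5$ rows, so $\Hom(\overline D_k,\End_F(D^\la))=0$. For $k\in\{n-c-3,\dots,n-c\}$ one uses $\overline D_k\cong \overline D_{n-c-k}\otimes\sgn$ (Lemma \ref{L20}) and $\la\ne\la^\Mull$ to reduce to $k\le 3$. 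This replaces your entire third paragraph.
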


\begin{proof}
We will use Lemma \ref{Lemma39} throughout the proof without further reference to it.

If $h(\mu)\geq 5$ then $D^\mu\not\subseteq \End_F(D^\la)$, since $\la$ has only 2 rows (note that any composition factor of $D^\la$ is also a composition factor of $S^\la\otimes M^\la\cong S^\la\da_{\s_{\la_1,\la_2}}\ua^{\s_n}$).

Since $\la\not=\la^\Mull$ by \cite[Lemma 1.8]{ks2}, we have that $D^{(n)^\Mull}\not\subseteq \End_F(D^\la)$. So we only need to check the lemma for $k\leq 3$. For $k=0$ the lemma clearly holds.
 
If $\la_1>\la_2$ then $\la_1\geq\la_2+3$ since $\la$ is JS. If $\la_1=\la_2$ then $\la_2\geq 4$ since $n\geq 8$.

It can be checked that $D^{(2)}$, $D^{(1^2)}$, $D^{(2,1)}$, $D^{(3,1)}$ and $D^{(2^2)}$ are composition factors of $D^\la\da_{\s_k}$ for the corresponding $k$. Comparing dimensions and multiplicities as well as $D^\la\da_{\s_{n-k,k}}\da_{\s_{n-k-1,1,k}}$ and $D^\la\da_{\s_{n-k-1,k+1}}\da_{\s_{n-k-1,1,k}}$ we that have that if $\la_1>\la_2$ then
\begin{align*}
D^\la\da_{\s_{n-1}}\cong\,& D^{(\la_1-1,\la_2)},\\
D^\la\da_{\s_{n-2,2}}\cong\,& (D^{(\la_1-2,\la_2)}\boxtimes D^{(2)})\oplus (D^{(\la_1-1,\la_2-1)}\boxtimes D^{(1^2)}),\\
D^\la\da_{\s_{n-3,3}}\cong\,& (D^{(\la_1-3,\la_2)}\boxtimes D^{(3)})\oplus (D^{(\la_1-2,\la_2-1)}\boxtimes D^{(2,1)}),\\
D^\la\da_{\s_{n-4,4}}\cong\,& (D^{(\la_1-4,\la_2)}\boxtimes D^{(4)})^{\oplus \de_{\la_1\geq\la_2+4}}\oplus (D^{(\la_1-3,\la_2-1)}\boxtimes D^{(3,1)})\\
&\oplus (D^{(\la_1-2,\la_2-2)}\boxtimes D^{(2^2)}),
\end{align*}
while $\la_1=\la_2$ and
\begin{align*}
D^\la\da_{\s_{n-1}}\cong\,& D^{(\la_1,\la_2-1)},\\
D^\la\da_{\s_{n-2,2}}\cong\,& (D^{(\la_1,\la_2-2)}\boxtimes D^{(2)})\oplus (D^{(\la_1-1,\la_2-1)}\boxtimes D^{(1^2)}),\\
D^\la\da_{\s_{n-3,3}}\cong\,& (D^{(\la_1,\la_2-3)}\boxtimes D^{(3)})\oplus (D^{(\la_1-1,\la_2-2)}\boxtimes D^{(2,1)}),\\
D^\la\da_{\s_{n-4,4}}\cong\,& (D^{(\la_1,\la_2-4)}\boxtimes D^{(4)})^{\oplus \de_{p=5}}\oplus (D^{(\la_1-1,\la_2-3)}\boxtimes D^{(3,1)})\\
&\oplus (D^{(\la_1-2,\la_2-2)}\boxtimes D^{(2^2)})
\end{align*}
(in the first case as well as some parts in the second case this also follows from \cite[Lemma 1.11]{bk5} and by comparing multiplicities and dimensions).

From Lemma \ref{L3} we have that $\overline{D}_2$ or $\overline{D}_3$ is contained in $\End_F(D^\la)$. The lemma then follows from Lemma \ref{L17} or \ref{L16} together with Lemma \ref{l2}.
\end{proof}

\begin{lemma}\label{L56}
Let $p=3$, $n\geq 6$ with $n\equiv 0\Md 3$ and $\la=(\la_1,\la_2,\la_3)\in\Par_3(n)$ with $\la_1>\la_2>\la_3\geq 1$. If $\la$ is not JS then $\overline{D}_0\oplus \overline{D}_1\oplus \overline{D}_2\subseteq\End_F(D^\la)$ or $\overline{D}_0\oplus \overline{D}_1\oplus \overline{D}_3\subseteq\End_F(D^\la)$.
\end{lemma}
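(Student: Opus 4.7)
The proof will follow the template of Lemma~\ref{L42}/\ref{L43}, with the trivial module $\overline{D}_0 = D^{(n)}$ arising automatically via the identity endomorphism; the substantive work is to produce $\overline{D}_1 = D_1$ and at least one of $\overline{D}_2 = D_{1^2}$ or $\overline{D}_3 = D^{(n-3,2,1)}$ inside $\End_F(D^\la)$. For the $\overline{D}_2/\overline{D}_3$ dichotomy, I first verify $\la \notin \h_3(n)$: any 3-row distinct-part partition in $\h_3(n)$ must be of the form $(n-k,(k)^\Mull)$ (or its Mullineux twist) with $(k)^\Mull$ having exactly two parts, and checking the small cases (e.g.\ $k=3$ giving $(n-3,2,1)$ and $k=5$ giving $(n-5,3,2)$, whose gaps are both $\equiv 1 \pmod 3$) shows such $\la$ would be JS, contradicting the hypothesis. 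With $\la \notin \h_3(n)$ established, Lemma~\ref{L3} supplies a nonzero $\psi \colon S_{1^3} \to \End_F(D^\la)$; since $S_{1^3} \cong \overline{D}_2\,|\,\overline{D}_3$ by Lemma~\ref{LH}, either $\psi$ is injective (so the socle $\overline{D}_2$ of $S_{1^3}$ embeds into $\End_F(D^\la)$) or $\psi$ factors through the head (so $\overline{D}_3 \subseteq \End_F(D^\la)$).

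It remains to show $D_1 \subseteq \End_F(D^\la)$. Assume $D_{1^2} \not\subseteq \End_F(D^\la)$ (the assumption is harmless since the same argument produces $D_1$ either way). By Lemmas~\ref{l2} and~\ref{L16}, using $M_{1^2} \cong M_2 \oplus Y_2$ where $Y_2 \sim D_1 | D_{1^2} | D_0 | D_1$ is indecomposable with simple head and socle both $D_1$, it suffices to prove
\[
\dim\End_{\s_{n-2}}(D^\la\da_{\s_{n-2}}) - \dim\End_{\s_{n-2,2}}(D^\la\da_{\s_{n-2,2}}) \geq 2.
\]
This equals $\dim \Hom(Y_2, \End_F(D^\la))$; by self-duality of $\End_F(D^\la)$ together with the assumption $D_{1^2} \not\subseteq \End_F(D^\la)$, the images of maps $\End_F(D^\la) \to Y_2$ are confined to submodules of $Y_2$ whose heads lie in $\{D_0, D_1\}$. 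Since the $D_0$-contribution is 1-dimensional (coming from the identity), any excess past $1$ forces the head $D_1$ into a quotient of $\End_F(D^\la)$, and hence into its socle by self-duality.

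To establish the bound I will analyze $D^\la\da_{\s_{n-2}} = \bigoplus_{i,j} e_j e_i D^\la$ via Lemma~\ref{Lemma39}, splitting cases according to the residues of the normal nodes of $\la$ (the non-JS hypothesis guarantees $\sum_i \eps_i(\la) \geq 2$, i.e.\ at least two normal nodes among the removable nodes $(1,\la_1), (2,\la_2), (3,\la_3)$). When two normal nodes have distinct residues $i \neq j$, Lemma~\ref{l22} provides cross-homomorphisms between $e_je_iD^\la$ and $e_ie_jD^\la$ of dimension at least $\eps_i(\la)\eps_j(\la) \geq 1$; when two normal nodes share a residue $i$, the divided power $e_i^{(2)}D^\la$ together with Lemma~\ref{Lemma39}(iii) contributes $\binom{\eps_i(\la)}{2}$ to the endomorphism dimension. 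Comparing $\End_{\s_{n-2}}$ with $\End_{\s_{n-2,2}}$ (i.e.\ taking $\s_2$-invariants), the sign-eigenspace of the swap in $\s_2$ then accounts for the $\geq 2$ excess required.

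The main obstacle will be the case in which two removable nodes of $\la$ coincide in residue and the corresponding gap $\la_i - \la_{i+1}$ is small (for instance $\equiv 0$ or $2 \pmod 3$), where careful bookkeeping is needed to confirm that enough \emph{anti-invariant} endomorphisms of $D^\la\da_{\s_{n-2}}$ under the $\s_2$-swap exist — merely having invariant endomorphisms would not produce excess in the comparison between $\End_{\s_{n-2}}$ and $\End_{\s_{n-2,2}}$. In practice this will require an explicit residue-by-residue enumeration of the possible shapes of $(\la_1,\la_2,\la_3)$ modulo 3, using the fact that the three removable nodes have residues $\la_1-1, \la_2-2, \la_3-3 \pmod 3$ and the non-JS condition forces a specific pattern of coincidences and normality.
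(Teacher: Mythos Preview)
Your treatment of the $\overline{D}_2/\overline{D}_3$ part matches the paper's: both invoke Lemma~\ref{L3} (for which $\la\notin\h_3(n)$ is needed; your verification is fine, the paper simply takes it for granted) together with $S_{1^3}\cong \overline{D}_2|\overline{D}_3$ to conclude that one of $\overline{D}_2$, $\overline{D}_3$ embeds.

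For $D_1$, however, your route diverges from the paper's and is left unfinished. You propose to bound $\dim\End_{\s_{n-2}}(D^\la\da)-\dim\End_{\s_{n-2,2}}(D^\la\da)\geq 2$ and then extract $D_1$ from the structure of $Y_2$. Even granting that the $Y_2$-extraction works (it does, though you tacitly use that $Y_2$ is uniserial, which the paper does not assert), you never establish the inequality: your last two paragraphs are a plan (``analyze $\bigoplus e_je_iD^\la$ \dots\ the main obstacle will be \dots\ this will require an explicit residue-by-residue enumeration''), not an argument. So there is a genuine gap.

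The paper avoids this entirely by working one level up, with $M_1$ rather than $Y_2$. The key observation you miss is that $n\equiv 0\Md 3$ forces $(\la_1-\la_2)-(\la_2-\la_3)=n-3\la_2\equiv 0\Md 3$, so the two gaps are congruent mod $3$; since $\la$ is not JS they are both $\equiv 0$ or both $\equiv 2$. In the $\equiv 2$ case all three removable nodes are normal of a single residue, so $\dim\End_{\s_{n-1}}(D^\la\da)=3$, and since $M_1\cong D_0|D_1|D_0$ (Lemma~\ref{L16}) this forces $D_1\subseteq\End_F(D^\la)$ directly. In the $\equiv 0$ case there are exactly two normal nodes, of distinct residues $i$ and $i-1$, each with a conormal node of the same residue; hence $D^\la\otimes M_1\supseteq f_ie_iD^\la\oplus f_{i-1}e_{i-1}D^\la$ has two independent summands each with $D^\la$ in head and socle, again forcing $D_1$ in. This two-case split via $M_1$ is both shorter and complete; your $Y_2$-approach would work in principle but needs the case analysis you flag and do not carry out.
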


\begin{proof}
In view of Lemmas \ref{LH} and \ref{L3} we have that $\overline{D}_2$ or $\overline{D}_3$ is contained in $\End_F(D^\la)$. Thus is it enough to prove that $\overline{D}_1\cong D_1\subseteq\End_F(D^\la)$. By assumption $\la_1-\la_2\equiv\la_2-\la_3\Md 3$ and we may assume that $\la_1-\la_2\not\equiv 1\Md 3$.

If $\la_1-\la_2\equiv 2\Md 3$ then $\la$ has 3 normal nodes. So $\dim\End_{\s_{n-3}}(D^\la)=3$ by Lemma \ref{Lemma39}. It then follows from Lemmas \ref{l2} and \ref{L16} that $D_1\subseteq\End_F(D^\la)$.

So assume now that $\la_1-\la_2\equiv 0\Md 3$. In this case if $i$ is the residue of $(1,\la_1)$ then $\eps_i(\la)=1$, $\eps_{i-1}(\la)=1$, $\phi_i(\la)\geq 1$ and $\phi_{i-1}(\la)\geq 1$. So, by Lemmas \ref{Lemma39} and \ref{Lemma40},
\[D^\la\otimes M_1\cong f_ie_iD^\la\oplus f_{i-1}e_{i-1}D^\la\oplus M\sim (D^\la|\ldots|D^\la)\oplus (D^\la|\ldots|D^\la)\oplus M\]
for a certain module $M$. It then follows from Lemma \ref{L16} that also in this case $D_1\subseteq\End_F(D^\la)$.
\end{proof}

\subsection{Spin representations}\label{sr}

The results obtained in this section will only be used to obtain reduction to tensor products with the natural module and with the basic spin module for $p=3$. However we prove them in general, since the proof in the general case is not more complicated or much longer. We refer the reader to \cite[Section 2-b]{BK} for the definition of even and odd homomorphisms.

\begin{lemma}\label{L071218_3}
Let $A$ be a superalgebra and $M$ be an $A$-supermodule with $\hd(M)\cong D$ simple of type Q. If $\End_A(M)\simeq\End_A(D)^{[M:D]}$ then $M$ admits an odd involution.
\end{lemma}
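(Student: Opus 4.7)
Plan: Let $E := \End_A(M)$ and let $\pi\colon M\twoheadrightarrow D$ denote the projection onto the head. Since $\hd(M)=D$ is simple as a supermodule, $M$ is super-indecomposable, so $E^{\bar 0}$ contains no non-trivial idempotents; being finite-dimensional over the algebraically closed field $F$, it is therefore local with residue field $F$, and I write $J_0$ for its Jacobson radical. Because $D$ is of type Q, $\End_A(D)$ has superdimension $(1|1)$, spanned by $\id_D$ and an odd involution $\theta_D$ with $\theta_D^{\,2}=\id_D$. The proof will proceed in three steps: introduce a restriction-to-head map $\phi\colon E\to\End_A(D)$, use the dimension hypothesis to force $\phi$ to surject, and then lift $\theta_D$ and renormalise.

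For the first step, the map $\phi$ sends $e$ to the unique endomorphism of $D=M/\rad M$ determined by $\phi(e)\circ\pi=\pi\circ e$; its kernel is precisely $\Hom_A(M,\rad M)$, embedded in $E$ via $\rad M\hookrightarrow M$. The crucial bound is $\dim\Hom_A(M,\rad M)\le 2\,([M:D]-1)$. To prove this I apply the left-exact functor $\Hom_A(M,-)$ to a composition series of $\rad M$: for any simple supermodule $D'\not\simeq D$ one has $\Hom_A(M,D')=0$ (such a map would factor through $\hd(M)=D$ and then vanish by Schur), while $\Hom_A(M,D)\simeq\End_A(D)$ is $2$-dimensional; since $\rad M$ has exactly $[M:D]-1$ composition factors isomorphic to $D$, the bound follows by induction on the filtration. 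Combined with the hypothesis $\dim E=2[M:D]$, this yields $\dim\im\phi\ge 2$, so $\phi$ surjects onto the $2$-dimensional $\End_A(D)$.

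For the final step, choose any odd lift $\widetilde\theta\in E^{\bar 1}$ with $\phi(\widetilde\theta)=\theta_D$ and set $u:=\widetilde\theta^{\,2}\in E^{\bar 0}$. Then $\phi(u)=\theta_D^{\,2}=\id_D$, so $u-\id_M$ lies in $\ker\phi\cap E^{\bar 0}$ and hence has image in $\rad M$, so is nilpotent; in particular $u\in\id_M+J_0$ is a unit of $E^{\bar 0}$. Since the characteristic is odd, Hensel's lemma applied inside the commutative subalgebra $F[u]\subset E^{\bar 0}$ produces an even $v\in F[u]$ with $v^{\,2}=u$. As $v$ is a polynomial in $\widetilde\theta^{\,2}$, it commutes with $\widetilde\theta$, and therefore $\theta:=\widetilde\theta\,v^{-1}$ is odd and satisfies $\theta^{\,2}=\widetilde\theta^{\,2}v^{-2}=u\cdot u^{-1}=\id_M$, giving the required odd involution.

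The main obstacle is the dimension-counting step: everything hinges on the inequality $\dim\Hom_A(M,\rad M)\le 2([M:D]-1)$ being tight enough that the hypothesis $\dim E=2[M:D]$ genuinely forces $\phi$ to surject. Once surjectivity of $\phi$ is secured, the lift-and-renormalise argument in the last paragraph is standard.
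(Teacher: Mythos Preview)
Your proof is correct and follows essentially the same strategy as the paper's. The paper's one-line proof reads ``Note that $\End_A(\rad M)\simeq\End_A(D)^{[M:D]-1}$ \ldots\ The lemma then follows since $D$ is of type Q''; the object $\End_A(\rad M)$ there is most naturally read as $\Hom_A(M,\rad M)$ (the kernel of your map $\phi$), and with that reading the paper is asserting exactly your dimension count and surjectivity of $\phi$, then leaving the lifting step implicit. Your argument makes explicit both halves: the filtration bound $\dim\Hom_A(M,\rad M)\le 2([M:D]-1)$ and the renormalisation via a square root in the local ring $E^{\bar 0}$, which is where the standing hypothesis $p\neq 2$ is actually used. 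One small remark on exposition: the implication ``has image in $\rad M$, so is nilpotent'' deserves a word of justification---either note that $e(\rad^k M)\subseteq\rad^{k+1}M$ since $\rad M=\rad(A)\cdot M$, or simply observe that $e$ is a non-unit in the local ring $E^{\bar 0}$ and hence lies in the nilpotent ideal $J_0$.
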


\begin{proof}
Note that $\Hom_A(M,\rad M)\simeq\End_A(D)^{[M:D]-1}$, since $\hd(M)\cong D$ and $\End_A(M)\simeq\End_A(D)^{[M:D]}$. Since $D$ is of type Q, there are $[M:D]$ even and $[M:D]$ odd linearly independent homomorphisms in $\End_A(M)$ but only $[M:D]-1$ even and $[M:D]-1$ odd linearly independent homomorphisms in $\Hom_A(M,\rad M)$. The lemma then follows.
\end{proof}

\begin{lemma}\label{L071218_4}
Let $A$ and $B$ be superalgebras, $M$ be an $A$-supermodule and $N$ be a $B$-supermodule. If both $M$ and $N$ admit an odd involution then there exists an $A\otimes B$-supermodule $L$ such that $M\boxtimes N\cong L^{\oplus 2}$.
\end{lemma}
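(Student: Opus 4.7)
The plan is to build, from the two odd involutions, a single even $A\otimes B$-linear endomorphism $\tau$ of $M\boxtimes N$ satisfying $\tau^2=-\id$, and to take $L$ to be one of its eigenspaces.

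Write $\sigma_M$ and $\sigma_N$ for the given odd involutions, so that $\sigma_M^2=\id_M$ and $\sigma_N^2=\id_N$. Following the Koszul rule for super-tensor products of homogeneous maps, define
\[
\tau\colon M\boxtimes N\to M\boxtimes N,\qquad \tau(m\otimes n)\;:=\;(-1)^{|m|}\sigma_M(m)\otimes\sigma_N(n).
\]
The sign $(-1)^{|m|}$ is precisely the one that balances the Koszul sign $(-1)^{|b||m|}$ appearing in the $A\otimes B$-action $(a\otimes b)(m\otimes n)=(-1)^{|b||m|}am\otimes bn$; a short direct check then shows that $\tau$ is even and $A\otimes B$-linear. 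The same sign rule, together with $\sigma_M$ being odd and both involutions squaring to the identity, forces
\[
\tau^2(m\otimes n)\;=\;(-1)^{|m|+|\sigma_M(m)|}\,\sigma_M^2(m)\otimes\sigma_N^2(n)\;=\;-\,m\otimes n,
\]
i.e.\ $\tau^2=-\id$. Since $F$ is algebraically closed of odd characteristic, choose $\iota\in F$ with $\iota^2=-1$ and set $L:=\ker(\tau-\iota\,\id)$ and $L':=\ker(\tau+\iota\,\id)$. As $\tau$ is even and $A\otimes B$-linear, both $L$ and $L'$ are $A\otimes B$-sub-supermodules of $M\boxtimes N$, and $M\boxtimes N = L\oplus L'$ because $X^2+1=(X-\iota)(X+\iota)$ splits into coprime factors.

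It remains to exhibit an $A\otimes B$-module isomorphism $L\cong L'$. The natural candidate is the odd super-linear map $\rho:=\sigma_M\otimes\id_N$, i.e.\ $\rho(m\otimes n)=\sigma_M(m)\otimes n$; a short sign computation gives $\rho\tau=-\tau\rho$, so $\rho$ interchanges the $\pm\iota$-eigenspaces of $\tau$, and since $\rho^2=\id$ its restriction to $L$ is a linear bijection $L\to L'$. The odd super-linearity relation $\rho(cv)=(-1)^{|c|}c\rho(v)$ for $c\in A\otimes B$ is then corrected by composing with the parity automorphism $v\mapsto(-1)^{|v|}v$ on $L$: the sign this introduces cancels the sign $(-1)^{|c|}$, yielding an honest (ungraded) $A\otimes B$-module isomorphism $L\to L'$. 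Combining everything gives $M\boxtimes N\cong L\oplus L'\cong L^{\oplus 2}$ with $L$ an $A\otimes B$-supermodule, as required.

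The main obstacle is really just bookkeeping of Koszul signs: one must check carefully that the sign $(-1)^{|m|}$ in the definition of $\tau$ makes it $A\otimes B$-linear, that it produces $\tau^2=-\id$ (rather than $+\id$), that $\rho$ anti-commutes with $\tau$, and most delicately that twisting $\rho$ by the parity automorphism upgrades it from an odd super-linear intertwiner to an ordinary module isomorphism. Once these sign conventions are handled correctly, the eigenspace decomposition and the construction of $L$ are immediate.
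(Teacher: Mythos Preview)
Your proof is correct and is precisely the standard argument from \cite[\S2-b]{BK} that the paper cites; you have simply written out in full the eigenspace decomposition under the even endomorphism $\sigma_M\otimes\sigma_N$ (squaring to $-\id$) and the odd intertwiner $\sigma_M\otimes\id_N$ swapping the eigenspaces, exactly as in that reference.
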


\begin{proof}
As in \cite[Section 2-b]{BK} (in order for the argument to work it is not required that $M$ and $N$ are simple).
\end{proof}

\begin{lemma}\label{L101218_2}
Let $\nu\in\RP_p(n-1)$. If $\epsilon_i(\nu)>0$ then the following happen.
\begin{enumerate}
\item If $D(\widetilde e_i\nu)$ is of type M then $e_iD(\nu)\ua^{\widetilde\s_{n-2,2}}\cong e_iD(\nu)\boxtimes D((2))=: e_iD(\nu)\circledast D((2))$.

\item If $D(\widetilde e_i\nu)$ is of type Q then $e_iD(\nu)\ua^{\widetilde\s_{n-2,2}}\cong e_iD(\nu)\boxtimes D((2))\cong (e_iD(\nu)\circledast D((2)))^{\oplus 2}$ for a certain module $e_iD(\nu)\circledast D((2))$.
\end{enumerate}
Further $e_iD(\nu)\circledast D((2))$ has simple head and socle isomorphic to $D(\widetilde e_i\nu,(2))$ and
\[\dim\End_{\widetilde\s_{n-2,2}}(e_iD(\nu)\circledast D((2)))=\eps_i(\nu)\dim\End(D(\widetilde e_i\nu,(2))).\]
\end{lemma}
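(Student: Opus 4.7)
Set $M := e_iD(\nu)$. By Lemma~\ref{Lemma39s}, $M$ is a self-dual indecomposable supermodule with $\hd(M) \cong \soc(M) \cong D(\widetilde e_i\nu)$, with composition multiplicity $[M:D(\widetilde e_i\nu)] = \eps_i(\nu)$, and with $\End_{\widetilde\s_{n-2}}(M) \simeq \End_{\widetilde\s_{n-2}}(D(\widetilde e_i\nu))^{\oplus\eps_i(\nu)}$. Since $a((2)) = 1$, the supermodule $D((2))$ is simple of type Q and in particular admits an odd involution. The identification $e_iD(\nu)\ua^{\widetilde\s_{n-2,2}} \cong e_iD(\nu)\boxtimes D((2))$ is a standard Mackey-type computation for the central product $\widetilde\s_{n-2,2} = \widetilde\s_{n-2}\cdot\widetilde\s_2$ over $\langle z\rangle$: inducing a spin $F\widetilde\s_{n-2}$-module from $\widetilde\s_{n-2}$ up to $\widetilde\s_{n-2,2}$ amounts to outer-tensoring with the unique spin supermodule of $F\widetilde\s_2$, which is $D((2))$.

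For part (i), with $D(\widetilde e_i\nu)$ of type M, I would take $e_iD(\nu)\circledast D((2)) := e_iD(\nu)\boxtimes D((2))$. Since taking heads commutes with outer tensor products, $\hd(M\boxtimes D((2))) \cong D(\widetilde e_i\nu)\boxtimes D((2))$; by the M-$\boxtimes$-Q rule this is the simple supermodule $D(\widetilde e_i\nu,(2))$ of type Q, and the socle is the same by self-duality of both factors. Using $\End(M\boxtimes D((2))) \cong \End(M)\otimes\End(D((2)))$ gives $\dim\End = \eps_i(\nu)\cdot 1\cdot 2 = 2\eps_i(\nu) = \eps_i(\nu)\dim\End(D(\widetilde e_i\nu,(2)))$, since $D(\widetilde e_i\nu,(2))$ is of type Q.

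For part (ii), with $D(\widetilde e_i\nu)$ of type Q, the hypothesis of Lemma~\ref{L071218_3} applied to $M$ reads $\End_{\widetilde\s_{n-2}}(M) \simeq \End(\hd(M))^{\oplus[M:\hd(M)]}$, which we verified above; hence $M$ admits an odd involution. Since $D((2))$ does too, Lemma~\ref{L071218_4} supplies a supermodule $L$ with $M\boxtimes D((2)) \cong L^{\oplus 2}$, and I set $e_iD(\nu)\circledast D((2)) := L$. Then $\hd(L)^{\oplus 2} \cong \hd(M\boxtimes D((2))) \cong D(\widetilde e_i\nu)\boxtimes D((2)) \cong D(\widetilde e_i\nu,(2))^{\oplus 2}$ by the Q-$\boxtimes$-Q doubling rule (with $D(\widetilde e_i\nu,(2))$ simple of type M), so Krull--Schmidt forces $\hd(L) \cong D(\widetilde e_i\nu,(2))$; the socle is analogous by self-duality. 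Finally, $\dim\End(L^{\oplus 2}) = \dim\End(M)\cdot\dim\End(D((2))) = 2\eps_i(\nu)\cdot 2 = 4\eps_i(\nu)$, yielding $\dim\End(L) = \eps_i(\nu) = \eps_i(\nu)\dim\End(D(\widetilde e_i\nu,(2)))$.

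The main step, and the one that genuinely uses the super structure, is the decomposition $M\boxtimes D((2)) \cong L^{\oplus 2}$ in case (ii). Lemmas~\ref{L071218_3} and~\ref{L071218_4} are set up precisely for this purpose, so the substance of the argument is confirming that the endomorphism-ring data from Lemma~\ref{Lemma39s}(vi) matches the hypothesis of Lemma~\ref{L071218_3}; everything else is a head/socle computation via outer-product rules and a dimension count.
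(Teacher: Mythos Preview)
Your proof is correct and follows essentially the same route as the paper: both arguments invoke Lemmas~\ref{L071218_3} and~\ref{L071218_4} (together with Lemma~\ref{Lemma39s}(vi)) to obtain the splitting $M\boxtimes D((2))\cong L^{\oplus 2}$ in case~(ii). The only cosmetic difference is in the head/socle and endomorphism computations: you use the standard identities $\hd(M\boxtimes N)\cong\hd(M)\boxtimes\hd(N)$ and $\End(M\boxtimes N)\cong\End(M)\otimes\End(N)$ directly, whereas the paper runs both calculations through Frobenius reciprocity for the induction $\widetilde\s_{n-2}\hookrightarrow\widetilde\s_{n-2,2}$; the two packagings are equivalent.
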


\begin{proof}
(i) clearly holds. For (ii) note that by Lemma \ref{Lemma39s}, $\hd(e_iD(\nu))\cong D(\tilde e_i\nu)$ and $\End_{\s_{n-2}}(e_iD(\nu))\simeq \End_{\s_{n-2}}(D(\tilde e_i\nu))^{\oplus [e_iD(\nu):D(\tilde e_i\nu)]}$. So $e_iD(\nu)$ admits an odd involution by Lemma \ref{L071218_3} and then (ii) follows from Lemma \ref{L071218_4}. 

Further for any $\mu\in\RP_p(n-2)$
\begin{align*}
&\dim\Hom_{\widetilde\s_{n-2,2}}(e_iD(\nu)\ua^{\widetilde\s_{n-2,2}},D(\mu,(2)))\\
&=\dim\Hom_{\widetilde\s_{n-2}}(e_iD(\nu),D(\mu,(2))\da_{\widetilde\s_{n-2}})\\
&=2^{1-a(\mu)}\dim\Hom_{\widetilde\s_{n-2}}(e_iD(\nu),D(\mu))\\
&=2\delta_{\mu,\widetilde e_i\nu}.
\end{align*}
Since $D(\widetilde e_i\nu)$ and $D(\widetilde e_i\nu,(2))$ are of different type, it follows that head and socle of $e_iD(\nu)\circledast D((2))$ are isomorphic to $D(\widetilde e_i\nu,(2))$.

Last, from Lemma \ref{Lemma39s}, we have that
\begin{align*}
&\dim\End_{\widetilde\s_{n-2,2}}(e_iD(\nu)\circledast D((2)))\\
&=2^{-2a(\widetilde e_i\nu)}\dim\End_{\widetilde\s_{n-2,2}}(e_iD(\nu)\ua^{\widetilde\s_{n-2,2}})\\
&=2^{-2a(\widetilde e_i\nu)}\dim\Hom_{\widetilde\s_{n-2}}(e_iD(\nu),e_iD(\nu)\ua^{\widetilde\s_{n-2,2}}\da_{\widetilde\s_{n-2}})\\
&=2^{1-2a(\widetilde e_i\nu)}\dim\End_{\widetilde\s_{n-2}}(e_iD(\nu))\\
&=2^{1-a(\widetilde e_i\nu)}\eps_i(\nu)\\
&=\eps_i(\nu)\dim\End(D(\widetilde e_i\nu,(2))).
\end{align*}
\end{proof}

\begin{lemma}\label{L051218_5}
Let $n\geq 5$ and $\la\in\RP_p(n)$. If $\epsilon_0(\la),\epsilon_i(\la)=1$ and $\eps_j(\la)=0$ for $j\not=0,i$ then at least one of $\widetilde e_0\la$ and $\widetilde e_j\la$  is not JS.
\end{lemma}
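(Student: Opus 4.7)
The plan is to argue by contradiction, assuming both $\widetilde e_0\la$ and $\widetilde e_i\la$ are JS. Since $\eps_0(\la)>0$, Lemma \ref{L051218_4} yields $\eps_i(\widetilde e_0\la)\geq\eps_i(\la)=1$; combined with the JS hypothesis this forces $\widetilde e_0\la$ to have its unique normal node at residue $i$ with multiplicity one, and $\eps_k(\widetilde e_0\la)=0$ for all other $k$. Symmetrically $\widetilde e_i\la$ is JS with $\eps_0(\widetilde e_i\la)=1$ the only nonzero value. Parts (iii)--(v) of Lemma \ref{Lemma39s}, applied with these single-normal-node conditions (the bound $\eps_i(\mu)\leq \eps_i(\widetilde e_0\la)-1=0$ for a composition factor $D(\mu)$ of $e_iD(\widetilde e_0\la)$ together with the ``iff'' clause rules out any $\mu\neq\widetilde e_i\widetilde e_0\la$), imply that $e_iD(\widetilde e_0\la)\simeq D(\widetilde e_i\widetilde e_0\la)$ and $e_0D(\widetilde e_i\la)\simeq D(\widetilde e_0\widetilde e_i\la)$ are both simple.

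Lemma \ref{L081218_2} then supplies a nonzero map between these two simples, forcing $\widetilde e_i\widetilde e_0\la=\widetilde e_0\widetilde e_i\la=:\nu$. A straightforward double-restriction calculation using the JS descriptions above produces
\[
D(\la)\da_{\widetilde\s_{n-2}}\;\simeq\; D(\nu)^{\oplus 2(1+a(\la))},
\]
and Lemma \ref{Lemma39s}(v) applied in both orderings gives $\eps_0(\nu)=\eps_i(\nu)=0$, while Lemma \ref{Lef} (since $\eps_0(\la),\eps_i(\la)>0$) furnishes $\phi_0(\nu),\phi_i(\nu)\geq 1$.

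Substituting these values into Lemma \ref{L10} for $\nu$ yields
\[
2\sum_{\substack{1\leq k\leq\ell\\ k\neq i}}\eps_k(\nu)+1
\;=\;\phi_0(\nu)+2\sum_{k\geq 1}\phi_k(\nu)\;\geq\; 1+2\;=\;3,
\]
so $\sum_{1\leq k\leq\ell,\,k\neq i}\eps_k(\nu)\geq 1$. For $p=3$ this indexing set is empty (as $\ell=1$ and $i=1$), yielding the desired contradiction immediately. The main obstacle is the remaining case $p\geq 5$, where one must rule out some residue $k_0\notin\{0,i\}$ with $\eps_{k_0}(\nu)\geq 1$: my plan is a local combinatorial analysis showing that the removal $\widetilde e_0\la\to\nu$ of the $i$-good node (respectively $\widetilde e_i\la\to\nu$ of the $0$-good node) can create a new $k_0$-normal node only when $k_0$ equals the residue of the node immediately to the left of the removed one, and that requiring both paths to account for the same $\eps_{k_0}(\nu)$ pins down the positions of the two good nodes of $\la$ in a way incompatible with the hypotheses $\eps_k(\la)=0$ for $k\neq 0,i$ and $\la\in\RP_p(n)$ with $n\geq 5$.
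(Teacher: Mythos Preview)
Your reduction to the partition $\nu=\widetilde e_i\widetilde e_0\la=\widetilde e_0\widetilde e_i\la$ is correct and the $p=3$ case is fine. The gap is the case $p\geq 5$: you only sketch a ``local combinatorial analysis'' and do not carry it out. The difficulty is real --- Lemma~\ref{L051218_4} only gives inequalities in the wrong direction (removing a node can \emph{create} normal nodes of other residues), so knowing $\eps_{k_0}(\nu)\geq 1$ for some $k_0\notin\{0,i\}$ does not by itself contradict $\eps_{k_0}(\la)=0$. Tracing how a new $k_0$-normal node can arise from two successive removals, and then reconciling the two paths $\la\to\widetilde e_0\la\to\nu$ and $\la\to\widetilde e_i\la\to\nu$, would require a careful case analysis of the addable/removable structure adjacent to both good nodes; you have not done this, and it is not obvious it closes cleanly.

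The paper's argument avoids $\nu$ entirely. From $\widetilde e_i\la\in\JS(0)$ and $\phi_i(\widetilde e_i\la)\geq 1$ one sees via Lemma~\ref{L10} that $\widetilde e_i\la$ has a \emph{unique} conormal node, of residue $i$; since the top addable node is always conormal, this forces the $i$-good node of $\la$ to be $(1,\la_1)$. The $0$-good node is then the bottom removable node. An elementary check shows $(1,\la_1)$ remains normal in $\widetilde e_0\la$; if $\widetilde e_0\la$ is also JS, its unique normal node is its bottom removable node, forcing $\la=(n-1,1)$. Then $(n-2,1)\in\JS(0)$ together with \cite[Lemma~3.7]{p} gives $n\in\{4,p+1\}$, and both are excluded (by $n\geq 5$ and by both removable nodes of $(p,1)$ having residue~$0$). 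If you wish to salvage your route, note that \cite[Lemma~3.8]{p} applied to $\widetilde e_i\la\in\JS(0)$ gives $\nu\in\JS(1)$; for $i\geq 2$ you can then feed $\widetilde e_0\la\in\JS(i)$ and $\nu=\widetilde e_i(\widetilde e_0\la)\in\JS(1)$ into Lemma~\ref{L13} to force $\widetilde e_0\la=\be_{n-1}$, but you would still need to analyse $\la=\widetilde f_0\be_{n-1}$ to finish --- and this ends up no shorter than the paper's direct argument.
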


\begin{proof}
Notice first that $h(\la)\geq 2$. Assume that $\widetilde e_j\la$ is JS. Then it is JS(0) by Lemma \ref{L051218_4}. Since $\phi_j(\widetilde e_j\la)\geq 1$, it follows from Lemma \ref{L10} that the top addable node of $\la$ is the only conormal node of $\widetilde e_j\la$ and this node has residue $j$. So the normal nodes of $\la$ are on row 1 (of residue $j$) and on row $h(\la)$ (of residue 0). It is easy to see that $(1,\la_1)$ is normal also in $\widetilde e_0\la$ (any removable node in $\la$ is also removable in $\widetilde e_0\la$ apart for the node $(h(\la),1)$ and any addable node in $\widetilde e_0\la$ is also addable in $\la$ again apart the node $(h(\la),1)$). Since $\widetilde e_0\la$ is JS it follows that $\la=(n-1,1)$. From $\widetilde e_j\la=(n-2,1)$ being JS(0) it follows from \cite[Lemma 3.7]{p} that $\la=(3,1)$ or $(p,1)$. The first case contradicts $n\geq 5$ while in the second case both removable nodes have residue 0.
\end{proof}

\begin{lemma}\label{L12}
Let  $\la\in\RPar_p(n)$ be $\la\in JS(0)$. Then $\phi_0(\la)\geq 1$ if and only if $n\equiv 0\Md p$.
\end{lemma}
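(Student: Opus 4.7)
The plan is to combine Lemma \ref{L10}, which constrains $\phi_0(\la)$ to the set $\{0,2\}$, with an analysis of the top addable node of $\la$ and a modular invariant for $n$.

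Since $\la$ is $\JS(0)$, we have $\eps_0(\la) = 1$ and $\eps_i(\la) = 0$ for $i \geq 1$, so Lemma \ref{L10} gives
\[\phi_0(\la) + 2\sum_{i \geq 1}\phi_i(\la) = 2,\]
and in particular $\phi_0(\la) \in \{0, 2\}$. Next one shows that the top addable node $B := (1, \la_1+1)$ always satisfies $\la \cup B \in \RP_p(n+1)$ --- using that $d_1 := \la_1 - \la_2$ satisfies $d_1 \leq p-1$, and in fact $d_1 \leq p-2$ whenever $p \mid \la_1 + 1$, for $\JS(0)$ partitions --- and furthermore that $B$ is $i_1$-cogood for $i_1 := \res(\la_1+1)$. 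The only potential $i_1$-node preceding $B$ in the top-to-bottom signature is $(1, \la_1)$; but $\res(\la_1) = i_1$ forces $\la_1 \equiv 0 \pmod p$, in which case the $\JS(0)$ structure forces $d_1 \in \{0, 1\}$, and one checks that $(1, \la_1)$ is then not removable (removing it would break the $p$-strict condition or produce a non-partition). This gives $\phi_{i_1}(\la) \geq 1$, and combined with the displayed identity: if $i_1 = 0$ then $\phi_0(\la) = 2$, while if $i_1 \geq 1$ then $2\phi_{i_1}(\la) \geq 2$ exhausts the right-hand side, forcing $\phi_0(\la) = 0$. Thus $\phi_0(\la) \geq 1$ if and only if $i_1 = 0$.

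It then remains to show $i_1 = 0$ if and only if $n \equiv 0 \pmod p$. Write $\la_a = b_a p + c_a$ with $c_a \in \{1, \ldots, p\}$, so $c_h = 1$ and $n \equiv \sum_a c_a \pmod p$; the $\JS(0)$ relation $\res(\la_a) = \res(\la_{a+1}+1)$ translates via the identity $\res(m) = \min\{c-1, p-c\}$ into the dichotomy $c_a \equiv c_{a+1}+1 \pmod p$ or $c_a \equiv -c_{a+1} \pmod p$. A direct substitution verifies that both transitions preserve the modular invariant
\[\sum_{b=a}^{h} c_b \equiv \frac{c_a(c_a+1)}{2} \pmod p,\]
which at $a = h$ reduces to $1 \equiv 1$. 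Taking $a = 1$ gives $n \equiv c_1(c_1+1)/2 \pmod p$, which vanishes modulo the odd prime $p$ exactly when $c_1 \in \{p-1, p\}$, i.e.\ when $\res(\la_1+1) = 0$. This matches the condition $i_1 = 0$ and closes the argument.

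The main potential obstacle is justifying the cogood claim for $B$ in the spin setting when $c_1 \equiv 0 \pmod p$: here $(1, \la_1)$ carries the same residue $i_1 = 0$ as $B$, and one must verify carefully that the $\JS(0)$ constraint $d_1 \in \{0, 1\}$ together with strictness prevents $(1, \la_1)$ from being removable, so that nothing cancels the $+$ at $B$ in the $0$-signature.
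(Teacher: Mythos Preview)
Your proof is correct and takes a genuinely different route from the paper's.

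The paper splits the two directions: for $n\equiv 0\Rightarrow\phi_0(\la)\geq 1$ it argues representation-theoretically, using the structure of $M_1$ (Lemma~\ref{M1}) and the composition-factor count in Lemma~\ref{L9} to force $\phi_0(\la)\geq 1$. For the converse it observes (via Lemma~\ref{L10}) that all conormal nodes have residue $0$, reduces to the case $\la_1\equiv 0\pmod p$ by passing to $\bar\la=(\la_1+1,\la_1,\la_2,\ldots)$ when $\la_1\equiv -1$, and then invokes the structural description of $\JS(0)$ partitions from \cite[Lemma~3.7]{p} to partition the rows $2,\ldots,h(\la)$ into sets $A_i,B_i,C_0$ and compute $|\la|\bmod p$ by matching $|A_i|$ against $|B_{i+1}|$.

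You instead argue both directions combinatorially and uniformly. You use Lemma~\ref{L10} to pin down $\phi_0(\la)\in\{0,2\}$, then show directly that the top addable node $B$ is always $i_1$-conormal (your check that $(1,\la_1)$ is never a genuine removable $i_1$-node when $\la_1\equiv 0\pmod p$ is correct, and in the usual boundary-reading convention $B$ actually precedes $(1,\la_1)$ anyway, so your verification is a harmless over-caution). This reduces the statement to $\res(\la_1+1)=0\Leftrightarrow n\equiv 0\pmod p$, which you establish via the clean invariant $\sum_{b\geq a}c_b\equiv c_a(c_a+1)/2\pmod p$ propagated along the $\JS(0)$ recursion. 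This invariant replaces both the representation-theoretic step and the $A_i/B_i/C_0$ bookkeeping in one stroke; the paper's argument, on the other hand, gets the ``if'' direction essentially for free from lemmas already in place.
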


\begin{proof}
Assume first that $\la\in\JS(0)$ and $n\equiv 0\Md p$. From Lemma \ref{M1} we have that $[D(\la)\otimes M^{(n-1,1)}:D(\la)]\geq 2$. So from Lemma \ref{L9} it follows that $\phi_0(\la)\geq 1$.

Assume now that $\la\in JS(0)$ and $\phi_0(\la)\geq 1$. Then all normal and conormal nodes of $\la$ have residue 0 by Lemma \ref{L10}. In particular the top addable node has residue 0. So $\la_1\equiv 0\mbox{ or }-1\Md p$.

If $\la_1\equiv -1\Md p$ then from \cite[Lemma 3.7]{p} we have that $\bar\la:=(\la_1+1,\la_1,\la_2,\ldots)\in JS(0)$. Since $|\la|\equiv|\bar\la|\Md p$, we may assume that $\la_1\equiv 0\Md p$.

For a residue $i$ define
\begin{align*}
A_i&:=\{2\leq j\leq h(\la)|\res(j,\la_j)=i=\res(j-1,\la_{j-1})-1\},\\
B_i&:=\{2\leq j\leq h(\la)|\res(j,\la_j)=i=\res(j-1,\la_{j-1})+1\},\\
C_i&:=\{2\leq j\leq h(\la)|\res(j,\la_j)=i=\res(j-1,\la_{j-1})\}.
\end{align*}
From \cite[Lemma 3.7]{p} we have the following:
\begin{enumerate}[-]
\item
$\cup_i(A_i\cup B_i\cup C_i)=\{2,\ldots,h(\la)\}$,

\item
if $C_i\not=\emptyset$ then $i=0$,

\item
if $j\in C_0$ then $\la_j\equiv 0\Md p$

\item
if $j\in A_i$ then $\la_j\equiv i+1\Md p$

\item
if $j\in B_{i+1}$ then $\la_j\equiv -i-1\Md p$.
\end{enumerate}
Since $\res(1,\la_1)=0=\res(h(\la),\la_{h(\la)})$ it then follows that $|A_i|=|B_{i+1}|$ for each $0\leq i<(p-1)/2$. In particular
\begin{align*}
|\la|&=\la_1+\sum_{i=0}^{(p-3)/2}\sum_{j\in A_i}\la_j+\sum_{i=1}^{(p-1)/2}\sum_{j\in B_i}\la_j+\sum_{j\in C_0}\la_j\\
&\equiv \sum_{i=0}^{(p-3)/2}(|A_i|(i+1)-|B_{i+1}|(i+1))\equiv 0\Md p.
\end{align*}
\end{proof}

\begin{rem}
Let $\la\in\RPar_p(n)$ be JS(0) with $\phi_0(\la)\geq 1$. Then by Lemma \ref{L10} we have that $\phi_0(\la)=2$ and $\phi_i(\la)=0$ for $i>0$. From Lemmas \ref{Lef} and \ref{L051218_4} we have that $\widetilde f_0\la$ only has normal nodes of residue 0. So it can be seen that the following are equivalent:
\begin{enumerate}[-]
\item $\la$ is JS(0) with $\phi_0(\la)\geq 1$,

\item $\la=\widetilde e_0\mu$ is JS(0) and all normal nodes of $\mu$ have residue 0.
\end{enumerate}
This holds for example if $p=5$ and $\la=(4,3,2,1)=\widetilde e_0(5,3,2,1)$. Note that $(5,3,2,1)\not=(5^2,1)=\be_{11}$. This shows that \cite[Lemma 3.14(i)]{p} is wrong. Since it is unclear where the error is in the proof of \cite[Lemma 3.14]{p} we next give a different proof of \cite[Lemma 3.14(ii)]{p}.
\end{rem}

\begin{lemma}\label{L13}
Let  $\la\in\RPar_p(n)$. Then $\la\in JS(i)$ and $\widetilde e_i\la\in JS(j)$ for some $i,j\not=0$ if and only if $\la=\be_n$ with $n\not\equiv 0,1,2\Md p$.
\end{lemma}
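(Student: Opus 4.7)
The backward direction is a direct sign-sequence computation. Writing $n = dp+e$ with $3 \leq e \leq p-1$ and $\la = \be_n = (p^d, e)$, I would enumerate the removable nodes: $(d, p)$ of residue $0$ (present only if $d \geq 1$) and $(d+1, e)$ of residue $\res(e) = \min\{e-1, p-e\}$, which is nonzero since $2 \leq e \leq p-1$. The addable nodes are (subject to $p$-restrictedness) $(1, p+1)$ of residue $0$, $(d+1, e+1)$ of residue $\res(e+1)$, and $(d+2, 1)$ of residue $0$. Reading the $0$-sign sequence bottom-to-top and cancelling $-+$ pairs, the single removable $0$-node $(d,p)$ cancels against $(1, p+1)$, leaving no normal $0$-node. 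Every residue other than $\res(e)$ has no contributing $-$, while residue $\res(e)$ retains only the $-$ at $(d+1,e)$. Hence $\la \in \JS(\res(e))$ with $\res(e) \neq 0$, and $\widetilde e_{\res(e)} \la = (p^d, e-1)$; the same analysis with $e$ replaced by $e-1 \geq 2$ shows $\widetilde e_{\res(e)} \la \in \JS(\res(e-1))$ with $\res(e-1) \neq 0$, completing this direction.

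For the converse, suppose $\la \in \JS(i)$ with $i \neq 0$ and $\widetilde e_i \la \in \JS(j)$ with $j \neq 0$. Since $\eps_i(\la) = 1$ and $\eps_k(\la) = 0$ for $k \neq i$, including $\eps_0(\la) = 0$, Lemma \ref{L10} gives
\[
\phi_0(\la) + 2\sum_{k \geq 1} \phi_k(\la) \,=\, 1 + 2 \,=\, 3,
\]
so $\phi_0(\la) \in \{1, 3\}$. Removing the $i$-good node drops $\eps_i$ by one, so $\eps_i(\widetilde e_i \la) = 0$ and $j \neq i$; Lemma \ref{L051218_4} then forces $\eps_k(\widetilde e_i \la) = 0$ for all $k \neq i, j$. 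The goal is to show these constraints force $\la = (p^d, e)$ with $0 < e < p$. The condition $\phi_0(\la) \geq 1$ odd combined with $\eps_0(\la) = 0$ severely restricts the possible row lengths modulo $p$: each removable $0$-node, located at the end of a row of length $\equiv 0 \pmod p$, must be cancelled in the sign sequence by an addable $0$-node. Walking up the diagram from the bottom, I would argue that any row of length $\neq p$ sitting above the bottom row would produce either an uncancelled normal node at some residue $\neq i$ (breaking JS of $\la$) or a failure of the JS property of $\widetilde e_i \la$. This forces all rows above the last to have length exactly $p$, so $\la = (p^d, e)$ for some $d \geq 0$ with $0 < e < p$, i.e., $\la = \be_n$. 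Finally, the requirement $\res(e) = i \neq 0$ rules out $e = 1$, and the requirement that $\widetilde e_i \la = (p^d, e-1)$ be $\JS(j)$ with $\res(e-1) = j \neq 0$ rules out $e = 2$, giving $3 \leq e \leq p-1$ and equivalently $n \not\equiv 0, 1, 2 \pmod p$.

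The main obstacle will be the case-by-case argument forcing the shape $(p^d, e)$. The cases $\phi_0(\la) = 1$ and $\phi_0(\la) = 3$ require separate handling, and careful bookkeeping of the sign sequence at residue $0$ is essential because of the doubled-column subtlety particular to $p$-strict partitions (columns $p$ and $p+1$, $2p$ and $2p+1$, etc.\ each carry residue $0$). A cleaner route may be to appeal to an explicit combinatorial characterisation of $\JS(i)$-partitions for $i \neq 0$ analogous to the description of $\JS(0)$ given before Lemma \ref{L12}; if such a classification is available in \cite{p}, the forward direction reduces to matching the constraints against a known list of shapes, with the extra condition on $\widetilde e_i \la$ picking out precisely the basic spin partitions.
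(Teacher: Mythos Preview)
Your backward direction is fine and matches the paper's ``easily checked'' remark. The forward direction, however, is incomplete: you acknowledge that the case analysis on $\phi_0(\la)\in\{1,3\}$ is the main obstacle, and the argument ``walking up the diagram'' is only sketched. The detour through Lemma~\ref{L10} and $\phi$-values is unnecessary and makes the problem harder than it is.

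The key observation you are missing is that the bottom removable node of any $p$-restricted $p$-strict partition is \emph{always} normal. Hence for a JS partition the unique normal node is the last node on the bottom row. This localises everything immediately: since $i\neq 0$ the bottom row of $\la$ has length $\la_{h(\la)}\geq 2$, so $h(\widetilde e_i\la)=h(\la)$ and $\widetilde e_i\la=(\la_1,\dots,\la_{h(\la)-1},\la_{h(\la)}-1)$; since $j\neq 0$ as well, one gets $3\leq\la_{h(\la)}<p$. Now either $p\mid\la_k$ for all $k<h(\la)$, which forces $\la=(p^{h(\la)-1},\la_{h(\la)})=\be_n$, or one takes $k<h(\la)$ maximal with $p\nmid\la_k$. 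In the latter case $\la=(\la_1,\dots,\la_k,p^{h(\la)-k-1},\la_{h(\la)})$, and the JS property of $\la$ forces $\res(\la_k)=\res(\la_{h(\la)}+1)$ while the JS property of $\widetilde e_i\la$ forces $\res(\la_k)=\res(\la_{h(\la)})$. Thus $\res(\la_{h(\la)})=\res(\la_{h(\la)}+1)$, which would require $p\mid\la_{h(\la)}$, contradicting $\la_{h(\la)}<p$.

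This is the paper's argument; it avoids any appeal to $\phi_0$, to Lemma~\ref{L051218_4} (which in any case does not give what you claim---it only says $\eps_k(\widetilde e_i\la)\geq\eps_k(\la)=0$, a triviality), or to a hypothetical classification of $\JS(i)$-partitions from \cite{p}.
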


\begin{proof}
For $\la=\be_n$ it can be easily checked that $\la\in JS(i)$ and $\widetilde e_i\la\in JS(j)$ for some $i,j\not=0$ if and only if $n\not\equiv 0,1,2\Md p$.

So assume that $\la\in JS(i)$ and $\widetilde e_i\la\in JS(j)$ for some $i,j\not=0$. Notice that the only normal node of $\la$ (of $\widetilde{e}_i\la$) is the last node on the bottom row, since $\la$ ($\widetilde e_i\la$) is JS. It then easily follows from $i,j\not=0$ that $h(\la)=h(\widetilde e_i\la)$, that $3\leq \la_{h(\la)}<p$ and $\widetilde e_i\la=(\la_1,\ldots,\la_{h(\la)-1},\la_{h(\la)}-1)$. If $p|\la_k$ for each $1\leq k<h(\la)$ then $\la=(p^{h(\la)-1},\la_{h(\la)})$ and so $\la=\be_n$ and $n\not\equiv 0,1,2\Md p$. So assume that this is not the case and let $k< h(\la)$ maximal such that $p\nmid \la_k$. Notice that $\la=(\la_1,\ldots,\la_k,p^{h(\la)-k-1},\la_{h(\la)})$. Since $\la$ is JS it can be checked that $\res(k,\la_k)=\res(h(\la),\la_{h(\la)}+1)$. On the other hand, since $\widetilde e_i\la=(\la_1,\ldots,\la_k,p^{h(\la)-k-1},\la_{h(\la)}-1)$ is also JS, we have that $\res(k,\la_k)=\res(h(\la),\la_{h(\la)})$. In particular $\res(h(\la),\la_{h(\la)})=\res(h(\la),\la_{h(\la)}+1)$ and so $p|\la_{h(\la)}$, contradicting $\la\in\RPar_p(n)$.
\end{proof}

\begin{lemma}\label{L081218}
Let $n\geq 5$ and $\la\in\RP_p(n)\setminus\{\be_n\}$. Then
\[\dim\End_{\widetilde\s_{n-2,2}}(D(\la)\da_{\widetilde\s_{n-2,2}})>\dim\End_{\widetilde\s_{n-1}}(D(\la)\da_{\widetilde\s_{n-1}})+\dim\End_{\widetilde\s_{n}}(D(\la))\]
unless one of the following holds:
\begin{enumerate}[-]
\item $\la$ is JS(1), $p=3$ and $\eps_0(\widetilde e_1\la)=3$,

\item $\la$ is JS(1), $p>3$, $\eps_0(\widetilde e_1\la)=1$ and $\eps_2(\widetilde e_1\la)=1$,

\item $\eps_0(\la)=2$, $\eps_i(\la)=0$ for $i>0$ and $\widetilde e_0\la\in\JS(0)$,

\item $\la$ is JS(0).
\end{enumerate}
\end{lemma}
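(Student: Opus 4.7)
The plan is to translate the inequality via Frobenius reciprocity into a comparison of homomorphism spaces. Since $z$ acts trivially on $X:=\End_F(D(\la))$, Lemma \ref{l2} yields $\dim\End_{\widetilde\s_{n-k,k}}(D(\la)\da)=\dim\Hom_{\widetilde\s_n}(M_k,X)$ for $k\in\{0,1,2\}$, so the desired inequality becomes $\dim\Hom(M_2,X)>\dim\Hom(M_1,X)+\dim\Hom(M_0,X)$. The filtration of $M_2$ over $M_1$ over $M_0$ coming from Lemma \ref{Mk} for $p\geq 5$ (or from the case-by-case description of Lemma \ref{L160817_2} for $p=3$) has successive quotients controlled by the Specht modules $S_1$ and $S_2$, so the gap $\dim\Hom(M_2,X)-\dim\Hom(M_1,X)$ equals the dimension of the image of $\Hom(M_2,X)$ in $\Hom(S_2,X)$, and the statement reduces to producing at least $2+a(\la)$ linearly independent homomorphisms $M_2\to X$ whose restrictions to $S_2$ remain linearly independent.

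To produce them I compute $\dim\End_{\widetilde\s_{n-2,2}}(D(\la)\da)$ directly by double branching. Identifying $\widetilde\s_{n-2,2}$ with the central quotient $(\widetilde\s_{n-2}\times\widetilde\s_2)/\langle(z,z)\rangle$, the endomorphism space in question is the $\s_2$-fixed part of $\End_{\widetilde\s_{n-2}}(D(\la)\da_{\widetilde\s_{n-2}})$. Lemmas \ref{Lemma39s}(ii) and \ref{L051218_3} decompose $D(\la)\da_{\widetilde\s_{n-2}}$ into summands indexed by ordered pairs of residues $(i,j)$, yielding a diagonal contribution (pairs $i=j$, controlled by the divided powers $e_i^{(2)}D(\la)$ of Lemma \ref{Lemma39sr}) and off-diagonal contributions (pairs $i\neq j$, permuted by the $\s_2$-action). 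The crucial input is Lemma \ref{L081218_2}, which provides a non-zero map $e_iD(\widetilde e_j\la)\to e_jD(\widetilde e_i\la)$ whenever $\eps_i(\la),\eps_j(\la)>0$, and hence an $\s_2$-symmetric endomorphism that lies outside the image of $\End_{\widetilde\s_{n-1}}(D(\la)\da)$. Combining with the formula from Lemma \ref{L15} gives an explicit expression for the surplus $\Delta:=\dim\End_{\widetilde\s_{n-2,2}}(D(\la)\da)-\dim\End_{\widetilde\s_{n-1}}(D(\la)\da)-\dim\End_{\widetilde\s_n}(D(\la))$ in terms of the $\eps_i(\la)$ and the iterated-branching data $\eps_j(\widetilde e_i\la)$.

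A case analysis on the normal-node profile of $\la$ then pinpoints when $\Delta\leq 0$. Generically $\Delta>0$: each pair of normal nodes of distinct residues contributes via Lemma \ref{L081218_2}, and each $\eps_i(\la)\geq 2$ forces the diagonal term coming from $e_i^{(2)}D(\la)$ to exceed its $\widetilde\s_{n-1}$-counterpart by at least $1+a(\la)$. The four listed configurations are exactly the degenerate cases where these sources all collapse: case (iv) where $\la\in\JS(0)$ has a unique residue and the minimal possible double-branching; cases (i)--(ii) where $\la\in\JS(i)$ with $i>0$ and $\widetilde e_i\la$ has all its normal nodes on residue $0$ (three of them when $p=3$, or one of residue $0$ and one of residue $2$ when $p>3$, both giving the same numerical collapse); and case (iii) where $\eps_0(\la)=2$ with no other normal nodes and $\widetilde e_0\la\in\JS(0)$, which simultaneously depletes both the diagonal divided-power contribution and the off-diagonal swap contribution. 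Lemmas \ref{L051218_4}, \ref{L051218_5}, \ref{L13} and \ref{L12} provide the combinatorial bookkeeping to enumerate these and exclude every other configuration.

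The hardest step will be the precise equality verification in the exceptional cases, in particular the non-Specht-filtered structure of $M_2$ when $p=3$ which forces a further mod-$3$ split on $n$ via the three sub-cases of Lemma \ref{L160817_2}, together with the subtle interaction between the $\s_2$-symmetry of the off-diagonal pairs and the type-$M$/type-$Q$ dichotomy of the spin simple summands; these last two interact to determine whether each borderline configuration produces equality, a strict deficit of $1+a(\la)$, or the generic strict inequality.
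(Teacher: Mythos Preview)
Your overall strategy---decompose $D(\la)\da_{\widetilde\s_{n-2,2}}$ into block components indexed by residue multisets, estimate each piece, and compare against the formula of Lemma~\ref{L15}---is the same as the paper's. But two steps in your plan do not work as written.

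First, the claim that Lemma~\ref{L081218_2} ``provides an $\s_2$-symmetric endomorphism'' is unjustified. That lemma gives a non-zero $\widetilde\s_{n-2}$-map $e_iD(\widetilde e_j\la)\to e_jD(\widetilde e_i\la)$, but both modules lie in the \emph{same} $\widetilde\s_{n-2}$-block (there is no permutation of ordered pairs: the block components are indexed by unordered multisets $\{i,j\}$, and the transposition $(n-1,n)$ acts within each), and there is no reason this $\widetilde\s_{n-2}$-map should commute with the lift of $(n-1,n)$. The paper never attempts to promote these maps to $\widetilde\s_{n-2,2}$-endomorphisms. Instead it uses a different mechanism: for each block piece $E_i$ or $E_{i,j}$ it writes $E_i\da_{\widetilde\s_{n-2}}\ua^{\widetilde\s_{n-2,2}}\cong E_i\oplus E_i'$ with $E_i'=E_i\otimes(\1\boxtimes\sgn)$, embeds explicit copies of $e_kD(\widetilde e_l\la)\circledast D((2))$ (controlled by Lemma~\ref{L101218_2}, which you do not invoke and which handles the type-M/type-Q splitting) into $E_i\oplus E_i'$, and then argues that at least half of them land in $E_i$ or in $E_i'$. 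Self-duality plus the known head/socle structure from Lemma~\ref{L101218_2} then yields a lower bound on $\dim\End_{\widetilde\s_{n-2,2}}(E_i)$.

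Second, there is no ``explicit expression for the surplus $\Delta$'' in terms of $\eps_i(\la)$ and $\eps_j(\widetilde e_i\la)$: the exact dimension of $\End_{\widetilde\s_{n-2,2}}(E_{i,j})$ depends on the full submodule structure of the iterated restriction, not just on these numbers. The paper only obtains \emph{inequalities}, and those inequalities still leave seven residual configurations (a)--(g) beyond the four listed in the statement. Cases such as $\la\in\JS(i)$ with $i>1$, or $\eps_0=\eps_i=1$ with all other $\eps_k=0$, or $\eps_0=3$ with all other $\eps_k=0$, are \emph{not} among the four exceptions yet do not satisfy the generic bound; each requires a separate ad-hoc argument (invoking Lemmas~\ref{L13}, \ref{L051218_5}, \cite[Lemma 20.2.3]{KBook}, or a finer count of composition factors). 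Your proposal collapses this entire secondary case analysis into one sentence, but it is where most of the work lies.
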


\begin{proof}
Throughout the proof let $\eps_i:=\eps_i(\la)$ and for $\al\in\Par(n)$ let $d_\al:=\dim\End_{\widetilde\s_\al}(D(\la)\da_{\widetilde\s_\al})$. We will use Lemma \ref{Lemma39s} without further referring to it. 

Note that
\[D(\la)\da_{\widetilde\s_{n-2,2}}\cong \bigoplus_i E_i\oplus\bigoplus_{i<j}E_{i,j},\]
with $E_i\da_{\widetilde\s_{n-2}}\cong (\Res_i)^2D(\la)$ and $E_{i,j}\da_{\widetilde\s_{n-2}}\cong \Res_i\Res_jD(\la)\oplus\Res_j\Res_iD(\la)$.

Further, since $M^{(n-2)}\boxtimes M^{(1^2)}\cong(\1\boxtimes\1)\oplus(\1\boxtimes\sgn)$, we have that $E_i\da_{\s_{n-2}}\ua^{\widetilde\s_{n-2,2}}\cong E_i\oplus E_i'$ with $E_i'\cong E_i\otimes (\1\boxtimes \sgn)$ and $E_{i,j}\da_{\widetilde\s_{n-2}}\ua^{\widetilde\s_{n-2,2}}\cong E_{i,j}\oplus E_{i,j}'$ with $E_{i,j}'\cong E_{i,j}\otimes (\1\boxtimes \sgn)$. In particular $\dim\End_{\widetilde\s_{n-2,2}}(E_i)=\dim\End_{\widetilde\s_{n-2,2}}(E_i')$ and $\dim\End_{\widetilde\s_{n-2,2}}(E_{i,j})=\dim\End_{\widetilde\s_{n-2,2}}(E_{i,j}')$.

Consider first $E_i$. If $\eps_i>0$ then
\[(e_iD(\widetilde e_i\la))^{\oplus 2+2\de_{i>0}}\subseteq (e_i^{(2)}D(\la))^{\oplus 2+2\de_{i>0}}\subseteq E_i\da_{\widetilde\s_{n-2}}.\]
In particular $A=(e_iD(\widetilde e_i\la)\circledast D((2)))^{\oplus (2+2\de_{i>0})(1+a(\la))}\subseteq E_i\oplus E_i'$. So $(e_iD(\widetilde e_i\la)\circledast D((2)))^{\oplus (1+\de_{i>0})(1+a(\widetilde e_i\la))}$ is contained in $E_i$ or $E_i'$ from Lemma \ref{L101218_2} and similarly to \cite[Lemma 3.7]{m2}. Due to self-duality of the modules it then follows that
$$\dim\End_{\widetilde\s_{n-2,2}}(E_i)\geq 2\delta_{\eps_i>0}(1+\de_{i>0})^2(\eps_i-1)d_{(n)}.$$

Consider next $E_{i,j}$ with $0<i<j$. Assume that $\eps_i,\eps_j>0$. Then $(e_iD(\widetilde e_j\la)\oplus e_jD(\widetilde e_i\la))^{\oplus 2}\subseteq E_{i,j}\da_{\widetilde\s_{n-2}}$. In particular
$$(e_iD(\widetilde e_j\la)\circledast D((2))\oplus e_jD(\widetilde e_i\la)\circledast D((2)))^{\oplus 2+2a(\la)}\subseteq E_{i,j}\oplus E_{i,j}'.$$
Let $\{k,l\}=\{i,j\}$ with $\eps_k(\widetilde e_l\la)\geq\eps_l(\widetilde e_k\la)$. Then one of
\[(e_iD(\widetilde e_j\la)\circledast D(\hspace{-1pt}(2)\hspace{-1pt})\oplus e_jD(\widetilde e_i\la)\circledast D(\hspace{-1pt}(2)\hspace{-1pt})\hspace{-1pt})^{\oplus 1+a(\la)}\hspace{8pt}\text{or}\hspace{8pt}(e_kD(\widetilde e_l\la)\circledast D(\hspace{-1pt}(2)\hspace{-1pt})\hspace{-1pt})^{\oplus 2+a(\la)}\]
is contained in $E_{i,j}$ or $E_{i,j}'$. In either case it follows from $\eps_a(\widetilde e_b\la)\geq \eps_a$ (see Lemma \ref{L051218_4}) and from Lemma \ref{L081218_2} that
\begin{align*}
\dim\End_{\widetilde\s_{n-2,2}}(E_{i,j})&> 2\de_{\eps_i>0}\de_{\eps_j>0} (\eps_i(\widetilde e_j\la)+\eps_j(\widetilde e_i\la))d_{(n)}\\
&\geq 2\de_{\eps_i>0}\de_{\eps_j>0} (\eps_i+\eps_j)d_{(n)}.
\end{align*}

Last consider $E_{0,i}$ with $i>0$. Again assume that $\eps_0,\eps_i>0$. Then $(e_0D(\widetilde e_i\la)\oplus e_iD(\widetilde e_0\la))^{\oplus 1+a(\la)}\subseteq E_{0,i}\da_{\widetilde\s_{n-2}}$. In particular
$$(e_0D(\widetilde e_i\la)\circledast D((2))\oplus e_iD(\widetilde e_0\la)\circledast D((2)))^{\oplus 2}\subseteq E_{i,j}\oplus E_{i,j}'.$$
Similar to the previous case we obtain
\begin{align*}
\dim\End_{\widetilde\s_{n-2,2}}(E_{0,i})&> \de_{\eps_0>0}\de_{\eps_i>0} (\eps_0(\widetilde e_i\la)+\eps_i(\widetilde e_0(\la))d_{(n)}\\
&\geq \de_{\eps_0>0}\de_{\eps_i>0} (\eps_0+\eps_i)d_{(n)}.
\end{align*}

In particular, if $x=|\{j>0:\eps_j>0\}|$,
\[d_{(n-2,2)}\geq\left(\delta_{\eps_0>0}((2+x)\eps_0-2)+\sum_{i>0}\delta_{\eps_i>0}((6+2x+\de_{\eps_0>0})\eps_i-8)\right)d_{(n)}.\]

In view of Lemma \ref{L15} we may assume that
\[d_{(n-2,2)}\leq d_{(n-1,1)}+d_{(n)}=(1+\eps_0+2\sum_{i>0}\eps_i)d_{(n)}.\]
It easily follows that $x+\de_{\eps_0>0}\leq 2$ and that we are in one of the following cases:
\begin{enumerate}[-]
\item $\eps_0\leq 3$ and $\eps_k=0$ for $k>0$,

\item $\eps_0\leq 2$, $\eps_i=1$ and $\eps_k=0$ for $k\not=0,i$ for some $i>0$.

\item $\eps_i,\eps_j=1$ and $\eps_k=0$ for $k\not=i,j$ for some $i,j>0$.
\end{enumerate}
Excluding cases which are not considered in the lemma and considering the stronger bounds involving $\eps_i(\widetilde e_j\la)$, strict inequalities and that $E_{i,j}\not=0$ if $\eps_i>0$ and $\eps_j(\widetilde e_i\la)>0$, we may assume that we are in one of the following cases:
\begin{enumerate}
\item[(a)] $\eps_0=3$, $\eps_k=0$ and $\eps_k(\widetilde e_0\la)>0$ for $k>0$,

\item[(b)] $\eps_0=2$, $\eps_k=0$ for $k>0$ and there exists $i>0$ with $\eps_i(\widetilde e_0\la)>0$,

\item[(c)] $\la$ is $\JS(i)$ with $i>1$,

\item[(d)] $p=3$, $\la$ is $\JS(1)$ and $\eps_0(\widetilde e_1\la)\not=3$,

\item[(e)] $p>3$, $\la$ is $\JS(1)$ and $(\eps_0(\widetilde e_1\la),\eps_2(\widetilde e_1\la))\not=(1,1)$,

\item[(f)] $\eps_0,\eps_i=1$, $\eps_k=0$ for $k\not=0,i$ and $\eps_i(\widetilde e_0\la)+\eps_0(\widetilde e_i\la)\leq 3$  for some $i>0$.

\item[(g)] $\eps_i,\eps_j=1$, $\eps_k=0$ for $k\not=i,j$ and $\widetilde e_i\la$ and $\widetilde e_j\la$ are JS for some $i,j>0$.
\end{enumerate}

{\bf Case (a).} In this case $D(\la)\da_{\widetilde\s_{n-2}}\cong e_0^{(2)}D(\la)^{\oplus 2}$ and
\[[e_0^{(2)}D(\la):D(\widetilde e_0^2\la)]=3>2=[e_0D(\tilde e_0\la):D(\widetilde e_0^2\la)].\]
It can then be checked that $(e_0D(\widetilde e_0\la)\circledast D(2))^{\oplus 1+a(\la)}$ is strictly contained in $E_0$ or $E_0'$. Thus
\begin{align*}
\dim\End_{\widetilde\s_{n-2,2}}(D(\la)\da_{\widetilde\s_{n-2,2}})&\!>\!(1+a(\la))^2(\eps_0-1)\dim\End_{\widetilde\s_{n-2,2}}(D(\widetilde e_0^2\la,(2)))\\
&\!=\!4\dim\End_{\widetilde\s_{n}}(D(\la)).
\end{align*}
So also in this case the lemma holds.

{\bf Case (b).} In this case $\dim\End_{\widetilde\s_{n-2,2}}(E_0)\geq 2\dim\End_{\widetilde\s_n}(D(\la))$, so it is enough to prove that $\dim\End_{\widetilde\s_{n-2,2}}(E_{0,i})>\dim\End_{\widetilde\s_n}(D(\la))$ by Lemma \ref{L10}. This follows from $E_{0,i}$ not being zero or simple as supermodule (since $\eps_0(\la)=2$ and $\eps_i(\widetilde e_0\la)>0$) and since its composition factors are of the same type as $D(\la)$.

{\bf Case (c).}  Using argument similar to the above we have (letting $E_{i,j}=E_{j,i}$ and  $E_{i,j}'=E_{j,i}'$ for $i>j$) that $(e_jD(\widetilde e_i\la)\circledast D((2)))^{\oplus 1+a(\la)}$ is contained in $E_{i,j}$ or $E_{i,j}'$ for each $j\not=i$ with $j> 0$. From Lemma \ref{L13} and \cite[Lemma 3.8]{p} we have that $\sum_{j\not=i}\eps_j(\widetilde e_i\la)\geq 2$. From \cite[Lemma 20.2.3]{KBook} we have that $\eps_0(\widetilde e_i\la)=0$. The lemma then follows.

{\bf Case (d).} Notice that $e_0D(\widetilde e_1\la)\circledast D((2))$ is contained in $E_{0,1}$ or $E_{0,1}'$.  Since $\la\not=\be_n$ it can be easily checked that $\la$ ends by $(4,3^b,2)$ with $b\geq 0$. It can then be easily checked that $\eps_0(\widetilde e_1\la)\geq 3$. So in this case $\eps_0(\widetilde e_1\la)\geq 4$, from which the lemma follows.

{\bf Case (e).} From \cite[Lemma 20.2.3]{KBook} and since $\la\in\JS(1)$ we have that $\eps_k(\widetilde e_1\la)=0$ for $k\not=0,2$. If $\la_{h(\la)}=p-1$ then the bottom removable node of $\widetilde e_1\la$ is $2$-normal (since $p>3$). If $\la_{h(\la)}=2$ let $k<h(\la)$ maximal with $p\nmid \la_k$. Note that $k$ exists since $\la\not=\be_n$ From $\la\in\JS(1)$ it follows that $\res(k,\la_k)=2$ and by maximality of $k$ we have that $(k,\la_k)$ is normal for $\widetilde e_1\la$. In particular $\eps_2(\widetilde e_1\la)\geq 1$.

We have that  $(e_0D(\widetilde e_1\la)\circledast D((2)))^{\oplus 2}$ is contained in $E_{0,1}$ or $E_{0,1}'$ and $(e_jD(\widetilde e_i\la)\circledast D((2)))^{\oplus 1+a(\la)}$ is contained in $E_{i,j}$ or $E_{i,j}'$ for each $j\not=i$ with $j> 0$. Since $\widetilde e_1\la$ is not JS by Lemma \ref{L13} and \cite[Lemma 3.8]{p}, we have $\eps_2(\widetilde e_1\la)\geq 2$ or $\eps_0(\widetilde e_1\la),\eps_2(\widetilde e_1\la)\geq 1$ from which the lemma follows.

{\bf Case (f).} From Lemma \ref{L051218_5} we have that $\widetilde e_0(\la)$ and $\widetilde e_i(\la)$ are not both JS. Since $\eps_i(\widetilde e_0\la)+\eps_0(\widetilde e_i\la)\leq 3$, we have by Lemmas \ref{Lemma39s} and \ref{L081218_2} that $\widetilde e_i\widetilde e_0\la=\widetilde e_0\widetilde e_i\la$.

If $\eps_i(\widetilde e_0\la)+\eps_0(\widetilde e_i\la)=3$ the lemma follows from $(e_0D(\widetilde e_i\la)\circledast D((2))\oplus e_iD(\widetilde e_0\la)\circledast D((2)))$ being contained in $E_{0,i}$ or $E_{0,i}'$ or $(e_kD(\widetilde e_l\la)\circledast D((2)))^{\oplus 2}$ being contained in one of $E_{0,i}$ or $E_{0,i}'$ (with $\{k,l\}=\{0,i\}$ such that $\eps_k(\widetilde e_l\la)=2$).

If $\eps_i(\widetilde e_0\la)+\eps_0(\widetilde e_i\la)=2$ then $E_{0,i}$ is not the only non-zero block component of $D(\la)\da_{\widetilde\s_{n-2,2}}$, since $\widetilde e_0(\la)$ and $\widetilde e_i(\la)$ are not both JS. From $\widetilde e_i\widetilde e_0\la=\widetilde e_0\widetilde e_i\la$ we have that $(D(\widetilde e_i\widetilde e_0\la)\circledast D((2)))^{\oplus 2}\subseteq E_{0,i}$ or $E_{0,i}'$, from which the lemma then follows.

{\bf Case (g).} In this case from Lemma \ref{L081218_2} we have that $\widetilde e_i\widetilde e_j\la=\widetilde e_j\widetilde e_i\la$ and then $D(\widetilde e_i\widetilde e_j\la)^{\oplus 4}$ is contained in $D(\la)\da_{\widetilde\s_{n-2}}$. So $(D(\widetilde e_i\widetilde e_j\la)\circledast D((2)))^{\oplus 2+2a(\la)}$ is contained in $E_{i,j}$ or $E_{i,j}'$, from which the lemma follows by Lemma \ref{L10}.
\end{proof}

\begin{lemma} \label{L101218_3}
Let $\la\in\RP_p(n)\setminus\{\be_n\}$ with $\eps_i(\la)=0$ for all $i\neq 0$ and $\widetilde e_0 \la\in \JS(0)$, then $D(\la)\da_{\s_{n-1}}$ has a composition factor $D(\mu)$, where $\mu\not=\widetilde e_0\la$ is obtained from $\la$ by removing the bottom removable node.
\end{lemma}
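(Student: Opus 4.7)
The plan is to pin down the bottom of $\la$, identify the candidate $\mu$, and then exhibit $D(\mu)$ as a composition factor of $D(\la)\da_{\widetilde\s_{n-1}}$ via the branching rule for $e_0$.

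First I would show $\la_{h(\la)}=1$. Since $\widetilde e_0\la\in\JS(0)$, its last part equals $1$. Were the $0$-good of $\la$ in row $h(\la)$, two cases arise. If $\la_{h(\la)}\ge 2$, removal yields $(\widetilde e_0\la)_{h(\la)}=\la_{h(\la)}-1=1$, forcing $\la_{h(\la)}=2$; but $\res(2)=1\ne 0$, contradicting that $(h(\la),2)$ is a $0$-good node. If $\la_{h(\la)}=1$, removal shortens the partition by one row and forces $\la_{h(\la)-1}=1=\la_{h(\la)}$, contradicting $p$-strictness. Hence the $0$-good of $\la$ lies strictly above row $h(\la)$, and so $\la$ and $\widetilde e_0\la$ share their last row, giving $\la_{h(\la)}=1$.

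Now set $A:=(h(\la),1)$, the bottom removable node of $\la$ (of residue $0$), and $\mu:=\la\setminus A=(\la_1,\ldots,\la_{h(\la)-1})\in\RP_p(n-1)$. Because $\widetilde e_0\la$ has length $h(\la)$ whereas $\mu$ has length $h(\la)-1$, we obtain $\mu\ne\widetilde e_0\la$ for free. Since $\eps_i(\la)=0$ for $i\ne 0$, Lemma \ref{Lemma39s}(i) collapses $D(\la)\da_{\widetilde\s_{n-1}}$ to a positive multiple of $e_0D(\la)$, so it suffices to exhibit $D(\mu)$ as a composition factor of $e_0D(\la)$. By Lemma \ref{Lemma39s}(viii) this in turn follows once $A$ is shown to be $0$-normal in $\la$.

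To count normal and conormal nodes I would use Lemma \ref{L10}. Applied to $\widetilde e_0\la\in\JS(0)$ it yields $\phi_0(\widetilde e_0\la)=2$ and $\phi_j(\widetilde e_0\la)=0$ for $j>0$. Combined with $\la=\widetilde f_0\widetilde e_0\la$ and Lemma \ref{Lemma40s}(v) this gives $\phi_0(\la)=1$; feeding this back into Lemma \ref{L10} for $\la$ together with $\eps_i(\la)=0$ for $i>0$ produces $\eps_0(\la)=2\sum_{i\ge 1}\phi_i(\la)\ge 2$. Thus $\la$ has at least two $0$-normal nodes. The core step is to identify $A$ as one of them, which I would do by tracking the reduced $0$-signature of $\la$: the $0$-cogood node of $\widetilde e_0\la$ added to produce $\la$ lies strictly above row $h(\la)$, so its conversion from $+$ to $-$ in the signature does not interact with the $-$ at $A$, while the addable $0$-node at $(h(\la)+1,1)$ sits just below $A$ and fails to cancel its $-$. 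The principal obstacle is precisely this signature verification—a careful accounting of how addable and removable $0$-nodes in rows between $A$ and the cogood position behave under the passage $\widetilde e_0\la\to\la$. With $A$ established to be $0$-normal, Lemma \ref{Lemma39s}(viii) furnishes $D(\mu)$ as a composition factor of $e_0D(\la)$, concluding the proof.
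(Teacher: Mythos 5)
Your proposal has a genuine gap at the point where you write $\mu:=\la\setminus A=(\la_1,\ldots,\la_{h(\la)-1})\in\RP_p(n-1)$ as part of the setup. That membership is not automatic and is in fact the crux of the lemma: Lemma \ref{Lemma39s}(viii) requires $\la\setminus A\in\RP_p(n-1)$ as an explicit hypothesis, and it can fail. Concretely, removing the bottom node $(h,1)$ leaves $\mu$ with last part $\la_{h-1}$, and if $\la_{h-1}=p$ then $\mu_{h-1}-\mu_h=p$ with $p\mid\mu_{h-1}$, so $\mu$ is not $p$-restricted. The paper devotes essentially all of its proof to excluding this: since $\widetilde e_0\la\in\JS(0)$ one gets $\eps_0(\la)=2$, so $B=(h-1,p)$ (also removable of residue $0$) would have to be the $0$-good node; then, because $\la\neq\be_n$, one can pick $k<h-1$ maximal with $\la_k>p$, and Phillips' Lemma 3.7 forces $\la_k=p+1$, making $(k,p+1)$ a $0$-normal node above $B$ and contradicting that $B$ is good. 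Your argument never uses the hypothesis $\la\neq\be_n$, which is the tell-tale sign that something was dropped.

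A secondary point: you spend effort trying to show that $A$ is $0$-normal by counting ($\eps_0(\la)\geq 2$) followed by an unfinished reduced-signature argument, which you explicitly flag as the ``principal obstacle.'' The paper treats the normality of the bottom removable node as standard and states it without proof; your counting by itself only shows there exist two $0$-normal nodes, not that $A$ is one of them, so as written this step is also not closed. The first paragraph above is the essential missing content; this second point would also need to be either tightened or replaced by the direct appeal to the standard fact.
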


\begin{proof}
Let $A=(h,\la_h)$ be the bottom removable node of $\la$. Then $A$ is normal for $\la$. Since all normal nodes of $\la$ have residue 0 and $\widetilde e_0\la\in\JS(0)$, we have that $A$ is not good, so $\mu\not=\widetilde e_0\la$. By Lemma \ref{Lemma39sr} it is then enough to prove that $\mu\in\RP_p(n-1)$. Note that $A$ has residue $0$, so $\la_h=1$. 
If $\mu\not\in\RP_p(n-1)$ then $\la_{h-1}=p$. So the node $B:=(h-1,p)$ is also normal for $\la$. Since $\widetilde e_0\la\in\JS(0)$ we have that $\eps_0(\la)=2$. In particular $B$ is the $0$-good node of $\la$. Let $k<h-1$ be maximal with $\la_k>p$ (such $k$  exists since $\la\not=\be_n$). By \cite[Lemma 3.7]{p} it follows that $\la_k=p+1$. In particular the node $(k,p+1)$ is removable of residue $0$ for $\la$, and then it is also $0$-normal, contradicting $B$ being the $0$-good node of $\la$.
\end{proof} 

\begin{lemma}\label{L101218_4}
Let $\la\in\RP_p(n)$ and $n\geq 4$. If $D(\la)$ is of type Q then, in the Grothendieck group, $[D(\la,+)\da_{\widetilde\s_{n-2}}]=[D(\la,-)\da_{\widetilde\s_{n-2}}]$. If $D(\la)$ is of type M then $[E(\la,+)\da_{\widetilde \A_{n-2}}]=[E(\la,-)\da_{\widetilde \A_{n-2}}]$.
\end{lemma}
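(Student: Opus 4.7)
The plan is to reduce both parts to a single computation inside $\widetilde\s_n$. Identify $\widetilde\s_{n-2}$ with the subgroup of $\widetilde\s_n$ fixing the last two points, and fix a lift $t\in\pi^{-1}\{(n-1,n)\}$. I claim that for every $g\in\widetilde\s_{n-2}$ one has $tgt^{-1}=g$ if $\pi(g)$ is even and $tgt^{-1}=zg$ if $\pi(g)$ is odd. This is a direct consequence of the standard anti-commutation $uv=zvu$ for lifts in $\widetilde\s_n$ of two disjoint transpositions: writing $\pi(g)$ as a product of transpositions inside $\s_{n-2}$, every such transposition is disjoint from $(n-1,n)$, so moving $t$ past each one contributes a factor $z$, and the cumulative $z$-exponent equals $\sgn\pi(g)$.

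For the type Q case I would compare Brauer characters of $D(\la,\pm)\da_{\widetilde\s_{n-2}}$, which determine their composition series (recall $p\neq 2$, so $z$ is $p$-regular). On $p$-regular $g\in\widetilde\A_{n-2}$ the two characters coincide since $D(\la,+)\da_{\widetilde\A_n}\cong D(\la,-)\da_{\widetilde\A_n}\cong E(\la,0)$ by the hypothesis that $D(\la)$ is of type Q. For $p$-regular $g\in\widetilde\s_{n-2}\setminus\widetilde\A_{n-2}$ the opening computation gives $tgt^{-1}=zg$; since $z$ acts as $-1$ on any spin module, this yields $\chi(g)=\chi(tgt^{-1})=\chi(zg)=-\chi(g)$ for both $\chi\in\{\chi^{D(\la,+)},\chi^{D(\la,-)}\}$, so each of these vanishes. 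Hence the two Brauer characters agree on all $p$-regular elements of $\widetilde\s_{n-2}$, yielding the first claim.

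For the type M case, Clifford theory applied to $\widetilde\A_n\trianglelefteq\widetilde\s_n$ together with the decomposition $D(\la,0)\da_{\widetilde\A_n}\cong E(\la,+)\oplus E(\la,-)$ into two \emph{non-isomorphic} irreducible summands forces conjugation by any $\sigma\in\widetilde\s_n\setminus\widetilde\A_n$ to exchange the two; otherwise $D(\la,0)\da_{\widetilde\A_n}$ would be two copies of a single irreducible $\widetilde\A_n$-module. Taking $\sigma=t$ gives $E(\la,-)\cong E(\la,+)^t$ as $\widetilde\A_n$-modules, and then the opening computation shows $t$ centralizes $\widetilde\A_{n-2}$ (every $g\in\widetilde\A_{n-2}$ has $\pi(g)$ even with support in $\{1,\dots,n-2\}$). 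The twisted action $\rho^t(g)=\rho(t^{-1}gt)$ therefore coincides with $\rho(g)$ on $\widetilde\A_{n-2}$, which gives $E(\la,+)\da_{\widetilde\A_{n-2}}\cong E(\la,-)\da_{\widetilde\A_{n-2}}$ as modules, a strengthening of the claimed equality of composition series.

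The only delicate step is the $z$-book-keeping in the opening claim, but it is insensitive to the choice of lifts (any two lifts of a transposition differ by the central element $z$, which acts as a fixed scalar on every spin module) and it holds equally in either double cover of $\s_n$, in accordance with the remark made in \S2.1 that the two covers have isomorphic group algebras.
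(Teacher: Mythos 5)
Your proof is correct, and it takes a genuinely different route from the paper. The paper's argument decomposes $D(\la,\pm)\da_{\widetilde\s_{n-2}}$ into block components $D_{i,j}^\pm$ via the $\Res_i$ functors, passes where needed to the corresponding $\widetilde\s_{n-2,2}$-components $E_{i,j}^\pm$, tracks how the type (M versus Q) of the composition factors changes with the residues removed, and exploits $D_{i,j}^+\otimes\sgn\cong D_{i,j}^-$ together with sign-invariance of those factors to conclude. You instead argue directly at the level of the double cover: fixing a lift $t$ of $(n-1,n)$, you use the standard anti-commutation of lifts of disjoint transpositions to get $tgt^{-1}=z^{\sgn\pi(g)}g$ for $g\in\widetilde\s_{n-2}$, which forces the Brauer characters of $D(\la,\pm)$ to vanish on every $p$-regular $g\in\widetilde\s_{n-2}\setminus\widetilde\A_{n-2}$ (since $z$ acts as $-1$ and $p\neq 2$), while on $\widetilde\A_{n-2}$ they agree because both modules restrict to $E(\la,0)$ on $\widetilde\A_n$; for type M you observe that $t$ centralizes $\widetilde\A_{n-2}$, so the conjugate module $E(\la,+)^t\cong E(\la,-)$ has literally the same action on $\widetilde\A_{n-2}$. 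Your approach avoids the branching machinery entirely, is shorter and more self-contained, and in the type M case actually delivers a module isomorphism $E(\la,+)\da_{\widetilde\A_{n-2}}\cong E(\la,-)\da_{\widetilde\A_{n-2}}$, which is strictly stronger than the equality of composition multiplicities claimed in the lemma. The Clifford-theory step could be phrased a bit more precisely (the reason $\sigma$ must swap $E(\la,\pm)$ is that Clifford's theorem identifies the composition factors of the restriction with a single $\widetilde\s_n$-orbit, and a fixed orbit would make the restriction isotypic), but this is a cosmetic matter; the conclusion and the rest of the argument are sound.
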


\begin{proof}
Let $\mu\in\RP_p(m)$ for some $m\geq 0$. It can be checked by definition of residues that
\[m-h_{p'}(\mu)\equiv|\{\text{nodes of $\mu$ of residue }\not=0\}|\Md 2.\]
So if $i=0$ then all composition factors of $\Res_iD(\mu)$ are of the same type as $D(\mu)$, while if $i>0$ then all composition factors of $\Res_iD(\mu)$ are of type different from the type of $D(\mu)$.

Assume first that $D(\la)$ is of type Q. Then
\[D(\la,\pm)\da_{\widetilde\s_{n-2}}\cong\bigoplus_iD_{i,i}^\pm\oplus \bigoplus_{i<j}D_{i,j}^\pm\]
with $D_{i,i}^\pm\cong\Res_i^2D(\la,\pm)$ and $D_{i,j}^\pm\cong\Res_i\Res_jD(\la,\pm)\oplus\Res_j\Res_iD(\la,\pm)$. Further $D(\la,\pm)\da_{\widetilde\s_{n-2,2}}\cong\bigoplus_iE_{i,i}^\pm\oplus \bigoplus_{i<j}E_{i,j}^\pm$ with $E_{i,j}^\pm\da_{\widetilde\s_{n-2,2}}\cong D_{i,j}^\pm$ for $i\leq j$. For any $i\leq j$ we have $D_{i,j}^+\otimes\sgn\cong D_{i,j}^-$ and $E_{i,j}^+\otimes\sgn\cong E_{i,j}^-$. If $j>0$ we then easily have that $[D_{0,j}^+]=[D_{0,j}^-]$, since composition factors of $D_{0,j}^\pm$ are of the form $D(\mu,0)\cong D(\mu,0)\otimes \sgn$ for some $\mu\in\RP_p(n-2)$. If $0<i\leq j$ then $[E_{i,j}^+]=[E_{i,j}^-]$, since composition factors of $E_{i,j}^\pm$ are of the form $D(\mu,(2))\cong (D(\mu,(2)))\otimes \sgn$ for some $\mu\in\RP_p(n-2)$. Also in this case it then follows that $[D_{i,j}^+]=[D_{i,j}^-]$.

If $D(\la)$ is of type M use a similar argument involving conjugation with $\widetilde{(1,2)}$ instead of tensoring with $\sgn$.
\end{proof}

\begin{lemma}\label{L101218_5}
Let $n\geq 4$, $\la\in\RP_p(n)\setminus\{\be_n\}$. Let $G=\widetilde\s_n$ or $G=\widetilde \A_n$ and $D$ be a simple $F G$-module indexed by $\la$. Assume that one of the following holds:
\begin{enumerate}
\item $\la$ is JS(1), $p=3$ and $\eps_0(\widetilde e_1\la)=3$,

\item $\la$ is JS(1), $p>3$ and $\eps_0(\widetilde e_1\la)=1$ and $\eps_2(\widetilde e_1\la)=1$,

\item $\eps_0(\la)=2$, $\eps_i(\la)=0$ for $i>0$ and $\widetilde e_0\la\in\JS(0)$.
\end{enumerate}
Then
\[\dim\End_{\widetilde\s_{n-2,2}\cap G}(D\da_{\widetilde\s_{n-2,2}\cap G})>\dim\End_{\widetilde\s_{n-1}\cap G}(D\da_{\widetilde\s_{n-1}\cap G}).\]
\end{lemma}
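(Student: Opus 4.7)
The plan is to treat the three hypothesis cases separately, in each sharpening the endomorphism count from the proof of Lemma \ref{L081218} to obtain the required module-level inequality. Set $H := \widetilde\s_{n-2,2}\cap G$ and $K := \widetilde\s_{n-1}\cap G$, and recall from the proof of Lemma \ref{L081218} the decomposition $D(\la)\da_{\widetilde\s_{n-2,2}} \cong \bigoplus_i E_i \oplus \bigoplus_{i<j} E_{i,j}$ by residue-blocks, together with the identification via Lemma \ref{L101218_2} of $e_j D(\widetilde e_i\la)\circledast D((2))$-summands inside each piece.

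For case (iii), use Lemma \ref{L15} to compute $d_{(n-1,1)} = 2\dim\End_{\widetilde\s_n}(D(\la))$, and invoke Lemma \ref{L101218_3} to extract an additional composition factor $D(\mu)$ of $D(\la)\da_{\widetilde\s_{n-1}}$ with $\mu\neq\widetilde e_0\la$, arising from the bottom removable node that is not $0$-good; this extra factor forces a summand in $E_0$ beyond the copies of $e_0D(\widetilde e_0\la)\circledast D((2))$ counted in the proof of Lemma \ref{L081218}, hence strictly more endomorphisms on the $H$-side than on the $K$-side. For cases (i) and (ii), the hypothesis $\la\in\JS(1)$ yields $\eps_i(\la)=\delta_{i,1}$, so $E_1=0$ and $E_{1,j}=0$ for $j\notin\{0,2\}$. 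In case (i), $\eps_0(\widetilde e_1\la)=3$ gives $[e_0D(\widetilde e_1\la):D(\widetilde e_0\widetilde e_1\la)]=3$, and Lemma \ref{L101218_2} forces the contribution to $E_{0,1}$ to have endomorphism dimension strictly exceeding $d_{(n-1,1)}$. In case (ii), both $e_0D(\widetilde e_1\la)$ and $e_2D(\widetilde e_1\la)$ are non-zero, yielding contributions to $E_{0,1}$ and $E_{1,2}$ respectively which together exceed $d_{(n-1,1)}$.

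To descend from the supermodule $D(\la)$ to the specific irreducible module $D$, distinguish according to the type of $D(\la)$ and the group $G$. When $D(\la)$ is of type M (so $D=D(\la,0)$ for $G=\widetilde\s_n$, or $D\in\{E(\la,\pm)\}$ for $G=\widetilde\A_n$), the relation between module and supermodule endomorphisms is immediate for $\widetilde\s_n$, and for $\widetilde\A_n$ the $E(\la,+)/E(\la,-)$ symmetry follows from Lemma \ref{L101218_4}. When $D(\la)$ is of type Q (so $D\in\{D(\la,\pm)\}$ for $G=\widetilde\s_n$, or $D=E(\la,0)$ for $G=\widetilde\A_n$), Lemma \ref{L101218_4} guarantees that $D(\la,+)\da_{\widetilde\s_{n-2}}$ and $D(\la,-)\da_{\widetilde\s_{n-2}}$ share composition factors, so the additional endomorphisms identified at the supermodule level are distributed symmetrically between them, and the strict inequality passes to each simple summand.

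The main obstacle is that in all three cases we are on the boundary where the supermodule bound of Lemma \ref{L081218} is nearly tight, so any loss of endomorphism dimension in the descent from $D(\la)$ to $D$ would be fatal. It is precisely the symmetry furnished by Lemma \ref{L101218_4} that makes the descent lossless; combined with the sharp structural information from Lemmas \ref{L15}, \ref{L101218_2}, and \ref{L101218_3}, this yields the strict inequality in each of the three exceptional configurations.
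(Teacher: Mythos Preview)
Your outline identifies the right ingredients, but the descent step from the supermodule $D(\la)$ to the individual simple module $D$ is where the argument fails to close. Lemma \ref{L101218_4} only asserts equality of composition-factor multiplicities $[D(\la,+)\da]=[D(\la,-)\da]$; it says nothing about how the filtration layers are arranged, and hence nothing about endomorphism dimensions. The phrase ``the additional endomorphisms identified at the supermodule level are distributed symmetrically between them'' is not justified: knowing that $D(\la,+)\da$ and $D(\la,-)\da$ have the same composition factors does not determine whether the head and socle of an indecomposable piece of $D(\la,\pm)\da$ carry the same sign or opposite signs, and that is exactly what controls $\dim\End$.

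Concretely, in case (iii) with $D(\la)$ of type Q, the restriction $D(\la,\pm)\da_{\widetilde\s_{n-1}}$ is indecomposable with filtration $D(\widetilde e_0\la,\pm)\,|\,\ldots\,|\,D(\widetilde e_0\la,\pm b)$ for some $b\in\{\pm\}$; one must show $b=-$ so that $\dim\End_{\widetilde\s_{n-1}}(D(\la,\pm)\da)=1$. The paper does this by restricting one step further to $\widetilde\s_{n-2}$, observing $\Res_0^2 D(\la,\pm)\cong D(\widetilde e_0^2\la,\pm)\oplus D(\widetilde e_0^2\la,\pm b)$, and then invoking Lemma \ref{L101218_4} to force both signs to appear. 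Likewise in case (i), the paper analyses the three-term filtration of $D(\la,\pm)\da_{\widetilde\s_{n-2,2}}$ and uses self-duality of $D(\la)$ (not Lemma \ref{L101218_4}) to show the top and bottom factors carry the same sign, giving $\dim\End\geq 2$. Your proposal needs these sign determinations; the symmetry appeal alone does not supply them. A minor point: in case (iii) the extra composition factor $D(\mu)$ from Lemma \ref{L101218_3} produces a nonzero block component $A$ in $E_{0,j}$ for some $j>0$, not an extra contribution inside $E_0$ as you state.
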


\begin{proof}
We will prove the lemma corresponding to cases (i), (ii) and (iii) separately. We will use Lemma \ref{Lemma39s} without further reference.

{\bf Case (i).}
Notice that $D(\la)\da_{\widetilde\s_{n-1}}\cong D(\widetilde e_1\la)^{\oplus 1+a(\la)}$ and $D(\la)\da_{\widetilde\s_{n-2,2}}\cong e_0 D(\widetilde e_1\la)\circledast D((2))$. So the lemma holds if $G=\widetilde\s_n$ and $D\cong D(\la,0)$ or $G=\widetilde \A_n$ and $D\cong E(\la,0)$ by Lemma \ref{L101218_2}. Assume now that $G=\widetilde\s_n$ and $D\cong D(\la,\pm)$. Then $D(\la,\pm)\da_{\widetilde\s_{n-1}}\cong D(\widetilde e_1\la,0)$ and $D(\la,\pm)\da_{\widetilde\s_{n-2,2}}$ is indecomposable with simple head and socle, it has exactly $3$ composition factors of the form $(D(\widetilde e_0\widetilde e_1\la,(2)),+)$ or $(D(\widetilde e_0\widetilde e_1\la,(2)),-)$. Let $b,c\in\{\pm\}$ such that $D(\la,\pm)\da_{\widetilde\s_{n-2,2}}$ has a filtration of the form
\[
(D(\widetilde e_0\widetilde e_1\la,(2)),\pm)|\ldots|(D(\widetilde e_0\widetilde e_1\la,(2)),\pm b)|\ldots|(D(\widetilde e_0\widetilde e_1\la,(2)),\pm c).\]
Note that by self-duality of $D(\la)$ we have that
\[(D(\la,\pm)\da_{\widetilde\s_{n-2,2}})^*\in\{D(\la,\pm)\da_{\widetilde\s_{n-2,2}},D(\la,\mp)\da_{\widetilde\s_{n-2,2}}\}.\]
So there exists $d\in\{\pm\}$ such that $(D(\la,\pm)\da_{\widetilde\s_{n-2,2}})^*$ has a filtration
\[
(D(\widetilde e_0\widetilde e_1\la,(2)),\pm cd)|\ldots|(D(\widetilde e_0\widetilde e_1\la,(2)),\pm bd)|\ldots|(D(\widetilde e_0\widetilde e_1\la,(2)),\pm d).\]
It then follows that $c=+$ and so the lemma holds. The case $G=\widetilde \A_n$ and $D\cong E(\la,\pm)$ holds with similar arguments.

{\bf Case (ii).} Notice that in this case $\eps_k(\widetilde e_1\la)=0$ for $k\not=0,2$ since $\la\in\JS(1)$ and using \cite[Lemma 20.2.3]{KBook}. In particular $D(\la)\da_{\widetilde\s_{n-1}}\cong D(\widetilde e_1\la)^{\oplus 1+a(\la)}$ and $D(\la)\da_{\widetilde\s_{n-2,2}}\cong (D(\widetilde e_0\widetilde e_1\la,(2)))\oplus(D(\widetilde e_2\widetilde e_1\la,D(2)))^{\oplus 1+a(\la)}$. The lemma then easily follows.

{\bf Case (iii).} In this case by Lemma \ref{L101218_3} we have $D(\la)\da_{\widetilde\s_{n-1}}\cong e_0 D(\la)$ and $D(\la)\da_{\widetilde\s_{n-2,2}}\cong (D(\widetilde e_0^2\la,(2)))^{\oplus 1+a(\la)}\oplus A$ with $A\not=0$ corresponding to blocks different than the block of $D(\widetilde e_0^2\la,(2))$. So the lemma holds if $G=\widetilde\s_n$ and $D\cong D(\la,0)$ or $G=\widetilde \A_n$ and $D\cong E(\la,0)$. Assume now that $G=\widetilde\s_n$ and $D\cong D(\la,\pm)$. Then $D(\la,\pm)\da_{\widetilde\s_{n-2,2}}\cong (D(\widetilde e_0^2\la,(2)),0)\oplus A'$ with $A'\not=0$. So it is enough to prove that $\dim\End_{\widetilde\s_{n-1}}(D(\la,\pm)\da_{\widetilde\s_{n-1}})=1$. Note that $D(\la,\pm)\da_{\widetilde\s_{n-1}}$ has simple head and socle and exactly two composition factors of the form $D(\widetilde e_0\la,+)$ or $D(\widetilde e_0\la,-)$. Let $b\in\{\pm\}$ with
\[D(\la,\pm)\da_{\widetilde\s_{n-1}}\sim D(\widetilde e_0\la,\pm)|\ldots|D(\widetilde e_0\la,\pm b).\]
It is enough to prove that $b=-$. This follows from
\begin{align*}
\Res_0(D(\la,\pm)\da_{\widetilde\s_{n-1}})&\cong \Res_0^2(D(\la,\pm)\cong \Res_0 (D(\widetilde e_0\la,\pm)\oplus D(\widetilde e_0\la,\pm b))\\
&\cong D(\widetilde e_0^2\la,\pm)\oplus D(\widetilde e_0^2\la,\pm b)
\end{align*}
and from Lemma \ref{L101218_4}. The case $G=\widetilde \A_n$ and $D\cong E(\la,\pm)$ holds similarly.
\end{proof}

\begin{lemma}\label{L190219}
Let $p\geq 3$, $n\geq 4$ and $\la\in\RP_p(n)$. Assume that
\[\dim\End_{\widetilde\s_{n-2,2}}(D(\la)\da_{\widetilde\s_{n-2,2}})>\dim\End_{\widetilde\s_{n-1}}(D(\la)\da_{\widetilde\s_{n-1}})+\dim\End_{\widetilde\s_{n}}(D(\la)).\]
Then
\begin{enumerate}[-]
\item If $D(\la)$ is of type M then there exists
\[\psi\in\Hom_{\widetilde\s_n}(M_2,\End_F(D(\la,0)))\]
which does not vanish on $S_2$. Further there exist
\[\phi_1,\phi_2\in\Hom_{\widetilde \A_n}(M_2,\Hom_F(E(\la,\pm),E(\la)))\]
which are linearly independent over $S_2$.

\item If $D(\la)$ is of type Q then there exist
\[\psi_1,\psi_2\in\Hom_{\widetilde\s_n}(M_2,\Hom_F(D(\la,\pm),D(\la)))\]
which are linearly independent over $S_2$. Further there exists
\[\phi\in\Hom_{\widetilde \A_n}(M_2,\End_F(E(\la,0)))\]
which does not vanish on $S_2$.
\end{enumerate}
\end{lemma}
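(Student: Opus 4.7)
The plan is as follows. Applying $\Hom_G(-,X)$ to the short exact sequence
\[0\to S_2\to M_2\to M_1\to 0\]
supplied by Lemma~\ref{Mk} (which uses $p\geq 3$) identifies the $G$-homomorphisms $M_2\to X$ that vanish on $S_2$ with $\Hom_G(M_1,X)$. Hence to produce $k$ elements of $\Hom_G(M_2,X)$ whose restrictions to $S_2$ are linearly independent, it suffices to show
\[\dim\Hom_G(M_2,X)-\dim\Hom_G(M_1,X)\geq k,\]
and I need this with $k=1$ in the cases producing a single $\psi$ or $\phi$, and with $k=2$ in those producing $\psi_1,\psi_2$ or $\phi_1,\phi_2$.

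I would then translate each hom space via Lemma~\ref{l2} together with its obvious variant for $\Hom_F$ in place of $\End_F$ (the same Frobenius reciprocity, using $\Hom_F(U,V)\cong U^*\otimes V$), obtaining
\[\dim\Hom_G(M_\alpha,\Hom_F(U,V))=\dim\Hom_{G\cap\widetilde\s_\alpha}(U\da,V\da).\]
In the type~M, $\widetilde\s_n$ subcase the target is $\End_F(D(\la,0))$ and $U=V=D(\la)$; since $\dim\End_{\widetilde\s_n}(D(\la))=1$ here, the hypothesis is exactly the gap $\geq 1$ that we need.

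For the remaining three subcases I would use Mackey on $\widetilde\A_m\leq\widetilde\s_m$, namely $W\da_{\widetilde\A_m}\ua^{\widetilde\s_m}\cong W\oplus W\otimes\sgn$, which gives
\[\End_{\widetilde\A_m}(W\da)\cong\End_{\widetilde\s_m}(W)\oplus\Hom_{\widetilde\s_m}(W,W\otimes\sgn),\]
together with the twist rule $D(\la,\epsilon)\otimes\sgn\cong D(\la,-\epsilon)$. In type~M, $D(\la,0)\otimes\sgn\cong D(\la,0)$ yields $\dim\End_{\widetilde\A_\alpha}(D(\la,0)\da)=2\dim\End_{\widetilde\s_\alpha}(D(\la)\da)$, and combined with the splitting $E(\la)=E(\la,+)\oplus E(\la,-)$ this gives $\dim\Hom_{\widetilde\A_\alpha}(E(\la,\pm)\da,E(\la)\da)=\dim\End_{\widetilde\s_\alpha}(D(\la)\da)$. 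In type~Q, the splitting $D(\la)=D(\la,+)\oplus D(\la,-)$ together with the twist rule gives $\dim\Hom_{\widetilde\s_\alpha}(D(\la,\pm)\da,D(\la)\da)=\tfrac12\dim\End_{\widetilde\s_\alpha}(D(\la)\da)$, and since $E(\la,0)\cong D(\la,+)\da_{\widetilde\A_n}$, the analogous computation yields $\dim\End_{\widetilde\A_\alpha}(E(\la,0)\da)=\tfrac12\dim\End_{\widetilde\s_\alpha}(D(\la)\da)$. The hypothesis (with $\dim\End_{\widetilde\s_n}(D(\la))=1+a(\la)$) then translates into the required gap in each subcase; in the type~Q cases one uses in addition that $\dim\End_{\widetilde\s_\alpha}(D(\la)\da)$ is even (by the same direct-sum symmetry), so the strict inequality $>2$ forces a difference of at least $4$, which after halving delivers the gap $\geq 2$ or $\geq 1$ that each conclusion requires.

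The main obstacle is the bookkeeping of the factors of $2$ coming from (a) the Mackey step on $\widetilde\A_m\leq\widetilde\s_m$, (b) the decomposition of $D(\la)$ or $E(\la)$ into its two summands in the type with $a(\la)=1$ (respectively $a(\la)=0$ for the alternating side), and (c) the type (M or Q) of $D(\la)$; the hypothesis is tight, and in each of the four subcases the factors must line up exactly with the target number $k\in\{1,2\}$ of linearly independent maps.
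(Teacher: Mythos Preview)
Your proposal is correct and follows essentially the same approach as the paper: use the filtration $M_2\sim S_2|M_1$ from Lemma~\ref{Mk}, translate the resulting gap condition via Lemma~\ref{l2}, and then relate the $\widetilde\s_\alpha$- and $\widetilde\A_\alpha$-endomorphism dimensions using the index-two inclusion and the twist $D(\la,\eps)\otimes\sgn\cong D(\la,-\eps)$.

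The only noteworthy difference is in the type~Q case. The paper argues by pigeonhole: from
\[\sum_{\eps}\bigl(\dim\Hom_{\widetilde\s_{n-2,2}}(D(\la,\eps)\da,D(\la)\da)-\dim\Hom_{\widetilde\s_{n-1}}(D(\la,\eps)\da,D(\la)\da)\bigr)>2\]
it extracts \emph{some} $\eps$ with gap $\geq 2$, and then transports to the stated sign via $D(\la,+)\otimes D(\la)\cong D(\la,-)\otimes D(\la)$. You instead observe directly that the two $\eps$-summands are equal (by tensoring with $\sgn$), hence each $\dim\End_{\widetilde\s_\alpha}(D(\la)\da)$ is even, so the strict inequality $>2$ forces a difference $\geq 4$, and halving gives the gap $\geq 2$ for both signs simultaneously. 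Your route is slightly cleaner in that it avoids the transport step and yields the conclusion for the given $\pm$ at once; the paper's route avoids stating the parity explicitly. Both are short and valid. One cosmetic point: your Mackey step should be phrased for $\widetilde\A_\alpha\leq\widetilde\s_\alpha$ (a Young-type subgroup) rather than for a single $\widetilde\A_m\leq\widetilde\s_m$, but the argument is identical since the index is $2$ whenever $\alpha\neq(1^n)$.
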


\begin{proof}
From Lemma \ref{Mk} we have that $M_2\sim S_2|M_1$.

Assume first that $D(\la)$ is of type M, so that
\[\dim\End_{\widetilde\s_{n-2,2}}(D(\la)\da_{\widetilde\s_{n-2,2}})>\dim\End_{\widetilde\s_{n-1}}(D(\la)\da_{\widetilde\s_{n-1}})+1.\]
Since $D(\la)\da_{\widetilde \A_n}\cong E(\la)$ and $D(\la,0)\cong D(\la)\cong E(\la,\pm)\ua^{\widetilde\s_n}$, for any partition $\mu\not=(1^n)$ we have that
\[\dim\End_{\widetilde\s_\mu}(D(\la)\da_{\widetilde\s_\mu})=\dim\End_{\widetilde \A_\mu}(E(\la,\pm)\da_{\widetilde \A_\mu},E(\la)\da_{\widetilde \A_\mu}).\]
The lemma then easily follows in this case.

Assume next that $D(\la)$ is of type Q, so that
\[\dim\End_{\widetilde\s_{n-2,2}}(D(\la)\da_{\widetilde\s_{n-2,2}})>\dim\End_{\widetilde\s_{n-1}}(D(\la)\da_{\widetilde\s_{n-1}})+2.\]
Then for some $\eps\in\{\pm\}$ we have that
\begin{align*}
&\dim\Hom_{\widetilde\s_{n-2,2}}(D(\la,\eps)\da_{\widetilde\s_{n-2,2}},D(\la)\da_{\widetilde\s_{n-2,2}})\\
&\geq\dim\Hom_{\widetilde\s_{n-1}}(D(\la,\eps)\da_{\widetilde\s_{n-1}},D(\la)\da_{\widetilde\s_{n-1}})+2.
\end{align*}
So there exist $\psi_1,\psi_2\in\Hom_{\widetilde\s_n}(M_2,\Hom_F(D(\la,\eps),D(\la)))$ which are linearly independent over $S_2$. The lemma then follows from
\[D(\la,+)\otimes D(\la)\cong D(\la,+)\otimes \sgn\otimes D(\la)\cong D(\la,\eps)\otimes D(\la)\]
and from $D(\la,\pm)\da_{\widetilde \A_n}\cong E(\la,0)$.
\end{proof}

\begin{lemma}\label{L190219_2}
Let $p\geq 3$, $n\geq 4$ and $\la\in\RP_p(n)$. Assume that $\la\not=\be_n$ and $\la$ is not JS(0). Then:
\begin{enumerate}[-]
\item If $D(\la)$ is of type M then there exists
\[\psi\in\Hom_{\widetilde\s_n}(M_2,\End_F(D(\la,0)))\]
which does not vanish on $S_2$. Further there exists
\[\phi\in\Hom_{\widetilde \A_n}(M_2,\End_F(E(\la,\pm)))\]
which does not vanish on $S_2$ or there exist
\[\phi_1,\phi_2\in\Hom_{\widetilde \A_n}(M_2,\Hom_F(E(\la,\pm),E(\la,\mp)))\]
which are linearly independent over $S_2$.

\item If $D(\la)$ is of type Q then there exists
\[\psi\in\Hom_{\widetilde\s_n}(M_2,\End_F(D(\la,\pm)))\]
which does not vanish on $S_2$ or there exist
\[\psi_1,\psi_2\in\Hom_{\widetilde\s_n}(M_2,\Hom_F(D(\la,\pm),D(\la,\mp)))\]
which are linearly independent over $S_2$. Further there exists
\[\phi\in\Hom_{\widetilde \A_n}(M_2,\End_F(E(\la,0)))\]
which does not vanish on $S_2$.
\end{enumerate}
\end{lemma}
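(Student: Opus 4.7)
The plan is to split into two cases according to Lemma \ref{L081218}. Given the hypotheses $\la \ne \be_n$ and $\la$ not JS(0), that lemma asserts that either the strict inequality
\[
\dim\End_{\widetilde\s_{n-2,2}}(D(\la)\da_{\widetilde\s_{n-2,2}}) > \dim\End_{\widetilde\s_{n-1}}(D(\la)\da_{\widetilde\s_{n-1}}) + \dim\End_{\widetilde\s_n}(D(\la))
\]
holds, or $\la$ satisfies one of the three exceptional conditions (i)--(iii) appearing in Lemma \ref{L101218_5} (these are exactly the non-JS(0) exceptional cases listed in Lemma \ref{L081218}). The two situations will be handled separately.

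In the generic case, Lemma \ref{L190219} applies directly. For type M it yields the required $\psi$ for $\widetilde\s_n$ together with a pair of linearly independent homomorphisms in $\Hom_{\widetilde\A_n}(M_2, \Hom_F(E(\la,\pm), E(\la)))$ over $S_2$; the type Q case is symmetric, with the roles of $\widetilde\s_n$ and $\widetilde\A_n$ exchanged and $D(\la)$ in place of $E(\la)$. To convert this pair into the ``or''-statement of Lemma \ref{L190219_2}, I would decompose $E(\la) = E(\la,+) \oplus E(\la,-)$ and project: if the projection of the $S_2$-restrictions onto the $E(\la,\pm)$-summand is nonzero, one extracts a nonvanishing $\phi \in \Hom_{\widetilde\A_n}(M_2, \End_F(E(\la,\pm)))$ on $S_2$ (first alternative); otherwise both restrictions already land in $\Hom_F(E(\la,\pm), E(\la,\mp))$ and remain linearly independent there (second alternative).

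In the exceptional case I would apply Lemma \ref{L101218_5} to each relevant simple constituent. For type M, take $(G, D) = (\widetilde\s_n, D(\la,0))$ and $(\widetilde\A_n, E(\la,+))$; for type Q, take $(\widetilde\s_n, D(\la,+))$ and $(\widetilde\A_n, E(\la,0))$. Lemma \ref{L101218_5} then gives, for each such pair,
\[
\dim\End_{\widetilde\s_{n-2,2}\cap G}(D\da_{\widetilde\s_{n-2,2}\cap G}) > \dim\End_{\widetilde\s_{n-1}\cap G}(D\da_{\widetilde\s_{n-1}\cap G}),
\]
which via Lemma \ref{Mk} (giving $M_2 \sim S_2 | M_1$) and Lemma \ref{l2} (Frobenius reciprocity) translates into a homomorphism $M_2 \to \End_F(D)$ with nonzero restriction to $S_2$. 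This directly produces both the required $\psi$ and $\phi$, invariably landing in the first alternative of each ``or''-clause. The main obstacle is already encapsulated in the technical Lemmas \ref{L081218} and \ref{L101218_5}; the present argument is essentially a case split together with the elementary projection in the generic case.
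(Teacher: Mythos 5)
Your proof is correct and follows essentially the same route as the paper: a case split driven by Lemma \ref{L081218} into the generic (strict inequality) and exceptional (conditions (i)--(iii) of Lemma \ref{L101218_5}) situations, with Lemma \ref{L190219} handling the former and Lemma \ref{L101218_5} together with $M_2\sim S_2|M_1$ handling the latter. The one place where you add detail the paper leaves implicit is the projection step in the generic case, converting the pair $\phi_1,\phi_2\in\Hom_{\widetilde\A_n}(M_2,\Hom_F(E(\la,\pm),E(\la)))$ given by Lemma \ref{L190219} into the disjunction of Lemma \ref{L190219_2} by splitting $\Hom_F(E(\la,\pm),E(\la))\cong\End_F(E(\la,\pm))\oplus\Hom_F(E(\la,\pm),E(\la,\mp))$; spelling this out is a small improvement in clarity, not a departure in method.
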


\begin{proof}
From Lemma \ref{L190219} we may assume that
\[\dim\End_{\widetilde\s_{n-2,2}}(D(\la)\da_{\widetilde\s_{n-2,2}})\leq\dim\End_{\widetilde\s_{n-1}}(D(\la)\da_{\widetilde\s_{n-1}})+\dim\End_{\widetilde\s_{n}}(D(\la)).\]
Let $G\in\{\widetilde\s_n,\widetilde \A_n\}$ and $D$ be an $FG$-representation indexed by $\la$. Then by Lemmas \ref{L081218} and \ref{L101218_5} we have that
\[\dim\End_{\widetilde\s_{n-2,2}\cap G}(D\da_{\widetilde\s_{n-2,2}\cap G})>\dim\End_{\widetilde\s_{n-1}\cap G}(D\da_{\widetilde\s_{n-1}\cap G}).\]
Since $M_2\sim S_2|M_1$ by Lemma \ref{Mk}, the lemma easily follows.
\end{proof}

\subsection{Basic spin modules}\label{sbs}

\begin{lemma}\label{L22}
Let $p\geq 3$. Let $c=1$ if $p\nmid n$ or $c=2$ if $p\mid n$.
\begin{enumerate}[-]
\item If $D(\be_n)$ is of type M then $D(\be_n,0)\otimes D(\be_n)\cong\oplus_{k=0}^{n-c}\overline{D}_k$ and $E(\be_n,\pm)\otimes E(\be_n)\cong\overline{E}_{(n-c)/2,\pm}\oplus_{k=0}^{(n-2-c)/2}\overline{E}_k$.

\item If $D(\be_n)$ is of type Q then $D(\be_n,\pm)\otimes D(\be_n)\cong\oplus_{k=0}^{n-c}\overline{D}_k$ and $E(\be_n,0)\otimes E(\be_n)\cong\oplus_{k=0}^{(n-1-c)/2}\overline{E}_k$.
\end{enumerate}
\end{lemma}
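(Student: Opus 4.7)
The plan is to deduce the formula from the characteristic-$0$ decomposition of the tensor square of the basic spin representation and then descend to characteristic $p$ using Lemmas~\ref{l1} and~\ref{LH}. Over $\mathbb{C}$, the classical Schur-type identity
\[S((n),0)\otimes S((n),0)\simeq\bigoplus_{k=0}^{n-1}S^{(n-k,1^k)}\quad(n\text{ odd})\]
and
\[S((n),+)\otimes\bigl(S((n),+)\oplus S((n),-)\bigr)\simeq\bigoplus_{k=0}^{n-1}S^{(n-k,1^k)}\quad(n\text{ even})\]
holds; I would establish it by a direct character computation, comparing values on the odd-cycle-type conjugacy classes (the only ones that support spin characters non-trivially) using Schur's explicit formula for basic spin characters and the hook-length formula for hook Spechts.

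In the case $p\nmid n$ (so $c=1$) the identification is immediate: Lemma~\ref{l1} shows the characteristic-$p$ basic spin agrees with its characteristic-$0$ lift as a supermodule, and by Lemma~\ref{LH} each $S_{1^k}$ reduces to the irreducible $\overline{D}_k$, so the characteristic-$0$ identity descends term-by-term to give the claim. In the case $p\mid n$ (so $c=2$) the situation is more delicate: Lemma~\ref{l1} shows the supermodule type of the basic spin swaps under reduction, and Lemma~\ref{LH} gives $S_{1^k}\sim\overline{D}_{k-1}\mid\overline{D}_k$. A direct reduction now only yields the composition-factor count of $D(\be_n,\cdot)\otimes D(\be_n)$: each $\overline{D}_k$ with $0\le k\le n-2$ occurs once, with the correct total dimension. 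To promote this to an actual semisimple decomposition I would either invoke the Hom-criterion of Lemma~\ref{l2} and a Frobenius reciprocity argument to split off each $\overline{D}_k$ as a direct summand, or use a block/content argument on the multiplicity-free isotypic components. This semisimplicity step is the main obstacle, since the naive reduction of Schur's identity gives non-split $2$-step modules when $p\mid n$.

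For the $\widetilde\A_n$-version I would restrict the $\widetilde\s_n$-decomposition. By Lemma~\ref{L20}, $\overline{D}_k\cong\overline{D}_{n-c-k}\otimes\sgn$, so these pair under restriction to $\widetilde\A_n$. In type M (part~(i)) the index $n-c$ is even, so the middle term $k=(n-c)/2$ is self-paired and restricts as $\overline{E}_{(n-c)/2,+}\oplus\overline{E}_{(n-c)/2,-}$; the two modules $E(\be_n,+)\otimes E(\be_n)$ and $E(\be_n,-)\otimes E(\be_n)$ sum to the restriction of the $\widetilde\s_n$-decomposition and are exchanged by conjugation with any element of $\widetilde\s_n\setminus\widetilde\A_n$, which forces each to contain exactly one of $\overline{E}_{(n-c)/2,\pm}$ with the matching sign, plus one copy of each $\overline{E}_k$ for $0\le k\le(n-c-2)/2$. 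In type Q (part~(ii)) the index $n-c$ is odd, no index is self-paired, and a direct dimension comparison after restriction yields $E(\be_n,0)\otimes E(\be_n)\cong\bigoplus_{k=0}^{(n-1-c)/2}\overline{E}_k$.
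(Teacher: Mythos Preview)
Your overall approach matches the paper's: cite the characteristic-$0$ identity for $S((n))\otimes S((n))$ (the paper quotes Stembridge \cite[Theorem 9.3]{s} rather than redoing the character computation), reduce modulo $p$ via Lemmas~\ref{l1} and~\ref{LH}, then pass to $\widetilde\A_n$ using Lemma~\ref{L20}. The restriction argument you sketch for $\widetilde\A_n$ is essentially what the paper has in mind.

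There is, however, a genuine gap in how you handle the passage from composition factors to an honest direct-sum decomposition. You flag this as ``the main obstacle'' in the $p\mid n$ case and propose to attack it with Lemma~\ref{l2} or a block argument, but in fact the same issue is already present when $p\nmid n$: reduction modulo $p$ of a characteristic-$0$ isomorphism only yields equality of Brauer characters, i.e.\ of composition multiplicities, not an isomorphism of modules. So your sentence ``the characteristic-$0$ identity descends term-by-term'' is not justified as stated. The paper resolves this uniformly and much more simply than your proposed routes: the module $D(\be_n,\delta)\otimes D(\be_n)$ is self-dual. In type~M this is immediate since $D(\be_n,0)$ is self-dual; in type~Q one uses $D(\be_n,+)\otimes\sgn\cong D(\be_n,-)$ together with $D(\be_n)\otimes\sgn\cong D(\be_n)$ to see that $D(\be_n,+)\otimes D(\be_n)\cong D(\be_n,-)\otimes D(\be_n)$, whence each is isomorphic to its own dual. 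Since the composition factors $\overline D_0,\dots,\overline D_{n-c}$ are pairwise non-isomorphic, a self-dual module with these composition factors (each of multiplicity one) is forced to be semisimple: any non-split extension between two non-isomorphic simples would dualise to the opposite extension, contradicting self-duality. This single observation replaces both of your proposed mechanisms and works identically whether $p\mid n$ or not.
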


\begin{proof}
Note that by \cite[Theorem 9.3]{s} 
\[[S((n),\eps)\otimes S((n))]=\sum_{k=0}^{n-1}[S^{(n-k,1^k)}],\]
with $\eps=0$ or $\pm$ depending on the type of $S((n))$. Let $\eps'=0$ or $\pm$ depending on the type of $D(\be_n)$. By Lemma \ref{LBS} we have that
\[[S((n),\eps)\otimes S((n))]=(1+\de_{p\mid n})[D(\be_n,\eps')\otimes D(\be_n)].\]
Further from Lemma \ref{LH}
\[\sum_{k=0}^{n-1}[S^{(n-k,1^k)}]=(1+\de_{p\mid n})\sum_{k=0}^{n-c}[\overline{D}_k].\]
So
\[[D(\be_n,\eps')\otimes D(\be_n)]=\sum_{k=0}^{n-c}[\overline{D}_k].\]
Since $D(\be_n,\eps')\otimes \sgn\cong D(\be_n,-\eps')$, we have that
\[D(\be_n,\eps')\otimes D(\be_n)\cong D(\be_n,-\eps')\otimes\sgn\otimes D(\be_n)\cong D(\be_n,-\eps')\otimes D(\be_n).\]
From $D(\be_n)$ being self-dual it then follows that so is $D(\be_n,\eps')\otimes D(\be_n)$. Since this module is self-dual, multiplicity free and its composition factors are self-dual, the lemma holds for $\widetilde\s_n$. For $\widetilde \A_n$ the lemma then follows by Lemma \ref{L20}.
\end{proof}

\begin{lemma}\label{L33}
Let $p\geq 3$ and $n\geq 10$. Then $\overline{D}_2\subseteq\End_F(D(\be_n,\delta))$ and $\overline{E}_2\subseteq\End_F(E(\be_n,\delta'))$.
\end{lemma}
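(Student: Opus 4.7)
The plan is to split the argument into cases based on whether $D(\be_n)$ is of type M or type Q. In two of the four $(G,\text{type})$-combinations the claim falls out immediately from Lemma~\ref{L22}; in the other two a branching computation is required.

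First, the easy cases. If $D(\be_n)$ is of type M then $\delta=0$, $D(\be_n)=D(\be_n,0)$, and Lemma~\ref{L22}(i) gives $\End_F(D(\be_n,0))\cong D(\be_n,0)\otimes D(\be_n)\cong\bigoplus_{k=0}^{n-c}\overline D_k$, a direct sum of pairwise non-isomorphic simples which contains $\overline D_2$ since $n-c\geq 2$. Symmetrically, if $D(\be_n)$ is of type Q then $E(\be_n,\delta')=E(\be_n,0)$ is irreducible, $E(\be_n)=E(\be_n,0)$, and $\End_F(E(\be_n,0))\cong E(\be_n,0)\otimes E(\be_n)\cong\bigoplus_{k=0}^{(n-1-c)/2}\overline E_k$ by Lemma~\ref{L22}(ii), which contains $\overline E_2$ since $(n-1-c)/2\geq 3$.

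The remaining subcases are type Q for $\widetilde\s_n$ and type M for $\widetilde\A_n$. In the $\widetilde\s_n$ subcase, using $D(\be_n,-)\simeq D(\be_n,+)\otimes\sgn$, Lemma~\ref{L22}(ii) yields
\[
\End_F(D(\be_n,\pm))\oplus\End_F(D(\be_n,\pm))\otimes\sgn\cong\bigoplus_{k=0}^{n-c}\overline D_k.
\]
By Lemma~\ref{L20}, $\overline D_k\otimes\sgn\cong\overline D_{n-c-k}$, and $n-c$ is odd here, so $\End_F(D(\be_n,\pm))$ is semisimple and contains exactly one of each pair $\{\overline D_k,\overline D_{n-c-k}\}$. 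In the $\widetilde\A_n$ subcase, using $E(\be_n,-)=E(\be_n,+)^\sigma$ for $\sigma\in\widetilde\s_n\setminus\widetilde\A_n$, Lemma~\ref{L22}(i) similarly gives
\[
\End_F(E(\be_n,\pm))\oplus E(\be_n,+)\otimes E(\be_n,-)\cong\overline E_{(n-c)/2,\pm}\oplus\bigoplus_{k=0}^{(n-c)/2-1}\overline E_k,
\]
and each $\overline E_k$ with $k<(n-c)/2$ sits in exactly one of the two summands on the left. In both cases the hypothesis $n\geq 10$ guarantees $\overline D_2\neq\overline D_{n-c-2}$ (respectively $2<(n-c)/2$), so there is a genuine dichotomy to resolve.

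To resolve it in favour of $\overline D_2$ (respectively $\overline E_2$), I would apply Lemma~\ref{l2} at $\alpha=(n-2,1,1)$ combined with the decomposition of $M_{1^2}$ from Lemma~\ref{L17} (when $p\nmid n$) or Lemma~\ref{L16} (when $p\mid n$). Since $\End_F(V)$ is semisimple, these identities turn into an explicit formula for $[\End_F(V):D_{1^2}]$ as an alternating combination of $\dim\End_{\widetilde\s_\mu}(V\da)$ for $\mu\in\{(n),(n-1,1),(n-2,2),(n-2,1^2)\}$, with a small adjustment coming from $Y_2$ when $p\mid n$. The hard part will be evaluating these four endomorphism dimensions by iterated branching of $D(\be_n,\delta)$ through $\widetilde\s_{n-1}$ and $\widetilde\s_{n-2}$, using Lemmas~\ref{Lemma39s}, \ref{Lemma40s} and \ref{L15} together with the fact that $\be_{n-1}$ and $\be_{n-2}$ are again (up to well-understood modifications depending on divisibility of $n$, $n-1$, $n-2$ by $p$) basic-spin partitions, so that $D(\be_n,\delta)\da_{\widetilde\s_{n-1}}$ and $D(\be_n,\delta)\da_{\widetilde\s_{n-2}}$ have tightly controlled structure. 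The analysis requires a case split on $n\bmod p$ and a careful tracking of the type M/Q alternation at each restriction step, but $n\geq 10$ is more than enough to keep $\be_{n-1},\be_{n-2}$ in the regime where these formulas apply. The $\widetilde\A_n$-subcase is then handled in parallel, using $\overline E_k=\overline D_k\da_{\widetilde\A_n}$ and the analogous branching identities on the alternating-group side.
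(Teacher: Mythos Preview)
Your plan is essentially the paper's approach: dispose of the ``easy'' $(G,\text{type})$-pairs via Lemma~\ref{L22}, then in the remaining pairs observe that $\End_F(V)$ is semisimple with exactly one of $\overline D_2$ and $\overline D_{n-c-2}$, and resolve the dichotomy by computing $\dim\End_{\widetilde\s_\mu}(V\da)$ for small $\mu$ and invoking Lemmas~\ref{l2}, \ref{L17}, \ref{L16}. The paper carries this out with a case split on $n\bmod p$, and in the $n\equiv 1$ and $n\equiv 2\pmod p$ subcases needs an extra restriction step to pin down whether the two composition factors of a uniserial length-two module $D(\be_n,\pm)\da$ are isomorphic or not; your phrase ``careful tracking of the type M/Q alternation'' is exactly what is required there.

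There is one genuine slip in the $p\mid n$ case. You propose to extract $[\End_F(V):D_{1^2}]$ from the decomposition $M_{1^2}\cong M_2\oplus Y_2$ of Lemma~\ref{L16}, treating the $Y_2$ contribution as ``a small adjustment''. But $Y_2$ has simple head $D_{1^{k-1}}=D_1$, so for semisimple $\End_F(V)$ one gets $\dim\Hom(Y_2,\End_F(V))=[\End_F(V):D_1]$, not $[\End_F(V):D_{1^2}]$. Your formula from the $\mu\in\{(n),(n-1,1),(n-2,2),(n-2,1^2)\}$ data therefore detects $\overline D_1$ versus $\overline D_{n-c-1}$, which is not the dichotomy you need. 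The paper's fix is to go one level deeper: for $p\mid n$, $p\geq 5$ it uses $M_{1^3}\oplus M_3\cong M_{2,1}^{\oplus 2}\oplus Y_3$ from Lemma~\ref{L16}, where $Y_3$ does have simple head $D_{1^2}$, and computes restrictions to $\widetilde\s_{n-3}$, $\widetilde\s_{n-3,2}$, $\widetilde\s_{n-3,3}$; for $p=3$ the analogue $Y_3'$ is used and the computation descends to $\widetilde\s_{n-4,2}$. Once you make this correction, your plan is the paper's proof.
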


\begin{proof}
In this case it can be easily checked from Lemma \ref{L20} that $\overline{D}_2\cong D_{1^2}$ and that $(n-2,1^2)>(n-2,1^2)^\Mull$. We will use Lemma \ref{Lemma39s} without further reference.

Note that any composition factor (as supermodule) of $D(\be_n)\da_{\widetilde\s_{n-k}}$ is of the form $D(\be_{n-k})$ (this holds for example by Lemma \ref{LBS} and branching in characteristic 0). So any composition factor of $D(\be_n)\da_{\widetilde\s_\al}$ is of the form $D(\be_{\al_1},\be_{\al_2},\ldots)$.

Consider first $D(\be_n,\delta)$. If $\delta=0$ then $D_{1^2}\subseteq\End_F(D(\be_n,\delta))$ by Lemma \ref{L22}. So we may assume that $\delta=\pm$. If $n\not\equiv 0,1,2\Md p$ then
\begin{align*}
D(\be_n)\da_{\widetilde\s_{n-1}}&\cong D(\be_{n-1})^{\oplus 2},\\
D(\be_n)\da_{\widetilde\s_{n-2}}&\cong D(\be_{n-2})^{\oplus 2},\\
D(\be_n)\da_{\widetilde\s_{n-2,2}}&\cong D(\be_{n-2},(2))^{\oplus 2},
\end{align*}
with $D(\be_{n-1})$ and $D(\be_{n-2},(2))$ of type M and $D(\be_{n-2})$ of type Q. So $D(\be_n,\pm)\da_{\widetilde\s_{n-1}}$ and $D(\be_n,\pm)\da_{\widetilde\s_{n-2,2}}$ are simple, while $D(\be_n,\pm)\da_{\widetilde\s_{n-2}}$ is a direct sum of two simple modules. So $D_{1^2}\subseteq\End_F(D(\be_n,\delta))$ by Lemmas \ref{l2} and \ref{L17}.

If $n\equiv 2\Md p$ then
\begin{align*}
D(\be_n)\da_{\widetilde\s_{n-1}}&\cong D(\be_{n-1})^{\oplus 2},\\
D(\be_n)\da_{\widetilde\s_{n-2}}&\cong(D(\be_{n-2})|D(\be_{n-2}))^{\oplus 2},\\
D(\be_n)\da_{\widetilde\s_{n-2,2}}&\cong D(\be_{n-2},(2))|D(\be_{n-2},(2)),\\
D(\be_n)\da_{\widetilde\s_{n-3,2}}&\cong D(\be_{n-3},(2))^{\oplus 2},
\end{align*}
with $D(\be_{n-1})$, $D(\be_{n-2})$ and $D(\be_{n-3},(3))$ of type M and $D(\be_{n-2},(2))$ and $D(\be_{n-3},(2))$ of type Q. In particular $D(\be_n,+)\da_{\widetilde\s_{n-1}}\cong D(\be_n,-)\da_{\widetilde\s_{n-1}}$ are simple, $D(\be_n,\pm)\da_{\widetilde\s_{n-2}}$ is uniserial with two isomorphic composition factors and $D(\be_n,\pm)\da_{\widetilde\s_{n-2,2}}$ is uniserial with two non-isomorphic composition factors (since $D(\be_n,+)\da_{\widetilde\s_{n-1}}\cong D(\be_n,-)\da_{\widetilde\s_{n-1}}$ the two composition factors of $D(\be_n,\pm)\da_{\widetilde\s_{n-3,2}}$ are not isomorphic). It then follows again by Lemmas \ref{l2} and \ref{L17} that $D_{1^2}\subseteq\End_F(D(\be_n,\delta))$.

If $n\equiv 1\Md p$ then
\begin{align*}
D(\be_n)\da_{\widetilde\s_{n-1}}&\cong D(\be_{n-1})|D(\be_{n-1}),\\
D(\be_n)\da_{\widetilde\s_{n-2}}&\cong(D(\be_{n-2}))^{\oplus 2},\\
D(\be_n)\da_{\widetilde\s_{n-2,2}}&\cong D(\be_{n-2},(2))^{\oplus 2},
\end{align*}
with $D(\be_{n-1})$ and $D(\be_{n-2})$ of type Q and $D(\be_{n-2},(2))$ of type M. In particular $D(\be_n,+)\da_{\widetilde\s_{n-2,2}}\cong D(\be_n,-)\da_{\widetilde\s_{n-2,2}}$ are simple, from which it follows that $D(\be_n,\pm)\da_{\widetilde\s_{n-2}}\cong D(\be_{n-2},+)\oplus D(\be_{n-2},-)$ and then that $D(\be_n)\da_{\widetilde\s_{n-1}}\cong D(\be_{n-1},\pm)|D(\be_{n-1},\mp)$, so again $D_{1^2}\subseteq\End_F(D(\be_n,\delta))$.

If $n\equiv 0\Md p$ and $p\not=3$ then
\begin{align*}
D(\be_n)\da_{\widetilde\s_{n-3}}&\cong D(\be_{n-3})^{\oplus 2},\\
D(\be_n)\da_{\widetilde\s_{n-3,2}}&\cong D(\be_{n-3},(2))^{\oplus 2},\\
D(\be_n)\da_{\widetilde\s_{n-3,3}}&\cong D(\be_{n-3},(3)),
\end{align*}
with $D(\be_{n-3})$ and $D(\be_{n-3},(3))$ of type Q, while $D(\be_{n-3},(2))$ is of type M. So $D(\be_n,\pm)\da_{\widetilde\s_{n-3,2}}$ and $D(\be_n,\pm)\da_{\widetilde\s_{n-3,3}}$ are simple, while $D(\be_n,\pm)\da_{\widetilde\s_{n-3}}$ is a direct sum of two simple modules. Then $D_{1^2}\subseteq\End_F(D(\be_n,\delta))$ by Lemmas \ref{l2} and \ref{L16}, since $\End_F(D(\be_n,\delta))$ is semisimple by Lemma \ref{L22}.

If $n\equiv 0\Md p$ and $p=3$ then
\begin{align*}
D(\be_n)\da_{\widetilde\s_{n-1}}&\cong D(\be_{n-1}),\\
D(\be_n)\da_{\widetilde\s_{n-2}}&\cong D(\be_{n-2})^{\oplus 2},\\
D(\be_n)\da_{\widetilde\s_{n-3}}&\cong (D(\be_{n-3})|D(\be_{n-3}))^{\oplus 2},\\
D(\be_n)\da_{\widetilde\s_{n-3,2}}&\cong D(\be_{n-3},(2))|D(\be_{n-3},(2)),\\
D(\be_n)\da_{\widetilde\s_{n-3,3}}&\cong D(\be_{n-3},(2,1))|D(\be_{n-3},(2,1)),\\
D(\be_n)\da_{\widetilde\s_{n-4,2}}&\cong D(\be_{n-4},(2))^{\oplus 2}.
\end{align*}
Further $D(\be_{n-2})$ and $D(\be_{n-3})$ are of type M while $D(\be_{n-1})$, $D(\be_{n-3},(2))$, $D(\be_{n-3},(2,1))$ and $D(\be_{n-4},(2))$ are of type Q. In particular $D(\be_n,+)\da_{\widetilde\s_{n-2}}\cong D(\be_n,+)\da_{\widetilde\s_{n-2}}$, from which follows that
\[D(\be_n,+)\da_{\widetilde\s_{n-4,2}}\cong D(\be_{n-4},(2),+)\oplus D(\be_{n-4},(2),-).\]
So 
\begin{align*}
D(\be_n,\pm)\da_{\widetilde\s_{n-3}}&\cong D(\be_{n-3},0)|D(\be_{n-3},0),\\
D(\be_n,\pm)\da_{\widetilde\s_{n-3,2}}&\cong D(\be_{n-3},(2),\pm)|D(\be_{n-3},(2),\mp),\\
D(\be_n,\pm)\da_{\widetilde\s_{n-3,3}}&\cong D(\be_{n-3},(2,1),\pm)|D(\be_{n-3},(2,1),\mp).
\end{align*}
Since $\End_F(D(\be_n,\delta))$ is semisimple by Lemma \ref{L22}, it follows from Lemma \ref{L16} that $D_{1^2}\subseteq\End_F(D(\be_n,\delta))$.

For $\widetilde \A_n$ the proof is similar (it uses the restriction to the corresponding subgroups of $\widetilde \A_n$).
\end{proof}

\section{Tensor products}\label{s6}

In this section we will consider tensor products with special classes of modules. In order to check if tensor products are irreducible we will at times use the following lemmas.

\begin{lemma}\label{L21a}
Let $D$ be an irreducible $F\widetilde\s_n$-module and $\mu\in\RPar_p(n)$. If $D\otimes D(\la,\de)$ is irreducible then
\begin{align*}
\dim\Hom_{\widetilde\s_n}(\End_F(D),\Hom_F(D(\mu),D(\mu,\delta))&\leq 1+a(\mu).
\end{align*}
Similarly if $E$ is an irreducible $F\widetilde \A_n$-module, $\mu\in\RPar_p(n)$ and $E\otimes D(\la,\de')$ is irreducible then
\begin{align*}
\dim\Hom_{\widetilde \A_n}(\End_F(E),\Hom_F(E(\mu),E(\mu,\delta'))&\leq 2-a(\mu).
\end{align*}
\end{lemma}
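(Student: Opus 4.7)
The strategy combines the standard tensor--Hom adjunction with Schur's lemma applied to the irreducible tensor products produced by the hypothesis. First, I would use self-duality of all irreducible $F\widetilde\s_n$-modules together with the identifications $\End_F(D)\cong D\otimes D^*$ and $\Hom_F(A,B)\cong B\otimes A^*$ to rewrite $\Hom_{\widetilde\s_n}(\End_F(D),\Hom_F(D(\mu),D(\mu,\delta)))\cong\Hom_{\widetilde\s_n}(D\otimes D(\mu),D\otimes D(\mu,\delta))$, turning the question into one about Hom spaces between tensor products.

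Second, I would split the supermodule $D(\mu)$ into its $1+a(\mu)$ simple module summands $D(\mu,\eps)$ (just $D(\mu,0)$ if $a(\mu)=0$, and the two summands $D(\mu,\pm)$ if $a(\mu)=1$). This decomposes the right-hand side as a direct sum of $1+a(\mu)$ spaces of the form $\Hom_{\widetilde\s_n}(D\otimes D(\mu,\eps),D\otimes D(\mu,\delta))$, so it suffices to bound each summand by $1$.

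The key step is then to verify each $D\otimes D(\mu,\eps)$ is irreducible. Reading the hypothesis with $\la$ standing for $\mu$ (the natural reading, since otherwise the $\la$ of the hypothesis is unrelated to the $\mu$ of the conclusion), one such tensor product is irreducible by assumption. When $a(\mu)=1$, the other summand is obtained by tensoring with $\sgn$ (using $D(\mu,-)\cong D(\mu,+)\otimes\sgn$), hence is also irreducible. Schur's lemma then bounds each of the $1+a(\mu)$ Hom summands by $1$, and summing gives the desired inequality.

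The $\widetilde\A_n$-version is handled identically, using that $E(\mu)$ has $2-a(\mu)$ simple module summands, and that conjugation by an element of $\widetilde\s_n\setminus\widetilde\A_n$ plays the role of the sign twist to transfer irreducibility between the summands $E(\mu,\pm)$ when $a(\mu)=0$. The only subtle point in the whole argument is the careful bookkeeping between the supermodule structure of $D(\mu)$ and the ordinary-module Hom spaces appearing in the adjunction, as well as ensuring the self-duality identifications behave correctly across the M/Q type dichotomy, but these are routine once the types of the relevant modules are tracked.
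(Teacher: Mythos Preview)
Your proposal is correct and is precisely the standard argument that the paper invokes by citing \cite[Lemma~3.4]{bk2}: the tensor--Hom adjunction
\[
\Hom_G\bigl(\End_F(D),\Hom_F(D(\mu),D(\mu,\delta))\bigr)\cong\Hom_G\bigl(D\otimes D(\mu),\,D\otimes D(\mu,\delta)\bigr),
\]
followed by decomposing $D(\mu)$ into its $1+a(\mu)$ simple summands and applying Schur's lemma to each (with the sign twist, respectively conjugation by an odd element, transferring irreducibility between summands). Your reading of the hypothesis with $\la=\mu$ is the intended one; note also that the adjunction step does not actually require self-duality of $D$, so that part of your outline is harmless but unnecessary.
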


\begin{proof}
Similar to \cite[Lemma 3.4]{bk2}.
\end{proof}

\begin{lemma}\label{L50}
Let $\la\in\Par_p(n)$ and $\mu\in\RPar_p(n)$. If $D(\mu)$ is of type Q and
\begin{align*}
\dim\Hom_{\widetilde\s_n}(\End_F(D^\la),\Hom_F(D(\mu),D(\mu,\pm))=2
\end{align*}
then
\begin{enumerate}[-]
\item
if $D^\la\otimes D(\mu)$ has a composition factor of type M then $D^\la\otimes D(\mu,\pm)$ is irreducible,

\item
if $D^\la\otimes D(\mu)$ has a composition factor of type Q then $D^\la\otimes D(\mu,\pm)$ is not irreducible.
\end{enumerate}

Similarly if $\la\in\Par_p(n)\setminus\Parinv_p(n)$, $D(\mu)$ is of type M and
\begin{align*}
\dim\Hom_{\widetilde \A_n}(\End_F(E^\la),\Hom_F(E(\mu),E(\mu,\pm))=2
\end{align*}
then
\begin{enumerate}[-]
\item
if $D^\la\otimes D(\mu)$ has a composition factor of type M then $E^\la\otimes E(\mu,\pm)$ is not irreducible,

\item
if $D^\la\otimes D(\mu)$ has a composition factor of type Q then $E^\la\otimes E(\mu,\pm)$ is irreducible.
\end{enumerate}
\end{lemma}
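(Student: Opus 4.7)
The plan is to set $M := D^\la \otimes D(\mu,+)$. Using $D(\mu,-) \cong D(\mu,+) \otimes \sgn$ we have $D^\la \otimes D(\mu,-) \cong M \otimes \sgn$, hence $D^\la \otimes D(\mu) \cong M \oplus (M \otimes \sgn)$. Splitting $\Hom_F(D(\mu), D(\mu,+)) = \End_F(D(\mu,+)) \oplus \Hom_F(D(\mu,-), D(\mu,+))$ and applying tensor--hom adjunction (using self-duality of $D^\la$), the hypothesis rewrites as
\[\dim \End_{\widetilde\s_n}(M) + \dim \Hom_{\widetilde\s_n}(M \otimes \sgn, M) = 2.\]

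Next I would do a case analysis on $\dim \End_{\widetilde\s_n}(M)\geq 1$, using the key observation that by $D(\nu,\eps) \cong D(\nu,-\eps) \otimes \sgn$ an irreducible spin module $N$ of $\widetilde\s_n$ satisfies $N \cong N \otimes \sgn$ precisely when $N = D(\nu,0)$ is of type M (whereas for type Q one has $D(\nu,+) \otimes \sgn = D(\nu,-) \not\cong D(\nu,+)$). If $\dim\End_{\widetilde\s_n}(M) = 1$, then $M$ is irreducible and $\dim\Hom_{\widetilde\s_n}(M\otimes\sgn, M) = 1$; a nonzero map between irreducibles is an isomorphism, so $M \cong M\otimes\sgn$ and hence $M = D(\nu,0)$ for some $\nu$ with $a(\nu)=0$. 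Then $D^\la \otimes D(\mu) \cong 2 D(\nu,0)$ has only a type M composition factor. Otherwise $\dim\End_{\widetilde\s_n}(M) = 2$ and $\Hom_{\widetilde\s_n}(M\otimes\sgn, M) = 0$, and the claim is that every composition factor of $D^\la \otimes D(\mu)$ is of type Q. Contrapositively, a type M factor forces the first case (so $M$ irreducible) and a type Q factor forces the second (so $M$ reducible), matching the two clauses.

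For the $\widetilde \A_n$ statement the strategy is identical with conjugation by a fixed $s \in \widetilde\s_n \setminus \widetilde\A_n$ replacing $\sgn$-twisting. Since $\la \neq \la^\Mull$, the module $E^\la$ extends to $\widetilde\s_n$ and is $s$-invariant, so $(E^\la \otimes E(\mu,+))^s \cong E^\la \otimes E(\mu,-)$. The analogous observation is that an irreducible $\widetilde\A_n$-spin module is $s$-invariant iff it extends to $\widetilde\s_n$, i.e.\ iff it is of the form $E(\nu,0)$ with $a(\nu) = 1$, i.e.\ iff the underlying supermodule is of type Q. This accounts for the swap of types M and Q between the two parts of the lemma.

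The main obstacle is the claim in the reducible case: ruling out type M composition factors throughout $M$, not only in its head and socle. A head/socle argument using self-duality of $D^\la \otimes D(\mu)$ together with $\sgn$-invariance of type M simples (namely, a nonzero composition $M \otimes \sgn \twoheadrightarrow D(\nu,0) \hookrightarrow M$ would contradict $\Hom_{\widetilde\s_n}(M \otimes \sgn, M) = 0$) handles the outermost Loewy layers. For interior composition factors I would exploit the odd automorphism $\tau$ of the type Q supermodule $D(\mu)$: the induced automorphism $\id \otimes \tau$ of $D^\la \otimes D(\mu)$ relates the summands $M$ and $M \otimes \sgn$, and combining its interplay with the vanishing of $\Hom_{\widetilde\s_n}(M \otimes \sgn, M)$ with the fact that any extension between type M simples is automatically $\sgn$-invariant should force every composition factor of $M$ to be of type Q.
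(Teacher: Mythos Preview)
Your reformulation of the hypothesis as $\dim\End_{\widetilde\s_n}(M)+\dim\Hom_{\widetilde\s_n}(M\otimes\sgn,M)=2$ is correct, but the case split has a genuine gap: the implication ``$\dim\End_{\widetilde\s_n}(M)=1\Rightarrow M$ is irreducible'' is false. For instance, if $\alpha\in\RP_p(n)$ has type~Q and there exists a nonsplit uniserial module $M$ with socle $D(\alpha,+)$ and head $D(\alpha,-)$, then $\dim\End(M)=1$ (any endomorphism is a scalar on the simple socle, and the difference factors through the head $D(\alpha,-)$, which does not embed back into $M$) while $\dim\Hom(M\otimes\sgn,M)=1$ (via $M\otimes\sgn\twoheadrightarrow D(\alpha,+)\hookrightarrow M$). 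So the numerical hypothesis holds with $\dim\End(M)=1$ but $M$ is reducible, and your next step ``a nonzero map between irreducibles is an isomorphism'' does not apply. In this example all composition factors are of type~Q, so the lemma itself is not contradicted; only your case labelling fails.

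The paper's proof avoids this by working with $X:=D^\la\otimes D(\mu)=M\oplus(M\otimes\sgn)$, which is always self-dual, and by choosing a simple supermodule $D(\nu)$ in $\soc(X)$ rather than splitting on $\dim\End(M)$. The hypothesis becomes $\dim\End_{\widetilde\s_n}(X)=4$. If $D(\nu)$ is of type~M then, since $D(\nu,0)\otimes\sgn\cong D(\nu,0)$, one gets $D(\nu,0)^{\oplus2}\hookrightarrow X$; self-duality places $D(\nu,0)^{\oplus2}$ in the head as well, and $\dim\End(X)=4$ then forces $X\cong D(\nu,0)^{\oplus2}$, so $M\cong D(\nu,0)$ is simple. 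If $D(\nu)$ is of type~Q then $X\supsetneq D(\nu)$ (since $\dim\End D(\nu)=2<4$), so $X$ has more than two module composition factors and $M$ is reducible. This exhausts the possibilities on the socle and yields ``$M$ irreducible $\Leftrightarrow$ all composition factors of $X$ are of type~M'', from which the second bullet is immediate; the self-duality of $X$ is precisely what allows one to conclude simplicity of $M$ without the false step above. Your sketch for the ``main obstacle'' via the odd involution $\id\otimes\tau$ is aiming at the residual direction (that no type~M factors survive in the reducible case), but as written it is not yet a proof.
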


\begin{proof}
We will prove the lemma only for $\widetilde\s_n$, the proof for $\widetilde \A_n$ being similar (using conjugation by elements in $\widetilde\s_n\setminus\widetilde \A_n$ instead of tensoring with $\sgn$).

As $D(\mu)=D(\mu,+)\oplus D(\mu,-)$ and $D(\mu,+)\cong D(\mu,-)\otimes\sgn$, 
\begin{align*}
\dim\End_{\widetilde\s_n}(D^\la\otimes D(\mu))&=\dim\Hom_{\widetilde\s_n}(\End_F(D^\la),\End_F(D(\mu))\\
&=2\dim\Hom_{\widetilde\s_n}(\End_F(D^\la),\Hom_F(D(\mu),D(\mu,\pm))\\
&=4.
\end{align*}
Let $D(\nu)\subseteq D^\la\otimes D(\mu)$. Assume first that $D(\nu)$ is of type M. Then $D(\nu)=D(\nu,0)\cong D(\nu,0)\otimes\sgn$. From $D(\mu,+)\cong D(\mu,-)\otimes\sgn$ it follows that $D(\nu)^{\oplus 2}\subseteq D^\la\otimes D(\mu)$. Since $D^\la\otimes D(\mu)$ is self-dual and so it has isomorphic head and socle, it follows that $D^\la\otimes D(\mu)\cong D(\nu)^{\oplus 2}$. In particular as module $D^\la\otimes D(\mu)$ has exactly two composition factors and so $D^\la\otimes D(\mu,\pm)$ is irreducible.

Assume now that $D(\nu)$ is of type Q. Then $D^\la\otimes D(\mu)\not\cong D(\nu)$. In particular as module $D^\la\otimes D(\mu)$ has more than two composition factors. Since $D^\la\otimes D(\mu,+)\cong (D^\la\otimes D(\mu,-))\otimes\sgn$, it then follows that $D^\la\otimes D(\mu,\pm)$ is not irreducible in this case.
\end{proof}

\subsection{Tensor products with natural modules}\label{snat}

\begin{lemma}\label{L8}
Let $n\geq 4$, $G=\widetilde \s_n$ or $\widetilde \A_n$, $\la\in\RPar_p(n)$ and $V$ be a simple spin $G$-module indexed by $\la$. If $V\otimes D^{(n-1,1)}\da_G$ is simple then, as supermodule,
\[[D(\la)\otimes M_1:D(\la)]=\left\{\begin{array}{ll}
1,&n\not\equiv 0\Md p,\\
2,&n\equiv 0\Md p.
\end{array}\right.\]
\end{lemma}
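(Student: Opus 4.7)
My plan is to count composition factors of $V \otimes M_1$ via the explicit structure of $M_1$ given by Lemma~\ref{M1}, then transfer the count to the required supermodule multiplicity $[D(\la) \otimes M_1 : D(\la)]$.

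First, tensoring $V$ with $M_1$ and using Lemma~\ref{M1}, I get that $V \otimes M_1$ decomposes as $V \oplus (V \otimes D^{(n-1,1)}\da_G)$ when $p \nmid n$, or has a uniserial filtration $V \,|\, V \otimes D^{(n-1,1)}\da_G \,|\, V$ when $p \mid n$. By the hypothesis, $V \otimes D^{(n-1,1)}\da_G$ is an irreducible $G$-module of dimension $(\dim V)(\dim D^{(n-1,1)})$; since $\dim D^{(n-1,1)} \in \{n-1,n-2\}$ is at least $2$ for $n \geq 4$, this dimension strictly exceeds $\dim V$, so $V \otimes D^{(n-1,1)}\da_G \not\cong V$ and contributes nothing to $[V \otimes M_1 : V]_{\mathrm{mod}}$. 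Hence
\[
[V \otimes M_1 : V]_{\mathrm{mod}} \;=\; \begin{cases} 1, & p \nmid n, \\ 2, & p \mid n. \end{cases}
\]

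Second, I would convert this module multiplicity to the supermodule multiplicity by establishing
\[
[D(\la) \otimes M_1 : D(\la)]_{\mathrm{super}} \;=\; [V \otimes M_1 : V]_{\mathrm{mod}}.
\]
In the type M case $D(\la) = V$ as module and the two counts agree. In the type Q case, write $D(\la) = V \oplus V'$ as module, with $V' \not\cong V$ the $\sgn$-twist (for $G = \widetilde\s_n$) or the $\widetilde\s_n$-conjugate (for $G = \widetilde\A_n$) of $V$; each super composition factor $D(\la)$ contributes exactly one copy of $V$ to the underlying module composition series, so the super multiplicity equals $[V \otimes M_1 : V]_{\mathrm{mod}} + [V' \otimes M_1 : V]_{\mathrm{mod}}$, and the first step applied to $V'$ (whose simplicity hypothesis is inherited from that of $V$ via the twist) shows $V' \otimes D^{(n-1,1)}\da_G$ is again simple of dimension exceeding $\dim V$, so the second summand vanishes. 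When $G = \widetilde\A_n$, the same argument is carried out at the $\widetilde\A_n$-level using the identity $[(D(\la) \otimes M_1)\da_{\widetilde\A_n} : E(\la)]_{\mathrm{super}} = (1 + a(\la))[D(\la) \otimes M_1 : D(\la)]_{\mathrm{super}}$, coming from the fact that $D(\la)\da_{\widetilde\A_n}$ contains $E(\la)$ with super multiplicity $1 + a(\la)$, which lets one recover the required $\widetilde\s_n$-supermodule multiplicity.

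The main obstacle is the supermodule bookkeeping in the type Q case, especially for $G = \widetilde\A_n$ where $V$ and $D(\la)$ live at different group levels; the argument requires a careful passage between module- and supermodule-level multiplicities. Once that identification is in place, the lemma reduces to the elementary dimension comparison in the first paragraph together with the structure of $M_1$ from Lemma~\ref{M1}.
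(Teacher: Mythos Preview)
Your proposal is correct and follows essentially the same approach as the paper: both use the structure of $M_1$ from Lemma~\ref{M1}, a dimension comparison to see that $V'\otimes D^{(n-1,1)}\da_G$ cannot be isomorphic to $V$, and the twist (by $\sgn$ or by conjugation with an odd element) to pass from $V$ to any other simple $G$-module $V'$ indexed by $\la$. The paper's argument is somewhat more streamlined in that it avoids the type M/type Q case split: it observes directly that $D(\la)\da_G$ is a sum of such $V'$, each $V'\otimes D^{(n-1,1)}\da_G$ is simple and not $V$, hence $[D(\la)\otimes D^{(n-1,1)}:D(\la)]=0$ as supermodule, giving $[D(\la)\otimes M_1:D(\la)]=[M_1:D_0]$ in one step; your explicit bookkeeping in the type Q and $\widetilde\A_n$ cases reaches the same conclusion but by a slightly longer route.
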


\begin{proof}
Since $n\geq 4$ we have that $D^{(n-1,1)}\da_G$ has dimension greater than 1. Let $V'$ be any simple spin $G$-module indexed by $\la$. Then $V'\otimes D^{(n-1,1)}\da_G$ is simple (by either tensoring with $\sgn$ if $G=\widetilde\s_n$ or conjugating with $\si\in\widetilde\s_n\setminus\widetilde \A_n$ if $G=\widetilde\A_n$) and so $V$ is not a composition factor of $V'\otimes D^{(n-1,1)}\da_G$. So $[D(\la)\otimes M_1:D(\la)]=[M_1:D_0]$ and then the lemma holds by Lemma \ref{M1}.
\end{proof}

\begin{lemma}\label{L11}
Let $G=\widetilde \s_n$ or $\widetilde \A_n$ and $\la\in\RPar_p(n)$.
\begin{enumerate}[-]
\item If $G=\widetilde\s_n$ and $D(\la)$ is of type M then $D(\la,0)\otimes D^{(n-1,1)}$ is irreducible if and only if as supermodule $D(\la)\otimes D^{(n-1,1)}$ is irreducible of type M.

\item If $G=\widetilde\s_n$ and $D(\la)$ is of type Q then $D(\la,\pm)\otimes D^{(n-1,1)}$ is irreducible if and only if as supermodule $D(\la)\otimes D^{(n-1,1)}$ is irreducible of type Q or it has exactly two composition factors both of type M.

\item If $G=\widetilde \A_n$ then $E(\la,0)\otimes E^{(n-1,1)}$ or $E(\la,\pm)\otimes E^{(n-1,1)}$ is irreducible if and only if as supermodule $D(\la)\otimes D^{(n-1,1)}$ is irreducible.
\end{enumerate}
\end{lemma}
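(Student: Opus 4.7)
My approach translates between supermodule structure and ordinary module structure via the standard dictionary: a simple supermodule of type M has simple underlying module, while a simple type Q supermodule $D(\mu)$ has underlying module $D(\mu,+) \oplus D(\mu,-)$; moreover, under restriction to $\widetilde\A_n$, a simple type M supermodule $D(\mu,0)$ yields $E(\mu,+) \oplus E(\mu,-)$, and a simple type Q supermodule $D(\mu)$ yields $E(\mu,0)^{\oplus 2}$. I will also exploit that $D^{(n-1,1)}$ and $E^{(n-1,1)}$ carry trivial super structure (being non-spin), so the super structure on $D(\la) \otimes D^{(n-1,1)}$ is entirely inherited from $D(\la)$; in particular, when $D(\la)$ is of type Q the odd involution of $D(\la)$ produces a nonzero odd endomorphism of $D(\la) \otimes D^{(n-1,1)}$, which then cannot be a simple type M supermodule.

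For (i), with $D(\la)$ of type M we have $D(\la) \cong D(\la,0)$ as $\widetilde\s_n$-modules, so the module $D(\la,0) \otimes D^{(n-1,1)}$ and the supermodule $D(\la) \otimes D^{(n-1,1)}$ share the same underlying module. Hence one is irreducible iff the other is; and since a simple type Q supermodule never has a simple underlying module, a simple supermodule with a simple underlying module must be of type M.

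For (ii), I would decompose $D(\la) \otimes D^{(n-1,1)} \cong (D(\la,+) \otimes D^{(n-1,1)}) \oplus (D(\la,-) \otimes D^{(n-1,1)})$ as modules and note the two summands are exchanged by $\otimes \sgn$, so one is irreducible iff the other is. If both are irreducible with $D(\la,+) \otimes D^{(n-1,1)} \cong D(\mu,\eps)$, then $D(\la,-) \otimes D^{(n-1,1)} \cong D(\mu,\eps) \otimes \sgn$, which equals $D(\mu,-\eps)$ when $\eps \in \{\pm\}$ (yielding the simple type Q supermodule $D(\mu) = D(\mu,+) \oplus D(\mu,-)$) and equals $D(\mu,0)$ when $\eps = 0$ (yielding $D(\mu,0)^{\oplus 2}$ with exactly two composition factors of type M, since $D(\mu,0)\otimes\sgn \cong D(\mu,0)$). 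Conversely, in either listed supermodule shape the underlying module has exactly two simple summands, which by dimension must coincide with the two halves $D(\la,\pm) \otimes D^{(n-1,1)}$, each then irreducible.

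For (iii), I would restrict to $\widetilde \A_n$ and use the dictionary that a simple $\widetilde\s_n$-supermodule has exactly two simple $\widetilde\A_n$-constituents in its underlying module after restriction. When $D(\la)$ is type M, the restriction equals $(E(\la,+) \otimes E^{(n-1,1)}) \oplus (E(\la,-) \otimes E^{(n-1,1)})$, the two summands being $\si$-conjugate for any $\si \in \widetilde\s_n \setminus \widetilde\A_n$; thus the restriction has exactly two simple constituents iff each of $E(\la,\pm) \otimes E^{(n-1,1)}$ is irreducible (and whether the two are non-isomorphic or isomorphic matches, respectively, the type M or type Q shape of the irreducible supermodule). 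When $D(\la)$ is type Q, the restriction equals $(E(\la,0) \otimes E^{(n-1,1)})^{\oplus 2}$, and the odd involution observation rules out a simple type M supermodule, so irreducibility of the supermodule reduces exactly to irreducibility of $E(\la,0) \otimes E^{(n-1,1)}$. The main obstacle throughout is making the odd endomorphism argument and the ``exactly two constituents under restriction'' dichotomy fully precise in the supermodule setting, in order to rule out hybrid decompositions that might otherwise falsify the dimension-counting matchings in parts (ii) and (iii).
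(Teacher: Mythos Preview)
Your approach is correct and is essentially the same composition-factor counting argument that the paper uses; the paper's proof is literally one line (``This holds by comparing the number of composition factors of $D(\la)\da_G$ and of $(D(\la)\otimes D^{(n-1,1)})\da_G$''), and you have unpacked exactly that count in detail. Your closing worry is unfounded: the odd-involution argument, while correct, is not needed, since in case (ii) a simple type M supermodule would give an underlying $\widetilde\s_n$-module with only one composition factor, contradicting the nontrivial direct-sum decomposition $(D(\la,+)\otimes D^{(n-1,1)})\oplus(D(\la,-)\otimes D^{(n-1,1)})$, and ``hybrid'' supermodule decompositions (one factor of each type) are ruled out because they would contribute $1+2=3$ module composition factors in (ii) or $2+2=4$ in (iii), again just by counting.
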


\begin{proof}
This holds by comparing the number of composition factors of $D(\la)\da_G$ and of $(D(\la)\otimes D^{(n-1,1)})\da_G$.
\end{proof}

\begin{theor}\label{T2}
Let $n\geq 4$, $G=\widetilde \s_n$ or $\widetilde \A_n$, $\la\in\RPar_p(n)$ and $V$ be a simple spin $G$-module indexed by $\la$. If $V\otimes D^{(n-1,1)}\da_G$ is simple then $n\not\equiv 0\Md p$ and $\la\in JS(0)$.

In this case, if $\nu=(\la\setminus A)\cup B$ where $A$ is the bottom removable node of $\la$ and $B$ is the top addable node of $\la$,
\begin{enumerate}[-]
\item
if $D(\la)$ is of type M then $D(\la,0)\otimes D^{(n-1,1)}$ is not irreducible, while $E(\la,\pm)\otimes E^{(n-1,1)}\cong E(\nu,0)$,

\item
if $D(\la)$ is of type Q then $D(\la,\pm)\otimes D^{(n-1,1)}\cong D(\nu,0)$, while $E(\la,0)\otimes E^{(n-1,1)}$ is not irreducible.
\end{enumerate}
\end{theor}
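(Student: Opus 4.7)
The plan is to combine Lemmas \ref{L8}, \ref{L9}, \ref{L10} and \ref{L11} to reduce irreducibility of $V\otimes D^{(n-1,1)}\da_G$ to an arithmetic constraint on the numbers $\eps_i(\la), \phi_i(\la)$, solve that constraint, and then identify $\nu$ via the branching formulas in Lemmas \ref{Lemma39s} and \ref{Lemma40s}.

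First I would assume $V\otimes D^{(n-1,1)}\da_G$ is irreducible. Since $\dim D^{(n-1,1)}\da_G>1$, $V$ itself is not a summand, and by Lemma \ref{L11} this forces that $D(\la)$ is not a supermodule constituent of $D(\la)\otimes D^{(n-1,1)}$. Thus Lemma \ref{L8} applies, giving $[D(\la)\otimes M_1:D(\la)]=1$ when $p\nmid n$ and $=2$ when $p\mid n$. By Lemma \ref{L9} the left-hand side equals $\eps_0(\phi_0+1)+2\sum_{i\geq 1}\eps_i(\phi_i+1)$, while Lemma \ref{L10} forces $\eps_0+\phi_0\equiv 1\pmod{2}$. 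In the case $p\nmid n$, the only solution is $\eps_0=1$, $\phi_0=0$, $\eps_i=0$ for $i\geq 1$, that is, $\la\in JS(0)$; Lemma \ref{L12} confirms that this configuration forces $p\nmid n$. In the case $p\mid n$, the parity constraint eliminates $(\eps_0,\phi_0)\in\{(2,0),(1,1)\}$, leaving only $\eps_0=0$ with $\la\in JS(i)$ for some $i\geq 1$ and $\phi_i(\la)=0$.

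To exclude this last case I would argue as follows. Since $\eps_k(\la)=0$ for $k\neq i$, Lemma \ref{Lemma39s}(ii)--(iii) gives $D(\la)\da_{\widetilde\s_{n-1}}\cong D(\widetilde e_i\la)^{\oplus 1+a(\la)}$, so $D(\la)\otimes M_1\cong D(\widetilde e_i\la)^{\oplus 1+a(\la)}\ua^{\widetilde\s_n}$. Lemma \ref{Lemma40s}(ii) then decomposes this induced module by the conormal residues of $\widetilde e_i\la$: Lemma \ref{Lef} guarantees $\phi_i(\widetilde e_i\la)\geq 1$, contributing $D(\la)$, and applying Lemma \ref{L10} to $\widetilde e_i\la$ forces at least one further residue $k$ with $\phi_k(\widetilde e_i\la)>0$, producing a constituent $D(\mu)\not\cong D(\la)$. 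Comparing with the three-layer filtration $D(\la)\mid D(\la)\otimes D^{(n-1,1)}\mid D(\la)$ of $D(\la)\otimes M^{(n-1,1)}$ coming from $M^{(n-1,1)}\cong D_0\mid D_1\mid D_0$ (Lemma \ref{M1}), both $D(\la)$'s are accounted for by the two outer copies, so the middle layer $D(\la)\otimes D^{(n-1,1)}$ must contain the distinct constituents $D(\mu)$ arising from the remaining conormal residues, violating the supermodule structure allowed by Lemma \ref{L11}.

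For the positive direction ($p\nmid n$, $\la\in JS(0)$), Lemma \ref{Lemma39s} gives $D(\la)\da_{\widetilde\s_{n-1}}\cong D(\widetilde e_0\la)$ (simple, since $\eps_0(\la)=1$), and Lemma \ref{M1} gives $M_1\cong D_0\oplus D^{(n-1,1)}$. Hence $D(\la)\otimes D^{(n-1,1)}\cong D(\widetilde e_0\la)\ua^{\widetilde\s_n}\ominus D(\la)$ in the Grothendieck group. I would decompose $D(\widetilde e_0\la)\ua^{\widetilde\s_n}$ via Lemma \ref{Lemma40s}(ii): the $0$-cogood contribution gives back $D(\la)$ (by Lemma \ref{Lef}, $\widetilde f_0\widetilde e_0\la=\la$), cancelling the subtracted copy. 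By Lemma \ref{L10} and $\phi_0(\la)=0$ one has $\sum_{j\geq 1}\phi_j(\la)=1$, so $\la$ has a unique conormal node $B$ of residue $j\geq 1$, which is the top addable node; the corresponding contribution $f_j D(\widetilde e_0\la)$ equals $D(\nu)$ with $\nu=\widetilde f_j\widetilde e_0\la=(\la\setminus A)\cup B$. By Lemma \ref{Lef}, $a(\nu)=1-a(\la)$, so $\la$ and $\nu$ are of opposite type. Finally, Lemma \ref{L11} translates the supermodule statement back: when $D(\la)$ is of type M (so $D(\nu)$ is type Q), $D(\la,0)\otimes D^{(n-1,1)}$ fails to be irreducible while $E(\la,\pm)\otimes E^{(n-1,1)}\cong E(\nu,0)$; dually when $D(\la)$ is of type Q (so $D(\nu)$ is type M, and $D(\la)\otimes D^{(n-1,1)}\cong D(\nu,0)^{\oplus 2}$), Lemma \ref{L11}(ii) applies to give $D(\la,\pm)\otimes D^{(n-1,1)}\cong D(\nu,0)$ while $E(\la,0)\otimes E^{(n-1,1)}$ splits.

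The main obstacle is the middle paragraph: proving that in the $p\mid n$, $\la\in JS(i>0)$ configuration, at least two genuinely non-isomorphic supermodule constituents appear in $D(\la)\otimes D^{(n-1,1)}$. This demands careful tracking of which conormal residues of $\widetilde e_i\la$ contribute, using Lemmas \ref{Lef} and \ref{L10} not only to produce distinct labels but also to rule out the "two type-M constituents" exception in Lemma \ref{L11}(ii).
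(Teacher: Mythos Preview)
Your overall strategy matches the paper's: use Lemmas \ref{L8} and \ref{L9} to compute $[D(\la)\otimes M_1:D(\la)]$, solve the resulting constraint with Lemma \ref{L10}, then identify the tensor product via Lemmas \ref{Lemma39s}, \ref{Lemma40s} and \ref{L11}. The arithmetic you do in the first paragraph is correct and coincides with the paper.

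There is, however, a genuine gap in your treatment of the case $p\mid n$, $\la\in\JS(i)$ with $i\geq 1$, $\phi_i(\la)=0$. Producing one composition factor $D(\mu)\not\cong D(\la)$ in $D(\la)\otimes D^{(n-1,1)}$ does \emph{not} violate Lemma \ref{L11}. Concretely, after the block argument one has
\[D(\la)\otimes D^{(n-1,1)}\cong\bigoplus_{j\geq 1,\,j\neq i}\bigl(f_jD(\widetilde e_i\la)\bigr)^{\oplus 2}\oplus\bigl(f_0D(\widetilde e_i\la)\bigr)^{\oplus c},\]
and if it happens that $\phi_j(\widetilde e_i\la)=0$ for all $j\neq 0,i$ while $\phi_0(\widetilde e_i\la)=1$, then $D(\la)\otimes D^{(n-1,1)}\cong D(\widetilde f_0\widetilde e_i\la)^{\oplus c}$. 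This single supermodule is either simple (type M case) or two copies of a type-M simple (type Q case), and in either situation Lemma \ref{L11} \emph{allows} some $V\otimes D^{(n-1,1)}\da_G$ to be irreducible. So Lemmas \ref{Lef} and \ref{L10} alone cannot close this case. The paper instead accepts this surviving configuration, applies Lemma \ref{L10} to $\la$ itself (getting either $\phi_0(\la)=3$ or $\phi_0(\la)=\phi_j(\la)=1$ for some $j\neq 0,i$), and then uses Lemma \ref{L051218_3} together with Lemmas \ref{L12} and \ref{L13} to show that $\widetilde f_0\la$ or $\widetilde f_j\la$ would have to be $\JS(0)$ or $\JS(k)$ in a way incompatible with $p\mid n$. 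These two subcase arguments are the missing ingredient.

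There is also a smaller gap in your positive direction. You apply Lemma \ref{L10} to $\la$ to obtain $\sum_{j\geq 1}\phi_j(\la)=1$, but what controls the decomposition of $D(\widetilde e_0\la)\ua^{\widetilde\s_n}$ is the conormal structure of $\widetilde e_0\la$, not of $\la$; knowing $\phi_j(\la)=1$ does not immediately give $\phi_j(\widetilde e_0\la)=1$ nor that $\phi_k(\widetilde e_0\la)=0$ for the remaining $k\geq 1$. The paper fills this by citing \cite[Lemma 3.8]{p}, which gives $\widetilde e_0\la\in\JS(1)$, and then applies Lemma \ref{L10} to $\widetilde e_0\la$ (together with $\phi_0(\widetilde e_0\la)=1$) to conclude that there is a unique $j\geq 1$ with $\phi_j(\widetilde e_0\la)=1$, after which the identification of $\nu$ proceeds as you describe.
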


\begin{proof}
Let $c:=1$ if $D(\la)$ is of type M or $c:=2$ if $D(\la)$ is of type Q. Assume that $V\otimes D^{(n-1,1)}\da_G$ is simple. We will use Lemmas \ref{Lemma39s} and \ref{Lemma40s} without further notice.

{\bf Case 1.} $n\equiv 0\Md p$. From Lemma \ref{L10} we have that $\eps_0(\la)+\phi_0(\la)$ is odd. So by Lemmas \ref{L9} and \ref{L8} we have that $\la\in JS(i)$ and $\phi_i(\la)=0$ for some $i\geq 1$. Note that
\begin{align*}
D(\la)\otimes M_1&\cong (f_ie_iD(\la))^{\oplus 2}\oplus\sum_{j\geq 1:j\not=i}(f_je_iD(\la))^{\oplus 2}\oplus (f_0e_iD(\la))^{\oplus c}\\
&\cong D(\la)^{\oplus 2}\oplus\sum_{j\geq 1:j\not=i}(f_jD(\widetilde{e}_i\la))^{\oplus 2}\oplus (f_0D(\widetilde{e}_i\la))^{\oplus c}.
\end{align*}
It then follows from Lemma \ref{M1} and considering block decomposition that
\[D(\la)\otimes D^{(n-1,1)}\cong \sum_{j\geq 1:j\not=i}(f_jD(\widetilde{e}_i\la))^{\oplus 2}\oplus (f_0D(\widetilde{e}_i\la))^{\oplus c}.\]

By Lemma \ref{L11} it follows that if $D(\la)$ is of type Q then it needs to have exactly two composition factors of type M, while if $D(\la)$ is of type M then $D(\la)\otimes D_1$ is irreducible as supermodule. In either case $\phi_i(\widetilde{e}_i\la)=\phi_0(\widetilde{e}_i\la)=1$ and $\phi_j(\widetilde{e}_i\la)=0$ for $j\not=0,i$.

In particular $D(\la)\otimes M_1\cong D(\la)^{\oplus 2}\oplus D(\widetilde{f}_0\widetilde{e}_i\la)^{\oplus c}$. Notice also that from Lemma \ref{L10} either $\phi_0(\la)=3$ and $\phi_k(\la)=0$ else or there exists $j\not=0,i$ such that $\phi_0(\la)=\phi_j(\la)=1$ and $\phi_k(\la)=0$ else.

{\bf Case 1.1.} $\phi_0(\la)=3$ and $\phi_j(\la)=0$ else.

From Lemma \ref{L051218_3}
\[D(\widetilde{f}_0\widetilde{e}_i\la)^{\oplus c}\cong\Ind_0\Res_i D(\la)\cong \Res_i\Ind_0 D(\la)\cong \Res_i f_0 D(\la)\]
and
\[0=\Ind_0\Res_j D(\la)\cong \Res_j\Ind_0 D(\la)\cong \Res_j f_0 D(\la)\]
for $j\not=0,i$. Since $c\leq 2<[f_0 D(\la):D(\widetilde f_0\la)]=\phi_0(\la)=3$, it follows that $\widetilde{f}_0$ has only normal nodes of residue 0 and then $\widetilde{f}_0\la\in JS(0)$, since $\eps_0\la=0$. Since $\phi_0(\widetilde{f}_0\la)=2$ we have from Lemma \ref{L12} that $n+1\equiv 0\Md p$, leading to a contradiction.

{\bf Case 1.2.} There exists $j\not=0,i$ such that $\phi_0(\la)=\phi_j(\la)=1$ and $\phi_k(\la)=0$ else. In this case by Lemma \ref{L051218_3}
\[\Res_iD(\widetilde f_j\la)^{\oplus c}\cong \Res_i\Ind_j D(\la)\cong \Ind_j\Res_i D(\la)\cong \Ind_j D(\widetilde e_i\la)^{\oplus c}=0\]
and
\[\Res_kD(\widetilde f_j\la)^{\oplus c}\cong \Res_k\Ind_j D(\la)\cong \Ind_j\Res_k D(\la)=0\]
for $k\not=i,j$. So all normal nodes of $\widetilde f_j\la$ have residue $j$. Since $\epsilon_j(\la)=0$ we then have that $\widetilde{f}_j\la\in JS(j)$, which by Lemma \ref{L13} contradicts $n\equiv 0\Md p$.

{\bf Case 2.} $n\not\equiv 0\Md p$. In this case $\la\in JS(0)$ and $\phi_0(\la)=0$ by Lemmas \ref{L9} and \ref{L8}. From Lemma \ref{L12} this is equivalent to $\la\in JS(0)$ since $n\not\equiv 0\Md p$. Notice that
\[D(\la)\otimes M_1\cong f_0e_0D(\la)\oplus\sum_{j\geq 1}(f_je_0D(\la))^{\oplus c}\cong D(\la)\oplus\sum_{j\geq 1}(f_jD(\widetilde{e}_0\la))^{\oplus c}.\]
From Lemma \ref{M1} it follows that
\[D(\la)\otimes D^{(n-1,1)}\cong\sum_{j\geq 1}(f_jD(\widetilde{e}_0\la))^c.\]
From \cite[Lemma 3.8]{p} $\widetilde{e}_0\la\in JS(1)$. Further $\phi_0(\widetilde{e}_0\la)=1$. So from Lemma \ref{L10} there exists $j\geq 1$ with $\phi_0(\widetilde{e}_0\la),\phi_j(\widetilde{e}_0\la)=1$ and $\phi_k(\widetilde{e}_0\la)=0$ for $k\not=0,j$. If $D(\la)$ is of type M then $D(\la)\otimes D^{(n-1,1)}\cong D(\widetilde{f}_j\widetilde{e}_0\la)$ and $D(\widetilde{f}_j\widetilde{e}_0\la)$ is of type Q. If $D(\la)$ is of type Q then $D(\la)\otimes D^{(n-1,1)}\cong D(\widetilde{f}_j\widetilde{e}_0\la)^{\oplus 2}$ and $D(\widetilde{f}_j\widetilde{e}_0\la)$ is of type M.

Note that $\widetilde{e}_0\la=\la\setminus A$, since $\la$ is JS(0) and the bottom addable node is always normal. Then $A$ is the bottom addable node of $\widetilde{e}_0\la$ and it is the conormal node of $\widetilde e_0\la$ of residue 0. Since $n\geq 4$ and $\la$ is JS(0) we have that $h(\la)\geq 2$. If $B$ is the top addable node of $\la$ then it is also the top addable node of $\widetilde e_0\la$. Since the top addable node is always conormal, it follows that $\widetilde{f}_j\widetilde{e}_0\la=(\la\setminus A)\cup B$. The theorem then follows from Lemma \ref{L11}.
\end{proof}

\subsection{Tensor products of basic spin and hooks}

\begin{theor}\label{THB}
Let $p\geq 3$. Let $G=\widetilde\s_n$ or $\widetilde \A_n$. Assume that $V$ is indexed by an element of $\h_p(n)$ and that $W$ is basic spin. If $V$ and $W$ are not 1-dimensional and $V\otimes W$ is irreducible, then one of the following holds:
\begin{enumerate}[-]
\item
$p\not=5$, $G=\widetilde \A_5$, $V\cong E^{(3,1^2)}_\pm$ and $W\cong E(\be_5,\pm)$, in which case two of the corresponding tensor products are irreducible and isomorphic to $E((4,1),0)$, while the other two tensor products are not irreducible.

\item
$p=3$, $G=\widetilde \A_6$, $V\cong E^{(4,1^2)}_\pm$ and $W\cong E((3,2,1),\pm)$, in which case two of the corresponding tensor products are irreducible and isomorphic to $E((4,2),\pm)$, while the other two tensor products are not irreducible.
\end{enumerate}
In the exceptional cases, if $\chi_V$ and $\chi_W$ are the characters of $V$ and $W$, we have that $V\otimes W$ is irreducible if and only if $(\chi_V\chi_W)\widetilde{(1,2,3,4,5)}=1$.
\end{theor}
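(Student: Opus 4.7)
The approach is to use the standard adjunction
\[\dim\End_G(V\otimes W) = \dim\Hom_G(\End_F(V), \End_F(W))\]
combined with Lemma \ref{L22}, which describes $\End_F(W)\cong W\otimes W^*$ explicitly as a direct sum of hook modules $\overline{D}_\ell$ or $\overline{E}_{\ell(,\pm)}$ (each appearing with known multiplicity depending on the type M or Q of the basic spin module $W$). Hence irreducibility of $V\otimes W$ forces $\End_F(V)$ to have very few hook composition factors in common with $\End_F(W)$.

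Since $V$ itself is a hook $\overline{D}_k$, its tensor square $\End_F(V)\cong V\otimes V^*$ can be decomposed using the Littlewood--Richardson rule for $S_{1^k}\otimes S_{1^k}$ together with the modular structure of hook Specht modules in Lemma \ref{LH} and the self-duality $\overline{D}_\ell\cong \overline{D}_{n-c-\ell}\otimes\sgn$ from Lemma \ref{L20}. I would show that for $n$ large enough the hook content of $\End_F(V)$ overlaps with that of $\End_F(W)$ in at least two distinct modules (of compatible type), giving $\dim\End_G(V\otimes W)\geq 2$; together with Lemmas \ref{L21a} and \ref{L50} accounting for the type Q versus type M subtleties, this rules out irreducibility outside a bounded range of small $n$.

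For the remaining small values of $n$ (roughly $n\leq 7$, with the triple cover cases $n=6,7$ handled as in the introduction via \cite{Atl}), I would enumerate the admissible pairs $(V,W)$ by matching dimensions. This isolates exactly the two exceptional families listed in the theorem. In each family a direct computation shows that two out of the four sign combinations yield a tensor product of the right dimension to be irreducible and isomorphic to the claimed $E((4,1),0)$ respectively $E((4,2),\pm)$, while the other two cannot be irreducible.

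Finally, to distinguish the two irreducible sign choices from the two reducible ones, I would evaluate on the 5-cycle lift $\widetilde{(1,2,3,4,5)}$. Because this element has order $5$ and acts on spin modules with trace a sum of $5$th roots of unity, the product $\chi_V\chi_W$ takes four distinct values as the two signs vary; matching these against the character value $1$ of the putative irreducible tensor product at this element picks out exactly the two irreducible cases, yielding the criterion $(\chi_V\chi_W)\widetilde{(1,2,3,4,5)}=1$. The main obstacle will be the combinatorial bookkeeping of hook composition factors in $V\otimes V^*$ when $p\mid n$, since Lemma \ref{LH} makes the hook Specht modules non-semisimple and the naive Littlewood--Richardson decomposition then needs correction on a block-by-block basis; a second nuisance is that the type of each hook summand of $\End_F(W)$ depends delicately on the parities in Lemma \ref{L22}, so the matching argument must be carried out separately in each of the four subcases of that lemma.
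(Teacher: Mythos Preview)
Your approach is genuinely different from the paper's, and the paper's route is considerably more direct. Rather than comparing $\End_F(V)$ with $\End_F(W)$, the paper analyses $V\otimes W$ itself. Using Stembridge's formula \cite[Theorem 9.3]{s} it computes $[S_{1^k}\otimes S((n))]$ as a sum of two-part spin modules $S((n-j,j))$; together with Lemmas \ref{LBS} and \ref{LH} this gives $[\overline{D}_k\otimes D(\be_n)]$ explicitly. The key observation is that $D(\be_n)$ itself always occurs as a composition factor of $\overline{D}_k\otimes D(\be_n)$ (for $p\mid n$ via \cite[Table IV]{Wales}), and since $\overline{D}_k$ is not one-dimensional this immediately forces reducibility for $\widetilde\s_n$ and for the non-split cases over $\widetilde\A_n$. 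Only the split case $k=(n-c)/2$ over $\widetilde\A_n$ needs further work, and there the paper uses a pure dimension argument: the possible composition factors are constrained to come from two-part spin modules, and one checks $\dim(\overline{E}_{k,\pm}\otimes E(\be_n,\pm))>d_j$ for all candidate dimensions $d_j$ once $n$ is large enough. The residual small cases are $n\leq 12$, not $n\leq 7$, and are handled by decomposition matrices.

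Your route, by contrast, requires controlling the hook composition factors of $\overline{D}_k\otimes\overline{D}_k$. This is not as accessible as you suggest: the Littlewood--Richardson decomposition of $S^{(n-k,1^k)}\otimes S^{(n-k,1^k)}$ involves many non-hook Specht modules, and to extract which $\overline{D}_\ell$ appear (and, crucially, which lie in the \emph{socle} of $\End_F(V)$, since that is what $\Hom_G(\End_F(V),\End_F(W))$ detects when $\End_F(W)$ is semisimple) one would need decomposition numbers for all those non-hook Specht modules in characteristic $p$. You flag the $p\mid n$ case as the main nuisance, but even when $p\nmid n$ this socle computation is not provided by Lemma \ref{LH} alone. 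The paper's argument avoids all of this by never touching $\End_F(V)$: the single composition factor $D(\be_n)\subseteq V\otimes W$ does the work.
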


\begin{proof}
For $n\leq 12$ the theorem can be proved by looking at decomposition matrices. So we may assume that $n>12$.

We may assume that $k<n/2$. From \cite[Theorem 9.3]{s},
\[[S_{1^k}\otimes S((n))]=[S((n))]+\sum_{1\leq j\leq k}d[S((n-j,j))],\]
where $d=1$ if $n$ is odd and $d=2$ if $n$ is even.

In particular, using Lemmas \ref{LBS} and \ref{LH} and induction on $k$ if $n\equiv 0\Md p$,
\begin{enumerate}[-]
\item
if $n\not\equiv 0\Md p$ then 
\[[\overline{D}_{k}\otimes D(\be_n)]=[D(\be_n)]+\sum_{1\leq j\leq k}d[S((n-j,j))],\]

\item
if $n\equiv 0\Md p$ and $k$ is even then
\[[\overline{D}_{k}\otimes D(\be_n)]=[D(\be_n)]+\sum_{1\leq j\leq k/2}[S((n-2j,2j))],\]

\item
if $n\equiv 0\Md p$ and $k$ is odd then
\[[\overline{D}_{k}\otimes D(\be_n)]=\sum_{0\leq j\leq (k-1)/2}[S((n-2j-1,2j+1))].\]
\end{enumerate}

When $n\equiv 0\Md p$ then $D(\be_n)$ is a composition factor of $S((n-1,1))$ by \cite[Table IV]{Wales}. So $D(\be_n)$ is always a composition factor of $\overline{D}_{k}\otimes D(\be_n)$ (as supermodule). Since $\overline{D}_{k}\otimes D(\be_n,\delta)$ is irreducible if and only if $\overline{D}_{k}\otimes D(\be_n,-\delta)$ is irreducible and since $\overline{D}_{k}$ is not 1-dimensional, it follows that $\overline{D}_{k}\otimes D(\be_n,\delta)$ is not irreducible. Similarly if $k\not=(n-c)/2$ then $\overline{E}_{k}\otimes E(\be_n,\delta')$ is not irreducible.

So assume now that $k=(n-c)/2$. Note that in this case either $n$ is odd with $n\not\equiv 0\Md p$ or $n$ is even with $n\equiv 0\Md p$, so $D(\be_n)$ is of type M and then $\delta'=\pm$. By Lemmas \ref{LBS} and \ref{LH} we have
\[\dim((\overline{E}_{k})_\pm\otimes E(\be_n,\pm))\hspace{-0.5pt}=\hspace{-0.5pt}\frac{1}{2}\binom{n-c}{(n-c)/2}2^{(n-c-2)/2}\hspace{-0.5pt}=\hspace{-0.5pt}2^{(n-c-4)/2}\binom{n-c}{(n-c)/2}\hspace{-0.5pt}.\]
Let $d_j$ be the dimension of any simple spin module of $\widetilde \A_n$ indexed by $(n-j,j)$ in characteristic 0. For $1\leq j\leq k$ we have
\[d_j=\frac{1}{2}\dim S((n-j,j))=2^{(n-c-2)/2}\frac{n-2j}{n-j}\binom{n-1}{j}.\]
Note that if $(\overline{E}_k)_\pm\otimes E(\be_n,\pm)$ is irreducible then it is not isomorphic to $E(\be_n,\pm)$ (since $(\overline{E}_k)_\pm$ is not 1-dimensional). In order to prove that $(\overline{E}_k)_\pm\otimes E(\be_n,\pm)$ is not irreducible it is then enough to prove that $\dim((\overline{E}_{k})_\pm\otimes E(\be_n,\pm))>d_j$ for any $1\leq j\leq k$. If $n$ is even note that
\[2\binom{n-2}{(n-2)/2}=
\frac{n}{n-1}\binom{n-1}{(n-2)/2}>\binom{n-1}{(n-2)/2}.\]
So it is enough to prove that
\[\binom{n-1}{\lfloor(n-1)/2\rfloor}=\binom{n-1}{(n-c)/2}>2^c\frac{n-2j}{n-j}\binom{n-1}{j}\]
for any $1\leq j\leq k=\lfloor(n-1)/2\rfloor$.

If $j>3/7n$ then $4(n-2j)/(n-j)<1$ and so the above inequality clearly holds. So we may assume that $j\leq 3/7n$. In this case it is enough to prove that
\[\frac{\binom{n-1}{\lfloor(n-1)/2\rfloor}}{\binom{n-1}{j}}=\prod_{i=j+1}^{\lfloor (n-1)/2\rfloor}\frac{n-i}{i}>4.\]
It is enough to prove this for $j=\lfloor 3/7n\rfloor$. If $n\geq 152$ then
\[\prod_{i=\lfloor 3/7n\rfloor+1}^{\lfloor (n-1)/2\rfloor}\frac{n-i}{i}\geq \prod_{a=1}^{\lfloor (n-1)/2\rfloor-\lfloor 3/7n\rfloor}\frac{4/7n-a}{3/7n+a}\geq \prod_{a=1}^9\frac{4/7n-a}{3/7n+a}>4.\]
 Using the above formulas, it can be checked that for $n\leq 151$ and $n\leq 3/7n$ we have $\dim((\overline{E}_{n,k})_\pm\otimes E(\be_n,\pm))>d_j$, unless possibly if $n\leq 20$ is even with $n\equiv 0\Md p$. In these cases notice that it is enough to prove that $\dim((\overline{E}_{n,k})_\pm\otimes E(\be_n,\pm))>d_j$ for $j$ odd if $n\equiv 0\Md 4$ or for $j$ even if $n\equiv 2\Md 4$, which again can be checked using the above formulas since we are assuming $n>12$.
\end{proof}

\subsection{Tensor products of basic spin and two rows partitions}

\begin{theor}\label{T011119}
Let $p\geq 3$ and $G\in\{\widetilde\s_n,\widetilde \A_n\}$. Let $V$ be a simple non-spin module indexed by $\la\in\Par_p(n)$ with $\min\{h(\la),h(\la^\Mull)\}=2$ and $W$ be basic spin. If $V\otimes W$ is irreducible then $\la$ is JS and $n\not\equiv 0,\pm 2\Md p$. Further in this case:
\begin{enumerate}[-]
\item if $G=\widetilde\s_n$ and $n$ is even then $V\otimes W\cong D(\mu,0)$ is irreducible with $\mu=\be_{\la_1}+\be_{\la_2}$ if $\la_1\not=\la_2$ or $\mu=\be_{n/2+1}\cup\be_{n/2-1}$ if $\la_1=\la_2$,

\item if $G=\widetilde\s_n$ and $n$ is odd then $V\otimes W$ is not irreducible,

\item if $G=\widetilde \A_n$ and $n$ is even then $V\otimes W$ is not irreducible,

\item if $G=\widetilde \A_n$ and $n$ is odd then $V\otimes W\cong E(\mu,0)$ is irreducible with $\mu=\be_{\la_1}+\be_{\la_2}$ if $\la_1\not=\la_2+p-2$ or $\mu=\be_{\la_1}\cup\be_{\la_2}$ if $\la=\la_2+p-2$.
\end{enumerate}
\end{theor}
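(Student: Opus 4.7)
The strategy is to study the endomorphism-ring adjunction
\[
\End_{FG}(V\otimes W)\cong\Hom_{FG}(\End_F(V),\End_F(W))
\]
against the description of $\End_F(W)$ given by Lemma \ref{L22}: for basic spin $W$, $\End_F(W)$ is a multiplicity-free direct sum of hook modules $\overline D_k$ (and the analogous modules on the $\widetilde\A_n$ side). Assume without loss of generality that $h(\la)=2$, since the other half is recovered by the Mullineux/$\sgn$ twist. Under this setup, irreducibility of $V\otimes W$ becomes a very tight restriction on the hook summands of $\End_F(V)$, which we then combine with the supermodule type analysis of Lemmas \ref{L21a} and \ref{L50}.

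For the ``only if'' direction I eliminate the forbidden cases in turn. If $\la\in\h_p(n)$ the conclusion is the content of Theorem \ref{THB}, so assume $\la\notin\h_p(n)$. In the regime $n\equiv 0\pmod p$, the three Lemmas \ref{L41}, \ref{L42}, \ref{L43} together exhaust all two-row $p$-regular $\la$ and produce inside $\End_F(V)$ either $\overline D_0\oplus\overline D_2$ or $\overline D_0\oplus\overline D_1\oplus\overline D_3$; combining with the hook description of $\End_F(W)$ this forces $\dim\Hom_{FG}\ge 2$, and a type check rules out irreducibility. In the regime $n\not\equiv 0\pmod p$ with $\la$ not JS, Lemma \ref{L39} delivers the three summands $\overline D_0,\overline D_1,\overline D_3$ in $\End_F(V)$, and the same counting suffices. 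The remaining subcase, $\la$ JS with $n\equiv\pm 2\pmod p$, is the most delicate: the JS congruence forces $\la_1-\la_2\equiv -2\pmod p$, and I propose to adapt the dimension comparison used in the proof of Lemma \ref{L44} to extract an additional hook summand ($\overline D_1$ or $\overline D_2$) in $\End_F(V)$ arising from the accidental coincidence of residues at the two removable nodes.

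For the ``if'' direction, assume $\la=(\la_1,\la_2)$ is JS with $n\not\equiv 0,\pm 2\pmod p$. Lemma \ref{L44} asserts that the only hook summands of $\End_F(V)$ are $\overline D_0$ and $\overline D_3$, giving $\dim\Hom_{FG}(\End_F(V),\End_F(W))\le 2$; the type bookkeeping of Lemma \ref{L50} then produces an irreducible $V\otimes W$ on $\widetilde\s_n$ for $n$ even and on $\widetilde\A_n$ for $n$ odd, and a properly reducible tensor product in the other two parities. To identify the labelling partition $\mu$ I lift to characteristic zero, where the decomposition of $S^\la\otimes S((n))$ for two-row $\la$ was computed in \cite{b2,bk4} and factors through a unique basic-spin contribution indexed by the two rows of $\la$. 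Reducing modulo $p$ via Lemma \ref{L54} and applying Lemma \ref{L55}, the JS hypothesis translates into the gap $\la_1-\la_2\in\{0\}\cup\{kp-2:k\ge 1\}$; together with $n\not\equiv\pm 2\pmod p$ this forces either $\la_1=\la_2$, yielding $\mu^R=\be_{n/2+1}\cup\be_{n/2-1}$, or $\la_1\ge\la_2+p+\de_{p\mid\la_2}$, yielding $\mu^R=\be_{\la_1}+\be_{\la_2}$. The $\widetilde\A_n$ statements follow by restriction, with the shifted dichotomy $\la_1=\la_2+p-2$ versus $\la_1\ne\la_2+p-2$ reflecting the loss of the bottom JS node of residue $0$ after restriction.

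\textbf{The main obstacle.} The JS subcase with $n\equiv\pm 2\pmod p$ above is the principal hurdle: neither Lemma \ref{L44} nor Lemma \ref{L39} applies directly, so the extra hook summand has to be extracted by a dedicated branching computation comparing $V\da_{\s_{n-2,2}}$ with $V\da_{\s_{n-2}}$ in the specific residue configuration forced by JS plus the congruence on $n$. The combinatorial identification of $\mu^R$ via Lemma \ref{L55}, while delicate, is essentially mechanical once the gap condition has been derived from the JS hypothesis.
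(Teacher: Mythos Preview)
Your overall architecture matches the paper's, but the handling of the JS case when $n\equiv 0,\pm 2\pmod p$ is where your proposal diverges, and there it has a genuine gap.

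First, Lemmas \ref{L41}, \ref{L42}, \ref{L43} do \emph{not} exhaust the two-row partitions when $n\equiv 0\pmod p$: Lemma \ref{L41} explicitly excludes $\la_1-\la_2\equiv -2\pmod p$, and Lemma \ref{L43} (which is the same as \ref{L42}) requires $\la_1>\la_2$ and $\la_1-\la_2\equiv 0,-1\pmod p$. So precisely the JS cases ($\la_1=\la_2$ or $\la_1-\la_2\equiv -2\pmod p$) slip through. Your case split therefore leaves ``$\la$ JS with $n\equiv 0\pmod p$'' entirely unaddressed.

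Second, and more seriously, your proposed approach for the JS subcase with $n\equiv\pm 2\pmod p$ --- namely, extracting a further hook summand $\overline D_1$ or $\overline D_2$ in $\End_F(V)$ --- cannot work for $p\geq 5$: Lemma \ref{L44} says precisely that for JS two-row $\la$ the \emph{only} hook submodules of $\End_F(D^\la)$ are $\overline D_0$ and $\overline D_3$. There is no extra summand to find. The paper does not try to push $\dim\Hom$ above $2$ here. Instead it accepts $\dim\Hom=2$ and invokes the type criterion of Lemma \ref{L50}: one must show that $D^\la\otimes D(\be_n)$ has a composition factor of the \emph{same} type as $D(\be_n)$. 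This is done by a block argument: using Stembridge's formula one isolates a composition factor $D(\mu)$ coming from $S(\nu)$ with $\nu=\la$ (or $(\la_1+1,\la_1-1)$ if $\la_1=\la_2$), and then checks that when $n\equiv 0$ or $\pm 2\pmod p$ one part of $\nu$ is divisible by $p$, forcing $S(\nu)$ into the same block as $S((n))$ and hence $D(\mu)$ to have the same type as $D(\be_n)$. This block/type step is the missing idea in your proposal.

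For the identification of $\mu$ in the irreducible cases, the paper does not rely solely on Lemma \ref{L55} but quotes \cite[Theorems 1.2, 1.3]{m4} to guarantee a composition factor $D(\mu)$ of $S(\nu)$ that does not occur in any $S((\pi_1,\pi_2))$ with $\pi_1>\nu_1$; your sketch via \cite{b2,bk4} and Lemma \ref{L55} would need this non-occurrence statement made explicit.
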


\begin{proof}
For $n\leq 9$ the theorem can be proved looking at decomposition matrices. So assume that $n\geq 10$. Note that $V\cong D^\la\da_G$ by \cite[Lemma 1.8]{ks2}. Further we may assume that $h(\la)=2$. In view of Theorem \ref{T2} we may also assume that $\la_2\geq 2$ (since $(n-1,1)$ is JS if and only if $n\equiv 0\Md p$). Note that in this case $\la\not\in\h_p(n)$ (the case $p=3$ and $\la=(n)^\Mull$ is excluded by assumption). It is easy to see that $\la$ is JS if and only if $\la_1=\la_2$ or $\la_1-\la_2\equiv -2\Md p$.

Let $W'=D(\be_n)$ or $E(\be_n)$ (depending on $G$). Further from Lemmas \ref{L22} and \ref{L33} we have that $\overline{D}_0\oplus \overline{D}_2\subseteq\End_F(W)$ and $\overline{D}_0\oplus \overline{D}_1\oplus \overline{D}_2\oplus\overline{D}_3\subseteq\Hom_F(W',W)$.

If $\la$ is not JS then we have that $\overline{D}_0\oplus \overline{D}_2$ or $\overline{D}_0\oplus\overline{D}_1\oplus \overline{D}_3$ is contained in $\End_F(V)$ from Lemmas \ref{L39}, \ref{L41} and \ref{L42}. It follows that
\[\dim\Hom_G(\End_F(V),\End_F(W))\geq 2\]
or
\[\dim\Hom_G(\End_F(V),\Hom_F(W',W))\geq 3.\]
So $V\otimes W$ is not irreducible (in the second case by Lemma \ref{L21a}).

So assume now that $\la$ is JS. In view of Lemmas \ref{LH} and \ref{L3} we have that $\overline{D}_0\oplus \overline{D}_2$ or $\overline{D}_0\oplus\overline{D}_3$ is contained in $\End_F(V)$. So
\[\dim\Hom_G(\End_F(V),\Hom_F(W',W))\geq 2.\]
So by Lemmas \ref{L21a} and \ref{L50} if $G=\widetilde\s_n$ then $D^\la\otimes D(\be_n,\de)$ is irreducible if and only if $D(\be_n)$ is of type Q, $D^\la\otimes D(\be_n)$ has a composition factor of type M and
\[\dim\Hom_{\widetilde\s_n}(\End_F(D^\la),\Hom_F(D(\be_n),D(\be_n,\de)))=2.\]
Similarly if $G=\widetilde \A_n$ then $E^\la\otimes E(\be_n,\de')$ is irreducible if and only if $D(\be_n)$ is of type M, $D^\la\otimes D(\be_n)$ has a composition factor of type Q and
\[\dim\Hom_{\widetilde \A_n}(\End_F(E^\la),\Hom_F(E(\be_n),E(\be_n,\de')))=2.\]
On the other hand if $D^\la\otimes D(\be_n)$ has a composition factor of the same type as $D(\be_n)$ then $V\otimes W$ is not irreducible.

Note that $[D^\la]=[S^\la]+\sum_{j<\la_2}d_j[S^{(n-j,j)}]$ for some $d_j\in\Z$ with $d_j\not=0$ only if $D^\la$ and $D^{(n-j,j)}$ are in the same block by \cite[Corollary 12.2]{JamesBook}. Further $[D(\be_n)]=d[S((n))]$ for some $d>0$ by Lemma \ref{LBS}.

If $\la_1=\la_2$ then $D^\la$ and $D^{(\la_1+1,\la_1-1)}$ are in different blocks and so by \cite[Theorem 9.3]{s} 
\[[D^\la\otimes D(\be_n)]=c[S((\la_1+1,\la_1-1))]+\sum_{j<\la_1-1}c_j[S((n-j,j))]\]
for some $c,c_j\in\Q$ with $c>0$. In this case let $\nu:=(n/2+1,n/2-1)=(\la_1+1,\la_2-1)$.

If $\la_1>\la_2$ then
\[[D^\la\otimes D(\be_n)]=c[S(\la)]+\sum_{j<\la_2}c_j[S((n-j,j))]\]
for some $c,c_j\in\Q$ with $c>0$. In this case let $\nu:=\la$. Note that $\la_1\geq \la_2+p-2$. Further if $p=3$ then by assumption $\la_1-\la_2\geq 4$.

From \cite[Theorems 1.2, 1.3]{m4} there exists a composition factor $D(\mu)$ of $S(\nu)$ which is not a composition factor of $S((\pi_1,\pi_2))$ for $(\pi_1,\pi_2)\in\RP_0(n)$ with $\pi_1>\nu_1$. Then $D(\mu)$ is a composition factor of $D^\la\otimes D(\be_n)$.

{\bf Case 1:} $n\equiv 0\Md p$. In this case any composition factor of $S((n-j,j))$ with $j< n/2$ is in the same block as $D(\be_n)$, so they have the same type and then $V\otimes W$ is not irreducible in this case.

{\bf Case 2:} $n\equiv \pm 2\Md p$. In this case it can be checked that if $\la_1=\la_2$ then one part of $(n/2+1,n/2-1)$ is divisible by $p$, while if $\la_1>\la_2$ then one part of $\la$ is divisible by $p$ (since in this case $\la_1-\la_2\equiv p-2\Md p$). So $S(\nu)$ is in the same block as $S((n))$ and then again $V\otimes W$ is not irreducible.

{\bf Case 3:} $n\not\equiv 0,\pm 2\Md p$. In this case $p\geq 5$ and so Lemmas \ref{LH}, \ref{L20}, \ref{L44} and \ref{L22} 
\begin{align*}
\dim\Hom_{\widetilde\s_n}(\End_F(D^\la),\Hom_F(D(\be_n),D(\be_n,\de)))&=2,\\
\dim\Hom_{\widetilde \A_n}(\End_F(E^\la),\Hom_F(E(\be_n),E(\be_n,\de')))&=2.
\end{align*}
Further if $\la_1=\la_2$ then $p\nmid n/2\pm 1$, while if $\la_1>\la_2$ then $p\nmid \la_1,\la_2$. Since $n\not\equiv 0\Md p$ it can then be easily checked that $D(\mu)$ and $D(\be_n)$ are of different type. So $D^\la\otimes D(\be_n,\de)$ is irreducible if and only if $n$ is even and in this case $D^\la\otimes D(\be_n,\de)\cong D(\mu,0)$. Similarly $E^\la\otimes E(\be_n,\de')$ is irreducible if and only if $n$ is even and in this case $E^\la\otimes E(\be_n,\de')\cong E(\mu,0)$. The theorem then follows from \cite[Theorems 1.2, 1.3]{m4} to identify $\mu$.
\end{proof}

\subsection{Tensor products of basic spin and three rows partitions}

\begin{theor}\label{tbs3r}
Let $p=3$ and $G\in\{\widetilde\s_n,\widetilde \A_n\}$. Let $\la\in\Par_p(n)\setminus\h_3(p)$ with $\min\{h(\la),h(\la^\Mull)\}=3$, $V$ be a simple non-spin module indexed by $\la$ and $W$ be basic spin. Then $V\otimes W$ is not irreducible.
\end{theor}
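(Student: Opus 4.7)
The plan is to show $\dim\Hom_G(\End_F(V),\End_F(W))\geq 2$, or, in the harder subcases, $\dim\Hom_G(\End_F(V),\Hom_F(D(\be_n),W))\geq 3$, either of which contradicts $V\otimes W$ being irreducible (the latter via Lemma~\ref{L21a}). Small $n$ (say $n\leq 12$) will be handled by inspection of decomposition matrices; for $n\geq 13$, using Lemma~\ref{L35} together with the isomorphism $\End_F(D^\la)\cong\End_F(D^{\la^\Mull})$, I may assume $h(\la)=3$.

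The two key inputs are Lemmas~\ref{L22} and~\ref{L33}, which give that $\End_F(W)$ is semisimple and contains $\overline{D}_0\oplus\overline{D}_2$ (with the obvious analogue for $\widetilde\A_n$), and Lemma~\ref{L3}, which provides a non-zero $\psi\colon S_{1^3}\to\End_F(V)$. When $3\mid n$, Lemma~\ref{LH} gives $S_{1^3}\cong \overline{D}_2\mid\overline{D}_3$ with socle $\overline{D}_2$, so $\overline{D}_2\subseteq\soc\End_F(V)$, and by self-duality $\overline{D}_2\subseteq\hd\End_F(V)$; together with the trivial summand this gives $\dim\Hom_G(\End_F(V),\End_F(W))\geq 2$, settling the cases where Lemma~\ref{L21a} requires only two constituents. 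When $3\nmid n$, Lemma~\ref{LH} instead gives $S_{1^3}\cong\overline{D}_3$, so $\overline{D}_3\subseteq\hd\End_F(V)$. A direct check of the JS condition for each of the shapes $(a,b,c)$ with $a>b>c$, $(a^2,b)$ and $(a,b^2)$ shows that every three-row partition in $\Par_3(n)$ is JS only when $3\mid n$ (indeed the JS constraints translate to $a-b\equiv b-c\equiv 1\pmod 3$ or $a\equiv b\pmod 3$, each forcing $3\mid n$). So for $3\nmid n$ the partition $\la$ is automatically non-JS; by Lemma~\ref{Lemma39}(v) one has $\dim\End_{\s_{n-1}}(V\da)\geq 2$, and combining with Lemmas~\ref{M1} and~\ref{l2} yields $D_1\subseteq\hd\End_F(V)$. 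The three pairwise distinct head constituents $\overline{D}_0,D_1,\overline{D}_3$, combined with $\Hom_F(D(\be_n),W)\supseteq\bigoplus_k\overline{D}_k$ from Lemma~\ref{L22}, produce $\dim\geq 3$ and contradict Lemma~\ref{L21a} in every type/cover combination.

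The hard part, and the main obstacle, is the subcase $3\mid n$ combined with $D(\be_n)$ of type Q on $\widetilde\s_n$ (i.e. $n\equiv 3\pmod 6$), or of type M on $\widetilde\A_n$ (i.e. $n\equiv 0\pmod 6$): here Lemma~\ref{L21a} permits a bound of $2$, so the constituents $\overline{D}_0,\overline{D}_2$ alone do not give a contradiction. For non-JS three-row $\la$ with distinct parts, Lemma~\ref{L56} already supplies a third constituent ($D_1$ or $\overline{D}_3$) in $\End_F(V)$. The remaining subcases — three-row JS partitions at $3\mid n$, and non-JS of shape $(a^2,b)$ or $(a,b^2)$ — will need a dedicated branching computation, either by analysing $\End_{\widetilde\s_{n-2,2}}(V\da)$ in the spirit of Lemma~\ref{L52} (using that the unique good node of a JS partition gives a precise description of $V\da_{\widetilde\s_{n-1}}$), or by refining Lemma~\ref{LM3} through the $p=3$ structure of $M_3$, in order to produce a third hook constituent in $\hd\End_F(V)$ beyond $\overline{D}_0$ and $\overline{D}_2$.
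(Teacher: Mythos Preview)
Your proposal has one genuine error, one unnecessary worry, and one real gap that the paper fills by a completely different method.

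\textbf{The error.} At $3\mid n$ you claim that a non-zero $\psi\colon S_{1^3}\to\End_F(V)$ forces $\overline{D}_2\subseteq\soc\End_F(V)$. It does not: $S_{1^3}\cong\overline{D}_2\mid\overline{D}_3$ is uniserial, and $\ker\psi$ may well be the socle $\overline{D}_2$, in which case only $\overline{D}_3\hookrightarrow\End_F(V)$. All Lemma~\ref{L3} gives you is that \emph{one} of $\overline{D}_2,\overline{D}_3$ lies in $\End_F(V)$. This is still enough for the ``easy'' type cases provided you pair it with $\Hom_F(W',W)$ (which by Lemma~\ref{L22} contains every $\overline{D}_k$) rather than with $\End_F(W)$, but your $\dim\Hom_G(\End_F(V),\End_F(W))\geq 2$ line as written is unjustified.

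\textbf{The unnecessary worry.} You list non-JS shapes $(a^2,b)$ and $(a,b^2)$ as a remaining subcase. At $p=3$ one has $(k)^\Mull=(\lceil k/2\rceil,\lfloor k/2\rfloor)$, so every three-row partition with two consecutive parts differing by at most $1$ lies in $\h_3(n)$. The hypothesis $\la\notin\h_3(n)$ therefore forces $\la_1\geq\la_2+2$ and $\la_2\geq\la_3+2$; in particular all parts are distinct and Lemma~\ref{L56} covers every non-JS case at $3\mid n$.

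\textbf{The real gap: the JS case.} For JS three-row $\la$ at $3\mid n$ you propose to hunt for a third hook constituent of $\End_F(V)$. The paper does not do this, and it is unclear that it can be done. Instead the paper uses Lemma~\ref{L50}: once $\overline{D}_0$ and one of $\overline{D}_2,\overline{D}_3$ sit in $\End_F(V)$, it suffices to exhibit a composition factor of $D^\la\otimes D(\be_n)$ having the \emph{same type} as $D(\be_n)$. To produce one, use Stembridge's formula to write $[D^\la\otimes D(\be_n)]=c[S(\la)]+\sum_{\mu\rhd\la}c_\mu[S(\mu)]$ with $c>0$; since JS forces $\la_1-\la_2\equiv\la_2-\la_3\equiv 1\pmod 3$, the gaps are at least $4$, so Lemma~\ref{L55} applies and $D(\la^R)$ with $\la^R=\be_{\la_1}+\be_{\la_2}+\be_{\la_3}$ is a composition factor of $D^\la\otimes D(\be_n)$. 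Finally, the JS congruences force one $\la_i\equiv 0\pmod 3$, so $S(\la)$ and $S((n))$ lie in the same block, whence $D(\la^R)$ and $D(\be_n)$ have the same type. This is the missing idea.
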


\begin{proof}
We may assume that $h(\la)=3$. Since $\la\not\in\h_3(n)$ we then have that $\la=(\la_1,\la_2,\la_3)$ with $\la_1\geq\la_2+2$, $\la_2\geq\la_3+2$ and $\la_3\geq 1$. In particular $n\geq 9$. Further it is easy to check that $\la\not=\la^\Mull$, so $V\cong D^\la\da_G$.

If $W'=D(\be_n)$ if $G=\widetilde\s_n$ or $W'=D(\be_n)$ if $G=\widetilde \A_n$ then
\[\overline{D}_0\oplus \overline{D}_1\oplus\overline{D}_2\oplus\overline{D}_3\subseteq\Hom_F(W',W)\]
by Lemma \ref{L22}. If $\la$ is not JS then
\[\overline{D}_0\oplus \overline{D}_1\oplus\overline{D}_k\subseteq\End_F(V)\]
with $2\leq k\leq 3$ from Lemmas \ref{L39} and \ref{L56}. So in this case $V\otimes W$ is not irreducible by Lemma \ref{L21a}.

So we may assume that $\la$ is JS. So $\la_1-\la_2,\la_2-\la_3\equiv 1\Md 3$ and then we have $\la_1\geq\la_2+4$ and $\la_2\geq\la_3+4$. From Lemmas \ref{LH} and \ref{L3} we have that $\overline{D}_2$ or $\overline{D}_3$ is contained in $\End_F(V)$. Since we always have $\overline{D}_0\subseteq\End_F(V)$ from Lemmas \ref{L21a} and \ref{L50} to prove that $V\otimes W$ is not irreducible it is enough to prove that $D^\la\otimes D(\be_n)$ has a composition factor of the same type as $D(\be_n)$. Note that by Lemma \ref{LBS} and \cite[Theorem 9.3]{s} we have that
\[[D^\la\otimes D(\be_n)]=c[S(\la)]+\sum_{\mu\in\RP_0(n):\mu\rhd\la}c_\mu[S(\mu)]\]
with $c>0$. From Lemmas \ref{L54} and \ref{L55} we then have that if $\nu=\la^R=\be_{\la_1}+\be_{\la_2}+\be_{\la_3}$ then $D(\nu)$ is a composition factor of $D^\la\otimes D(\be_n)$ (since $\la_1\geq\la_2+4$ and $\la_2\geq\la_3+4$). From $\la_1-\la_2,\la_2-\la_3\equiv 1\Md 3$ we have that $n\equiv 0\Md 3$ and one of $\la_1$, $\la_2$ and $\la_3$ is divisible by 3. In particular $S(\la)$ and $S((n))$ are in the same block and so $D(\nu)$ and $D(\be_n)$ are of the same type. So $V\otimes W$ is not irreducible.
\end{proof}

\subsection{Tensor products of basic and second basic spin}\label{sbssbs}

The following result will only be needed for $p=3$, but the proof in the general case is the same as the proof for the case $p=3$ only, so we present it here in the general version. By definition, for $G=\widetilde\s_n$ or $\widetilde \A_n$, second basic spin modules of $G$ are composition factors of the reduction modulo $p$ of $S((n-1,1))\da_G$ which are not basic spin modules.

\begin{theor}\label{L30}
Let $p\geq 3$, $n\geq 6$ and $G=\widetilde\s_n$ or $\widetilde \A_n$. Assume that $V$ is second basic spin and that $W$ is basic spin. Then $V\otimes W$ is not irreducible.
\end{theor}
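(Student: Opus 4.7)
The plan is to identify the second basic spin module $V$ explicitly and then show that $V\otimes W$ has at least two composition factors. Following the same philosophy as Lemma \ref{LBS}, the second basic spin modules in characteristic $p$ should arise as the composition factors of the reduction modulo $p$ of the characteristic zero spin module $S((n-1,1))$ (or of an appropriate small modification of $(n)$, depending on divisibility of $n$ by $p$). Write $V\cong D(\mu,\eps)$ for the resulting partition $\mu$, which is a simple modification of $\be_n$; in particular $\mu$ has at most one more part than $\be_n$ and $V$ is \emph{not} one-dimensional for $n\ge 6$.

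The main step is to compute $[V\otimes W]$ in the Grothendieck group. The plan is to lift to characteristic zero: up to scalars and splitting of types, $V\otimes W$ lifts a summand of $S((n-1,1))\otimes S((n))$, whose decomposition into Specht/spin characters is provided by the classical Stembridge/Morris formula, cf.\ \cite{s}. This decomposition contains several distinct terms $S(\nu)$ with $\nu\in\RP_0(n)$ (for instance $S((n))$, $S((n-1,1))$ and $S((n-2,2))$). Reducing each such $S(\nu)$ modulo $p$ and applying Lemma \ref{L54} to identify the dominant composition factor $D(\nu^R)$, I would then extract at least two distinct modular constituents $D(\nu_1)$ and $D(\nu_2)$ of $V\otimes W$. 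Concretely, one of these constituents is $W=D(\be_n)$ itself (coming from $\mu^R=\be_n$ type considerations together with $n\ge 6$ making $V$ nontrivial, so $V\otimes W$ strictly exceeds $W$ in composition length), while a second, more dominant constituent comes from a higher $\nu$ and survives the reduction.

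An alternative route, which is the one the earlier sections of the paper set up, is to prove the stronger statement that
\[
\dim\Hom_G(\End_F(V),\End_F(W))\ge 2,
\]
and apply Lemma \ref{L21a}. Here Lemmas \ref{L22} and \ref{L33} already show $\overline{D}_0\oplus\overline{D}_2\subseteq\End_F(W)$ (together with $\overline{D}_1$ when $n\equiv 0\pmod p$ via Lemma \ref{L33}), and by an analogue of Lemmas \ref{L22}, \ref{L33} applied to the second basic spin module, $\End_F(V)$ contains at least $\overline{D}_0\oplus\overline{D}_2$. Matching the common constituent $\overline{D}_2$ (together with the trivial summand) produces the required two-dimensional Hom space.

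The main obstacle is the same one that required separating the basic spin analysis into the four parity cases in Lemma \ref{LBS}: the module $V$, being a composition factor of the reduction of $S((n-1,1))$, may be of type $M$ or type $Q$ depending on $n\bmod p$ and $n\bmod 2$, and the characteristic zero constituents of $S((n-1,1))\otimes S((n))$ can split or merge under reduction in ways controlled by Lemma \ref{L55}. Handling these cases uniformly requires a small case analysis on the residue of $n$ modulo $p$ together with, in the borderline small-$n$ regime, direct inspection of decomposition matrices (in the spirit of the $n\le 12$ reductions used in the proofs of Theorems \ref{THB} and \ref{T011119}).
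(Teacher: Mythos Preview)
Your first approach contains a basic error: since both $V$ and $W$ are spin modules, $z$ acts on $V\otimes W$ as $(-1)(-1)=1$, so $V\otimes W$ is a \emph{non-spin} module. The Stembridge decomposition of $S((n-1,1))\otimes S((n))$ in characteristic zero therefore produces ordinary Specht modules $S^{(n-k,1^k)}$ and $S^{(n-k,2,1^{k-2})}$, not spin characters $S(\nu)$ with $\nu\in\RP_0(n)$, and Lemma~\ref{L54} is inapplicable. Even after this correction, the plan of ``finding two distinct modular constituents'' runs into the usual problem that distinct Specht modules can share composition factors modulo $p$, so extracting two genuinely different $D^\lambda$ from the list would itself require a nontrivial argument.

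Your alternative route via $\dim\Hom_G(\End_F(V),\End_F(W))\ge 2$ is in the spirit of the paper's general machinery, but the asserted ``analogue of Lemmas~\ref{L22},~\ref{L33} for the second basic spin module'' is not available: those lemmas exploit the explicit character formula for $S((n))\otimes S((n))$, which has no simple counterpart for second basic spin. The paper's substitutes for general $\lambda\neq\beta_n$ are Lemmas~\ref{L4} and~\ref{L14}, but Lemma~\ref{L14} requires $\lambda_1\ge 5$; precisely in the residual case $p=3$, $\mu_1\le 4$ (where the second basic spin partition is $(4,\beta_{n-4})$) this machinery fails, and this is exactly the case in which the paper \emph{invokes} Theorem~\ref{L30} rather than the Hom-space argument (see Case~10 of Section~\ref{s7}). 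So your alternative would be circular in the one place the theorem is genuinely needed.

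The paper's proof is entirely different and much more elementary: a pure dimension bound. From Wales' tables one has $\dim(V\otimes W)\ge 2^{n-4}(n-4)$, while every composition factor of $V\otimes W$ lies in some $S^{(n-k,1^k)}$ or $S^{(n-k,2,1^{k-2})}$, each of dimension at most $\binom{n}{\lfloor n/2\rfloor}(n-2)/4$. A short estimate shows the latter is strictly smaller for $n\ge 15$, so $V\otimes W$ cannot be simple; the range $6\le n\le 14$ is dispatched by direct inspection of decomposition matrices. This avoids all questions about which specific $D^\lambda$ survive reduction, at the cost of a small asymptotic calculation.
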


\begin{proof}
From Lemma \ref{LBS} and \cite[Table IV]{Wales} we have that any composition factor of $V\otimes W$ is a composition factor of the reduction modulo $p$ of $S((n-1,1))\otimes S((n))$. 
So from \cite[Theorem 9.3]{s}, any composition factor of $V\otimes W$ is a composition factor of a Specht module of the form $S^{(n-k,1^k)}$ with $0\leq k\leq n-1$ or $S^{(n-k,2,1^{k-2})}$ with $2\leq k\leq n-2$. Notice also that by \cite[Tables III and IV]{Wales}
\[\dim V\otimes W\geq 2^{n-4}(n-4).\]
It can be computed that
\[\dim S^{(n-k,1^k)}=\binom{n-1}{k},\hspace{24pt}\dim S^{(n-k,2,1^{k-2})}=\binom{n}{k}\frac{(n-k-1)(k-1)}{n-1}.\]
Since
\[\frac{(n-k-1)(k-1)}{n-1}\leq \frac{(n-2)^2}{4(n-1)}\leq\frac{n-2}{4},\]
it is enough to prove that
\[\binom{n}{\lfloor n/2\rfloor}\frac{n-2}{4}<2^{n-4}(n-4),\]
that is that
\[\frac{\binom{n}{\lfloor n/2\rfloor}(n-2)}{2^{n-2}(n-4)}<1.\]
Notice that $(n-2)/(n-4)$ is decreasing as is $\binom{n}{\lfloor n/2\rfloor}/2^{n-2}$, since
\begin{align*}
\binom{n}{\lfloor n/2\rfloor}/2^{n-2}&=\binom{n-1}{\lfloor n/2\rfloor}/2^{n-2}+\binom{n-1}{\lfloor n/2\rfloor-1}/2^{n-2}\\
&\leq \binom{n-1}{\lfloor (n-1)/2\rfloor}/2^{(n-1)-2}.
\end{align*}
Since $\binom{15}{7}\cdot 13/(2^{13}\cdot 11)<1$, the lemma holds for $n\geq 15$.

For $6\leq n\leq 14$ the lemma can be checked by looking at decomposition matrices for $\s_n$ and $\A_n$ to find the dimension of composition factors of the reduction modulo $p$ of the modules $S^{(n-k,1^k)}$ and $S^{(n-k,2,1^{k-2})}$ as well as exact formulas for $\dim V\otimes W$ coming \cite[Tables III and IV]{Wales}.
\end{proof}

\section{Proofs of Theorems \ref{TS} and \ref{TA}}\label{s7}

By \cite{bk,bk2,m2,m3,z1} we may assume that $W$ is a spin representation. Further we may assume that neither $V$ nor $W$ is 1-dimensional. For $n\leq 12$ the theorems can be proved using GAP \cite{gap} 
or looking at decomposition matrices (and using Lemma \ref{L54} to identify modular spin representations). So assume that $n\geq 13$. It can then be checked (using Lemma \ref{L20} and \cite[Lemma 2.2]{bkz} to help check some cases) that if $\al$ is one of $(n-3,3)$, $(n-3,1^3)$, $(n-5,1^5)$ or $(n-5,3,1^2)$ (the last one only for $p=3$) and $\al\in\Par_p(n)$, then $\al>\al^\Mull$. Let $G\in\{\widetilde\s_n,\widetilde \A_n\}$ depending on which theorem we are considering. Since $n\geq 13$ we have from \cite[Lemma 1.8]{ks2} that $(n-k,k)\not=(n-k,k)^\Mull$ for any $0\leq k\leq n/2$. Further the modules $\overline{E}_k$ for $0\leq k\leq 5$ are defined and they are simple and  pairwise non-isomorphic.

{\bf Case 1:} $p\geq 5$ and neither $V$ nor $W$ is a basic spin representation or a natural representation (a non-spin representation indexed by $(n-1,1)$ or $(n-1,1)^\Mull$).

Parts of this case could be proved using results from \cite{bk5,kt}. However the cases where $V$ is a non-spin representation which is indexed by a 2-rows JS partition (or its Mullineux dual) or if $G=\widetilde \A_n$ and $V$ is non spin and indexed by a Mullineux-fixed partition are not covered by results from \cite{bk5,kt}.

By Lemmas \ref{LM3S} and \ref{L4} there exist $\phi_3\in\Hom_G(M_3,\End_F(W))$ and $\phi_{1^3}\in\Hom_G(M_{1^3},\End_F(W))$ which do not vanish on $S_3$ and $S_{1^3}$ respectively. Further from Lemmas \ref{LM3}, \ref{LM3S} and \ref{L3} we have that there exists $\psi_3\in\Hom_G(M_3,\End_F(V))$ or $\psi_{1^3}\in\Hom_G(M_{1^3},\End_F(V))$ which does not vanish on $S_3$ or $S_{1^3}$. Since $M_0=S_0$ is the trivial module, so that there also always exist non-zero $\phi_0\in\Hom_G(M_0,\End_F(W))$ and $\psi_0\in\Hom_G(M_0,\End_F(V))$, we then have from Lemma \ref{l15} that
\[\dim\End_G(V\otimes W)=\dim\Hom_G(\End_F(V),\End_F(W))\geq 2\]
and so $V\otimes W$ is not irreducible.

{\bf Case 2:} $p=3$, neither $V$ nor $W$ is a basic spin representation or a natural representation and either $n\equiv 2\Md 3$ and $V$ is not a non-spin representation indexed by $(n-2,2)$ or $(n-2,2)^\Mull$ or $n\not\equiv 2\Md 3$.

This case holds similarly to the previous case, using Lemmas \ref{LM3}, \ref{LM3S}, \ref{L52}, \ref{L51} and \ref{L7a} (so using $M_{3,1^2}$ instead of $M_{1^3}$).

{\bf Case 3:} $p=3$, $n\equiv 2\Md 3$ and $V$ is a non-spin representation indexed by $(n-2,2)$ or $(n-2,2)^\Mull$ and $W$ is not basic spin.

We have that $(n-2,2)\not=(n-2,2)^\Mull$. So (up to tensoring with $\sgn$) $V\cong D^{(n-2,2)}$ or $E^{(n-2,2)}$. From \cite[Corollary 3.9]{bk5} and Lemma \ref{Mk} there exists $\psi_2\in\Hom_G(M_2,\End_F(V))$ which does not vanish on $S_2$. From \cite[Lemma 3.7]{p} we have that, for $p=3$, any JS(0) partition in $\RP_3(m)$ is of the form $\be_{\mu_1}+\ldots+\be_{\mu_k}$ with $\mu_j\equiv 0\Md 3$ for $j<k$ and $\mu_k=1$ or $\mu_k\equiv 0\Md 3$. Since $n\equiv 2\Md 3$ there is then no JS(0) partition in $\RP_3(n)$. Let $\nu$ be the partition indexing $W$. We will now consider $G=\widetilde\s_n$, the case $G=\widetilde \A_n$ being similar. By Lemma \ref{L190219_2} there exists $\phi_2\in\Hom_{\widetilde\s_n}(M_2,\End_F(W))$ which does not vanish on $S_2$ or $W\cong D(\nu,\pm)$ and there exist $\phi_2',\phi_2''\in\Hom_{\widetilde\s_n}(M_2,\Hom_F(D(\la,\pm),D(\la,\mp)))$ which are linearly independent over $S_2$. In the first case we can conclude as in Case 1. In the second case we have by Lemma \ref{l15} that
\begin{align*}
&\dim\Hom_{\widetilde\s_n}(V\otimes D(\la,\pm),V\otimes D(\la,\mp))\\
&=\dim\Hom_{\widetilde\s_n}(\End_F(V),\Hom_F(D(\la,\pm),D(\la,\mp)))\\
&\geq 2.
\end{align*}
Since $D(\la,+)$ and $D(\la,-)$ have the same dimension, this contradicts $V\otimes W$ being irreducible.

{\bf Case 4:} $V$ is a natural module.

Up to tensoring with $\sgn$ we have that $V\cong D^{(n-1,1)}$ or $E^{(n-1,1)}$. The theorems then follow from Theorem \ref{T2}.

{\bf Case 5:} $V$ and $W$ are basic spin.

Let $A:=D(\be_n)$ if $G=\widetilde\s_n$ or $A:=E(\be_n)$ if $G=\widetilde \A_n$. Then by Lemma \ref{L22}
\[\dim\Hom_G(A\otimes A,V\otimes W)=\dim\Hom_G(\Hom_F(V,A),\Hom_F(A,W)\geq 5.\]
Since $\dim A\leq 2\dim V$ and $\dim V=\dim W$, it follows that $V\otimes W$ is not irreducible.

{\bf Case 6:} $W$ is basic spin and $V$ is either a non-spin representation indexed by $\la\not\in\h_p(n)$ with $h(\la),h(\la^\Mull)\geq 3+\de_{p=3}$ or a spin representation indexed by $\mu\not=\be_n$ with $\mu_1\geq 5$.

From Lemmas \ref{LH} and \ref{L20} we have that $[S_{1^3}]=[\overline{D}_3]+\de_{p\mid n}[\overline{D}_2]$ and $[S_{1^5}]=[\overline{D}_5]+\de_{p\mid n}[\overline{D}_4]$.

From Lemmas \ref{L3} and \ref{L4} there exists $0\not=\phi_3\in\Hom_G(S_{1^3},\End_F(V))$ and from Lemmas \ref{L36} and \ref{L14} there exists $0\not=\phi_5\in\Hom_G(S_{1^5},\End_F(V))$. In particular there exist $a=0<b<c\leq 5$ with $\overline{D}_k\da_G\subseteq\End_F(V)$ for $k\in\{a,b,c\}$. If again $A:=D(\be_n)$ if $G=\widetilde\s_n$ or $A:=E(\be_n)$ if $G=\widetilde \A_n$ then by Lemma \ref{L22}
\[\dim\Hom_G(V\otimes A,V\otimes W)=\dim\Hom_G(\End_F(V),\Hom_F(A,W)\geq 3\]
and so $V\otimes W$ is not irreducible by Lemma \ref{L21a}.

{\bf Case 7:} $W$ is basic spin and $V$ is a non-spin representation indexed by $\la\in\h_p(n)$.

In this case the theorems hold by Theorem \ref{THB}.

{\bf Case 8:} $W$ is basic spin and $V$ is a non-spin representation indexed by $\la$ with $h(\la),h(\la^\Mull)=2$.

This case is covered by Theorem \ref{T011119}.

{\bf Case 9:} $p=3$, $W$ is basic spin and $V$ is a non-spin representation indexed by $\la\not\in\h_3(n)$ with $h(\la),h(\la^\Mull)=3$.

In this case $V\otimes W$ is not irreducible by Theorem \ref{tbs3r}.

{\bf Case 10:} $W$ is basic spin and $V$ is a spin representation indexed by $\mu\not=\be_n$ with $\mu_1\leq 4$.

Note that in this case $p=3$ since $n\geq 13$. Since $\mu\not=\be_n$ we have that $\mu=(4,\be_{n-4})=\be_{n-1}+\be_1$. In view of Lemma \ref{L55} and \cite[Table IV]{Wales} we have that $V$ is second basic spin. So the theorems hold by Theorem \ref{L30}.

\section*{Acknowledgements}

The author thanks Alexander Kleshchev for some comments on the paper. The author also thanks the referees for comments.

\end{document}